\documentclass[11pt]{article}

\usepackage[utf8]{inputenc}
\usepackage{amsmath}
\usepackage{amssymb}
\usepackage{amsthm}
\usepackage{bm}
\usepackage{mathabx}
\usepackage{xcolor}
\theoremstyle{definition}
\newtheorem{definition}{Definition}[section]

\newtheorem{remark}[definition]{Remark}

\theoremstyle{plain}
\newtheorem{theorem}[definition]{Theorem}
\newtheorem{lemma}[definition]{Lemma}
\newtheorem{corollary}[definition]{Corollary}
\newtheorem{proposition}[definition]{Proposition}

\title{\textbf{Wave Numbers: Discrete Sequence Algebras, Sieve Projectors, and Dynamical Geometry}}
\author{Terence R. Smith\\Department of Computer Science, UCSB\\smithtr@cs.ucsb.edu}
\date{\today}

\begin{document}

\maketitle

\begin{abstract}
We establish a comprehensive algebraic, geometric, and physical classification of the wave closure space generated from primitive plane wave sequences on the discrete spatial lattice $\mathbb{Z}$. Resolving lattice degeneracies via unwrapped phase spaces, we prove that the linear wave group $\mathcal{G}$ under pointwise product, inversion, and root extraction is an infinite divisible abelian torsion group isomorphic to $(\mathbb{Q}/\mathbb{Z}) \times (\mathbb{Q}/\mathbb{Z})$, with canonical decomposition into Pr\"{u}fer $p$-groups and maximal cyclotomic value field $\mathbb{Q}^{\mathrm{ab}}$. Extending to coordinate powers yields the polynomial phase group $\mathcal{G}_{\mathrm{poly}}$, classified by integer-valued polynomials $\operatorname{Int}(\mathbb{Z})$. Adjoining addition yields the group algebra $\mathbb{C}[\mathcal{G}_{\mathrm{poly}}]$, for which we establish the Permutation-Symmetric Phasor Superposition Theorem, factoring superpositions into collective barycentric carriers and closed Born probability envelopes. Using algebraic sieves over roots of unity, we construct idempotent projectors and complementary Not-sieve notch filters, achieving exact algebraic synthesis of both momentum and localized Kronecker position bases. Generalizing to non-abelian quaternions $\mathcal{V}_\mathbb{H}$, we prove a polar decomposition into scalar envelopes and $\mathrm{SU}(2)$ spinor rotors. Identifying coordinate advance with time, biquaternion determinants intrinsically yield Minkowski spacetime $s^2 = c^2 t^2 - \|\mathbf{x}\|^2$ and the Lorentz group $\mathrm{SO}^+(1,3)$. We demonstrate emergent vacuum zero-point jitter, topological selection of rational frequencies, and correspondences with discrete qudits.
\end{abstract}

%\tableofcontents
\vspace{1em}

\section{Introduction and Physical Context}
\label{sec:intro}

\subsection{Physical Importance and Ubiquity of Discrete Wave Numbers}
Discrete complex exponential sequences of the canonical form:
\begin{equation}
\mathbf{w}(f, g)[m] = \exp\big(2\pi i (f m + g)\big), \quad m \in \mathbb{Z},
\end{equation}
lie at the structural intersection of modern mathematical physics, discrete geometry, and harmonic analysis. In condensed matter physics and lattice field theory, they represent Bloch-Floquet crystal momentum eigenstates, describing tight-binding electron transport, lattice phonon excitations, and propagation modes on periodic spatial structures~\cite{aubry1980}. 
In quantum kinematics, as originally recognized by Weyl~\cite{weyl1950} and systematically developed by Schwinger~\cite{schwinger1960,schwinger1970}, discrete plane waves and their position duals constitute the fundamental canonical bases for quantum mechanics on finite-dimensional configuration spaces (qudits)~\cite{vourdas2004}. 
In discrete signal processing and algorithmic number theory, these sequences form the kernel of the discrete Fourier transform and harmonic analysis on finite cyclic groups $\mathbb{Z}/n\mathbb{Z}$~\cite{terras1999}.

\subsection{The Classical Paradigms and Focus of Prior Analyses}
Despite their pervasive appearance across disciplines, existing mathematical treatments of discrete wave sequences have predominantly operated within three distinct analytical paradigms:
\begin{enumerate}
    \item \textbf{The Linear Functional Analysis Paradigm:} In traditional harmonic analysis, discrete exponentials are treated almost exclusively as passive coordinate kernels for expanding general square-summable $\ell^2(\mathbb{Z})$ or bounded $\ell^\infty(\mathbb{Z})$ sequences. The primary focus has been on analytical convergence, spectral measures, and operator bounds, rather than investigating the internal algebraic closures formed by the wave sequences themselves under non-linear operations.
    
    \item \textbf{The Fixed-Period Paradigm:} In algorithmic discrete Fourier analysis and quantum information theory, analyses are almost universally formulated on a fixed, pre-selected cyclic lattice $\mathbb{Z}/n\mathbb{Z}$ for a single integer $n$~\cite{terras1999,vourdas2004}. While computationally convenient, fixing $n$ in advance truncates the global number-theoretic structure and obscures the rich inductive tower connecting all rational frequencies $\mathbb{Q}/\mathbb{Z}$ across the divisibility poset of the natural numbers.
    
    \item \textbf{The Ergodic and Almost-Periodic Paradigm:} In the classical theory of Bohr~\cite{bohr1947} and Besicovitch~\cite{besicovitch1932}, linear combinations of exponentials with incommensurate frequencies are examined via topological completions and ergodic translations on compact group compactifications. While foundational for quasicrystals and Harper-type spectral models~\cite{aubry1980}, this framework bypasses the discrete finitary arithmetic, treating sequences analytically rather than as exact elements of an algebraic number system.
\end{enumerate}

\subsection{The Foundational Void and the Unique Contribution of this Work}
What has remained absent from the literature is a rigorous, constructive, and unified \textbf{algebraic closure theory} of discrete wave sequences that treats wave numbers as an autonomous, self-contained mathematical number system. 

Prior work typically takes for granted the existence of the complex continuum and evaluates trigonometric phases directly, leaving critical structural questions unanswered:
\begin{itemize}
    \item What is the minimal generating set required to inductively generate all rational discrete waves without relying on pre-existing continuous trigonometric functions?
    \item How is the severe discrete lattice degeneracy (aliasing) rigorously resolved at the coordinate level?
    \item What exact algebraic structure emerges when wave sequences are closed not merely under linear addition, but under pointwise tensor multiplication, multi-branch rational roots, non-linear polynomial phase modulations, cumulative summation, and total ring of fractions localization?
    \item Can delocalized momentum waves and localized particulate Kronecker sieves be synthesized within a single closed algebraic system without appealing to ad hoc analytic delta distributions?
    \item How does the algebraic structure extend to non-commutative quaternionic division algebras and finite-state quantum kinematic geometries?
\end{itemize}

This paper, which represents an extension and correction of the paper of Smith~\cite{smith2025rational}, provides a complete and self-contained development of the theory of wave numbers. Starting from two primitive coordinate generators $\{E_Z, E_{\bar{1}}\}$, we systematically construct, classify, and physically interpret the nested hierarchy of algebraic closures generated by primitive wave sequences. Descriptions of the contributions of the paper are organized as follows:
\begin{enumerate}
    \item \textbf{Phase Lifting and Generative Foundations (Sections~\ref{sec:foundations} and~\ref{sec:lemmas}):} We resolve the spatial aliasing degeneracy by explicitly distinguishing the unwrapped phase configuration space $\mathbb{R}^\mathbb{Z}/\mathbb{Z}^\mathbb{Z}$ from the unimodular physical evaluation in $(S^1)^\mathbb{Z}$, establishing inductive sequence synthesis lemmas and operator invariance theorems under spatial translations.
    
    \item \textbf{Group-Theoretic and Cyclotomic Classification (Section~\ref{sec:main_theorems}):} We prove that the linear wave closure space $\mathcal{G}$ under pointwise product, inversion, and multi-branch rational roots is an infinite divisible abelian torsion group isomorphic to $(\mathbb{Q}/\mathbb{Z}) \times (\mathbb{Q}/\mathbb{Z})$, establishing its canonical decomposition into Pr\"{u}fer $p$-groups, its Pontryagin dual, and its maximal abelian cyclotomic value field $\mathbb{Q}(\mu_\infty) = \mathbb{Q}^{\mathrm{ab}}$~\cite{fuchs1970,neukirch1999,washington1997}.
    
    \item \textbf{Physical Lattice Realizations (Section~\ref{sec:physical_realizations}):} We map the sequence algebra directly onto physical realizations in coupled harmonic chains and crystal lattices.
    
    \item \textbf{Polynomial Phase Extensions (Section~\ref{sec:polynomial_extension}):} We formulate the higher polynomial phase closure $\mathcal{G}_{\mathrm{poly}}$, proving that its quotient classification is uniquely governed by the ring of integer-valued polynomials $\operatorname{Int}(\mathbb{Z})$ and its binomial basis $\binom{X}{k}$~\cite{cahen1997}, providing algebraic realizations of chirped beams, Fresnel diffraction, and accelerating Airy packets.
    
    \item \textbf{Group Algebra and the Phasor Superposition Theorem (Section~\ref{sec:superposition_phasor}):} Adjoining element-wise addition, we establish the \textbf{Permutation-Symmetric Phasor Superposition Theorem}, demonstrating that any finite superposition factors into a collective barycentric wave carrier in $\mathcal{G}_{\mathrm{poly}}$ and a permutation-symmetric envelope $A_N$, linking the microscopic virtual path sum to the closed macroscopic Born probability envelope.
    
    \item \textbf{Cumulative Integration and Polygonal Kinematics (Section~\ref{sec:integral_operator}):} We formulate the discrete cumulative integral operator $\mathcal{I}$, proving that pure carrier trajectories trace closed regular $n$-gons in $\mathbb{C}$ with zero mean drift and non-zero ground-state variance.
    
    \item \textbf{Inner Products and Fourier Orthogonality (Section~\ref{sec:inner_product}):} We construct the principal period inner product $\langle \cdot, \cdot \rangle$ over the least common multiple period, establishing exact discrete Fourier mode orthogonality and Parseval conservation.
    
    \item \textbf{Nodeless Unit Groups and Rings of Fractions (Section~\ref{sec:division_field}):} Under pointwise multiplication, we characterize the invertible unit group $\mathcal{U}(\mathcal{V})$ as the set of nodeless sequences, construct the total ring of fractions $\mathcal{Q}(\mathcal{V})$, and derive discrete M\"{o}bius transformations on wave numbers.
    
    \item \textbf{Algebraic Sieves and Dual Projectors (Section~\ref{sec:sieves}):} We formulate algebraic wave sieves over roots of unity, constructing idempotent projection operators $\Pi_{f_0}$ and complementary Not-sieve notch filters that achieve the exact algebraic synthesis of both delocalized momentum and localized Kronecker position bases.
    
    \item \textbf{Non-Abelian Quaternionic Wave Algebra (Section~\ref{sec:quaternions}):} We generalize the framework to the non-commutative quaternion algebra $\mathcal{V}_\mathbb{H}$~\cite{adler1995,conway2003}, proving a lossless polar decomposition $q[m] = R[m]\mathbf{U}[m]$ into scalar envelopes and $\mathrm{SU}(2)$ spinor rotors.
    
    \item \textbf{Spacetime Kinematics and Vacuum Geometry (Section~\ref{sec:spacetime_geometry}):} Promoting coordinate advance to physical time, we demonstrate that biquaternions $\mathbb{H}_\mathbb{C} \cong \mathcal{M}_2(\mathbb{C})$ intrinsically produce the Minkowski metric interval $s^2 = c^2 t^2 - \|\mathbf{x}\|^2$ and the restricted Lorentz group $\mathrm{SO}^+(1,3)$ via the matrix determinant~\cite{gursey1964,penrose1984,synge1972}. We prove emergent vacuum zero-point jitter, establish a topological selection principle for rational frequencies $f \in \mathbb{Q}/\mathbb{Z}$, and elucidate the structural primacy of the $n=6$ hexagonal lattice in generating emergent massless Dirac fermions~\cite{castroneto2009} and 4D 24-cell polytopes.
    
    \item \textbf{Sequential Roadmap and Qudit Quantum Kinematics (Section~\ref{sec:roadmap_beyond}):} Finally, we formalize the action of the discrete Weyl-Heisenberg group, demonstrating that wave periodicity $n$ corresponds to the state space dimension of finite-dimensional quantum systems (qudits)~\cite{schwinger1960,vourdas2004}.
\end{enumerate}

\section{Foundational Definitions and the Generating Set}
\label{sec:foundations}

\subsection{Phase Lifting, Ambient Sequence Spaces, and Unimodularity}

In formulating wave mechanics on a discrete spatial lattice $\mathbb{Z}$, a distinction must be maintained between the \textbf{unwrapped spatial phase profile} $\theta: \mathbb{Z} \to \mathbb{R}$ and its \textbf{unimodular physical state} $u[m] = \exp(2\pi i \theta[m]) \in U(1)$.

\begin{definition}[Ambient Sequence Space and Unimodular Torus]
Let $\mathbb{Z}$ denote the discrete one-dimensional spatial lattice. The ambient sequence space is $\mathcal{S} = \mathbb{C}^{\mathbb{Z}} = \{u : \mathbb{Z} \to \mathbb{C}\}$. A sequence $u \in \mathcal{S}$ is said to be \emph{unimodular} if:
\begin{equation}
|u[m]| = 1, \quad \forall m \in \mathbb{Z}.
\end{equation}
The unimodular sequences form the infinite-dimensional compact torus $\mathbb{T}^\mathbb{Z} = (S^1)^{\mathbb{Z}}$.
\end{definition}

\begin{definition}[Phase Configuration Space and Projection Map]
Let $\mathcal{F}(\mathbb{Z}, \mathbb{R})$ denote the additive group of real-valued spatial functions $\theta: \mathbb{Z} \to \mathbb{R}$ under pointwise addition $(\theta_1 + \theta_2)[m] := \theta_1[m] + \theta_2[m]$.
We define the canonical exponential projection homomorphism:
\begin{equation}
\pi: \mathcal{F}(\mathbb{Z}, \mathbb{R}) \to \mathbb{T}^\mathbb{Z}, \quad (\pi(\theta))[m] := \exp(2\pi i \theta[m]).
\end{equation}
The kernel of $\pi$ is the integer configuration space:
\begin{equation}
\ker(\pi) = \mathcal{F}(\mathbb{Z}, \mathbb{Z}) = \mathbb{Z}^\mathbb{Z}.
\end{equation}
Consequently, the group of unimodular sequences is isomorphic to the quotient:
\begin{equation}
\mathbb{T}^\mathbb{Z} \cong \mathcal{F}(\mathbb{Z}, \mathbb{R}) / \mathcal{F}(\mathbb{Z}, \mathbb{Z}) = (\mathbb{R}/\mathbb{Z})^\mathbb{Z}.
\end{equation}
\end{definition}

\subsection{The Primitive Generators}

To ensure that fractional root extraction acts non-trivially on spatial coordinates, the generating primitives are defined as \textbf{affine phase configurations} prior to exponential projection.

\begin{definition}[The Two Primitive Generators]
\label{def:primitives}
The algebraic construction is initiated from two primitive generators:
\begin{enumerate}
    \item \textbf{Primitive Spatial Coordinate Generator} $\Theta_Z \in \mathcal{F}(\mathbb{Z}, \mathbb{R})$:
    \begin{equation}
    \Theta_Z[m] := m, \quad \forall m \in \mathbb{Z}.
    \end{equation}
    The corresponding physical carrier sequence is $E_Z := \pi(\Theta_Z) \in \mathbb{T}^\mathbb{Z}$. While $E_Z[m] = \exp(2\pi i m) \equiv 1$ on the integer lattice, its phase representation $\Theta_Z$ carries non-trivial winding number $\Delta \Theta_Z = 1$ per lattice unit.
    
    \item \textbf{Primitive Global Calibration Generator} $\Theta_{\bar{1}} \in \mathcal{F}(\mathbb{Z}, \mathbb{R})$:
    \begin{equation}
    \Theta_{\bar{1}}[m] := 1, \quad \forall m \in \mathbb{Z},
    \end{equation}
    with exponential projection $E_{\bar{1}} := \pi(\Theta_{\bar{1}}) \in \mathbb{T}^\mathbb{Z}$.
\end{enumerate}
We denote the primitive phase set by $\mathcal{P}_\Theta := \{\Theta_Z, \Theta_{\bar{1}}\} \subset \mathcal{F}(\mathbb{Z}, \mathbb{R})$, and its projected physical seed by $\mathcal{P} := \{E_Z, E_{\bar{1}}\}$.
\end{definition}

\subsection{The System of Operators}

\begin{definition}[The Operators $\mathcal{O}$ on Phase Configurations and Sequences]
\label{def:operators}
We introduce three canonical operations:
\begin{enumerate}
    \item \textbf{Pointwise Product $\otimes$}: For $u, v \in \mathbb{T}^\mathbb{Z}$ with phases $\theta_u, \theta_v$,
    \begin{equation}
    (u \otimes v)[m] := u[m] \cdot v[m] = \exp(2\pi i (\theta_u[m] + \theta_v[m])).
    \end{equation}
    On the phase space, this corresponds to pointwise addition: $(\theta_u + \theta_v)[m] := \theta_u[m] + \theta_v[m]$.
    
    \item \textbf{Sequence Inversion $(\cdot)^{-1}$}:
    \begin{equation}
    (u^{-1})[m] := \frac{1}{u[m]} = \overline{u[m]} = \exp(-2\pi i \theta_u[m]).
    \end{equation}
    On the phase space, this corresponds to phase reflection: $(-\theta_u)[m] := -\theta_u[m]$.
    
    \item \textbf{Multi-valued $n$-th Root Operator Family $\{(\cdot)^{1/n}\}_{n \in \mathbb{N}}$}: For each positive integer $n \in \mathbb{Z}^+ = \{1, 2, 3, \dots\}$ and any phase configuration $\theta \in \mathcal{F}(\mathbb{Z}, \mathbb{R})$, the $n$-th root operator generates $n$ branches labeled by $k \in \{0, 1, \dots, n-1\}$:
    \begin{equation}
    (\theta^{1/n})_{(k)}[m] := \frac{\theta[m] + k}{n}, \quad \forall m \in \mathbb{Z}.
    \end{equation}
    Projected into $\mathbb{T}^\mathbb{Z}$, the $k$-th branch of the sequence $u = \pi(\theta)$ is:
    \begin{equation}
    (u^{1/n})_{(k)}[m] := \exp\left(2\pi i \frac{\theta[m] + k}{n}\right).
    \end{equation}
\end{enumerate}
We denote the operational closure system by $\mathcal{O} := \{\otimes, (\cdot)^{-1}, \{(\cdot)^{1/n}\}_{n \in \mathbb{N}}\}$.
\end{definition}

\begin{definition}[The Wave Closure Space $\mathcal{G}$]
\label{def:closure_space}
The wave closure space $\mathcal{G}$ is the minimal subset of $\mathbb{T}^\mathbb{Z}$ containing $\mathcal{P} = \{E_Z, E_{\bar{1}}\}$ that is closed under the operations $\mathcal{O}$:
\begin{equation}
\mathcal{G} := \operatorname{Cl}_{\mathcal{O}}(\mathcal{P}) = \pi\left( \operatorname{Cl}_{\{+, -, (\cdot)/n\}}\left(\{\Theta_Z, \Theta_{\bar{1}}\}\right) \right).
\end{equation}
\end{definition}

\section{Lemmas on Inductive Synthesis and Operator Invariance}
\label{sec:lemmas}

\begin{lemma}[Action on Rational Modulated Plane Waves]
\label{lem:action}
Let $\theta_u, \theta_v \in \mathcal{F}(\mathbb{Z}, \mathbb{R})$ be rational affine phase functions parameterized by $(\alpha_u, \beta_u), (\alpha_v, \beta_v) \in \mathbb{Q} \times \mathbb{Q}$:
\begin{equation}
\theta_u[m] = \alpha_u m + \beta_u, \quad \theta_v[m] = \alpha_v m + \beta_v.
\end{equation}
Then:
\begin{enumerate}
    \item $(\theta_u + \theta_v)[m] = (\alpha_u + \alpha_v) m + (\beta_u + \beta_v)$.
    \item $(-\theta_u)[m] = (-\alpha_u) m + (-\beta_u)$.
    \item $((\theta_u)^{1/n})_{(k)}[m] = \left(\frac{\alpha_u}{n}\right) m + \left(\frac{\beta_u + k}{n}\right)$ for all $k \in \{0, \dots, n-1\}$.
\end{enumerate}
\end{lemma}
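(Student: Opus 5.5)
The plan is to verify each of the three identities pointwise. We substitute the affine forms of $\theta_u$ and $\theta_v$ into the phase-space realizations of the operators recorded in Definition~\ref{def:operators}. The lemma is a statement about phase configurations in $\mathcal{F}(\mathbb{Z},\mathbb{R})$, not about their projections. Equality of functions $\mathbb{Z}\to\mathbb{R}$ means equality at every $m\in\mathbb{Z}$, so it suffices to fix an arbitrary $m$ and compute.

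For item~1, we use the definition of pointwise addition, $(\theta_u+\theta_v)[m]=\theta_u[m]+\theta_v[m]$. We substitute $\alpha_u m+\beta_u$ and $\alpha_v m+\beta_v$, then regroup using commutativity, associativity and distributivity in $\mathbb{R}$ to obtain $(\alpha_u+\alpha_v)m+(\beta_u+\beta_v)$. For item~2, the definition of phase reflection gives $(-\theta_u)[m]=-(\alpha_u m+\beta_u)$, and distributing the sign yields $(-\alpha_u)m+(-\beta_u)$. For item~3, we fix $n\in\mathbb{Z}^+$ and a branch index $k\in\{0,\dots,n-1\}$. The root-operator definition gives $((\theta_u)^{1/n})_{(k)}[m]=(\theta_u[m]+k)/n=(\alpha_u m+\beta_u+k)/n$. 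Splitting the numerator then gives $(\alpha_u/n)m+(\beta_u+k)/n$.

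We also note that the resulting coefficient pairs $(\alpha_u+\alpha_v,\beta_u+\beta_v)$, $(-\alpha_u,-\beta_u)$ and $(\alpha_u/n,(\beta_u+k)/n)$ lie in $\mathbb{Q}\times\mathbb{Q}$. This holds because $\mathbb{Q}$ is closed under addition, negation, and division by nonzero integers, and $k\in\mathbb{Z}$. So the class of rational affine phases is preserved by all three operations, which is the point of the lemma for the later closure arguments.

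There is no genuine obstacle; the only step requiring care is item~3. There one must apply the root operator to the unwrapped phase $\theta_u$ itself and not to a reduced representative modulo $\mathbb{Z}$. Otherwise the branch shift $k/n$ and the division of the slope $\alpha_u$ would not be well defined. This is precisely why Definition~\ref{def:operators} formulates the root family on $\mathcal{F}(\mathbb{Z},\mathbb{R})$, and the proof will simply invoke that formulation.
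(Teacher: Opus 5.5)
Your proposal is correct and follows the paper's approach exactly: the paper's proof is simply ``direct substitution into Definition~\ref{def:operators},'' which is the pointwise computation you carry out for each of the three items. Your closing observation that the resulting coefficients stay in $\mathbb{Q}\times\mathbb{Q}$ is not part of this lemma. The paper proves that fact separately as Lemma~\ref{lem:invariance}.
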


\begin{proof}
Direct substitution into Definition~\ref{def:operators}.
\end{proof}

\begin{lemma}[Rational Invariance of the Plane Wave Class]
\label{lem:invariance}
Define the set of rational affine phase functions:
\begin{equation}
\mathcal{W}_\Theta := \left\{ \theta \in \mathcal{F}(\mathbb{Z}, \mathbb{R}) \ \middle|\ \exists\, \alpha, \beta \in \mathbb{Q} \text{ such that } \theta[m] = \alpha m + \beta, \, \forall m \in \mathbb{Z} \right\},
\end{equation}
and its projected image in sequence space $\mathcal{W} := \pi(\mathcal{W}_\Theta) \subset \mathbb{T}^\mathbb{Z}$.
Then $\mathcal{W}_\Theta$ is closed under $\{+, -, (\cdot)/n\}$, and $\mathcal{W}$ is closed under $\mathcal{O}$.
\end{lemma}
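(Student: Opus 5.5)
The plan is to verify closure directly on the phase side, using Lemma~\ref{lem:action}, and then transport it to sequence space through the homomorphism $\pi$. Take arbitrary $\theta_u, \theta_v \in \mathcal{W}_\Theta$ with parameters $(\alpha_u,\beta_u),(\alpha_v,\beta_v)\in\mathbb{Q}\times\mathbb{Q}$. By Lemma~\ref{lem:action}:
\begin{itemize}
\item $\theta_u+\theta_v$ is affine with parameters $(\alpha_u+\alpha_v,\beta_u+\beta_v)$;
\item $-\theta_u$ is affine with parameters $(-\alpha_u,-\beta_u)$;
\item each branch $((\theta_u)^{1/n})_{(k)}$ is affine with parameters $(\alpha_u/n,(\beta_u+k)/n)$.
\end{itemize}
Since $\mathbb{Q}$ is a field containing $\mathbb{Z}$, it is closed under addition and negation, and under division by $n\in\mathbb{Z}^+$ after adding an integer $k$. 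So all these parameter pairs lie in $\mathbb{Q}\times\mathbb{Q}$, and $\mathcal{W}_\Theta$ is closed under $\{+,-,(\cdot)/n\}$ with every branch included.

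For the sequence-level statement, I will use two facts. First, $\pi$ is a homomorphism from $(\mathcal{F}(\mathbb{Z},\mathbb{R}),+)$ to $\mathbb{T}^\mathbb{Z}$ under pointwise product. Second, by Definition~\ref{def:operators}, each operation in $\mathcal{O}$ is defined through a phase representative: $\otimes$ corresponds to $+$, inversion to negation, and the $k$-th root branch to $\theta\mapsto(\theta+k)/n$. Given $u=\pi(\theta_u)$ and $v=\pi(\theta_v)$ with $\theta_u,\theta_v\in\mathcal{W}_\Theta$, this gives
\[
u\otimes v=\pi(\theta_u+\theta_v),\qquad u^{-1}=\pi(-\theta_u),\qquad (u^{1/n})_{(k)}=\pi\bigl(((\theta_u)^{1/n})_{(k)}\bigr).
\]
Each of these is the image under $\pi$ of an element of $\mathcal{W}_\Theta$, by the first part, and hence lies in $\mathcal{W}$.

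The one subtle point is well-definedness of the root operator on $\mathcal{W}$. A sequence $u\in\mathcal{W}$ may have several phase lifts differing by elements of $\mathbb{Z}^\mathbb{Z}$, and different lifts can give different root branches. I will handle this by reading the statement as the paper's framework intends: the operators act on phase configurations, and their outputs are then projected. Closure then means that every root obtained from any rational affine lift again lies in $\mathcal{W}$, which is exactly what the computation above shows.

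If one also allowed non-affine integer lifts $\theta_u+z$ with $z\in\mathbb{Z}^\mathbb{Z}$ arbitrary, the root $\pi((\theta_u+z+k)/n)$ need not be affine. So I will restrict explicitly to lifts in $\mathcal{W}_\Theta$, consistent with Definition~\ref{def:closure_space}, where $\mathcal{G}$ is defined as $\pi$ of a phase-level closure. I expect this scoping remark to be the only real obstacle; the remainder is routine substitution.
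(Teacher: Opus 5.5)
Your proposal is correct and follows the paper's proof essentially verbatim. You apply Lemma~\ref{lem:action} and observe that the resulting parameter pairs $(\alpha_u\pm\alpha_v,\beta_u\pm\beta_v)$ and $(\alpha_u/n,(\beta_u+k)/n)$ remain in $\mathbb{Q}\times\mathbb{Q}$ by the field axioms; your extra remark is a correct scoping point that the paper leaves implicit via Definition~\ref{def:closure_space}, namely that the sequence-level root operator must be computed from rational affine lifts, since arbitrary $\mathbb{Z}^\mathbb{Z}$-lifts can produce non-affine roots.
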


\begin{proof}
By Lemma~\ref{lem:action}, for any $(\alpha_u, \beta_u), (\alpha_v, \beta_v) \in \mathbb{Q}^2$ and $n \in \mathbb{N}$:
$(\alpha_u \pm \alpha_v) \in \mathbb{Q}$, $(\beta_u \pm \beta_v) \in \mathbb{Q}$, $\alpha_u / n \in \mathbb{Q}$, and $(\beta_u + k)/n \in \mathbb{Q}$ for all $k \in \{0, \dots, n-1\}$, by the field axioms of $\mathbb{Q}$.
\end{proof}

\begin{lemma}[Finite Character Sums and Root of Unity Annihilation]
\label{lem:geometric_sum}
Let $q \in \mathbb{N}$ and let $\zeta_q = \exp(2\pi i / q)$ be a primitive $q$-th root of unity. For any integer $p \in \mathbb{Z}$, the finite geometric character sum over the cyclic period satisfies:
\begin{equation}
\sum_{m=0}^{q-1} \zeta_q^{p m} = \sum_{m=0}^{q-1} \exp\left(2\pi i \frac{p m}{q}\right) = 
\begin{cases}
q, & \text{if } p \equiv 0 \pmod q, \\
0, & \text{if } p \not\equiv 0 \pmod q.
\end{cases}
\end{equation}
\end{lemma}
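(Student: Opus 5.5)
The plan is to set $r := \zeta_q^{p} = \exp(2\pi i p/q)$ and reduce the sum to a finite geometric series $\sum_{m=0}^{q-1} r^m$. Everything then hinges on the single dichotomy of whether $r = 1$. First I will record two elementary facts about $\zeta_q$. Since $\zeta_q^q = \exp(2\pi i) = 1$, we have $\zeta_q^{a} = 1$ whenever $q \mid a$. Conversely, $\zeta_q^{a} = \exp(2\pi i a/q) = 1$ forces $a/q \in \mathbb{Z}$, because $\exp(2\pi i x) = 1$ exactly when $x \in \mathbb{Z}$; this is the kernel statement $\ker(\pi) = \mathcal{F}(\mathbb{Z},\mathbb{Z})$ from the phase-lifting definition, read pointwise. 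Together these give the characterization $r = 1 \iff p \equiv 0 \pmod q$. Note that $p$ may be negative, which is harmless since $\zeta_q$ is invertible.

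\textbf{Case $p \equiv 0 \pmod q$.} Here $r = 1$, so every term $\zeta_q^{pm} = r^m$ equals $1$. The sum therefore counts the $q$ indices $m = 0, \dots, q-1$ and equals $q$.

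\textbf{Case $p \not\equiv 0 \pmod q$.} Now $r \neq 1$, and I will use the telescoping identity
\begin{equation}
(1 - r)\sum_{m=0}^{q-1} r^m = 1 - r^q,
\end{equation}
which is verified by expanding and cancelling adjacent terms. Since $r^q = \zeta_q^{pq} = (\zeta_q^{q})^{p} = 1$, the right-hand side vanishes. Because $1 - r \neq 0$, we may divide by it, and the sum is $0$.

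I do not expect a genuine obstacle. The only step needing care is the converse direction of the characterization, namely that $p \not\equiv 0 \pmod q$ really gives $r \neq 1$. This is precisely what licenses division by $1-r$, and it rests on the injectivity of $\exp(2\pi i\,\cdot)$ on $[0,1)$, equivalently on $\ker \pi$ being the integers. Everything else is direct substitution.
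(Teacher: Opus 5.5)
Your proof is correct and follows the paper's argument: you split on whether $\zeta_q^p = 1$, count the $q$ unit terms in the trivial case, and otherwise annihilate the geometric series using $(\zeta_q^p)^q = (\zeta_q^q)^p = 1$. Your explicit justification that $p \not\equiv 0 \pmod q$ forces $\zeta_q^p \neq 1$ is a step the paper merely asserts.
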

\begin{proof}
If $p \equiv 0 \pmod q$, each term is $\zeta_q^0 = 1$, yielding $\sum_{m=0}^{q-1} 1 = q$.
If $p \not\equiv 0 \pmod q$, then $\zeta_q^p \neq 1$. Using the standard finite geometric progression formula:
\begin{equation}
\sum_{m=0}^{q-1} (\zeta_q^p)^m = \frac{1 - (\zeta_q^p)^q}{1 - \zeta_q^p} = \frac{1 - (\zeta_q^q)^p}{1 - \zeta_q^p} = \frac{1 - 1}{1 - \zeta_q^p} = 0,
\end{equation}
which establishes exact cancellation.
\end{proof}

\begin{lemma}[Constructibility of All Rational Waves from Primitives]
\label{lem:reachability}
For any $\alpha = p/q \in \mathbb{Q}$ and $\beta = r/s \in \mathbb{Q}$ (with $q, s \in \mathbb{Z}^+$), the sequence:
\begin{equation}
u_{\alpha, \beta}[m] = \exp\left(2\pi i \left(\frac{p}{q} m + \frac{r}{s}\right)\right)
\end{equation}
belongs to $\mathcal{G} = \operatorname{Cl}_{\mathcal{O}}(\mathcal{P})$.
\end{lemma}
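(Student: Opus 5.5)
We work at the level of phase configurations, using the description of $\mathcal{G}$ in Definition~\ref{def:closure_space} as the $\pi$-image of the closure of $\{\Theta_Z, \Theta_{\bar{1}}\}$ under $\{+,-,(\cdot)/n\}$. It suffices to show that the affine phase $\theta[m] = \frac{p}{q} m + \frac{r}{s}$ lies in this phase closure $\operatorname{Cl}_{\{+,-,(\cdot)/n\}}(\mathcal{P}_\Theta)$; applying $\pi$ then gives $u_{\alpha,\beta} \in \mathcal{G}$. The construction is explicit: build the slope part and the offset part separately, then add them. Every step is a single application of Lemma~\ref{lem:action}.

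\textbf{Step 1 (unit fractions).} Apply the $q$-th root operator with branch $k=0$ to $\Theta_Z$. By Lemma~\ref{lem:action}(3) this gives the phase $m \mapsto \frac{1}{q} m$. Likewise, branch $k=0$ of the $s$-th root of $\Theta_{\bar{1}}$ gives the constant phase $m \mapsto \frac{1}{s}$. (Branch $k = 1$ applied to $\Theta_Z$ would also work, but it introduces an unwanted offset. The $k=0$ branch is cleanest.)

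\textbf{Step 2 (integer multiples).} For $p > 0$, add the phase $\frac{1}{q}m$ to itself $p$ times using $+$, which by Lemma~\ref{lem:action}(1) gives $\frac{p}{q} m$. For $p<0$, first apply phase reflection (Lemma~\ref{lem:action}(2)) and then add $|p|$ times. For $p = 0$, take $\Theta_Z + (-\Theta_Z)$, the zero phase. This zero phase lies in the closure and is fine as a phase function; note that $\pi(0) = E_Z$ anyway. Do the same with $\frac{1}{s}$ and $r$ to obtain the constant phase $\frac{r}{s}$. Finally, add the two phases to get $\theta[m] = \frac{p}{q} m + \frac{r}{s}$, again by Lemma~\ref{lem:action}(1).

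\textbf{Main obstacle.} The only subtle point is that the closure must be taken on phases rather than on the projected sequences. On the projected side, $E_Z \equiv 1$ is indistinguishable from the identity, so roots of $E_Z$ taken at the sequence level are ambiguous. The phase formulation in Definitions~\ref{def:operators} and~\ref{def:closure_space} is exactly what resolves this, since $\Theta_Z$ carries slope $1$. I will therefore state explicitly that the induction on the number of operations takes place in $\mathcal{F}(\mathbb{Z},\mathbb{R})$ and that $\pi$ is applied only at the end. I will also note that the result does not depend on the chosen representative $(p,q,r,s)$, because only the final real values $\alpha$ and $\beta$ matter.
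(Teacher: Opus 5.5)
Your proposal is correct and follows the paper's construction. You take the principal-branch ($k=0$) $q$-th root of $\Theta_Z$ and $s$-th root of $\Theta_{\bar{1}}$, then form $p$-fold and $r$-fold repetitions (with reflection for negative numerators), and finally combine the slope and offset parts. The only difference is bookkeeping: you stay in phase space and apply $\pi$ once at the end, whereas the paper projects right after the root step and uses $\otimes$ on sequences, which is equivalent because $\pi$ is a homomorphism.
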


\begin{proof}
We construct $u_{\alpha, \beta}$ explicitly:
\begin{enumerate}
    \item \textbf{Fractional Spatial Step}: Apply the $q$-th root operator to $\Theta_Z$ choosing principal branch $k=0$:
    \begin{equation}
    \theta_{1/q}[m] = ((\Theta_Z)^{1/q})_{(0)}[m] = \frac{m}{q}.
    \end{equation}
    Projecting to $\mathbb{T}^\mathbb{Z}$ gives $w_{1/q}[m] = \exp(2\pi i m / q)$.
    
    \item \textbf{Harmonic Iteration}: Composing $w_{1/q}$ with itself $p$ times via $\otimes$ (or inverting if $p < 0$) produces $w_{p/q}[m] = \exp(2\pi i \frac{p}{q} m)$.
    
    \item \textbf{Calibration Phase Division}: Apply the $s$-th root operator to $\Theta_{\bar{1}}$ choosing branch $k=0$:
    \begin{equation}
    \theta_{0, 1/s}[m] = ((\Theta_{\bar{1}})^{1/s})_{(0)}[m] = \frac{1}{s} \implies v_{1/s}[m] = \exp(2\pi i / s).
    \end{equation}
    
    \item \textbf{Phase Scaling and Coupling}: Taking $r$-fold products yields $v_{r/s}[m] = \exp(2\pi i r / s)$. Finally:
    \begin{equation}
    u_{\alpha, \beta} = w_{p/q} \otimes v_{r/s} \implies u_{\alpha, \beta}[m] = \exp\left(2\pi i \left(\frac{p}{q} m + \frac{r}{s}\right)\right).
    \end{equation}
\end{enumerate}
Since each step uses operators from $\mathcal{O}$, $u_{\alpha, \beta} \in \mathcal{G}$.
\end{proof}

\section{Main Theorems: Group Structure, Field Extensions, and Duality}
\label{sec:main_theorems}

\begin{theorem}[Primary Classification: Group Isomorphism]
\label{thm:main_iso}
The closure space $\mathcal{G} = \operatorname{Cl}_{\mathcal{O}}(\mathcal{P})$, equipped with the binary operation $\otimes$, is an \textbf{infinite divisible abelian torsion group}, isomorphic to the direct product of two copies of the additive group of rational numbers modulo 1:
\begin{equation}
(\mathcal{G}, \otimes) \cong (\mathbb{Q}/\mathbb{Z}, +) \times (\mathbb{Q}/\mathbb{Z}, +).
\end{equation}
\end{theorem}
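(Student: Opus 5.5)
The plan is to identify $\mathcal{G}$ with the projected rational plane-wave class $\mathcal{W}$ of Lemma~\ref{lem:invariance}, and then write down an explicit isomorphism $\mathcal{W}\to \mathbb{Q}/\mathbb{Z}\times\mathbb{Q}/\mathbb{Z}$ that reads off the frequency and the calibration phase.

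\textbf{Step 1: $\mathcal{G}=\mathcal{W}$.} The primitives lie in $\mathcal{W}_\Theta$: $\Theta_Z$ has $(\alpha,\beta)=(1,0)$ and $\Theta_{\bar 1}$ has $(0,1)$. By Lemma~\ref{lem:invariance}, $\mathcal{W}_\Theta$ is closed under $\{+,-,(\cdot)/n\}$, including every root branch $k$. By minimality in Definition~\ref{def:closure_space}, the phase closure of $\{\Theta_Z,\Theta_{\bar 1}\}$ therefore lies in $\mathcal{W}_\Theta$, so $\mathcal{G}\subseteq\mathcal{W}$. Conversely, Lemma~\ref{lem:reachability} gives $u_{\alpha,\beta}\in\mathcal{G}$ for every $(\alpha,\beta)\in\mathbb{Q}^2$, so $\mathcal{W}\subseteq\mathcal{G}$.

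\textbf{Step 2: the isomorphism.} Define $\Phi:\mathbb{Q}/\mathbb{Z}\times\mathbb{Q}/\mathbb{Z}\to\mathcal{G}$ by $\Phi(\alpha+\mathbb{Z},\beta+\mathbb{Z}) := u_{\alpha,\beta}$, that is, $m\mapsto\exp(2\pi i(\alpha m+\beta))$. The map is well defined: shifting $\alpha$ or $\beta$ by an integer changes the phase $\alpha m+\beta$ by an integer at every $m\in\mathbb{Z}$, which is exactly the aliasing absorbed by $\ker\pi=\mathbb{Z}^\mathbb{Z}$. It is a homomorphism by part~1 of Lemma~\ref{lem:action}, since $\otimes$ corresponds to addition of phases. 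It is surjective by Step~1. For injectivity, suppose $u_{\alpha,\beta}\equiv 1$. Evaluating at $m=0$ gives $e^{2\pi i\beta}=1$, so $\beta\in\mathbb{Z}$. Then evaluating $u[1]/u[0]=e^{2\pi i\alpha}=1$ gives $\alpha\in\mathbb{Z}$. The same evaluation maps $u\mapsto(u[0],\,u[1]\overline{u[0]})$ give an explicit inverse into $\mu_\infty\times\mu_\infty\cong(\mathbb{Q}/\mathbb{Z})^2$. Abelianness of $(\mathcal{G},\otimes)$ is inherited from pointwise multiplication in $\mathbb{C}$.

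\textbf{Step 3: the adjectives.} These transfer through $\Phi$.
\begin{itemize}
\item \emph{Infinite:} $\mathbb{Q}/\mathbb{Z}$ is infinite, and the elements $1/q$ are pairwise distinct.
\item \emph{Torsion:} if $q\alpha,q\beta\in\mathbb{Z}$, then $q\cdot(\alpha,\beta)=0$, which says $u^{\otimes q}\equiv 1$.
\item \emph{Divisible:} given $(\alpha,\beta)$ and $n$, the pair $(\alpha/n,\beta/n)$ is an $n$-th divisor. On the sequence side this is the $k=0$ branch of the root operator, by part~3 of Lemma~\ref{lem:action}.
\end{itemize}

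The main obstacle is the well-definedness/injectivity interplay in Step~2. The closure is defined on lifted phases, where $\alpha,\beta\in\mathbb{Q}$ genuinely differ, but the group lives in $\mathbb{T}^\mathbb{Z}$. One must check that the identification collapses exactly $\mathbb{Z}\times\mathbb{Z}$ and nothing more. I would handle this with the two-point evaluation argument ($m=0,1$), which uses only that an affine function on $\mathbb{Z}$ is determined by two values. I would also remark that $E_Z\equiv 1$ is consistent with this: $\Theta_Z$ corresponds to $(1,0)\equiv(0,0)$. Nontrivial frequencies arise only through the root branches.
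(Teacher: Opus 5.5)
Your proposal is correct and follows the paper's proof essentially step for step: first the set equality $\mathcal{G}=\mathcal{W}$ via Lemmas~\ref{lem:invariance} and~\ref{lem:reachability}, then the explicit map $\Phi([\alpha],[\beta])[m]=\exp(2\pi i(\alpha m+\beta))$, checked for well-definedness, homomorphism and surjectivity, with injectivity by evaluating at $m=0$ and $m=1$. You also run the minimality argument at the level of lifted phases, which is needed because both $E_Z$ and $E_{\bar 1}$ project to $\mathbf{1}$, so the root operator only makes sense on phases; together with your explicit transfer of the adjectives, this tightens what the paper leaves terse.
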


\begin{proof}
\textbf{Step 1: Set Equality.}
By Lemma~\ref{lem:invariance}, $\mathcal{W}$ is closed under $\mathcal{O}$ and contains $\mathcal{P}$, so $\mathcal{G} \subseteq \mathcal{W}$. By Lemma~\ref{lem:reachability}, $\mathcal{W} \subseteq \mathcal{G}$. Thus $\mathcal{G} = \mathcal{W}$.

\textbf{Step 2: Group Structure.}
Under pointwise complex multiplication $\otimes$, unimodular sequences satisfy associativity, commutativity, have identity $e[m] \equiv 1$ ($\alpha=0, \beta=0$), and inverses $u^{-1} = \bar{u} \in \mathcal{G}$.

\textbf{Step 3: Isomorphism Map.}
Define $\Phi: (\mathbb{Q}/\mathbb{Z}) \times (\mathbb{Q}/\mathbb{Z}) \to \mathcal{G}$ by:
\begin{equation}
\Phi([\alpha], [\beta])[m] := \exp(2\pi i (\alpha m + \beta)).
\end{equation}
\begin{itemize}
    \item \emph{Well-definedness}: If $\alpha \equiv \alpha' \pmod 1$ and $\beta \equiv \beta' \pmod 1$, then $(\alpha - \alpha')m + (\beta - \beta') \in \mathbb{Z}$ for all $m \in \mathbb{Z}$, so $\exp(2\pi i (\alpha m + \beta)) = \exp(2\pi i (\alpha' m + \beta'))$.
    \item \emph{Homomorphism}: $\Phi((\alpha_1, \beta_1) + (\alpha_2, \beta_2))[m] = \Phi(\alpha_1, \beta_1)[m] \cdot \Phi(\alpha_2, \beta_2)[m] = (\Phi(\alpha_1, \beta_1) \otimes \Phi(\alpha_2, \beta_2))[m]$.
    \item \emph{Injectivity}: If $\Phi([\alpha], [\beta])[m] = 1$ for all $m \in \mathbb{Z}$, then:
    At $m=0$: $\exp(2\pi i \beta) = 1 \implies \beta \in \mathbb{Z} \implies [\beta] = [0]$.
    At $m=1$: $\exp(2\pi i (\alpha + \beta)) = \exp(2\pi i \alpha) = 1 \implies \alpha \in \mathbb{Z} \implies [\alpha] = [0]$.
    Hence $\ker(\Phi) = \{([0], [0])\}$, proving injectivity.
    \item \emph{Surjectivity}: Immediate from Lemma~\ref{lem:reachability}.
\end{itemize}
Thus $\Phi$ is an isomorphism. Divisibility and torsion follow since $\mathbb{Q}/\mathbb{Z}$ is divisible and torsion.
\end{proof}

\begin{corollary}[Torsion and Spatial Periodicity]
Every element $u \in \mathcal{G}$ has finite group order under $\otimes$. Specifically, if $\alpha = p/q$ and $\beta = r/s$ in lowest terms, the order is:
\begin{equation}
\mathrm{ord}(u) = \mathrm{lcm}(q, s).
\end{equation}
Furthermore, $u[m]$ is periodic on $\mathbb{Z}$ with fundamental spatial period $q$.
\end{corollary}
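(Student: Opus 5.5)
We transport the question through the isomorphism $\Phi$ of Theorem~\ref{thm:main_iso}. By Step~1 of that proof, every $u \in \mathcal{G}$ has the form $u = \Phi([\alpha],[\beta])$ with $\alpha = p/q$ and $\beta = r/s$ in lowest terms, where $q, s \in \mathbb{Z}^+$. For the sharp statements we normalize by taking $0 \le p < q$ and $0 \le r < s$; the class $[0]$ is written $0/1$. Since $\Phi$ is a group isomorphism, $\mathrm{ord}(u)$ under $\otimes$ equals the order of $([\alpha],[\beta])$ in $(\mathbb{Q}/\mathbb{Z})^2$. The order of a pair in a direct product is the lcm of the orders of its components, so the computation reduces to finding the order of $[p/q]$ in $\mathbb{Q}/\mathbb{Z}$.

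\textbf{Order of a component.} We have $N[p/q] = [0]$ iff $q \mid Np$. Because $\gcd(p,q)=1$, this holds iff $q \mid N$, so the order is exactly $q$. The same argument gives order $s$ for $[r/s]$. Hence $u^{\otimes N} = e$ iff $q \mid N$ and $s \mid N$, i.e.\ iff $\mathrm{lcm}(q,s) \mid N$. This also shows finiteness of every order directly, consistent with the torsion claim of Theorem~\ref{thm:main_iso}.

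\textbf{Spatial periodicity.} Since $u[m+T] = u[m]\exp(2\pi i \alpha T)$, the integer $T$ is a period of $u$ iff $\exp(2\pi i pT/q) = 1$. This holds iff $q \mid pT$, i.e.\ iff $q \mid T$. So the set of periods is $q\mathbb{Z}$ and the fundamental period is $q$. The calibration phase $\beta$ plays no role here because it is a nonzero common factor.

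\textbf{Main obstacle.} The only delicate point is bookkeeping around the representatives. We must make sure that ``lowest terms'' is applied to a representative of the class mod $1$. This is harmless, since shifting $p/q$ by an integer preserves the reduced denominator $q$. We must also treat the degenerate cases $\alpha \in \mathbb{Z}$ and $\beta \in \mathbb{Z}$, where $q=1$ or $s=1$, separately. In those cases the formulas still hold: the period is $1$ and the order is $\mathrm{lcm}(1,s)=s$. Apart from that, the proof is a direct application of $\gcd(p,q)=1$.
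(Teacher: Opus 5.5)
Your proof is correct and matches the route the paper intends. The paper states this corollary without proof as an immediate consequence of the isomorphism $\Phi$ in Theorem~\ref{thm:main_iso}, and you supply exactly the two computations this leaves implicit: the componentwise orders in $(\mathbb{Q}/\mathbb{Z})^2$, and the period check $u[m+T]=u[m]\,e^{2\pi i\alpha T}$. Your observation that the reduced denominator depends only on the class modulo $1$ also settles the representative bookkeeping.
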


\begin{theorem}[Pr\"{u}fer $p$-Group Direct Sum Decomposition]
\label{thm:prufer}
As an abelian group, $\mathcal{G}$ decomposes canonically into an infinite direct sum of Pr\"{u}fer $p$-groups over all prime numbers $p \in \mathbb{P}$:
\begin{equation}
\mathcal{G} \cong \bigoplus_{p \in \mathbb{P}} \left( \mathbb{Z}(p^\infty) \oplus \mathbb{Z}(p^\infty) \right),
\end{equation}
where $\mathbb{Z}(p^\infty) \cong \mathbb{Z}[1/p]/\mathbb{Z}$ is the group of all $p$-power roots of unity under multiplication.
\end{theorem}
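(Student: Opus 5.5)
By Theorem~\ref{thm:main_iso}, $\mathcal{G} \cong (\mathbb{Q}/\mathbb{Z}) \times (\mathbb{Q}/\mathbb{Z})$. It therefore suffices to prove the primary decomposition $\mathbb{Q}/\mathbb{Z} \cong \bigoplus_{p \in \mathbb{P}} \mathbb{Z}(p^\infty)$ for a single factor. Since finite direct products agree with direct sums, and direct sums commute with one another, we then get
\begin{equation}
(\mathbb{Q}/\mathbb{Z})^2 \cong \bigoplus_{p} \mathbb{Z}(p^\infty) \oplus \bigoplus_{p} \mathbb{Z}(p^\infty) \cong \bigoplus_{p}\left(\mathbb{Z}(p^\infty)\oplus\mathbb{Z}(p^\infty)\right).
\end{equation}
Transporting this along the isomorphism $\Phi$ identifies the $p$-component of $\mathcal{G}$ with the $p$-power-order waves $\exp(2\pi i(\alpha m+\beta))$, i.e. those with $\alpha,\beta \in \mathbb{Z}[1/p]/\mathbb{Z}$. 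This is the canonical meaning of the decomposition: it is the $p$-primary torsion decomposition, which does not depend on any choices.

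For the single-factor decomposition, define $T_p := \{x \in \mathbb{Q}/\mathbb{Z} : p^k x = 0 \text{ for some } k\}$. Each $T_p$ is a subgroup, and it equals $\mathbb{Z}[1/p]/\mathbb{Z}$, because $[a/b]$ has order $b/\gcd(a,b)$.

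\emph{Generation.} Take $[a/b]$ with $b=\prod_{i} p_i^{e_i}$. The cofactors $b/p_i^{e_i}$ have $\gcd$ equal to $1$, so Bézout gives integers $c_i$ with $\sum_i c_i\, b/p_i^{e_i} = 1$. Then
\begin{equation}
\frac{a}{b} = \sum_i \frac{a c_i}{p_i^{e_i}},
\end{equation}
which is the partial-fraction decomposition. It exhibits $[a/b]$ as a sum of elements of the $T_{p_i}$.

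\emph{Independence.} Suppose $x_1 + \dots + x_r = 0$ with $x_i \in T_{p_i}$ for distinct primes $p_i$. Multiply by $N = \prod_{i \ne 1} \mathrm{ord}(x_i)$; this kills every term except the first, so $N x_1 = 0$. But $N$ is coprime to $p_1$ while $\mathrm{ord}(x_1)$ is a power of $p_1$, hence $x_1=0$. By symmetry every $x_i=0$.

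Hence the natural map $\bigoplus_p T_p \to \mathbb{Q}/\mathbb{Z}$ is an isomorphism. Finally, the map $[a/p^k] \mapsto \exp(2\pi i a/p^k)$ identifies $T_p = \mathbb{Z}[1/p]/\mathbb{Z}$ with the group of $p$-power roots of unity. This matches the description of $\mathbb{Z}(p^\infty)$ in the statement, and in $\mathcal{G}$ it is realized, via Lemma~\ref{lem:reachability}, by the generators $\pi((\Theta_Z)^{1/p^k}_{(0)})$ and $\pi((\Theta_{\bar 1})^{1/p^k}_{(0)})$.

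I expect the independence step to be the main point needing care, namely the coprimality argument that rules out nontrivial relations between different primary components. Everything else is bookkeeping on top of Theorem~\ref{thm:main_iso}. If one wants to avoid the Bézout computation, an alternative is to note that the decomposition holds for any torsion abelian group. The cleanest general proof of this applies the Chinese Remainder Theorem to the cyclic subgroup generated by a single element $x$, and then takes the union over $x$.
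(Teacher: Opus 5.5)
Your proof is correct and takes the same route as the paper: reduce via Theorem~\ref{thm:main_iso} to the primary decomposition $\mathbb{Q}/\mathbb{Z}\cong\bigoplus_{p}\mathbb{Z}(p^\infty)$, then take direct sums over the two factors. The only difference is that you prove that decomposition yourself (B\'ezout for generation, coprimality for independence, and $\Phi$ to identify the $p$-components of $\mathcal{G}$), whereas the paper quotes it, together with the structure theorem for divisible torsion groups, which it does not need and whose $p$-ranks it never computes.
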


\begin{proof}
By the standard structure theorem for divisible abelian torsion groups, any such group decomposes as $\bigoplus_p \mathbb{Z}(p^\infty)^{(\kappa_p)}$, where $\kappa_p$ is the $p$-rank.
Since $\mathbb{Q}/\mathbb{Z} \cong \bigoplus_{p \in \mathbb{P}} \mathbb{Z}(p^\infty)$, the result follows by taking direct sums across both direct factors.
\end{proof}

\begin{theorem}[Cyclotomic Galois Field Extension]
\label{thm:galois}
Let $K = \mathbb{Q}(\{u[m] \mid u \in \mathcal{G}, m \in \mathbb{Z}\})$ be the field generated over $\mathbb{Q}$ by adjoining all values evaluated at all lattice sites by elements of $\mathcal{G}$. Then:
\begin{equation}
K = \mathbb{Q}(\mu_\infty) = \bigcup_{n=1}^\infty \mathbb{Q}(e^{2\pi i / n}) = \mathbb{Q}^{\mathrm{ab}},
\end{equation}
which is the \textbf{maximal abelian extension} of the rational numbers $\mathbb{Q}$ by the Kronecker-Weber Theorem. The Galois group is:
\begin{equation}
\mathrm{Gal}(K/\mathbb{Q}) \cong \widehat{\mathbb{Z}}^\times = \prod_{p \in \mathbb{P}} \mathbb{Z}_p^\times.
\end{equation}
\end{theorem}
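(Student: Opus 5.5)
The approach is a double inclusion $K = \mathbb{Q}(\mu_\infty)$, followed by quoting the standard computation of the Galois group of the full cyclotomic tower.

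\textbf{Inclusion $K \subseteq \mathbb{Q}(\mu_\infty)$.} By Step 1 of the proof of Theorem~\ref{thm:main_iso}, $\mathcal{G} = \mathcal{W}$. So every $u \in \mathcal{G}$ has the form $u[m] = \exp(2\pi i(\alpha m + \beta))$ with $\alpha = p/q$ and $\beta = r/s$ rational. For each $m \in \mathbb{Z}$ the exponent $\alpha m + \beta$ is a rational number with denominator dividing $\mathrm{lcm}(q,s)$. Hence $u[m]$ is an $\mathrm{lcm}(q,s)$-th root of unity and lies in $\mathbb{Q}(e^{2\pi i/\mathrm{lcm}(q,s)}) \subseteq \mathbb{Q}(\mu_\infty)$. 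Every generator of $K$ therefore lies in $\mathbb{Q}(\mu_\infty)$, and this inclusion follows.

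\textbf{Inclusion $\mathbb{Q}(\mu_\infty) \subseteq K$.} Let $n \ge 1$. Lemma~\ref{lem:reachability} with $\alpha = 0$ and $\beta = 1/n$ gives the constant sequence $v_{1/n} \in \mathcal{G}$, whose value at $m = 0$ is $e^{2\pi i/n}$. Equivalently, one may take $w_{1/n}[1]$. Hence every primitive root $\zeta_n$ lies in $K$, so $\mathbb{Q}(\zeta_n) \subseteq K$ for all $n$, and $\bigcup_n \mathbb{Q}(\zeta_n) \subseteq K$. The union $\bigcup_n \mathbb{Q}(\zeta_n)$ is itself a field, because $\mathbb{Q}(\zeta_a), \mathbb{Q}(\zeta_b) \subseteq \mathbb{Q}(\zeta_{\mathrm{lcm}(a,b)})$ makes it a directed union. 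This justifies writing $\mathbb{Q}(\mu_\infty) = \bigcup_n \mathbb{Q}(e^{2\pi i/n})$.

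\textbf{Identifications.} The equality $\mathbb{Q}(\mu_\infty) = \mathbb{Q}^{\mathrm{ab}}$ is exactly the Kronecker--Weber theorem, which I cite rather than prove. Every finite abelian extension of $\mathbb{Q}$ lies in some $\mathbb{Q}(\zeta_n)$, and conversely each $\mathbb{Q}(\zeta_n)$ is abelian over $\mathbb{Q}$.

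For the Galois group, I use:
\begin{itemize}
\item irreducibility of the cyclotomic polynomial $\Phi_n$, which gives $\mathrm{Gal}(\mathbb{Q}(\zeta_n)/\mathbb{Q}) \cong (\mathbb{Z}/n\mathbb{Z})^\times$ via $\sigma_a(\zeta_n) = \zeta_n^a$;
\item the fact that these isomorphisms are compatible with the restriction maps for $n \mid n'$.
\end{itemize}
Passing to the inverse limit over the divisibility poset gives
$\mathrm{Gal}(K/\mathbb{Q}) \cong \varprojlim_n (\mathbb{Z}/n\mathbb{Z})^\times = \widehat{\mathbb{Z}}^\times$.
The Chinese remainder theorem then splits $\widehat{\mathbb{Z}} \cong \prod_p \mathbb{Z}_p$, and taking unit groups gives $\prod_p \mathbb{Z}_p^\times$.

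The only genuinely nontrivial inputs are Kronecker--Weber and the irreducibility of $\Phi_n$, both cited as standard. The step needing the most care is the compatibility of the finite-level isomorphisms with restriction, which is what justifies the inverse limit. It holds because $\sigma_a$ restricted to $\mathbb{Q}(\zeta_n)$ is $\sigma_{a \bmod n}$.
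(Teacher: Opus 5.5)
Your proposal is correct and follows the paper's proof essentially verbatim: you use the same two inclusions (every value of $u \in \mathcal{G} = \mathcal{W}$ lies in $\mu_{\mathrm{lcm}(q,s)}$, and $e^{2\pi i/n}$ is realized at $m=0$ by the sequence with $\alpha = 0$, $\beta = 1/n$), then appeal to Kronecker--Weber. The only difference is that you justify $\mathrm{Gal}(K/\mathbb{Q}) \cong \varprojlim_n (\mathbb{Z}/n\mathbb{Z})^\times = \widehat{\mathbb{Z}}^\times \cong \prod_p \mathbb{Z}_p^\times$ through irreducibility of $\Phi_n$ and compatibility with restriction, whereas the paper simply asserts this identification.
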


\begin{proof}
For any $u \in \mathcal{G}$ and $m \in \mathbb{Z}$, $u[m] = \exp(2\pi i (\frac{p}{q}m + \frac{r}{s})) \in \mu_{\mathrm{lcm}(q, s)}$. Conversely, for every $n \in \mathbb{N}$, the primitive root $e^{2\pi i / n}$ is realized at site $m=0$ by the sequence with $\alpha=0, \beta=1/n$. Thus the generated field is $\mathbb{Q}(\mu_\infty)$. By the Kronecker-Weber Theorem, this is the maximal abelian extension $\mathbb{Q}^{\mathrm{ab}}$, with Galois group isomorphic to the group of units of the profinite integers $\widehat{\mathbb{Z}}^\times$.
\end{proof}

\begin{theorem}[Pontryagin Duality Relation]
\label{thm:pontryagin}
Let $\widehat{\mathbb{Z}} = \mathrm{Hom}(\mathbb{Z}, U(1)) \cong \mathbb{R}/\mathbb{Z}$ be the Pontryagin dual of the discrete spatial translation group $\mathbb{Z}$.
The spatial wavenumber factor of $\mathcal{G}$ is canonically isomorphic to the rational torsion subgroup of $\widehat{\mathbb{Z}}$:
\begin{equation}
\mathcal{G}_{\mathrm{wave}} \cong \mathrm{Torsion}(\widehat{\mathbb{Z}}) = \mathbb{Q}/\mathbb{Z} \subset \mathbb{R}/\mathbb{Z}.
\end{equation}
Under the canonical Euclidean quotient metric on $\mathbb{R}/\mathbb{Z}$, $\mathcal{G}_{\mathrm{wave}}$ is a dense, countable subgroup of the first Brillouin zone.
\end{theorem}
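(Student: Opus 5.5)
We first pin down $\mathcal{G}_{\mathrm{wave}}$. Through the isomorphism $\Phi$ of Theorem~\ref{thm:main_iso}, it is the image of the first factor, $\{\Phi([\alpha],[0]) : [\alpha]\in\mathbb{Q}/\mathbb{Z}\}$, i.e.\ the pure plane waves $m\mapsto \exp(2\pi i\alpha m)$ with $\alpha\in\mathbb{Q}$. By Theorem~\ref{thm:main_iso} this is a subgroup isomorphic to $\mathbb{Q}/\mathbb{Z}$.

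\textbf{Step 1: the dual group.} A homomorphism $\chi:\mathbb{Z}\to U(1)$ is determined by $\chi(1)$, which can be any point of $U(1)$. Writing $\chi(1)=e^{2\pi i\alpha}$ with $[\alpha]\in\mathbb{R}/\mathbb{Z}$, we get $\chi_\alpha(m)=e^{2\pi i\alpha m}$. The map $[\alpha]\mapsto\chi_\alpha$ is well defined, and it is a homomorphism because $\chi_{\alpha+\alpha'}=\chi_\alpha\chi_{\alpha'}$. Injectivity is the $m=1$ evaluation argument already used in the injectivity step of Theorem~\ref{thm:main_iso}, and surjectivity comes from the choice of $\chi(1)$. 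Hence $\widehat{\mathbb{Z}}\cong\mathbb{R}/\mathbb{Z}$.

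\textbf{Step 2: the torsion subgroup.} The class $[\alpha]$ has finite order $n$ iff $n\alpha\in\mathbb{Z}$ for some $n\ge1$, iff $\alpha\in\mathbb{Q}$. So $\mathrm{Torsion}(\mathbb{R}/\mathbb{Z})=\mathbb{Q}/\mathbb{Z}$. Every element of $\mathcal{G}_{\mathrm{wave}}$ is a sequence on $\mathbb{Z}$ that is also a character, and it is literally $\chi_\alpha$ with $\alpha$ rational. The canonical isomorphism is therefore the identity inclusion $\mathcal{G}_{\mathrm{wave}}\hookrightarrow\widehat{\mathbb{Z}}$, with image exactly the torsion subgroup. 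No choice of basis enters, which justifies the word ``canonical.''

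\textbf{Step 3: countability and density.} Countability follows because $\mathbb{Q}/\mathbb{Z}$ is the image of the countable set $\mathbb{Q}$. For density, take any $[x]\in\mathbb{R}/\mathbb{Z}$ and $\varepsilon>0$, and choose $n>1/\varepsilon$. Then $\lfloor nx\rfloor/n$ lies within $1/n<\varepsilon$ of $x$ in $\mathbb{R}$. Since the quotient metric $d([x],[y])=\min_{k}|x-y-k|$ is at most $|x-y|$, it is within $\varepsilon$ in $\mathbb{R}/\mathbb{Z}$ too. Identifying $\mathbb{R}/\mathbb{Z}$ with the first Brillouin zone $[-\tfrac12,\tfrac12)$ of wavenumbers gives the final sentence of the statement.

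The only point needing care, which I expect to be the main obstacle, is the terminology. The notation $\widehat{\mathbb{Z}}$ here means the Pontryagin dual, not the profinite integers of Theorem~\ref{thm:galois}, and I will remark on this clash. I will also state explicitly that $\mathcal{G}_{\mathrm{wave}}$ means the $\beta=0$ factor, since the phase factor $\beta$ yields non-characters (they do not send $0\mapsto1$). Everything else reduces to results already established.
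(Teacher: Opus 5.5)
Your proposal is correct and follows the same route as the paper. You parametrize the characters of $\mathbb{Z}$ by $\chi(1)=e^{2\pi i\alpha}$, identify the finite-order ones as those with $\alpha\in\mathbb{Q}$, note that these are exactly the $\beta=0$ plane waves $m\mapsto e^{2\pi i\alpha m}$, and conclude from the density of $\mathbb{Q}/\mathbb{Z}$ in $\mathbb{R}/\mathbb{Z}$. The only difference is that you supply details the paper leaves implicit: the explicit density estimate, countability, and the observation that elements with $\beta\notin\mathbb{Z}$ are not characters because $u[0]\neq 1$.
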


\begin{proof}
Each character $\chi_k \in \widehat{\mathbb{Z}}$ is given by $\chi_k(m) = e^{2\pi i k m}$ for $k \in \mathbb{R}/\mathbb{Z}$. A character has finite order if and only if $k \in \mathbb{Q}/\mathbb{Z}$. These are precisely the elements $\exp(2\pi i \alpha m)$ with $\alpha \in \mathbb{Q}/\mathbb{Z}$. Since $\mathbb{Q}/\mathbb{Z}$ is dense in $\mathbb{R}/\mathbb{Z}$, the result follows.
\end{proof}

\section{Physical Realizations of the Closure Space}
\label{sec:physical_realizations}

\begin{proposition}[Hofstadter Magnetic Butterfly and Harper Equation]
When a two-dimensional square lattice is subjected to a perpendicular magnetic field with magnetic flux per plaquette $\Phi / \Phi_0 = p/q \in \mathbb{Q}$, the Peierls substitution generates phase factors $\exp(2\pi i (p/q) m) \in \mathcal{G}$. The 1D Harper Hamiltonian:
\begin{equation}
\psi[m+1] + \psi[m-1] + 2\lambda \cos\left(2\pi \frac{p}{q} m + \nu\right)\psi[m] = E \psi[m]
\end{equation}
splits the single Bloch band into exactly $q$ subbands with gaps exhibiting fractal self-similarity.
\end{proposition}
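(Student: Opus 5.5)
The plan is to reduce the Harper equation to a $q$-periodic Jacobi operator and then apply Bloch--Floquet theory, using the structure of $\mathcal{G}$ already established. The first step places the phase factors in the closure space. In the Landau gauge $A = (0, Bx)$, the Peierls substitution multiplies the vertical hopping at column $m$ by $\exp(2\pi i (p/q) m)$. By Lemma~\ref{lem:reachability} this is exactly $u_{p/q,0} \in \mathcal{G}$, and by the Torsion corollary it has spatial period $q$ when $p/q$ is in lowest terms. The transverse momentum $k_y$ enters as the calibration phase $\nu$. Writing the cosine as $\tfrac12(u_{p/q,\nu/2\pi} + u_{p/q,\nu/2\pi}^{-1})$, we see that the on-site potential $V[m] = 2\lambda\cos(2\pi p m/q + \nu)$ is $q$-periodic in $m$.

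The second step is the Floquet reduction. Since $V[m+q] = V[m]$, the Harper operator $H$ commutes with the translation $T_q$. We seek generalized eigenfunctions with $\psi[m+q] = e^{i\kappa q}\psi[m]$, $\kappa \in [0, 2\pi/q)$. This reduces $H$ to a $q\times q$ Hermitian matrix $H(\kappa,\nu)$. It is tridiagonal with diagonal entries $V[0],\dots,V[q-1]$ and off-diagonal entries $1$, plus corner entries $e^{\pm i\kappa q}$. The transfer matrix over one period is
\begin{equation}
M_q(E) = \prod_{m=q-1}^{0} \begin{pmatrix} E - V[m] & -1 \\ 1 & 0 \end{pmatrix},
\end{equation}
which has $\det M_q = 1$. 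The spectrum is $\sigma(H) = \{E : |\operatorname{tr} M_q(E)| \le 2\}$. Now $\operatorname{tr} M_q(E)$ is a real polynomial in $E$ of degree exactly $q$ with leading coefficient $1$. Standard discriminant theory for periodic Jacobi matrices then shows that $\{|\Delta(E)|\le 2\}$ is a union of exactly $q$ closed intervals. Each interval is swept out by one eigenvalue branch $E_j(\kappa)$, $j=1,\dots,q$, of $H(\kappa,\nu)$. These are the $q$ subbands.

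The third step is the gaps. I will invoke Chambers' relation $\operatorname{tr} M_q(E) = \Delta_0(E) - 2\lambda^q\cos(q\nu)$ (up to sign conventions). It shows that varying $\nu$ only shifts the discriminant by a constant. Hence the band edges, and therefore the band count, are controlled by $\Delta_0$. For $\lambda \neq 0$ generic gaps are open (for even $q$ the central gap closes at $E=0$, which should be noted as a caveat). The claim of \emph{fractal self-similarity} cannot be proven from finite-$q$ data alone. Here I would state precisely what is meant: as $p/q$ varies over the dense subgroup $\mathcal{G}_{\mathrm{wave}} \subset \mathbb{R}/\mathbb{Z}$ (Theorem~\ref{thm:pontryagin}), the band structures satisfy the renormalization relations of Hofstadter. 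I would cite these, together with the Cantor spectrum results for irrational limits (Ten Martini), rather than derive them.

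I expect the main obstacle to be proving that there are exactly $q$ bands, with the gaps genuinely open rather than touching. The degree-$q$ discriminant argument gives at most $q$ bands cleanly. Openness requires showing that $\Delta(E)=\pm2$ has only simple roots away from the exceptional degenerate points. I would handle this first through the Chambers formula, and then by a perturbative argument in small $\lambda$, where the gaps open at order $\lambda^{j}$ at the $j$-th gap.
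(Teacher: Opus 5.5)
The paper gives no proof of this proposition, so there is no route of the paper's to compare with. On its own terms, your Floquet--discriminant reduction is the standard rigorous argument, and it proves everything in the statement that is provable (assuming $\gcd(p,q)=1$, which the statement leaves implicit and you rightly impose).

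It proves the band count exactly, not merely ``at most $q$''. By the discriminant theory you invoke, every critical value of the monic degree-$q$ polynomial $\Delta=\operatorname{tr}M_q$ has modulus at least $2$. Hence $\Delta^{-1}([-2,2])$ is exactly $q$ closed bands with pairwise disjoint interiors, each swept out by one branch $E_j(\kappa)$. Touching bands still count as $q$ subbands; ``at most $q$'' refers only to connected components, so openness of the gaps is not needed for the count.

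You are also right that the fractal clause cannot be derived at fixed $p/q$, where the spectrum is a finite union of intervals. It can only be reinterpreted (as dependence on $p/q$, or as the Cantor-spectrum limit at irrational flux) and cited. One notational slip: $u_{p/q,\nu/2\pi}\in\mathcal{G}$ only when $\nu/2\pi\in\mathbb{Q}$. For general $k_y$, write the potential as $\lambda\bigl(e^{i\nu}u_{p/q,0}+e^{-i\nu}u_{p/q,0}^{-1}\bigr)\in\mathcal{V}$, which is all the periodicity argument uses.

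Your plan would genuinely fail at the step you call the main obstacle. Chambers' formula only shifts $\Delta$ by a $\nu$-dependent constant. So the gap at a critical point $E_c$ is open for every phase if and only if $|\Delta_0(E_c)|>2+2|\lambda|^q$, and the formula says nothing about whether this holds. A small-$\lambda$ perturbation argument proves it only for $0<|\lambda|<\lambda_0(q)$. The statement for all $\lambda\neq0$ (all gaps open except, for even $q$, the central one) is a non-perturbative theorem due to van Mouche and to Choi, Elliott and Yui. You would have to cite it or replace it with a global argument.

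Your caveat and your gap-opening orders also need correcting:
\begin{enumerate}
\item At fixed $\nu$, the even-$q$ central gap closes only at the finitely many phases mod $2\pi$ where $|\Delta_0(0)-2\lambda^q\cos(q\nu)|=2$. For $q=2$ one gets $\Delta=E^2-2-2\lambda^2-2\lambda^2\cos2\nu$, which closes the gap only at $\nu\equiv\pi/2\pmod{\pi}$. By contrast, in the genuinely two-dimensional spectrum (the union over $k_y$) the two central bands always touch.
\item The gap at integrated density of states $j/q$ opens at order $|\lambda|^{|k|}$, where $k$ is the smallest-modulus solution of $kp\equiv j\pmod q$, not at order $\lambda^j$. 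For $p/q=1/3$ one finds $\Delta=E^3-3(1+\lambda^2)E-2\lambda^3\cos3\nu$, and both gaps open at first order.
\end{enumerate}
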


\begin{proposition}[Fractional Talbot Self-Imaging and Gauss Sums]
For an optical wave with periodic structure governed by elements of $\mathcal{G}$, propagation along the longitudinal axis $z$ to a rational Talbot distance $z = (p/q) z_T$ produces a discrete sum of phase-shifted replicas given by the quadratic Gauss sum:
\begin{equation}
\psi(x, (p/q)z_T) = \sum_{m=0}^{q-1} S(p, q, m) \, \psi\left(x - m \frac{d}{q}, 0\right), \quad S(p, q, m) = \frac{1}{q} \sum_{n=0}^{q-1} \exp\left(2\pi i \frac{p n^2 + m n}{q}\right).
\end{equation}
All phase shifts $S(p, q, m)$ lie in the cyclotomic field $\mathbb{Q}(\mu_{4q}) \subset \mathbb{Q}(\mu_\infty)$.
\end{proposition}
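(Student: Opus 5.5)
The plan is to derive the Talbot formula from the paraxial (Fresnel) propagator applied to a $d$-periodic input, and then read off the cyclotomic membership directly from the explicit form of $S(p,q,m)$.

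\textbf{Setup.} Expand the $d$-periodic input field in Fourier modes,
\begin{equation}
\psi(x,0)=\sum_{n\in\mathbb{Z}} c_n e^{2\pi i n x/d},
\end{equation}
whose spatial factors are exactly the rational plane waves of $\mathcal{G}$ when sampled on a lattice. Under paraxial propagation each mode acquires the quadratic phase $\exp(-i\pi\lambda z n^2/d^2)$. With the Talbot length normalized as $z_T$ (taking $z_T=2d^2/\lambda$, or absorbing the factor into $p$ according to convention), this phase at $z=(p/q)z_T$ becomes $\exp(2\pi i\, p n^2/q)$ up to a sign convention in the exponent. These are themselves elements of the polynomial-phase class, but only $n \bmod q$ matters, since $pn^2/q \bmod 1$ depends only on $n \bmod q$.

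\textbf{Key step.} Because the phase factor $g(n)=\exp(2\pi i pn^2/q)$ is $q$-periodic in $n$, I expand it in the finite Fourier basis on $\mathbb{Z}/q\mathbb{Z}$,
\begin{equation}
g(n)=\sum_{m=0}^{q-1} S(p,q,m)\, e^{-2\pi i mn/q}.
\end{equation}
The inversion formula giving $S(p,q,m)=\frac1q\sum_{n} g(n)e^{2\pi i mn/q}$ is verified with Lemma~\ref{lem:geometric_sum}: substituting the sum for $S$ and exchanging the finite sums, the character sum collapses to the Kronecker delta in $n\bmod q$. Substituting the expansion of $g(n)$ into the mode sum, each factor $e^{-2\pi i mn/q}$ combines with $e^{2\pi i nx/d}$ to give $e^{2\pi i n(x-md/q)/d}$. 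Resumming over $n$ then yields $\psi(x-md/q,0)$, which is the stated replica formula. The sign conventions for $m$ must be chosen consistently here; this is the main bookkeeping obstacle, and I would fix them by checking the case $q=1$, where one should recover $\psi(x,z_T)=\psi(x,0)$. Convergence issues are avoided by assuming finitely many modes, or an $\ell^1$ coefficient sequence.

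\textbf{Field membership.} Each summand $\exp(2\pi i (pn^2+mn)/q)$ is a $q$-th root of unity, and $1/q\in\mathbb{Q}$, so in fact $S(p,q,m)\in\mathbb{Q}(\mu_q)\subseteq\mathbb{Q}(\mu_{4q})$. The field $\mathbb{Q}(\mu_{4q})$ is also the natural one to state: the closed-form evaluation of quadratic Gauss sums involves $\sqrt{\pm q}$ and $(1+i)$ factors, which lie in $\mathbb{Q}(\mu_{4q})$. If the physical convention instead uses the phase $\exp(i\pi p n^2/q)$, i.e.\ half-period Talbot phases, the summands become $2q$-th roots of unity, and the inclusion in $\mathbb{Q}(\mu_{4q})$ still holds. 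Finally, Theorem~\ref{thm:galois} places $\mathbb{Q}(\mu_{4q})$ inside $\mathbb{Q}(\mu_\infty)$.
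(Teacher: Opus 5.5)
The paper gives no proof of this proposition; it appears only as one of the illustrative physical realizations in Section~\ref{sec:physical_realizations}. Your Fresnel-mode derivation is the standard one, and it is correct.

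Expanding the $q$-periodic multiplier $g(n)=e^{2\pi i pn^2/q}$ on $\mathbb{Z}/q\mathbb{Z}$ with Lemma~\ref{lem:geometric_sum} yields exactly the stated coefficients $S(p,q,m)$. Each factor $e^{-2\pi i mn/q}$ is exactly translation by $md/q$. Because the sum over $m$ is finite, the resummation is valid in $L^2$ for any square-summable $c_n$, so your finiteness or $\ell^1$ assumption is not needed.

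Your field-membership argument is also correct and sharper than the statement. Every summand is a $q$-th root of unity, so $S(p,q,m)\in\mathbb{Q}(\mu_q)\subseteq\mathbb{Q}(\mu_{4q})$. This holds regardless of any physical convention, since $S$ is defined explicitly in the statement.

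Two of your bookkeeping remarks need adjusting.

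\textbf{The sign of $m$.} This is not a real issue. Expanding $g$ in $e^{+2\pi i mn/q}$ instead gives $\sum_m S(p,q,-m)\,\psi(x+md/q,0)$. After reindexing $m\mapsto -m \bmod q$ and using $d$-periodicity, this is the same formula.

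\textbf{The sign of the quadratic phase.} This is the genuine convention. The physical multiplier $e^{-i\pi\lambda z n^2/d^2}$ turns $S(p,q,m)$ into $S(-p,q,m)=\overline{S(p,q,-m)}$. Your proposed check at $q=1$ cannot detect this, since $S(\pm p,1,0)=1$. The difference is already visible at $q=3$, where $S(1,3,0)=i/\sqrt{3}$ but $S(-1,3,0)=-i/\sqrt{3}$. So you should fix the convention explicitly: the phase is $e^{+2\pi i pn^2/q}$ at $z=(p/q)z_T$ with $z_T=2d^2/\lambda$. Then note that the opposite sign convention replaces $p$ by $-p$.

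\textbf{The half-period convention.} Taking $z_T=d^2/\lambda$ is not compatible with the displayed identity. The multiplier $e^{i\pi pn^2/q}$ gives different coefficients, and it is not even $q$-periodic in $n$ when $pq$ is odd. So that hedge bears only on field membership, which, as noted, does not depend on it.
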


\begin{proposition}
[Braiding Monodromy in Laughlin States]In a two-dimensional Laughlin topological order at fractional filling factor 
$\nu = 1/q$ (where  $q$
is an odd integer), let $\xi _{1}$and $\xi _{2}$
be two identical quasiparticles each carrying a fractional charge $e^* = e/q$. The topological monodromy phase factor $\eta$ acquired when adiabatically transporting $\xi _{1}$ in a complete, counterclockwise closed loop around$\xi _{2}$ is given by the full braiding statistic:
\begin{equation}\eta =\exp \left(2\pi i\frac{p}{q}\right)\in U(1),\end{equation}
where $p = 1$ for the primary quasiparticles. For a composite excitation consisting of $p$ such primary quasiparticles winding around a single primary quasiparticle, the acquired phase realizes a generator of the discrete cyclic subgroup 
\(\mathbb{Z}_q \subset U(1)\)
\end{proposition}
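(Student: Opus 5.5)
The plan is to model the Laughlin quasiparticles by the standard $U(1)_q$ Chern--Simons (anyon) description and compute the monodromy as an Aharonov--Bohm phase. The statement then reduces to an arithmetic fact about the subgroup of $\mathcal{G}$ generated by $\exp(2\pi i/q)$.

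First, I will fix the effective description. At filling $\nu = 1/q$ the low-energy theory is a $U(1)$ Chern--Simons theory at level $q$. A primary quasiparticle carries charge $e^* = e/q$ and binds one flux quantum $\Phi_0 = h/e$ of the statistical gauge field. I will take as input the standard Laughlin plasma analogy, or equivalently the Berry-phase computation of Arovas--Schrieffer--Wilczek. When $\xi_1$ encircles $\xi_2$ once, the Berry phase splits into two parts. The background-field part cancels in the relative monodromy, which I define by subtracting the single-particle loop. The remaining part is the Aharonov--Bohm phase $2\pi e^*\Phi_0/h$ produced by the flux attached to $\xi_2$. Since $e^*\Phi_0/h = 1/q$, this gives $\eta = \exp(2\pi i/q)$, which is the case $p=1$.

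Second, I will treat the composite. A composite of $p$ primaries has charge $pe^*$. Its Berry phase is additive: by the homomorphism property of $\Phi$ in Theorem~\ref{thm:main_iso}, it equals the $\otimes$-product of $p$ copies of the primary phase. So $\eta_p = \exp(2\pi i p/q)$, which is the element $\Phi([0],[p/q]) \in \mathcal{G}$. The value $\exp(2\pi i/q)$ is realized in $\mathcal{G}$ by Lemma~\ref{lem:reachability}, so the whole set of phases lies in the cyclic subgroup $\mu_q \cong \mathbb{Z}_q$. It remains to show that $\eta_p$ generates $\mu_q$. This holds exactly when $\gcd(p,q)=1$, by the order formula in the Torsion Corollary: the order of $\exp(2\pi i p/q)$ is $q/\gcd(p,q)$. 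I will therefore read the final sentence of the statement with the implicit hypothesis $\gcd(p,q)=1$, which covers in particular $p=1$ and every $p<q$ when $q$ is prime. If $\gcd(p,q)=d>1$, the phase instead generates the subgroup $\mathbb{Z}_{q/d}$. Periodicity in $p$ modulo $q$ reflects the fact that $q$ primaries fuse to an electron, whose monodromy is trivial.

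The main obstacle is the physical input in the first step, namely justifying the monodromy $2\pi/q$ as opposed to the exchange phase $\pi/q$. I will handle it by citing the Chern--Simons braiding formula $\exp(2\pi i\, a b/q)$ for charges $a,b$. With $a=p$ and $b=1$, this gives the statement directly. The algebraic part that follows is routine given Theorem~\ref{thm:main_iso}.
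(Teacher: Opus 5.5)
The paper gives no proof of this proposition. It sits in the physical-realizations section next to the Hofstadter and Talbot propositions, and the braiding phase is asserted as standard physics, so there is no argument of the paper's to compare yours with. The physical input cannot be derived from anything in the paper's algebra and must be cited, as you do: either the Arovas--Schrieffer--Wilczek Berry phase or the $U(1)_q$ Chern--Simons monodromy $\exp(2\pi i\,ab/q)$.

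Your observation about the final sentence is correct and important. As written, that sentence is false unless $\gcd(p,q)=1$. For $p=q$ the composite is a charge-$e$ local excitation with $\eta=1$. For $q=9$, $p=3$ the phase $e^{2\pi i/3}$ has order $3$ and generates only $\mathbb{Z}_3$. Your repaired reading is the right fix. So is the general answer $\mathbb{Z}_{q/\gcd(p,q)}$, which follows from the order formula in the Torsion and Spatial Periodicity corollary after reducing $p/q$ to lowest terms.

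Three smaller points.

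First, the homomorphism property of $\Phi$ in Theorem~\ref{thm:main_iso} does not show that the composite's Berry phase is the product of $p$ primary phases. It only converts a sum of phases modulo $1$ into a $\otimes$-product. The additivity itself is physics: bilinearity of the abelian braiding form, or equivalently rigid adiabatic transport of the cluster so that intra-cluster phases do not change. You supply this when you cite $\exp(2\pi i\,ab/q)$ with $a=p$, $b=1$, so the appeal to Theorem~\ref{thm:main_iso} is only bookkeeping.

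Second, use the charge--flux heuristic with care. Suppose both $\xi_1$ and $\xi_2$ are modeled as composites of charge $e/q$ and flux $h/e$. The naive count is the charge of $\xi_1$ around the flux of $\xi_2$ plus the flux of $\xi_1$ around the charge of $\xi_2$. That gives $\exp(4\pi i/q)$, the familiar factor-of-two problem. Counting only a single Aharonov--Bohm contribution is justified by the ASW or Chern--Simons computation, not by the heuristic, which is why your fallback citation is doing the real work.

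Third, the sign of $\eta$ depends on the field orientation and on whether the excitations are quasiholes or quasielectrons. This is harmless here, because $\eta$ and $\eta^{-1}$ generate the same subgroup.
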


\section{The Polynomial Phase Extension: Higher Coordinate Powers $Z^2, Z^3, \dots$}
\label{sec:polynomial_extension}

\subsection{Motivation and Primitive Expansion}
The plane wave closure space $\mathcal{G} \cong (\mathbb{Q}/\mathbb{Z}) \times (\mathbb{Q}/\mathbb{Z})$ is restricted to linear spatial phases $\alpha m + \beta$. In wave mechanics and physical optics, higher-order coordinate dependence governs spatial curvature and dispersion: quadratic phases $m^2$ describe lenses and chirps, while cubic phases $m^3$ govern caustics, third-order dispersion, and accelerating Airy wavepackets.

Prior to introducing linear vector addition $(+)$, we enrich the carrier alphabet by including higher coordinate powers:
\begin{equation}
\Theta_{Z^k}[m] := m^k, \quad k \in \{1, 2, 3, \dots\}, \quad \forall m \in \mathbb{Z}.
\end{equation}
Specifically, $\Theta_{Z^2}[m] = m^2$ (quadratic chirp/lens) and $\Theta_{Z^3}[m] = m^3$ (cubic Airy/caustic).
We denote $\mathcal{P}_{\le d} := \{\Theta_{Z^k}\}_{k=1}^d \cup \{\Theta_{\bar{1}}\}$ and $\mathcal{P}_{\mathrm{poly}} := \{\Theta_{Z^k}\}_{k=1}^\infty \cup \{\Theta_{\bar{1}}\}$.
Their operational closures under $\mathcal{O}$ are denoted $\mathcal{G}_{\le d}$ and $\mathcal{G}_{\mathrm{poly}}$.

\subsection{Algebraic Classification and Integer-Valued Polynomials}

\begin{definition}[Ring of Integer-Valued Polynomials]
The ring of integer-valued polynomials over $\mathbb{Q}$ is:
\begin{equation}
\operatorname{Int}(\mathbb{Z}) := \{ P \in \mathbb{Q}[X] \mid P(m) \in \mathbb{Z}, \, \forall m \in \mathbb{Z} \}.
\end{equation}
By the classical theorem of P\'{o}lya and Ostrowski, $\operatorname{Int}(\mathbb{Z})$ is a free $\mathbb{Z}$-module with basis given by the \textbf{binomial polynomials}:
\begin{equation}
\binom{X}{k} := \frac{X(X-1)\cdots(X-k+1)}{k!}, \quad k \ge 0.
\end{equation}
\end{definition}

\begin{theorem}[Classification of the Polynomial Closure Spaces]
\label{thm:poly_iso}
Under pointwise sequence product $\otimes$, the polynomial closure spaces are infinite divisible abelian torsion groups isomorphic to the quotient modules:
\begin{enumerate}
    \item For any fixed degree $d \in \mathbb{N}$:
    \begin{equation}
    (\mathcal{G}_{\le d}, \otimes) \cong \left(\mathbb{Q}_{\le d}[X] / \operatorname{Int}_{\le d}(\mathbb{Z}), +\right) \cong (\mathbb{Q}/\mathbb{Z})^{d+1}.
    \end{equation}
    Every sequence $u \in \mathcal{G}_{\le d}$ is uniquely parameterized by binomial coefficients:
    \begin{equation}
    u[m] = \exp\left(2\pi i \sum_{k=0}^d c_k \binom{m}{k}\right), \quad c_k \in \mathbb{Q}/\mathbb{Z}.
    \end{equation}
    
    \item The full polynomial closure space is the direct sum over all polynomial degrees:
    \begin{equation}
    (\mathcal{G}_{\mathrm{poly}}, \otimes) \cong (\mathbb{Q}[X]/\operatorname{Int}(\mathbb{Z}), +) \cong \bigoplus_{k=0}^\infty (\mathbb{Q}/\mathbb{Z}) \cong \bigoplus_{k=0}^\infty \bigoplus_{p \in \mathbb{P}} \mathbb{Z}(p^\infty).
    \end{equation}
\end{enumerate}
\end{theorem}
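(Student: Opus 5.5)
We follow the template of Theorem~\ref{thm:main_iso}, replacing rational affine phases by rational polynomial phases. First we set $\mathcal{W}^{\le d}_\Theta := \mathbb{Q}_{\le d}[X]$, viewed as phase functions $\theta[m] = P(m)$, and $\mathcal{W}^{\le d} := \pi(\mathcal{W}^{\le d}_\Theta)$. The analogue of Lemma~\ref{lem:invariance} is immediate: sums, negatives, and the branch maps $\theta \mapsto (\theta + k)/n$ preserve $\mathbb{Q}_{\le d}[X]$. The seeds $\Theta_{Z^k}$ ($k \le d$) and $\Theta_{\bar{1}}$ lie in it, so $\mathcal{G}_{\le d} \subseteq \mathcal{W}^{\le d}$. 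For the reverse inclusion we imitate Lemma~\ref{lem:reachability}. Given $P = \sum_{k=0}^d a_k X^k$ with $a_k = p_k/q_k$, we take the principal $q_k$-th root of $\Theta_{Z^k}$, form $p_k$-fold products (or inverses), and take the product over $k$. The constant term comes from $\Theta_{\bar{1}}$ exactly as before. Hence $\mathcal{G}_{\le d} = \mathcal{W}^{\le d}$, and the full closure is $\mathcal{G}_{\mathrm{poly}} = \bigcup_d \mathcal{G}_{\le d}$, since any finite derivation uses only finitely many seeds.

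Next we define $\Phi_d : \mathbb{Q}_{\le d}[X] \to \mathcal{G}_{\le d}$ by $\Phi_d(P)[m] = \exp(2\pi i P(m))$. It is a surjective homomorphism from $(+)$ to $(\otimes)$ by the previous step. Its kernel is $\{P \in \mathbb{Q}_{\le d}[X] : P(m) \in \mathbb{Z}\ \forall m\} = \operatorname{Int}_{\le d}(\mathbb{Z})$, where $\operatorname{Int}_{\le d}(\mathbb{Z}) := \operatorname{Int}(\mathbb{Z}) \cap \mathbb{Q}_{\le d}[X]$. This yields $\mathcal{G}_{\le d} \cong \mathbb{Q}_{\le d}[X]/\operatorname{Int}_{\le d}(\mathbb{Z})$.

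For the coordinates, note that $\binom{X}{0}, \dots, \binom{X}{d}$ form a $\mathbb{Q}$-basis of $\mathbb{Q}_{\le d}[X]$, since they are triangular with respect to the monomials with nonzero leading coefficients $1/k!$. We will need that they also form a $\mathbb{Z}$-basis of $\operatorname{Int}_{\le d}(\mathbb{Z})$. This degree-truncated P\'olya--Ostrowski statement is the main point requiring care. Rather than merely quoting the full-ring version, we prove it by finite differences. If $P = \sum c_k \binom{X}{k}$ is integer-valued, then $c_k = (\Delta^k P)(0)$, where $\Delta P(X) = P(X+1) - P(X)$. This holds because $\Delta \binom{X}{k} = \binom{X}{k-1}$ and $\binom{0}{j} = \delta_{j0}$. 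Since $\Delta$ preserves integer-valuedness, each $c_k \in \mathbb{Z}$. Conversely, each $\binom{X}{k}$ is integer-valued on $\mathbb{Z}$, including negative arguments via $\binom{-n}{k} = (-1)^k\binom{n+k-1}{k}$.

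With both bases in hand, the coordinate map $P \mapsto (c_0, \dots, c_d)$ carries $\mathbb{Q}_{\le d}[X]$ onto $\mathbb{Q}^{d+1}$ and $\operatorname{Int}_{\le d}(\mathbb{Z})$ onto $\mathbb{Z}^{d+1}$. The quotient is therefore $(\mathbb{Q}/\mathbb{Z})^{d+1}$, which gives the unique parameterization $u[m] = \exp\bigl(2\pi i \sum_k c_k \binom{m}{k}\bigr)$ with $c_k \in \mathbb{Q}/\mathbb{Z}$. Divisibility and torsion are inherited from $\mathbb{Q}/\mathbb{Z}$, exactly as in Theorem~\ref{thm:main_iso}.

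For part 2 we pass to the directed union. The maps $\Phi_d$ are compatible with the inclusions $\mathbb{Q}_{\le d}[X] \subset \mathbb{Q}_{\le d+1}[X]$, and they assemble into a surjection $\Phi : \mathbb{Q}[X] \to \mathcal{G}_{\mathrm{poly}}$ with kernel $\operatorname{Int}(\mathbb{Z})$. Because the binomial basis is compatible across degrees, the coordinate isomorphisms glue to $\mathbb{Q}[X]/\operatorname{Int}(\mathbb{Z}) \cong \bigoplus_{k \ge 0} \mathbb{Q}/\mathbb{Z}$. It is a direct sum rather than a product, since every polynomial has only finitely many nonzero $c_k$. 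Finally, substituting $\mathbb{Q}/\mathbb{Z} \cong \bigoplus_p \mathbb{Z}(p^\infty)$ from Theorem~\ref{thm:prufer} gives the last isomorphism.
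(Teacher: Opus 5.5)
Your proposal is correct and follows the same route as the paper: the evaluation homomorphism $P \mapsto \exp(2\pi i P(m))$ with kernel $\operatorname{Int}_{\le d}(\mathbb{Z})$, binomial coordinates identifying the quotient with $(\mathbb{Q}/\mathbb{Z})^{d+1}$, and reachability by taking roots of the monomial seeds. You also fill in details the paper leaves implicit: the containment $\mathcal{G}_{\le d} \subseteq \pi(\mathbb{Q}_{\le d}[X])$, a finite-difference proof of the truncated P\'olya--Ostrowski basis, and the directed-union argument for part 2.
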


\begin{proof}
Let $P(X) \in \mathbb{Q}_{\le d}[X]$. The evaluation map $\Psi: P \mapsto (\exp(2\pi i P(m)))_{m \in \mathbb{Z}}$ is a group homomorphism from $(\mathbb{Q}_{\le d}[X], +)$ into $(\mathbb{T}^\mathbb{Z}, \otimes)$.
The kernel consists of all $P \in \mathbb{Q}_{\le d}[X]$ such that $\exp(2\pi i P(m)) = 1$ for all $m \in \mathbb{Z}$, which is equivalent to $P(m) \in \mathbb{Z}$ for all $m \in \mathbb{Z}$.
By definition, $\ker(\Psi) = \operatorname{Int}_{\le d}(\mathbb{Z})$.
Expanding in the binomial basis $P(X) = \sum_{k=0}^d c_k \binom{X}{k}$ with $c_k \in \mathbb{Q}$, we have $P \in \operatorname{Int}_{\le d}(\mathbb{Z})$ if and only if $c_k \in \mathbb{Z}$ for all $k \in \{0, \dots, d\}$.
Therefore, the quotient is isomorphic to $\bigoplus_{k=0}^d (\mathbb{Q}/\mathbb{Z}) = (\mathbb{Q}/\mathbb{Z})^{d+1}$.
Reaching all rational coefficients from $\mathcal{P}_{\le d}$ follows as in Lemma~\ref{lem:reachability} by taking roots of higher powers.
\end{proof}

\subsection{Physical Manifestations of Quadratic and Cubic Phases}

\begin{proposition}[$Z^2$ as Spatial Lens, Fresnel Propagator, and Chirp Carrier]
The quadratic phase factor $E_{Z^2}^{p/(2q)}$ realizes:
\begin{enumerate}
    \item \textbf{Optical Lens Phase}: In paraxial wave optics, a thin cylindrical lens of focal length $f$ imparts transmission phase $t(x) = \exp(-i \frac{k}{2f} x^2)$. On the lattice, $u[m] = \exp(i c m^2)$ acts as a discrete focusing or defocusing element.
    \item \textbf{Schr\"{o}dinger Free Evolution Propagator}: The free quantum particle time-evolution operator $U(t) = \exp(-i \frac{\hat{p}^2}{2\hbar M} t)$ has spatial kernel $K(m, m^\prime; t) \propto \exp\left(i \frac{M}{2\hbar t} (m - m^\prime)^2\right)$. Its diagonal and separable factors belong strictly to $\mathcal{G}_{\le 2}$.
    \item \textbf{Gauss Sums and Discrete Chirp Bases}: Quadratic sequences on $\mathbb{Z}/q\mathbb{Z}$ generate Weil representations and discrete chirp bases used in radar ambiguity functions and quantum state tomography.
\end{enumerate}
\end{proposition}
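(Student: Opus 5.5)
The plan is to handle the three items separately. In each case I will first show that the relevant sequence is an element of $\mathcal{G}_{\le 2}$, which takes care of the algebraic content. After that, the physical identification amounts to matching the continuum formula to the lattice formula by restricting $x\mapsto m$.

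The membership criterion I will use is Theorem~\ref{thm:poly_iso}. A sequence lies in $\mathcal{G}_{\le 2}$ exactly when it can be written as $\exp\big(2\pi i P(m)\big)$ with $P\in\mathbb{Q}_{\le 2}[X]$. It is reached from the generators by taking a root of $\Theta_{Z^2}$ and then forming a power, exactly as in Lemma~\ref{lem:reachability}. Concretely, take the branch $k=0$ of the $2q$-th root of $\Theta_{Z^2}$ and raise it to the $p$-th power under $\otimes$. This gives $E_{Z^2}^{p/(2q)}[m]=\exp\big(2\pi i\,\tfrac{p}{2q}m^2\big)$.

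\textbf{Item 1 (lens).} Sample the thin-lens transmission $t(x)=\exp(-i\tfrac{k}{2f}x^2)$ at $x=m$ and set $c=-k/(2f)$. This is a quadratic phase $u[m]=\exp(icm^2)$. It belongs to $\mathcal{G}_{\le 2}$ precisely when $c/(2\pi)\in\mathbb{Q}$, and then $c=2\pi p/(2q)$ for suitable integers. That is exactly the sequence $E_{Z^2}^{p/(2q)}$ constructed above. I will state this rationality hypothesis explicitly, because the statement tacitly assumes it. The sign of $c$ determines whether the element converges (focuses) or diverges (defocuses).

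\textbf{Item 2 (propagator).} Expand the kernel as
\[
(m-m')^2=m^2-2mm'+m'^2 .
\]
Assume $\tfrac{M}{2\hbar t}=2\pi\tfrac{p}{2q}$ is a rational multiple of $2\pi$. Then:
\begin{itemize}
\item the diagonal factor $\exp(i\tfrac{M}{2\hbar t}m^2)$ equals $E_{Z^2}^{p/(2q)}$;
\item the factor $\exp(i\tfrac{M}{2\hbar t}m'^2)$ depends only on $m'$ and acts as a calibration constant;
\item the cross term $\exp(-2\pi i\tfrac{p}{q}mm')$, for fixed $m'$, is a linear plane wave in $\mathcal{W}=\mathcal{G}$, by Theorem~\ref{thm:main_iso}.
\end{itemize}
Each of these factors therefore lies in $\mathcal{G}_{\le 2}$. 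I also want the claim that they lie \emph{strictly} in $\mathcal{G}_{\le 2}$, meaning not in $\mathcal{G}_{\le 1}$. By the uniqueness of the binomial parameterization, this amounts to showing that the coefficient $c_2$ of $\binom{m}{2}$ is nonzero in $\mathbb{Q}/\mathbb{Z}$.

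Converting the monomial to the binomial basis gives
\[
\tfrac{p}{2q}m^2=\tfrac{p}{q}\binom{m}{2}+\tfrac{p}{2q}m ,
\]
so $c_2=p/q$. This is nonzero modulo $1$ exactly when $q\nmid p$.

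\textbf{Item 3 (Gauss sums and chirps).} The step I expect to be hardest is item 3, because it is descriptive rather than a theorem. I would restrict it to a verifiable core in three steps.
\begin{enumerate}
\item Periodicity. Check that $\exp(2\pi i\,a m^2/q)$ has period dividing $q$, or $2q$ when a half-integer coefficient appears. This comes from expanding $(m+q)^2=m^2+2qm+q^2$.
\item Chirp basis. Show that, for $q$ odd and $a$ fixed, the modulated chirps $\exp\big(2\pi i(am^2+bm)/q\big)$ with $b\in\mathbb{Z}/q\mathbb{Z}$ form an orthogonal basis. The inner products reduce to $\sum_m\zeta_q^{(b-b')m}$, and Lemma~\ref{lem:geometric_sum} evaluates this sum.
\item Gauss sums. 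Check that the Gauss sums $\sum_m\zeta_q^{am^2}$ lie in $\mathbb{Q}(\mu_{4q})$, consistent with Theorem~\ref{thm:galois}.
\end{enumerate}
The Weil representation and the radar and tomography remarks would then be cited as standard applications rather than proved.
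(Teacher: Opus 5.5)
The paper states this proposition without proof, so its only checkable content is that the relevant quadratic factors lie in $\mathcal{G}_{\le 2}$ by Theorem~\ref{thm:poly_iso}. Your route (a $2q$-th root of $\Theta_{Z^2}$ followed by a $p$-fold product) is the natural one. You are also right that rationality of $c/2\pi$ and of $M/(4\pi\hbar t)$ (in lattice units) must be added as hypotheses, since otherwise the membership claims fail.

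The step that does not work is your treatment of ``strictly'' in item~2. You read it as ``not in $\mathcal{G}_{\le 1}$'' and assert it for each of the three factors. But for fixed $m'$ the cross factor is $\exp(-2\pi i \frac{p}{q} m m') = \mathbf{w}(-pm'/q, 0)$, which has $c_2 = 0$, and the $m'$-only factor is the constant $\mathbf{w}(0, pm'^2/(2q))$. Both lie in $\mathcal{G}_{\le 1}$. Your binomial computation covers only the chirp, and even there it shows the strong claim fails whenever $q \mid p$: for $p/(2q) = 1/2$ one gets $E_{Z^2}^{1/2}[m] = (-1)^{m^2} = (-1)^m$, which is $\mathbf{w}(1/2,0)$. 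So under your reading item~2 is false. You can fix this in one of two ways:
\begin{itemize}
\item read ``strictly'' as ``entirely within'', i.e.\ plain membership, which your argument does establish; or
\item replace the claim by the correct refinement: the separable factors lie in $\mathcal{G}_{\le 1}\subset\mathcal{G}_{\le 2}$, and the chirp factor lies outside $\mathcal{G}_{\le 1}$ exactly when $q\nmid p$.
\end{itemize}

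There are also some smaller points.
\begin{itemize}
\item \textbf{Sign of $c$ in item~1.} ``The sign of $c$ decides focusing versus defocusing'' is not an invariant of the sequence. By your own binomial expansion, the curvature datum is $c_2 = c/\pi \in \mathbb{Q}/\mathbb{Z}$. So $c$ matters only modulo $\pi$ up to a tilt: $c\mapsto c+\pi$ multiplies by $(-1)^m$, and $c\mapsto c-2\pi$ changes nothing. The resulting sequence is also periodic, so it is at best a lenslet array. A sign can only be read off a chosen representative such as $c\in(-\pi/2,\pi/2]$, on an aperture $|m|<\pi/(2|c|)$ where the local wavenumber $2cm$ does not alias.
\item \textbf{Converse in item~1.} Your ``precisely when $c/2\pi\in\mathbb{Q}$'' needs its converse. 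If $e^{icm^2} = e^{2\pi i P(m)}$ with $P = a_0 + a_1X + a_2X^2 \in \mathbb{Q}_{\le 2}[X]$, then $P(m) - \frac{c}{2\pi}m^2$ is integer-valued. Hence its constant second difference $2a_2 - c/\pi$ is an integer, which forces $c/2\pi\in\mathbb{Q}$.
\item \textbf{Kernel in item~2.} The sampled continuum kernel is not the lattice free propagator. For $H = -\frac{\hbar^2}{2M}\Delta^2$ the kernel is, up to a phase, $i^{m-m'}J_{m-m'}(\hbar t/M)$. So only the membership statement about the sampled factors is provable.
\item \textbf{Hypothesis in item~3.} ``$q$ odd'' is unnecessary for orthogonality of chirps sharing the same $a$, because the chirp cancels for every $q$. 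Oddness matters only for mutual unbiasedness across different $a$.
\end{itemize}
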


\begin{proposition}[$Z^3$ as Caustic Generator, Airy Wavepackets, and Third-Order Dispersion]
The cubic phase factor $E_{Z^3}^{p/(3q)}$ realizes:
\begin{enumerate}
    \item \textbf{Airy Wavepacket Carrier}: The continuous Airy function $\operatorname{Ai}(x) = \frac{1}{2\pi} \int_{-\infty}^\infty \exp\left(i \left(\frac{k^3}{3} + k x\right)\right) dk$ is the unique non-trivial solution to the Schr\"{o}dinger equation exhibiting self-acceleration and non-spreading wavepacket dynamics. The phase factor $\exp(i (c_3 m^3 + c_1 m))$ in $\mathcal{G}_{\le 3}$ is precisely the discrete Fourier-dual kernel generating lattice Airy beams.
    \item \textbf{Third-Order Optical Dispersion}: In ultrafast fiber optics, pulse broadening near zero group-velocity dispersion ($\beta_2 \approx 0$) is governed by third-order dispersion $\beta_3 = \frac{\partial^3 \beta}{\partial \omega^3}$, producing characteristic asymmetric oscillatory ripples modeled by cubic phase modulation.
    \item \textbf{Catastrophe Optics and Caustics}: Fold caustics (rainbow scattering) possess the cubic germ $V(s) = \frac{1}{3} s^3 + x s$ in Thom's classification of elementary catastrophes.
\end{enumerate}
\end{proposition}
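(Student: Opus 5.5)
The proposition mixes one algebraic claim with three physical identifications. I will first prove the algebraic part rigorously and then reduce each physical item to a checkable statement about that algebraic object.

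\textbf{Algebraic core.} The algebraic claim is that the cubic factor $E_{Z^3}^{p/(3q)}$, and more generally every sequence $\exp\bigl(2\pi i(c_3 m^3 + c_1 m + c_0)\bigr)$ with rational $c_j$, belongs to $\mathcal{G}_{\le 3}$.
\begin{enumerate}
\item Apply the $3q$-th root operator (branch $k=0$) to $\Theta_{Z^3}$.
\item Take $p$-fold products under $\otimes$, exactly as in Lemma~\ref{lem:reachability}.
\item Multiply by a linear factor from $\mathcal{G}$.
\end{enumerate}
Theorem~\ref{thm:poly_iso} then classifies the result. Rewriting $m^3 = 6\binom{m}{3} + 6\binom{m}{2} + \binom{m}{1}$ gives binomial coordinates $c_3 = 6p/(3q)$, $c_2 = 6p/(3q)$ and $c_1 = p/(3q)$ modulo $1$. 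This shows the element is a genuine cubic class, not reducible to $\mathcal{G}_{\le 2}$, unless $q \mid 2p$. I will record that caveat.

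\textbf{Item 1 (Airy carrier).} Recall the classical fact that $\operatorname{Ai}$ is the inverse Fourier transform of $e^{ik^3/3}$, together with Berry--Balazs non-spreading acceleration for the free Schr\"odinger equation. I read the word ``unique'' as referring to that classical characterization rather than proving a new uniqueness statement. For the lattice version, take $c_3, c_1 \in \mathbb{Q}$. By the Corollary on torsion, $\exp\bigl(2\pi i(c_3 m^3 + c_1 m)\bigr)$ has some finite period $N$. Its discrete Fourier transform over one period is the finite cubic exponential sum
\begin{equation}
\widehat{u}[\ell] = \frac{1}{N}\sum_{m=0}^{N-1} \exp\Bigl(2\pi i\bigl(c_3 m^3 + (c_1 - \ell/N) m\bigr)\Bigr).
\end{equation}
This is the discrete analogue of the Airy integral. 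Orthogonality on the period follows from Lemma~\ref{lem:geometric_sum}. To justify the word ``kernel'', I would show that a cubic spectral phase turns the lattice shift into a shift plus a quadratic term, mirroring $\operatorname{Ai}'' = x\operatorname{Ai}$. For this I would use the finite-difference identity $(m+1)^3 - m^3 = 3m^2 + 3m + 1$.

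\textbf{Items 2 and 3 (dispersion and caustics).}
\begin{itemize}
\item For third-order dispersion, expand $\beta(\omega)$ in a Taylor series about $\omega_0$ and set $\beta_2 = 0$. The propagation factor then becomes $\exp\bigl(i\beta_3 (\Delta\omega)^3 L/6\bigr)$. Sampling $\Delta\omega$ on a lattice, with $\beta_3 L/(12\pi)$ rational after calibration, gives exactly an element of $\mathcal{G}_{\le 3}$.
\item For the fold caustic, apply stationary phase to $V(s) = \tfrac13 s^3 + xs$. The critical points $s = \pm\sqrt{-x}$ coalesce at $x = 0$, which is the fold germ $A_2$ in Thom's list. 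The discrete phase $c_3 m^3 + c_1 m$ is its lattice sampling with $x \leftrightarrow c_1/c_3$.
\end{itemize}

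\textbf{Main obstacle.} The hardest step is making Item~1 precise. With rational $c_3$ the sequence is periodic, so there is no genuine $\mathbb{Z}$-Fourier transform to a continuous Airy profile. The honest statement is therefore about the finite cubic sum above on $\mathbb{Z}/N\mathbb{Z}$. I would try to recover $\operatorname{Ai}$ only in a scaling limit $N \to \infty$ with $c_3 N^3$ fixed, converting the cubic sum to the Airy integral by a Riemann-sum argument. If that limit proves unwieldy, I would downgrade the claim to the algebraic membership plus the finite-sum identity.
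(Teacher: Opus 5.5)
The paper gives no proof of this proposition; it is stated as a physical gloss on Theorem~\ref{thm:poly_iso}, so your proposal has to stand on its own. Your algebraic core does stand, and it is sharper than the paper. Your binomial coordinates $(2p/q,\,2p/q,\,p/(3q))$ show the element is genuinely cubic unless $q\mid 2p$. When $q\mid 2p$ it collapses all the way to the linear wave $e^{2\pi i pm/(3q)}$; for example $e^{2\pi i m^3/3}=e^{2\pi i m/3}$, since $m^3\equiv m \pmod 3$. Items~2 and~3 are reasonably reduced to classical facts plus sampling, up to two slips:
\begin{itemize}
\item Periodicity of a cubic-phase sequence does not follow from the torsion corollary, which covers only $\mathcal{G}$. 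It follows from $(m+D)^k-m^k\in D\mathbb{Z}$ for a common denominator $D$.
\item For the phase $c_3m^3+c_1m$ as you wrote it, the fold parameter is $x=c_1(3c_3)^{-1/3}$ after rescaling $s=(3c_3)^{1/3}m$, not $c_1/c_3$.
\end{itemize}

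The genuine gap is the second sentence of Item~1, and neither of your routes closes it.

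\textbf{The shift identity does not mirror $\operatorname{Ai}''=x\operatorname{Ai}$.} Take $\phi(m)=e^{2\pi i(c_3m^3+bm)}$ and set $\epsilon(m)=3c_3m^2+3c_3m+c_3+b$. Comparing $\sum_m\phi(m+1)$ with $\sum_m\phi(m)$ over a period gives $\sum_m\phi(m)\bigl(e^{2\pi i\epsilon(m)}-1\bigr)=0$. The multiplier here is a quadratic chirp, not the polynomial $k^2+x$ that integration by parts produces. On the DFT side this identity is a convolution with a Gauss-sum kernel. An Airy-type recurrence would appear only after linearizing $e^{2\pi i\epsilon}-1\approx 2\pi i\epsilon$, which needs $\epsilon(m)$ to stay near a fixed integer. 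That cannot happen over a full period of a genuinely cubic element. Periodicity forces $6c_3P\in\mathbb{Z}$, so $\epsilon(m+1)-\epsilon(m)=6c_3(m+1)$ takes every value in $\frac1d\mathbb{Z}/\mathbb{Z}$, where $d\ge2$ is the denominator of $6c_3$. Some of these values lie at distance at least $1/3$ from $\mathbb{Z}$.

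\textbf{The scaling limit is inconsistent as set up.}
\begin{itemize}
\item If $c_3N^3=\gamma$ is fixed, then $N$ is not the period. For $\gamma=1$ and prime $N\ge5$, the period of $e^{2\pi i m^3/N^3}$ is $N^3$. So your sum over $\mathbb{Z}/N\mathbb{Z}$ is not the DFT of the sequence.
\item The DFT over the true period is a complete cubic exponential sum with no Airy profile. For $u[m]=e^{2\pi i(am^3+cm)/P}$ with $P\ge5$ prime and $P\nmid a$, Weil's bound caps every normalized coefficient at $2/\sqrt{P}$.
\item A Riemann sum over a window of length $N$, with $bN=\beta$ fixed, converges to the incomplete integral $\int_0^1e^{2\pi i(\gamma t^3+\beta t)}\,dt$, not to $\operatorname{Ai}$. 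Reaching $\operatorname{Ai}$ needs a centered window plus a second limit $\gamma\to\infty$ with $x=2\pi\beta(6\pi\gamma)^{-1/3}$ held fixed.
\end{itemize}

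The underlying obstruction is that every element of $\mathcal{G}_{\le 3}$ is periodic, while an Airy integral needs a phase that varies slowly over the whole summation range. A rigorous version lives either on non-periodic windows, or with the cubic phase on the continuous Brillouin zone, $\psi_n=\frac{1}{2\pi}\int_{-\pi}^{\pi}e^{i(\gamma k^3+kn)}\,dk$. Both lie outside the principal-period framework. You should say outright that the literal claim fails for periodic elements of $\mathcal{G}_{\le 3}$, and prove that corrected limit rather than presenting the finite cubic sum as ``the'' Airy kernel.
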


\section{The Superposition Extension: Element-Wise Addition, the Phasor Theorem, and Orthonormal Bases}
\label{sec:superposition_phasor}

\subsection{Adjoining Element-Wise Linear Addition and the Group Algebra}
The polynomial closure space $\mathcal{G}_{\mathrm{poly}}$ is strictly a multiplicative group of unimodular sequences satisfying $|u[m]| = 1$ for all lattice sites $m \in \mathbb{Z}$. While $\mathcal{G}_{\mathrm{poly}}$ provides an alphabet of curved, chirped, and caustic modes, its elements exhibit constant spatial modulus; consequently, they cannot exhibit wave interference, spatial nodes, or localized wavepackets.

To overcome this structural boundary, we adjoin the binary operation of \textbf{element-wise vector addition} $(+)$ and scalar multiplication by the complex field $\mathbb{C}$.

\begin{definition}[Element-Wise Sequence Addition and Linear Span]
For any two sequences $\psi_1, \psi_2 \in \mathcal{S} = \mathbb{C}^\mathbb{Z}$ and any complex amplitudes $c_1, c_2 \in \mathbb{C}$, the linear combination $(c_1 \psi_1 + c_2 \psi_2)$ is defined pointwise on the lattice by:
\begin{equation}
(c_1 \psi_1 + c_2 \psi_2)[m] := c_1 \psi_1[m] + c_2 \psi_2[m], \quad \forall m \in \mathbb{Z}.
\end{equation}
The closure of the polynomial wave space $\mathcal{G}_{\mathrm{poly}}$ under finite element-wise addition and complex scaling is the linear span:
\begin{equation}
\mathcal{V} := \operatorname{span}_{\mathbb{C}}(\mathcal{G}_{\mathrm{poly}}) = \left\{ \Psi \in \mathbb{C}^\mathbb{Z} \ \middle|\ \Psi[m] = \sum_{j=1}^N c_j u_j[m], \ N \in \mathbb{N}, \ c_j \in \mathbb{C}, \ u_j \in \mathcal{G}_{\mathrm{poly}} \right\}.
\end{equation}
\end{definition}

\begin{theorem}[Group Algebra Identification]
\label{thm:group_algebra}
Equipped with element-wise vector addition $(+)$ and pointwise product $\otimes$, the linear space $\mathcal{V}$ is isomorphic to the \textbf{polynomial group algebra} (group ring) over $\mathbb{C}$:
\begin{equation}
(\mathcal{V}, +, \otimes) \cong \mathbb{C}[\mathcal{G}_{\mathrm{poly}}].
\end{equation}
Under sequence involution $u^*[m] := \overline{u[m]}$, $\mathbb{C}[\mathcal{G}_{\mathrm{poly}}]$ is a unital $*$-algebra.
\end{theorem}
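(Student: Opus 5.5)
The plan is to build the canonical comparison map from the abstract group algebra to $\mathcal{V}$ and check, in turn, that it respects the algebra structure, is onto, is one-to-one, and carries the involution.

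\textbf{Step 1 (the map and its algebra structure).} Write elements of $\mathbb{C}[\mathcal{G}_{\mathrm{poly}}]$ as finite formal sums $\sum_j c_j [u_j]$, with $c_j \in \mathbb{C}$ and $u_j \in \mathcal{G}_{\mathrm{poly}}$. Multiplication is $[u][v] = [u \otimes v]$, extended bilinearly. Define the evaluation map
\begin{equation}
\mathrm{ev}: \mathbb{C}[\mathcal{G}_{\mathrm{poly}}] \to \mathcal{V}, \qquad \mathrm{ev}\Big(\sum_j c_j [u_j]\Big)[m] := \sum_j c_j u_j[m].
\end{equation}
It is $\mathbb{C}$-linear by construction. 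It is multiplicative because pointwise multiplication distributes over pointwise addition, and on basis elements $\mathrm{ev}([u][v]) = u \otimes v = \mathrm{ev}([u]) \otimes \mathrm{ev}([v])$. It is unital, since $[e] \mapsto e \equiv 1$.

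\textbf{Step 2 (surjectivity).} This is immediate from the definition of $\mathcal{V}$ as the set of finite combinations $\sum_j c_j u_j$.

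\textbf{Step 3 (the involution).} Put $(\sum_j c_j [u_j])^* := \sum_j \bar c_j [u_j^{-1}]$. Since $u^{-1} = \bar u$ for unimodular $u$, this is intertwined by $\mathrm{ev}$ with pointwise conjugation. The $*$-algebra axioms (conjugate-linearity, $(xy)^* = y^* x^*$, $x^{**} = x$) reduce to commutativity of $\otimes$ and the facts $(u \otimes v)^{-1} = u^{-1} \otimes v^{-1}$ and $(u^{-1})^{-1} = u$.

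\textbf{Step 4 (injectivity, the main obstacle).} Injectivity is equivalent to linear independence over $\mathbb{C}$ of any finite family of distinct sequences $u_1, \dots, u_N \in \mathcal{G}_{\mathrm{poly}}$. My first attempt would be a Dedekind--Artin style argument.
\begin{itemize}
\item By Theorem~\ref{thm:poly_iso}, each $u_j$ is $\exp(2\pi i P_j(m))$ with $P_j \in \mathbb{Q}[X]$, so the whole family is periodic with some common period $L$.
\item Take a minimal nontrivial relation $\sum_j c_j u_j = 0$.
\item Apply the shift by $m \mapsto m+1$, use $u_j[m+1] = u_j[m]\,\delta_j[m]$ with $\delta_j \in \mathcal{G}_{\mathrm{poly}}$ of lower degree, and subtract a multiple of the original relation to get a shorter relation.
\item Then induct on the maximal degree, with the linear case coming from Lemma~\ref{lem:geometric_sum} and the orthogonality of distinct frequencies over a full period.
\end{itemize}

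I expect this step to fail as stated, and it should be checked first. The calibration factors $\exp(2\pi i\beta)$ are nonzero complex constants. For example, the constant sequences $1$ and $e^{2\pi i/3}$ are distinct elements of $\mathcal{G}_{\mathrm{poly}}$, yet the second is a scalar multiple of the first. More generally, the restrictions to $\mathbb{Z}/L\mathbb{Z}$ span at most an $L$-dimensional space while infinitely many group elements have period $L$.

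So the independence argument can only run on distinct elements with trivial constant part, i.e.\ on the polynomial phases with $c_0 = 0$, which are distinct modulo scalars. For such elements I would handle the linear part by restricting to one period and using character orthogonality from Lemma~\ref{lem:geometric_sum}, and the higher-degree part by the shift induction above. The identification in the theorem should then be read with the scalar phases absorbed into $\mathbb{C}$, namely $\mathcal{V} \cong \mathbb{C}[\mathcal{G}_{\mathrm{poly}}/\mathcal{G}_{\mathrm{const}}]$, where $\mathcal{G}_{\mathrm{const}} \cong \mathbb{Q}/\mathbb{Z}$ is the subgroup of constant sequences. I would also verify that this residual independence claim really holds, since different polynomials with $c_0 = 0$ may still coincide on every residue class mod $L$.
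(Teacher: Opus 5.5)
Your Steps 1--3 are sound, and your objection in Step 4 is correct. The evaluation map $\mathbb{C}[\mathcal{G}_{\mathrm{poly}}]\to\mathcal{V}$ sends $[e^{2\pi i/3}\mathbf{1}]-e^{2\pi i/3}[\mathbf{1}]$ to zero, so it is not injective. This is exactly where the paper's own one-line proof fails: it infers linear independence from the distinctness of the phases modulo $\operatorname{Int}(\mathbb{Z})$, and distinctness does not give independence.

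\textbf{The gap is in your repair.} The normalized elements with $c_0=0$ are pairwise non-proportional, but they are not linearly independent. Your own counting argument applies to them unchanged. Every element of $\mathcal{G}_{\mathrm{poly}}$ is periodic, so each higher-degree chirp already lies in the span of the linear characters $\mathbf{w}(k/L,0)$ of its period $L$. Concretely, take $\mathbf{1}$, $\mathbf{w}(1/2,0)=(-1)^m$, and the quadratic chirp $\exp(2\pi i m^2/4)$. The chirp has values $1,i,1,i,\dots$ and binomial coordinates $c_0=0$, $c_1=1/4$, $c_2=1/2$. These are three distinct elements with $c_0=0$, all of period $2$, and they satisfy
\[
\exp\Bigl(\frac{2\pi i m^2}{4}\Bigr)=\frac{1+i}{2}+\frac{1-i}{2}\,(-1)^m\qquad\text{for all } m\in\mathbb{Z}.
\]
So $\mathbb{C}[\mathcal{G}_{\mathrm{poly}}/\mathcal{G}_{\mathrm{const}}]\to\mathcal{V}$ also has a large kernel, and the shift induction you sketch cannot succeed.

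\textbf{What does hold canonically.} Keep only the pure characters $\mathbf{w}(f,0)$, $f\in\mathbb{Q}/\mathbb{Z}$. They are linearly independent because they are orthonormal by Theorem~\ref{thm:mode_orthonormality}. They span $\mathcal{V}$, because every element of $\mathcal{V}$ is $L$-periodic for some $L$, and the $L$ characters $\mathbf{w}(k/L,0)$ span all $L$-periodic sequences. Moreover $\mathbf{w}(f,0)\otimes\mathbf{w}(f',0)=\mathbf{w}(f+f',0)$ and $\overline{\mathbf{w}(f,0)}=\mathbf{w}(-f,0)$. Hence evaluation is a unital $*$-isomorphism $\mathbb{C}[\mathbb{Q}/\mathbb{Z}]\cong\mathcal{V}$ onto the algebra of all periodic sequences. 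From $\mathbb{C}[\mathcal{G}_{\mathrm{poly}}]$ you only get a surjection.

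\textbf{A caveat on your reading of the statement.} As a bare abstract isomorphism, $\mathcal{V}\cong\mathbb{C}[\mathcal{G}_{\mathrm{poly}}]$ is in fact true. Both are algebras of locally constant functions on a Cantor space. For $\mathbb{C}[\mathcal{G}_{\mathrm{poly}}]$ that space is the Pontryagin dual of the countably infinite torsion group $\mathcal{G}_{\mathrm{poly}}$. For $\mathcal{V}$ it is the profinite completion of $\mathbb{Z}$. That isomorphism is not induced by evaluation, so neither your argument nor the paper's establishes it.
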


\begin{proof}
By Theorem~\ref{thm:poly_iso}, $(\mathcal{G}_{\mathrm{poly}}, \otimes)$ is an abelian group. The formal group algebra $\mathbb{C}[\mathcal{G}_{\mathrm{poly}}]$ consists of all formal finite linear combinations $\sum_{g \in \mathcal{G}_{\mathrm{poly}}} c_g g$. Since distinct elements of $\mathcal{G}_{\mathrm{poly}}$ have distinct rational polynomial phases modulo $\operatorname{Int}(\mathbb{Z})$, they are linearly independent as functions on $\mathbb{Z}$. Thus the evaluation map is an injective $*$-algebra isomorphism onto $\mathcal{V}$.
\end{proof}

\subsection{The Permutation-Symmetric Phasor Representation Theorem}

Conventionally, adjoining linear addition $(+)$ is treated as an abrupt departure from the multiplicative group $(\mathcal{G}_{\mathrm{poly}}, \otimes)$ into an extrinsic linear space. Remarkably, the sum of $N$ unimodular wave carriers does \textbf{not} require abandoning polar/phasor coordinates: any finite superposition factorizes into a single collective barycentric carrier multiplied by an iteratively generated symmetric interference envelope.

\begin{theorem}[Permutation-Symmetric Phasor Representation]
\label{thm:phasor_superposition}\label{thm:corrected_main}
Let $F_1, \dots, F_N \in \mathbb{R}$ denote the local phases at site $m$ of $N$ constituent wave carriers $u_j \in \mathcal{G}_{\mathrm{poly}}$, such that $u_j[m] = e^{i F_j}$. The sum $S_N = \sum_{j=1}^N e^{i F_j}$ admits the exact closed factorization:
\begin{equation}
S_N(F_1, \dots, F_N) = A_N(F_1, \dots, F_N) \, \exp\left( i \, \frac{1}{N}\sum_{j=1}^N F_j \right),
\end{equation}
where the coefficients $A_N \in \mathbb{C}$ satisfy $A_1 = 1$, $A_2(F_1, F_2) = 2 \cos\left(\frac{F_1 - F_2}{2}\right)$, and for all $N \ge 1$:
\begin{equation}\label{eq:alg_recursion}
(A_{N+1})^{N+1} = \prod_{m=1}^{N+1} \left( A_{N,m} + \exp\left( -i \, \frac{\sum_{j \ne m} F_j - N F_m}{N} \right) \right),
\end{equation}
where $A_{N,m} = A_N(F_1, \dots, \widehat{F_m}, \dots, F_{N+1})$ is the $N$-term factor obtained by omitting $F_m$.
\end{theorem}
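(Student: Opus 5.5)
The plan is to make the factorization hold by definition and then check the recursion by an averaging argument. For real phases $F_1,\dots,F_N$ I will set
\begin{equation*}
A_N(F_1,\dots,F_N) := S_N(F_1,\dots,F_N)\,\exp\Big(-\tfrac{i}{N}\textstyle\sum_{j=1}^N F_j\Big).
\end{equation*}
With this definition the displayed factorization is an identity, and $A_N \in \mathbb{C}$ depends on the real lifts $F_j$, not only on their classes modulo $2\pi$. This matches the paper's distinction between unwrapped phases and unimodular values.

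\textbf{Initial values.} For $N=1$ we get $A_1 = e^{iF_1}e^{-iF_1} = 1$. For $N=2$ I factor out the half-sum:
\begin{equation*}
e^{iF_1}+e^{iF_2} = e^{i(F_1+F_2)/2}\big(e^{i(F_1-F_2)/2}+e^{-i(F_1-F_2)/2}\big),
\end{equation*}
so $A_2 = 2\cos\big(\frac{F_1-F_2}{2}\big)$.

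\textbf{Recursion.} Fix $N\ge1$ and the phases $F_1,\dots,F_{N+1}$. For each omitted index $m$, let $\bar F_{(m)} := \frac1N\sum_{j\ne m}F_j$ denote the mean of the remaining phases. Splitting off the $m$-th term and applying the definition of $A_{N,m}$ to the remaining $N$ terms gives
\begin{equation*}
S_{N+1} = S_N^{(m)} + e^{iF_m} = e^{i\bar F_{(m)}}\big(A_{N,m} + e^{i(F_m-\bar F_{(m)})}\big).
\end{equation*}
Since $F_m - \bar F_{(m)} = -\frac{\sum_{j\ne m}F_j - NF_m}{N}$, the bracket is exactly the $m$-th factor in \eqref{eq:alg_recursion}. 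This gives $N+1$ separate expressions for the same number $S_{N+1}$, one for each $m$. Multiplying all of them together yields
\begin{equation*}
S_{N+1}^{N+1} = \exp\Big(i\textstyle\sum_{m=1}^{N+1}\bar F_{(m)}\Big)\prod_{m=1}^{N+1}\Big(A_{N,m}+e^{i(F_m-\bar F_{(m)})}\Big).
\end{equation*}

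\textbf{Exponent bookkeeping.} It remains to show that the prefactor cancels. Each $F_j$ appears in exactly $N$ of the $N+1$ omitted-index sums, so
\begin{equation*}
\sum_m \bar F_{(m)} = \tfrac1N\cdot N\sum_j F_j = \sum_j F_j.
\end{equation*}
On the other hand, the definition of $A_{N+1}$ gives $A_{N+1}^{N+1} = S_{N+1}^{N+1}\exp\big(-i\sum_j F_j\big)$, since the $(N+1)$-th power of $\exp\big(-\frac{i}{N+1}\sum_j F_j\big)$ is $\exp\big(-i\sum_j F_j\big)$. Combining the two gives \eqref{eq:alg_recursion}.

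\textbf{Main obstacle.} There is no analytic difficulty; the only delicate point is this exponent bookkeeping. The recursion must be stated for the $(N+1)$-th power precisely because taking an $(N+1)$-th root would introduce a branch ambiguity. The proof should therefore compare powers and never extract roots.
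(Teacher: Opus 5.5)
Your proof is correct and follows essentially the same route as the paper: you define $A_N$ through the factorization, write each factor of the product as $S_{N+1}$ times the phase $e^{-i\bar F_{(m)}}$, multiply over $m$, and use $\sum_m \bar F_{(m)} = \sum_j F_j$ to identify the result with $A_{N+1}^{N+1}$. Your explicit check of the initial values $A_1 = 1$ and $A_2 = 2\cos\bigl(\frac{F_1-F_2}{2}\bigr)$ is a small addition; the paper's proof does not verify these.
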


\begin{proof}
Let $S_{N+1} = \sum_{j=1}^{N+1} e^{i F_j}$, and let $\Phi_{\mathrm{c}}^{(N+1)} = \frac{1}{N+1} \sum_{j=1}^{N+1} F_j$ be its barycentric phase, so that by definition $A_{N+1} = S_{N+1} \exp\left(-i \Phi_{\mathrm{c}}^{(N+1)}\right)$.
For each omitted index $m \in \{1, \dots, N+1\}$, denote the $N$-term sub-sum by $S_{N, m} = \sum_{j \ne m} e^{i F_j}$, with barycentric sub-phase $\Phi_{\mathrm{c}, m}^{(N)} = \frac{1}{N} \sum_{j \ne m} F_j$.
By definition of $A_{N, m}$, we have:
\begin{equation}
A_{N, m} = S_{N, m} \exp\left(-i \Phi_{\mathrm{c}, m}^{(N)}\right).
\end{equation}
Now consider the exponential term inside the product in Eq.~\eqref{eq:alg_recursion}:
\begin{equation}
\exp\left( -i \, \frac{\sum_{j \ne m} F_j - N F_m}{N} \right) = \exp\left( -i \Phi_{\mathrm{c}, m}^{(N)} \right) \exp\left( i F_m \right).
\end{equation}
Adding $A_{N, m}$ to this term factors out the sub-barycentric phase:
\begin{align}
A_{N, m} + \exp\left( -i \, \frac{\sum_{j \ne m} F_j - N F_m}{N} \right) &= S_{N, m} e^{-i \Phi_{\mathrm{c}, m}^{(N)}} + e^{i F_m} e^{-i \Phi_{\mathrm{c}, m}^{(N)}} \nonumber \\
&= \left( S_{N, m} + e^{i F_m} \right) e^{-i \Phi_{\mathrm{c}, m}^{(N)}}.
\end{align}
Notice that $S_{N, m} + e^{i F_m} = S_{N+1}$ is identically the full $(N+1)$-term sum! Therefore:
\begin{equation}
A_{N, m} + \exp\left( -i \, \frac{\sum_{j \ne m} F_j - N F_m}{N} \right) = S_{N+1} \exp\left(-i \Phi_{\mathrm{c}, m}^{(N)}\right).
\end{equation}
Taking the product over all $m \in \{1, \dots, N+1\}$ yields:
\begin{equation}
\prod_{m=1}^{N+1} \left( A_{N, m} + \exp\left( -i \, \frac{\sum_{j \ne m} F_j - N F_m}{N} \right) \right) = (S_{N+1})^{N+1} \exp\left( -i \sum_{m=1}^{N+1} \Phi_{\mathrm{c}, m}^{(N)} \right).
\end{equation}
We evaluate the sum of the sub-phases:
\begin{equation}
\sum_{m=1}^{N+1} \Phi_{\mathrm{c}, m}^{(N)} = \frac{1}{N} \sum_{m=1}^{N+1} \left( \sum_{j \ne m} F_j \right) = \frac{1}{N} \sum_{m=1}^{N+1} \left( \sum_{j=1}^{N+1} F_j - F_m \right) = \frac{1}{N} \left[ (N+1) \sum_{j=1}^{N+1} F_j - \sum_{m=1}^{N+1} F_m \right].
\end{equation}
This simplifies to:
\begin{equation}
\sum_{m=1}^{N+1} \Phi_{\mathrm{c}, m}^{(N)} = \frac{1}{N} \left[ N \sum_{j=1}^{N+1} F_j \right] = \sum_{j=1}^{N+1} F_j = (N+1) \Phi_{\mathrm{c}}^{(N+1)}.
\end{equation}
Substituting this back into the product:
\begin{equation}
(S_{N+1})^{N+1} \exp\left( -i (N+1) \Phi_{\mathrm{c}}^{(N+1)} \right) = \left[ S_{N+1} \exp\left( -i \Phi_{\mathrm{c}}^{(N+1)} \right) \right]^{N+1} = (A_{N+1})^{N+1},
\end{equation}
which establishes the theorem.
\end{proof}

\subsection{Physical Interpretations: Born Density, Gauge Invariance, and Bosons}

\begin{corollary}[Barycentric Carrier and Center-of-Mass Momentum]
The collective phase $\Phi_{\mathrm{c}}(m) = \frac{1}{N}\sum_{j=1}^N F_j(m)$ is the arithmetic barycenter of the constituent phases. When each $u_j \in \mathcal{G}_{\mathrm{poly}}$ has polynomial phase $P_j(m) \in \mathbb{Q}[m]$, the collective phase is the average polynomial $\bar{P}(m) = \frac{1}{N}\sum P_j(m) \in \mathbb{Q}[m]$.
Thus, the collective wave carrier $\exp(2\pi i \bar{P}(m))$ remains an exact member of $\mathcal{G}_{\mathrm{poly}}$.
\end{corollary}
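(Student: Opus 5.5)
The plan is to compute directly with the phase conventions already fixed in Theorem~\ref{thm:phasor_superposition}, and then use the closure properties from Theorem~\ref{thm:poly_iso}.

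\textbf{Step 1 (normalization).} Write each constituent as $u_j[m] = \exp(2\pi i P_j(m))$ with $P_j \in \mathbb{Q}[X]$. This is possible because Theorem~\ref{thm:poly_iso} identifies $\mathcal{G}_{\mathrm{poly}}$ with $\mathbb{Q}[X]/\operatorname{Int}(\mathbb{Z})$. Each $P_j$ is a chosen representative.

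\textbf{Step 2 (identify the barycentric phase).} Take the local phases in Theorem~\ref{thm:phasor_superposition} to be $F_j = 2\pi P_j(m)$. Then $\Phi_{\mathrm{c}}(m) = \frac{1}{N}\sum_j F_j = 2\pi \bar{P}(m)$ with $\bar{P} = \frac{1}{N}\sum_j P_j$. Since $\mathbb{Q}[X]$ is a $\mathbb{Q}$-vector space, $\bar{P} \in \mathbb{Q}[X]$. This verifies the arithmetic-barycenter claim.

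\textbf{Step 3 (membership of the carrier).} Show that $\exp(2\pi i \bar{P}(m))$ lies in $\mathcal{G}_{\mathrm{poly}}$. The cleanest route is surjectivity in Theorem~\ref{thm:poly_iso}: every class in $\mathbb{Q}[X]/\operatorname{Int}(\mathbb{Z})$ is realized. Concretely, expand $\bar{P}$ in the monomials $X^k$ with rational coefficients $a_k = p_k/q_k$. Apply the $q_k$-th root (branch $k=0$) to $\Theta_{Z^k}$ and take $p_k$-fold products, exactly as in the proof of Lemma~\ref{lem:reachability}. The constant term is handled the same way through $\Theta_{\bar{1}}$. Then combine everything with $\otimes$.

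\textbf{Main obstacle: representative dependence.} $F_j$ is only defined modulo $2\pi$, and $P_j$ only modulo $\operatorname{Int}(\mathbb{Z})$. So the average $\bar{P}$ depends on the chosen lifts: shifting one $P_j$ by an integer-valued polynomial shifts $\bar{P}$ by $\frac{1}{N}$ of it. That changes the carrier by an element of $\mathcal{G}_{\mathrm{poly}}$ (an $N$-th root branch) and changes $A_N$ by the compensating factor.

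I would address this explicitly. Fix the lifts first, which is the unwrapped-phase philosophy of Section~\ref{sec:foundations}. Then note that any other choice still produces a carrier in $\mathcal{G}_{\mathrm{poly}}$. The reason is that $\mathcal{G}_{\mathrm{poly}}$ is closed under the multi-branch roots in $\mathcal{O}$, and each branch change multiplies the carrier by an element of the group.

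With that settled, the membership statement holds for every choice of lifts. The corollary then follows.
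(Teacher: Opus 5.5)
Your proposal is correct and matches the argument implicit in the paper, which states this corollary without a separate proof: $\bar{P}\in\mathbb{Q}[X]$ because $\mathbb{Q}[X]$ is closed under averaging, and $\exp(2\pi i\bar{P}(m))\in\mathcal{G}_{\mathrm{poly}}$ by the reachability part of Theorem~\ref{thm:poly_iso}; your identification $\Phi_{\mathrm{c}} = 2\pi\bar{P}$ also supplies the $2\pi$ factor that the statement omits. Your discussion of lifts is a sound addition, though any polynomial lifts already give $\bar{P}\in\mathbb{Q}[X]$ directly; note that it genuinely requires the lifts to be polynomial, since arbitrary site-by-site choices of $F_j(m)$ modulo $2\pi$ can introduce a non-periodic factor of $N$-th roots of unity that lies outside $\mathcal{G}_{\mathrm{poly}}$.
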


\begin{corollary}[Born Rule Probability Density via Recursive Envelopes]
In quantum mechanics, the spatial probability density is $\rho[m] = |\Psi[m]|^2$. Because the collective carrier is unimodular ($|\exp(i \Phi_{\mathrm{c}})| \equiv 1$), the probability distribution is governed entirely by the envelope:
\begin{equation}
\rho[m] = |S_N(m)|^2 = |A_N(F_1(m), \dots, F_N(m))|^2.
\end{equation}
Constructive peaks ($\rho[m] \to N^2$) and destructive quantum nodes ($\rho[m] = 0$) correspond to the extrema and zeros of $A_N$.
\end{corollary}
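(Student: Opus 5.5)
The corollary follows from the factorization in Theorem~\ref{thm:phasor_superposition} together with unimodularity of the carrier. I would proceed in three steps.

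First, I would fix a site $m$ and write $\Psi[m] = S_N(m) = \sum_{j=1}^N u_j[m]$ with $u_j[m] = e^{iF_j(m)}$. By Theorem~\ref{thm:phasor_superposition},
\begin{equation*}
S_N(m) = A_N(F_1(m),\dots,F_N(m))\, e^{i\Phi_{\mathrm{c}}(m)} .
\end{equation*}
Since $\Phi_{\mathrm{c}}(m)\in\mathbb{R}$, we have $|e^{i\Phi_{\mathrm{c}}(m)}|=1$. Multiplicativity of the complex modulus then gives $\rho[m] = |S_N(m)|^2 = |A_N|^2$. The factorization holds by definition, since $A_N := S_N e^{-i\Phi_{\mathrm{c}}}$ in the proof of the theorem, so nothing about the recursion is needed for this part.

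Second, for the peak statement I would use the triangle inequality: $|A_N| = |S_N| \le \sum_j |e^{iF_j}| = N$, so $\rho[m]\le N^2$. Equality holds iff all the unimodular terms are positively collinear, that is, iff $F_j(m)\equiv F_k(m) \pmod{2\pi}$ for all $j,k$. In that case $A_N = N e^{i(F_1-\Phi_{\mathrm{c}})}$. Its modulus is $N$, but its phase need not be $0$, because the barycenter of representatives differing by multiples of $2\pi$ is not itself a representative. So I would read the extrema as extrema of $|A_N|$, equivalently of $\rho$, which attain $N^2$.

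Third, nodes: $\rho[m]=0$ iff $S_N(m)=0$ iff $A_N=0$, again because the carrier never vanishes. This establishes the exact correspondence between nodes of $\rho$ and zeros of $A_N$. As a sanity check I would treat $N=2$ with $A_2 = 2\cos\bigl((F_1-F_2)/2\bigr)$: this gives $\rho = 4\cos^2\bigl((F_1-F_2)/2\bigr)$, which reaches $4$ when $F_1-F_2\in 2\pi\mathbb{Z}$ and $0$ when $F_1-F_2\in \pi+2\pi\mathbb{Z}$.

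The main obstacle is only interpretive. The phrase ``extrema of $A_N$'' must be read as extrema of $|A_N|$, since $A_N$ is complex and its phase depends on the chosen lifts $F_j$. I would state this explicitly and note that $|A_N|$, and hence $\rho$, is independent of the lifts, because $S_N$ is.
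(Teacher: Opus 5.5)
Your proposal is correct and follows the route the paper intends: the paper gives no separate proof, since the corollary follows immediately from the factorization $S_N = A_N e^{i\Phi_{\mathrm{c}}}$ of Theorem~\ref{thm:phasor_superposition} together with $|e^{i\Phi_{\mathrm{c}}}| = 1$. Your triangle-inequality bound $\rho[m] \le N^2$ and your point that ``extrema'' must refer to $|A_N|$ are correct refinements that make the statement's informal last sentence precise; for example, $F_1 = 0$, $F_2 = 2\pi$ gives $A_2 = -2$.
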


\begin{corollary}[Gauge Invariance and Indistinguishable Bosons]
The arguments entering the recursion $\sum_{j \ne m} F_j - N F_m = \sum_{j=1}^{N+1} (F_j - F_m)$ depend only on pairwise relative phase differences. Under a global gauge transformation $F_j \to F_j + \theta_0$, $A_N$ is strictly invariant.
Moreover, $A_N$ is invariant under any permutation $\sigma \in \mathcal{S}_N$ of the modes:
\begin{equation}
A_N(F_{\sigma(1)}, \dots, F_{\sigma(N)}) = A_N(F_1, \dots, F_N), \quad \forall \sigma \in \mathcal{S}_N,
\end{equation}
corresponding to identical bosonic field statistics.
\end{corollary}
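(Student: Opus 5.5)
The plan is to avoid the recursion entirely and use the closed form that defines $A_N$ in Theorem~\ref{thm:phasor_superposition}:
\begin{equation}
A_N(F_1,\dots,F_N) = S_N \exp\Bigl(-\tfrac{i}{N}\textstyle\sum_{j=1}^N F_j\Bigr) = \sum_{k=1}^N \exp\Bigl(i\bigl(F_k - \tfrac{1}{N}\textstyle\sum_{j=1}^N F_j\bigr)\Bigr).
\end{equation}
The first equality comes from the factorization itself, and the second is obtained by distributing the barycentric phase over the sum. The advantage of this form is that every exponent appears as a deviation from the barycenter. Each deviation can be written as
\begin{equation}
F_k - \tfrac1N\textstyle\sum_j F_j = \tfrac1N\sum_{j=1}^N (F_k - F_j),
\end{equation}
so it depends only on pairwise phase differences. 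The identity $\sum_{j\ne m}F_j - NF_m = \sum_{j=1}^{N+1}(F_j - F_m)$ used in the recursion is checked the same way: split off the $j=m$ term, which vanishes.

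Gauge invariance then follows immediately. Under $F_j \to F_j+\theta_0$, each difference $F_k - F_j$ is unchanged, so every summand of $A_N$ is unchanged. Equivalently, $S_N$ picks up the factor $e^{i\theta_0}$ and the barycentric carrier picks up the same factor, and the two cancel.

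Permutation invariance goes similarly. For $\sigma\in\mathcal{S}_N$, the barycenter $\frac1N\sum_j F_{\sigma(j)}$ is a symmetric function, so it is unchanged. The sum $\sum_k e^{iF_{\sigma(k)}}$ is a reindexing of $\sum_k e^{iF_k}$, so $S_N$ is unchanged as well. Their product $A_N$ is therefore invariant.

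The one subtle point is that the recursion in Eq.~\eqref{eq:alg_recursion} determines only $(A_{N+1})^{N+1}$, so a priori it fixes $A_{N+1}$ only up to an $(N+1)$-th root of unity. If $A_N$ were defined through the recursion, a branch choice could break the symmetry. I will resolve this by explicitly taking the closed form above as the definition, which matches the way the proof of Theorem~\ref{thm:phasor_superposition} introduces $A_{N+1}$. The recursion then serves only as a consistency relation and plays no role in the invariance argument.
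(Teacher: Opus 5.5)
Your proof is correct, but it follows a different route from the one the paper indicates. The paper gives no separate proof. The corollary's own wording argues through the recursion \eqref{eq:alg_recursion}: the exponents $\sum_{j\ne m}F_j - NF_m = \sum_{j=1}^{N+1}(F_j-F_m)$ involve only relative phases, so invariance is meant to propagate inductively from $A_1=1$ and $A_2 = 2\cos\bigl(\tfrac{F_1-F_2}{2}\bigr)$.

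As you observe, that inference has a gap. If $A_N$ is gauge- and permutation-invariant, the recursion only shows that $(A_{N+1})^{N+1}$ is too. This fixes $A_{N+1}$ only up to an $(N+1)$-th root of unity. Already for $N=1$ the recursion yields just $A_2^2 = 4\cos^2\bigl(\tfrac{F_1-F_2}{2}\bigr)$.

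You close the gap by taking $A_N := S_N e^{-i\bar F}$, with $\bar F = \frac1N\sum_j F_j$, as the definition. This is exactly how the proof of Theorem~\ref{thm:phasor_superposition} introduces $A_{N+1}$. Both invariances then reduce to two facts: $F_k-\bar F = \frac1N\sum_j(F_k-F_j)$ is unchanged by a common shift, and $S_N$ and $\bar F$ are symmetric functions of the $F_j$.

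The paper's framing conveys the structural point that the iterated interference envelope is built entirely from relative phases. Your version gives a complete, branch-independent proof, in which the recursion is only a consistency identity rather than the source of the invariance.
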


\subsection{Orthonormal Position Bases via Cyclotomic Sieves and Lattice Quantization}
\label{sec:bases_quantization}

While individual wave carriers in $\mathcal{G}_{\mathrm{poly}}$ are delocalized ($|u[m]| = 1$ everywhere), we show that the interaction of difference operators $(\ominus)$ and finite products $(\bigotimes)$ constructs localized Kronecker position eigenstates.

\begin{theorem}[Closed Algebraic Construction of the Kronecker Position Basis]
\label{thm:fourier_basis}\label{thm:basis_construction}
For every period $n \in \mathbb{N}$ and each coordinate shift $j \in \{1, \dots, n\}$, there exists an orthogonal basis of $n$ independent sequences in $\mathbb{C}[\mathcal{G}]$ whose principal subsequences over $\mathbb{Z}/n\mathbb{Z}$ satisfy:
\begin{equation}
\mathbf{u}\left(\frac{1}{n}, j\right) = \left( 0, \dots, 0, n, 0, \dots, 0 \right) = n \, \mathbf{e}_j,
\end{equation}
where the non-zero scalar value $n$ is locked precisely to the $j$-th spatial coordinate.
\end{theorem}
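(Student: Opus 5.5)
The plan is to build, for each shift $j$, an explicit element of $\mathbb{C}[\mathcal{G}]$ as a sum of $n$ plane waves, and then evaluate it with Lemma~\ref{lem:geometric_sum}. For fixed $n$ and $j \in \{1,\dots,n\}$, define
\begin{equation}
\mathbf{u}\left(\tfrac{1}{n}, j\right)[m] := \sum_{k=0}^{n-1} \exp\left(2\pi i \,\frac{k(m-j)}{n}\right) = \sum_{k=0}^{n-1} u_{k/n,\,-kj/n}[m].
\end{equation}
Each summand is the rational plane wave $u_{\alpha,\beta}$ with $\alpha = k/n$ and $\beta = -kj/n$. By Lemma~\ref{lem:reachability}, and by Step~1 of Theorem~\ref{thm:main_iso}, each summand lies in $\mathcal{G}$. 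A finite sum with coefficients $1$ therefore lies in $\operatorname{span}_{\mathbb{C}}(\mathcal{G}) \cong \mathbb{C}[\mathcal{G}]$, by the same reasoning as Theorem~\ref{thm:group_algebra} restricted to degree one.

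Next I would compute the values. Setting $p = m-j$ and applying Lemma~\ref{lem:geometric_sum} with $q = n$ (summing over $k$ rather than $m$) gives
\begin{equation}
\mathbf{u}\left(\tfrac{1}{n}, j\right)[m] = n \quad \text{if } m \equiv j \pmod n, \qquad \mathbf{u}\left(\tfrac{1}{n}, j\right)[m] = 0 \quad \text{otherwise}.
\end{equation}
Each summand has period $n$, so the sequence is $n$-periodic. Its principal subsequence over a complete residue system identified with $\mathbb{Z}/n\mathbb{Z}$ is therefore exactly $n\,\mathbf{e}_j$, with the single nonzero entry $n$ at coordinate $j$; the index $j=n$ corresponds to residue $0$.

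Orthogonality and independence then follow directly. The vectors $n\mathbf{e}_1,\dots,n\mathbf{e}_n$ have disjoint supports, so their standard inner product over one period is $n^2\delta_{jj'}$, and they are linearly independent, hence a basis of $\mathbb{C}^n$. Distinct $j$ give distinct residues mod $n$, so there are exactly $n$ such sequences. As a consistency check, I would note that these are exactly the inverse DFT of the momentum basis $\{u_{k/n,0}\}$.

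The only delicate point is interpretive rather than computational: the statement reads as if a single $j$ yields the whole basis. I would make explicit that the basis is the family $\{\mathbf{u}(1/n,j)\}_{j=1}^n$, each member of which is localized at its own $j$. I would also make explicit that the sequences are elements of $\mathbb{C}[\mathcal{G}]$, and that orthogonality refers to their principal period restrictions, since on all of $\mathbb{Z}$ they are periodic Kronecker combs rather than single spikes.
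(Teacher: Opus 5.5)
Your argument is correct, but it takes a different route from the paper.

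\textbf{Your route.} You build each localized sequence additively, as the inverse DFT $\sum_{k=0}^{n-1} e^{-2\pi i kj/n}\,\mathbf{w}(k/n,0)$. Its values then follow from the character-sum cancellation of Lemma~\ref{lem:geometric_sum}.

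\textbf{The paper's route.} The paper builds the same sequence multiplicatively, in Lagrange-interpolation style.
\begin{enumerate}
\item It takes the $n-1$ difference sequences $\mathbf{w}(1/n,0)\ominus\mathbf{w}(0,j/n)$ with $j\neq\xi$. Their $\otimes$-product vanishes at every off-target site, because one factor is zero there.
\item The surviving value at $\xi$ is $\prod_{j\neq\xi}(z_0-\zeta_j)$, where $z_0=e^{2\pi i\xi/n}$. This equals the derivative of $X^n-1$ at $z_0$, namely $n z_0^{-1}$.
\item A final factor $\mathbf{w}(1/n,0)$ normalizes this value to $n$.
\end{enumerate}

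\textbf{The two constructions give the same element.} Since $\prod_{j\neq\xi}(X-\zeta_j)=\sum_{k=0}^{n-1} z_0^{\,n-1-k}X^k$, you can multiply by $X$ and reduce with $\mathbf{w}(1/n,0)^{\otimes n}=\mathbf{1}$. This turns the paper's product into exactly your sum with $j=\xi$. Yours is the expanded (Fourier) form; the paper's is the factored form.

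\textbf{What each buys.} Your version makes membership in $\operatorname{span}_{\mathbb{C}}(\mathcal{G})$ and the plane-wave coefficients immediate, and it uses no ring multiplication. It coincides with $n\,\mathbf{e}(1/n,j)$ from Definition~\ref{def:sieve_projector}. This expanded form is also what later citations of this theorem actually rely on: the span claim in Theorem~\ref{thm:unit_group} and the projections $P_k$ in Theorem~\ref{thm:total_fraction_ring}. The paper's version shows localization arising from the multiplicative structure, with zeros planted by products of differences at roots of unity, which fits its sieve theme. The cost is that the plane-wave expansion only becomes visible after multiplying out the product.
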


\begin{proof}
Let $\mathbf{w}(1/n, 0)[m] = \exp(2\pi i m / n)$. Shifting by the phase angles $\mathbf{w}(0, j/n)[m] = \exp(2\pi i j / n)$ defines the difference sequences:
\begin{equation}
\mathbf{w}_{nj} := \mathbf{w}\left(\frac{1}{n}, 0\right) \ominus \mathbf{w}\left(0, \frac{j}{n}\right), \quad j \in \{1, \dots, n\},
\end{equation}
where $(\psi_1 \ominus \psi_2)[m] := \psi_1[m] - \psi_2[m]$.
At lattice site $\xi \in \{1, \dots, n\}$, let $z_0 = e^{2\pi i \xi / n}$ and $\zeta_j = e^{2\pi i j / n}$.
Then $\mathbf{w}_{nj}(\xi) = z_0 - \zeta_j$.
For a target site $\xi$, form the sequence product omitting $j = \xi$:
\begin{equation}
\overline{\mathbf{U}}_{n\xi} := \bigotimes_{j \neq \xi} \mathbf{w}_{nj}.
\end{equation}
Evaluating $\overline{\mathbf{U}}_{n\xi}$ at any site $m \in \{1, \dots, n\}$:
\begin{enumerate}
    \item \textbf{Off-target sites ($m \neq \xi$):} The index $j = m$ is included in the product $j \ne \xi$. At that index, $\mathbf{w}_{nm}(m) = e^{2\pi i m / n} - e^{2\pi i m / n} = 0$. Hence $\overline{\mathbf{U}}_{n\xi}(m) = 0$ for all $m \neq \xi$.
    
    \item \textbf{On-target site ($m = \xi$):} Since $\zeta_1, \dots, \zeta_n$ are the roots of the polynomial $X^n - 1 = 0$, we have the factorization:
    \begin{equation}
    \prod_{j=1}^n (X - \zeta_j) = X^n - 1.
    \end{equation}
    Dividing by $(X - z_0)$ and taking the limit $X \to z_0$:
    \begin{equation}
    \prod_{j \ne \xi} (z_0 - \zeta_j) = \left. \frac{d}{dX}(X^n - 1) \right|_{X = z_0} = n z_0^{n-1} = n z_0^{-1} = n e^{-2\pi i \xi / n}.
    \end{equation}
\end{enumerate}
Multiplying $\overline{\mathbf{U}}_{n\xi}$ by the carrier $\mathbf{w}(1/n, 0)[m] = e^{2\pi i m / n}$ yields at $m = \xi$:
\begin{equation}
\mathbf{u}\left(\frac{1}{n}, \xi\right) := \overline{\mathbf{U}}_{n\xi} \otimes \mathbf{w}\left(\frac{1}{n}, 0\right) = \left( n e^{-2\pi i \xi / n} \right) \cdot e^{2\pi i \xi / n} = n.
\end{equation}
Since all off-target sites remain zero, $\mathbf{u}(1/n, \xi) = n \, \delta_{m, \xi}$.
Normalizing by $1/n$ yields the complete orthonormal position basis on $\mathbb{Z}/n\mathbb{Z}$.
\end{proof}

\begin{corollary}[Algebraic Duality of Position and Momentum Representations]
Theorem~\ref{thm:basis_construction} establishes that the wave algebra $\mathbb{C}[\mathcal{G}]$ contains both the delocalized momentum basis (plane waves $E_Z^{k/n}$) and the localized position basis (Kronecker states $\delta_{m, \xi}$) in exact closed algebraic form.
\end{corollary}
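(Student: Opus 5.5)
The corollary asserts that $\mathbb{C}[\mathcal{G}]$ contains both the momentum basis and the position basis of $\mathbb{Z}/n\mathbb{Z}$ in exact algebraic form, so the proof has two halves.

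\textbf{Momentum half.} For each $n$ and $k\in\{0,\dots,n-1\}$, Lemma~\ref{lem:reachability} with $\alpha=k/n$ and $\beta=0$ places the plane wave $E_Z^{k/n}[m]=e^{2\pi i km/n}$ in $\mathcal{G}$, hence in $\mathbb{C}[\mathcal{G}]$. Restricting to one period, Lemma~\ref{lem:geometric_sum} gives
\[
\sum_{m=0}^{n-1}e^{2\pi i(k-k')m/n}=n\,\delta_{k,k'},
\]
so the $n$ plane waves are pairwise orthogonal. Being $n$ nonzero orthogonal vectors in $\mathbb{C}^n$, they form a basis.

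\textbf{Position half.} I will invoke Theorem~\ref{thm:basis_construction}. For each $\xi$, the sequence $\mathbf{u}(1/n,\xi)$ is built from the plane wave $\mathbf{w}(1/n,0)$ and constant phases $\mathbf{w}(0,j/n)$ using only differences and pointwise products. The element $\mathbf{w}(0,j/n)$ lies in $\mathcal{G}$ by Lemma~\ref{lem:reachability}. Differences stay in $\mathbb{C}[\mathcal{G}]$ because it is a $\mathbb{C}$-vector space, and products stay in it because, by Theorem~\ref{thm:group_algebra}, it is closed under $\otimes$. Multiplying by the scalar $1/n$ is allowed, so $\delta_{m,\xi}=\frac1n\mathbf{u}(1/n,\xi)$ on the principal period lies in $\mathbb{C}[\mathcal{G}]$. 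These are the standard unit vectors of $\mathbb{C}^n$ and therefore orthonormal.

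\textbf{Main obstacle.} The delicate point is that "Kronecker state" here means the restriction to the principal period, not a literal $\delta$ on $\mathbb{Z}$. Every element of $\mathbb{C}[\mathcal{G}]$ built from these generators is $n$-periodic, since it is a polynomial in period-$n$ waves. The sequence constructed in Theorem~\ref{thm:basis_construction} is therefore the periodic comb $\sum_{r\in\mathbb{Z}}\delta_{m,\xi+rn}$, and the corollary must be read on $\mathbb{Z}/n\mathbb{Z}$. I would state this qualification explicitly.

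Under that reading, the final step is to confirm that position and momentum are genuinely dual bases of the same $n$-dimensional space of $n$-periodic sequences. The unitary change of basis between them is the DFT matrix $\frac{1}{\sqrt n}(e^{2\pi i k\xi/n})$. Equivalently, $\delta_\xi=\frac1n\sum_k e^{-2\pi ik\xi/n}E_Z^{k/n}$, which gives a second, linear-only derivation of the position states inside $\mathbb{C}[\mathcal{G}]$. This double derivation is a useful consistency check on the product construction.
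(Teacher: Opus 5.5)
Your proposal is correct and follows the paper's route: the paper gives no separate proof and reads the corollary off directly from Theorem~\ref{thm:basis_construction} for the position states on $\mathbb{Z}/n\mathbb{Z}$, with the plane waves supplied by Lemma~\ref{lem:reachability}; your periodic-comb caveat is consistent with the paper's own statement in Definition~\ref{def:particulate_waves} that a genuine single-site impulse has no finite period. One small citation fix: $\mathbb{C}[\mathcal{G}]$ is closed under $\otimes$ because $\mathcal{G}$ is itself a group under $\otimes$ (Theorem~\ref{thm:main_iso}) and $\otimes$ is bilinear, whereas Theorem~\ref{thm:group_algebra} concerns $\mathcal{G}_{\mathrm{poly}}$ and on its own only places such products in the larger algebra $\mathbb{C}[\mathcal{G}_{\mathrm{poly}}]$.
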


\begin{remark}[Probabilistic Interpretation: Open Path Superposition vs. Closed Born Envelope]
\label{rem:open_vs_closed_prob}
The distinction between the open summation of wave numbers and their closed analytic representation has a direct interpretation in terms of quantum measurement probabilities:
\begin{enumerate}
    \item \textbf{Open Form (Microscopic Virtual Pathways and Coherence):}
    In the un-evaluated open expansion:
    \begin{equation}
    \Psi[m] = \sum_{j=1}^N c_j \mathbf{w}(f_j, g_j)[m] = \sum_{j=1}^N c_j e^{2\pi i (f_j m + g_j)},
    \end{equation}
    the Born probability distribution $P[m] = |\Psi[m]|^2$ expands as:
    \begin{equation}
    P[m] = \sum_{j=1}^N |c_j|^2 + \sum_{j \neq k} c_j \overline{c_k} e^{2\pi i \left( (f_j - f_k)m + (g_j - g_k) \right)}.
    \end{equation}
    The first sum $\sum |c_j|^2$ represents the classical, incoherent probability of independent modes, while the double sum represents the off-diagonal \textbf{quantum interference cross-terms} (coherences). The relative phases $(g_j - g_k)$ explicitly dictate whether specific mode pairs interfere constructively or destructively at site $m$.
    
    \item \textbf{Closed Form (Macroscopic Born Probability Density):}
    By the Permutation-Symmetric Phasor Superposition Theorem (Theorem~\ref{thm:phasor_superposition}), the $O(N^2)$ interference pathways are resummed into a single macroscopic envelope and composite phase:
    \begin{equation}
    \Psi[m] = R[m] e^{i \Theta[m]}.
    \end{equation}
    The Born probability density simplifies directly to the geometric square of the envelope:
    \begin{equation}
    P[m] = |\Psi[m]|^2 = R[m]^2.
    \end{equation}
    In this closed representation:
    \begin{itemize}
        \item \textbf{Nodal Zeros as Quantum Exclusion Zones:} Sites where $R[m] = 0$ represent exact destructive interference nodes ($P[m] = 0$), corresponding to forbidden transition sites or quantum exclusion zones.
        \item \textbf{Envelope Peaks as Coherent Bound States:} Maxima of $R[m]$ correspond to constructive wave packets, concentrating the total probability measure into localized particulate states.
    \end{itemize}
    Thus, while the open form describes the microscopic interference of virtual channels, the closed form expresses the resulting physical probability distribution as an explicit, stationary spatial geometry.
\end{enumerate}
\end{remark}

\section{The Discrete Cumulative Integral Operator, Regular $n$-gon Geometries, and Zero-Sum Cycle Spaces}
\label{sec:integral_operator}

\subsection{Motivation and Definition of the Cumulative Integral Operator}

Having established the linear span $\mathcal{V} \cong \mathbb{C}[\mathcal{G}_{\mathrm{poly}}]$ under element-wise addition $(+)$ and pointwise product $\otimes$, we now formulate the action of the \textbf{discrete cumulative integral operator} $\mathcal{I}$. In continuous field theory, integration inverts the differential operator and maps periodic oscillating carriers into shifted harmonic primitives. On the discrete spatial lattice $\mathbb{Z}$, cumulative summation generates both an envelope modulation and a discrete geometric path in the complex plane $\mathbb{C}$.

\begin{definition}[The Discrete Cumulative Integral Operator $\mathcal{I}$]
\label{def:cumulative_integral}
Let $u \in \mathcal{S} = \mathbb{C}^\mathbb{Z}$ be any sequence. The \textbf{discrete cumulative integral operator} $\mathcal{I}: \mathcal{S} \to \mathcal{S}$ is defined pointwise by:
\begin{equation}
(\mathcal{I} u)[k] := \sum_{j=1}^k u[j], \quad \forall k \ge 1,
\end{equation}
with the natural two-sided extension $(\mathcal{I} u)[0] := 0$ and $(\mathcal{I} u)[-k] := -\sum_{j=-k+1}^0 u[j]$ for $k \ge 1$.
When restricted to periodic sequences of fundamental period $n$, the \textbf{principal integral sequence} ${_1}\mathcal{I} u$ is the finite $n$-tuple:
\begin{equation}
({_1}\mathcal{I} u) := \left( (\mathcal{I} u)[1], (\mathcal{I} u)[2], \dots, (\mathcal{I} u)[n] \right) \in \mathbb{C}^n.
\end{equation}
\end{definition}

\subsection{Exact Closed-Form Representation of Integrated Wave Numbers}

When applied to rational plane wave carriers $u \in \mathcal{G}$, the discrete cumulative integral does not merely produce an intractable sum; rather, it factors into a modified spatial carrier with \textbf{halved wavenumber} modulated by a classical Dirichlet diffraction envelope.

\begin{theorem}[Closed Evaluation of Integrated Rational Waves]
\label{thm:integral_closed_form}
Let $u[j] = \mathbf{w}(m/n, g)[j] = \exp\left( 2\pi i \left( \frac{m}{n} j + g \right) \right) \in \mathcal{G}$ be a primitive rational plane wave of period $n$, where $m \in \{1, \dots, n-1\}$ and $g \in \mathbb{Q}$.
The cumulative integral sequence $(\mathcal{I} u)[k]$ admits the exact closed-form factorization:
\begin{equation}
(\mathcal{I} u)[k] = A_n(m, k) \, \exp\left( 2\pi i \left[ \frac{m}{2n}(k + 1) + g \right] \right), \quad \forall k \ge 1,
\end{equation}
where the amplitude envelope $A_n(m, k) \in \mathbb{R}$ is the Dirichlet ratio:
\begin{equation}
A_n(m, k) := \frac{\sin\left( \pi \frac{m}{n} k \right)}{\sin\left( \pi \frac{m}{n} \right)}.
\end{equation}
\end{theorem}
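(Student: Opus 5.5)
The plan is to evaluate the cumulative sum directly as a finite geometric progression and then factor out a half-angle phase. Write $\omega := e^{2\pi i m/n}$. Since $m \in \{1,\dots,n-1\}$, we have $\omega \neq 1$; this is the same non-degeneracy used in Lemma~\ref{lem:geometric_sum}, and it guarantees the denominator $\sin(\pi m/n)$ is nonzero. By Definition~\ref{def:cumulative_integral}, for $k \ge 1$,
\begin{equation}
(\mathcal{I} u)[k] = e^{2\pi i g} \sum_{j=1}^k \omega^j = e^{2\pi i g}\, \omega \, \frac{\omega^k - 1}{\omega - 1}.
\end{equation}

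The key step is the symmetric half-angle factorization $\omega^k - 1 = e^{\pi i m k/n}\cdot 2i \sin(\pi m k/n)$, together with $\omega - 1 = e^{\pi i m/n}\cdot 2i\sin(\pi m/n)$. Dividing these, the factors $2i$ cancel, leaving the real ratio $A_n(m,k)$ times the phase $e^{\pi i m(k-1)/n}$.

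It then remains to collect the phases. Multiplying by the prefactor $\omega = e^{2\pi i m/n}$ gives the total exponent
\begin{equation}
\pi i \frac{m}{n}(k-1) + 2\pi i \frac{m}{n} = 2\pi i \frac{m}{2n}(k+1).
\end{equation}
Together with the factor $e^{2\pi i g}$, this is exactly the stated carrier $\exp\bigl(2\pi i[\tfrac{m}{2n}(k+1)+g]\bigr)$. Finally, I would note that $A_n(m,k)$ is real because it is a ratio of sines of real arguments.

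No step is a serious obstacle. The only point needing care is the bookkeeping of the half-angle phases: the summation starts at $j=1$ rather than $j=0$, and this accounts for the shift to $(k+1)$ rather than $(k-1)$ in the exponent. I would double-check this by verifying the case $k=1$: there $A_n(m,1)=1$, and the phase is $e^{2\pi i (m/n + g)} = u[1]$, as required.
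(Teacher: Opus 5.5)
Your proposal is correct and follows essentially the same route as the paper: sum the finite geometric progression in $\omega = e^{2\pi i m/n}$ (with $\omega \neq 1$ because $m \in \{1,\dots,n-1\}$), then factor half-angle phases so that the $2i$ factors cancel. This leaves the Dirichlet ratio times $e^{\pi i m(k+1)/n}$, and multiplying by $e^{2\pi i g}$ gives the stated carrier.
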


\begin{proof}
By Definition~\ref{def:cumulative_integral}, the $k$-th element of the integrated sequence is the finite geometric sum:
\begin{equation}
(\mathcal{I} u)[k] = \sum_{j=1}^k e^{2\pi i \left( \frac{m}{n} j + g \right)} = e^{2\pi i g} \sum_{j=1}^k \left( e^{2\pi i \frac{m}{n}} \right)^j.
\end{equation}
Let $\omega = e^{2\pi i m / n}$. Since $m \not\equiv 0 \pmod n$, $\omega \neq 1$. Using the standard finite geometric progression formula:
\begin{equation}
\sum_{j=1}^k \omega^j = \omega \frac{1 - \omega^k}{1 - \omega} = \frac{\omega^{1/2} (1 - \omega^k)}{\omega^{-1/2} - \omega^{1/2}} = \frac{e^{\pi i \frac{m}{n}} \left( 1 - e^{2\pi i \frac{m}{n} k} \right)}{e^{-\pi i \frac{m}{n}} - e^{\pi i \frac{m}{n}}}.
\end{equation}
Factoring out the half-phase $e^{\pi i \frac{m}{n} k}$ from the numerator:
\begin{equation}
1 - e^{2\pi i \frac{m}{n} k} = e^{\pi i \frac{m}{n} k} \left( e^{-\pi i \frac{m}{n} k} - e^{\pi i \frac{m}{n} k} \right) = -2i \, e^{\pi i \frac{m}{n} k} \sin\left( \pi \frac{m}{n} k \right).
\end{equation}
Similarly, the denominator is $-2i \sin\left( \pi \frac{m}{n} \right)$. Taking the ratio:
\begin{equation}
\sum_{j=1}^k \omega^j = \frac{-2i \, e^{\pi i \frac{m}{n}} \, e^{\pi i \frac{m}{n} k} \sin\left( \pi \frac{m}{n} k \right)}{-2i \sin\left( \pi \frac{m}{n} \right)} = \frac{\sin\left( \pi \frac{m}{n} k \right)}{\sin\left( \pi \frac{m}{n} \right)} \, \exp\left( \pi i \frac{m}{n} (k + 1) \right).
\end{equation}
Reintroducing the global calibration phase $e^{2\pi i g} = \exp(2\pi i g)$:
\begin{equation}
(\mathcal{I} u)[k] = \left[ \frac{\sin\left( \pi \frac{m}{n} k \right)}{\sin\left( \pi \frac{m}{n} \right)} \right] \exp\left( 2\pi i \left[ \frac{m}{2n}(k + 1) + g \right] \right),
\end{equation}
which is identically the stated expression.
\end{proof}

\begin{corollary}[Spatial Frequency Halving and Carrier Inversion]
The carrier phase of $(\mathcal{I} u)[k]$ possesses the fundamental spatial frequency $\frac{m}{2n}$, which is exactly half the frequency $\frac{m}{n}$ of the constituent wave. Furthermore, at the end of the fundamental period $k = n$:
\begin{equation}
(\mathcal{I} u)[n] = \frac{\sin(\pi m)}{\sin(\pi m / n)} \exp\left( 2\pi i \left[ \frac{m}{2n}(n+1) + g \right] \right) = 0,
\end{equation}
since $\sin(\pi m) = 0$ for all integers $m \in \mathbb{Z}$.
\end{corollary}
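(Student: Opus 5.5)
The statement to prove is the final corollary: the carrier in Theorem~\ref{thm:integral_closed_form} has frequency $\frac{m}{2n}$, and $(\mathcal{I}u)[n]=0$. I would derive both parts from the closed-form factorization of Theorem~\ref{thm:integral_closed_form}, and check the vanishing directly against Lemma~\ref{lem:geometric_sum}.

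\textbf{Frequency halving.} Read off the carrier factor $\exp\bigl(2\pi i[\frac{m}{2n}(k+1)+g]\bigr)$. It is an affine rational phase in $k$ with slope $\frac{m}{2n}$ and intercept $\frac{m}{2n}+g\in\mathbb{Q}$. By Lemma~\ref{lem:invariance} it therefore belongs to $\mathcal{W}=\mathcal{G}$, namely the wave $\mathbf{w}(\frac{m}{2n},\frac{m}{2n}+g)$, and its slope is half the constituent slope $\frac{m}{n}$.

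One caveat matters. A factorization into real amplitude times unimodular carrier is unique only up to a simultaneous sign flip, i.e.\ up to multiplying the carrier by $e^{i\pi k}$, which shifts the slope by $\frac12$. So "the" frequency must mean the frequency of the carrier singled out in Theorem~\ref{thm:integral_closed_form}, with the sign absorbed into $A_n(m,k)\in\mathbb{R}$. I will state it that way rather than claim intrinsic uniqueness.

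\textbf{Vanishing at $k=n$.} Substitute $k=n$ into the amplitude to get $A_n(m,n)=\sin(\pi m)/\sin(\pi m/n)$. The denominator is nonzero because $1\le m\le n-1$ gives $0<\pi m/n<\pi$. The numerator is zero because $m\in\mathbb{Z}$. Since the carrier is unimodular, the product is $0$.

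As an independent check, $(\mathcal{I}u)[n]=e^{2\pi i g}\sum_{j=1}^{n}\zeta_n^{mj}$. Reindexing $j=n$ to $j=0$ using $\zeta_n^{mn}=1$ turns this into $e^{2\pi i g}\sum_{j=0}^{n-1}\zeta_n^{mj}$. Lemma~\ref{lem:geometric_sum} then gives $0$, because $m\not\equiv0\pmod n$.

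\textbf{Main obstacle.} The only delicate point is the one flagged above: the half-frequency claim is a statement about a chosen representative, not an invariant of the sequence. This is because $\frac{m}{2n}$ is defined only modulo $\frac12$ once real signed amplitudes are allowed. The vanishing claim is routine.
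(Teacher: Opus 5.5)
Your proposal is correct and follows the paper's route: the frequency $\frac{m}{2n}$ is read off the carrier in Theorem~\ref{thm:integral_closed_form}, and the vanishing follows by substituting $k=n$ so that the numerator $\sin(\pi m)$ is zero. Your additional points are valid refinements that the paper leaves implicit: the check that $\sin(\pi m/n)\neq 0$ because $1\le m\le n-1$, the cross-check via Lemma~\ref{lem:geometric_sum}, and the observation that the slope is only determined modulo $\tfrac12$ once signed real amplitudes are allowed.
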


\subsection{Geometric Representation as Regular $n$-gons and the $n=6$ Bifurcation}

\begin{theorem}[Geometric Realization as Closed Regular Polygons]
\label{thm:regular_ngon}
Let $u = \mathbf{w}(1/n, 0)$ be the primary rational coordinate wave of period $n \ge 3$. 
In the complex Argand plane $\mathbb{C} \cong \mathbb{R}^2$, the sequence of points $z_0 = 0$ and $z_k = (\mathcal{I} u)[k]$ for $k = 1, \dots, n$ forms the vertices of an equilateral, equiangular \textbf{regular $n$-gon} having one vertex anchored at the origin $z_0 = z_n = 0$.
The polygon is inscribed in a circle of circumradius:
\begin{equation}
R_n = \frac{1}{2 \sin\left( \frac{\pi}{n} \right)},
\end{equation}
with circumcenter located at $Z_{\mathrm{center}} = R_n \, e^{i \left( \frac{\pi}{2} + \frac{\pi}{n} \right)} = \frac{i}{2} \left( 1 + i \cot\left(\frac{\pi}{n}\right) \right)$.
\end{theorem}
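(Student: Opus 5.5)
The plan is to write the partial sums in closed form and read the geometry off directly, rather than go through the Dirichlet factorization of Theorem~\ref{thm:integral_closed_form}. Put $\omega = e^{2\pi i/n}$. For $k \ge 1$ we have $z_k = \sum_{j=1}^k \omega^j = \omega(1-\omega^k)/(1-\omega)$. Since $\omega \neq 1$, this can be rearranged as
\begin{equation*}
z_k = C - C\,\omega^k, \qquad C := \frac{\omega}{1-\omega}.
\end{equation*}
The same formula gives $z_0 = C - C = 0$. It also gives $z_n = 0$, which recovers the closure result of the preceding corollary. The point is that every vertex is the fixed point $C$ minus a rotated copy of the fixed vector $C$. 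So the candidate circumcenter is $Z_{\mathrm{center}} = C$, and $|z_k - C| = |C|$ for all $k$.

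\textbf{Step 1: radius and center.} First, $|C| = 1/|1-\omega|$. Writing $1-\omega = -2i\sin(\pi/n)\,e^{i\pi/n}$ gives $|1-\omega| = 2\sin(\pi/n)$, hence $R_n = 1/(2\sin(\pi/n))$. For the location, substitute the same factorization into $C$:
\begin{equation*}
C = \frac{e^{2\pi i/n}}{-2i\sin(\pi/n)\,e^{i\pi/n}} = \frac{i\,e^{i\pi/n}}{2\sin(\pi/n)} = R_n\, e^{i(\pi/2+\pi/n)}.
\end{equation*}
Expanding $i e^{i\pi/n}/(2\sin(\pi/n))$ into real and imaginary parts gives $-\tfrac12\cot(\pi/n) + \tfrac{i}{2}$. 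This equals $\tfrac{i}{2}(1 + i\cot(\pi/n))$, which is the stated center.

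\textbf{Step 2: regularity.} As $k$ runs over $0,\dots,n-1$, the vectors $z_k - C = -C\omega^k$ are the $n$ distinct rotations of $-C$ by the angles $2\pi k/n$, taken in increasing angular order. So the points $z_0,\dots,z_{n-1}$ are $n$ equally spaced points on the circle of radius $R_n$ about $C$, visited consecutively. It follows that consecutive sides $z_k - z_{k-1} = \omega^k$ all have unit length. Consecutive sides differ by the factor $\omega$, so every exterior angle equals $2\pi/n$. The polygon is therefore equilateral and equiangular, and it is convex because $n \ge 3$ and the turning is monotone.

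\textbf{Main obstacle.} The computation itself is routine. The delicate part is sign and branch bookkeeping in the factorization of $1-\omega$: it fixes whether the center sits at angle $\pi/2+\pi/n$ or at $-\pi/2+\pi/n$. I would cross-check this against the case $n=4$. There $z_1 = i$, $z_2 = i-1$, $z_3 = -1$, which is a unit square with center $(-1+i)/2$. This agrees with $\tfrac{i}{2}(1 + i\cot(\pi/4))$.
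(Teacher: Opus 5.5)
Your closed form $z_k = C(1-\omega^k)$ with $C=\omega/(1-\omega)$ is correct, and it is a different and cleaner route than the paper's. The paper argues from unit step vectors $\Delta z_k=\omega^k$, the constant turning angle $2\pi/n$, closure via $\sum_k \Delta z_k=0$, and the chord-length formula for $R_n$; it never actually locates the center. Your version gives concyclicity, the radius, the distinctness and ordering of the vertices, and the polar form $C=R_n e^{i(\pi/2+\pi/n)}$ all at once. The error is in the last line of Step 1. Since $i e^{i\pi/n} = -\sin(\pi/n) + i\cos(\pi/n)$,
\begin{equation*}
C=\frac{i e^{i\pi/n}}{2\sin(\pi/n)} = -\frac12 + \frac{i}{2}\cot\Bigl(\frac{\pi}{n}\Bigr),
\end{equation*}
not $-\tfrac12\cot(\pi/n)+\tfrac{i}{2}$. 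You swapped the positions of the cotangent and the constant.

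This slip matters, because it is exactly what lets you ``confirm'' the statement's Cartesian expression $\tfrac{i}{2}\bigl(1+i\cot(\pi/n)\bigr) = -\tfrac12\cot(\pi/n)+\tfrac{i}{2}$. That expression is false for every $n\neq 4$: the two formulas the statement gives for $Z_{\mathrm{center}}$ coincide only when $\cot(\pi/n)=1$. Your $n=4$ test is the one case that cannot detect the swap. Here is a check that works for all $n$. Both $z_0=0$ and $z_{n-1}=z_n-\omega^n=-1$ are always vertices, so any circumcenter must lie on the line $\operatorname{Re} z=-\tfrac12$. The point $-\tfrac12\cot(\pi/n)+\tfrac{i}{2}$ does not lie on it. For example, at $n=6$ the true center is $-\tfrac12+\tfrac{\sqrt3}{2}i$, while the claimed one is $-\tfrac{\sqrt3}{2}+\tfrac{i}{2}$. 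So your argument proves the theorem with the corrected center $Z_{\mathrm{center}}=R_n e^{i(\pi/2+\pi/n)}=\tfrac12\bigl(-1+i\cot(\pi/n)\bigr)$. You should state that the second equality, as written, cannot be proved, rather than force agreement with it.
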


\begin{proof}
Each term in the sum is a step vector $\Delta z_k = z_k - z_{k-1} = u[k] = \exp(2\pi i k / n)$.
All step vectors possess constant unit length $|\Delta z_k| = 1$.
The angle between consecutive steps $\Delta z_k$ and $\Delta z_{k+1}$ is:
\begin{equation}
\theta_{\mathrm{ext}} = \arg(\Delta z_{k+1}) - \arg(\Delta z_k) = \frac{2\pi(k+1)}{n} - \frac{2\pi k}{n} = \frac{2\pi}{n},
\end{equation}
which is independent of $k$ and equals the constant exterior angle of a regular $n$-gon.
Since the total exterior turn after $n$ steps is $\sum_{k=1}^n \frac{2\pi}{n} = 2\pi$ and $\sum_{k=1}^n \Delta z_k = 0$, the polygonal chain closes on itself ($z_n = z_0 = 0$).
By standard Euclidean trigonometry, a chord of unit length subtending central angle $2\pi/n$ requires radius $R_n = \frac{1}{2\sin(\pi/n)}$.
\end{proof}

\begin{theorem}[Recursive Polytopic Iteration and the Hexagonal $n=6$ Scaling Bifurcation]
\label{thm:hexagonal_bifurcation}
Define the iterated cumulative integral family $\mathcal{I}^t$ for $t \in \mathbb{N}$ by setting the initial state to $u^{(0)} = \mathbf{w}(1/n, 0)$ with edge length $e_0 = 1$ and circumradius $R^{(0)} = R_n$.
At each subsequent iteration $t \ge 1$, let the defining step vectors be rescaled such that the new edge length $e_t$ equals the circumradius of the preceding iteration:
\begin{equation}
e_t := R^{(t-1)}.
\end{equation}
Then the circumradius sequence $R^{(t)}$ satisfies the first-order recurrence:
\begin{equation}
R^{(t)} = \frac{R^{(t-1)}}{2 \sin\left( \frac{\pi}{n} \right)} = \left( \frac{1}{2 \sin\left( \frac{\pi}{n} \right)} \right)^t R_n.
\end{equation}
The asymptotic behavior exhibits a sharp mathematical bifurcation governed exclusively by the \textbf{regular hexagon ($n=6$)}:
\begin{enumerate}
    \item \textbf{Sub-Hexagonal Contraction ($n < 6$):} For $n \in \{3, 4, 5\}$, $\sin(\pi/n) > 1/2$, yielding scaling ratio $\lambda_n = \frac{1}{2\sin(\pi/n)} < 1$. The iterated polygon collapses monotonically to a point:
    \begin{equation}
    \lim_{t \to \infty} R^{(t)} = 0 \quad (n < 6).
    \end{equation}
    \item \textbf{Hexagonal Invariance ($n = 6$):} For $n = 6$, $\sin(\pi/6) = 1/2$, yielding $\lambda_6 = 1$. The geometric radius and edge length are strictly invariant:
    \begin{equation}
    R^{(t)} \equiv 1, \quad \forall t \ge 0 \quad (n = 6).
    \end{equation}
    \item \textbf{Super-Hexagonal Expansion ($n > 6$):} For $n \ge 7$, $\sin(\pi/n) < 1/2$, yielding $\lambda_n > 1$. The iterated polygon undergoes exponential geometric dilation:
    \begin{equation}
    \lim_{t \to \infty} R^{(t)} = +\infty \quad (n > 6).
    \end{equation}
\end{enumerate}
\end{theorem}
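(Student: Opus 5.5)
The plan is to reduce everything to the single-step geometry already established in Theorem~\ref{thm:regular_ngon}, using the linearity (hence positive homogeneity) of the cumulative integral operator $\mathcal{I}$ of Definition~\ref{def:cumulative_integral}. First I will make precise what ``rescaling the defining step vectors'' means. At iteration $t$, the input sequence is taken to be $u^{(t)} := e_t \, \mathbf{w}(1/n, 0)$ with $e_t > 0$. Since $\mathcal{I}$ is a finite sum, it is $\mathbb{C}$-linear, so $\mathcal{I} u^{(t)} = e_t \, \mathcal{I}\mathbf{w}(1/n,0)$. Thus the vertex set $\{0, z_1^{(t)}, \dots, z_n^{(t)}\}$ is the image of the regular $n$-gon of Theorem~\ref{thm:regular_ngon} under the homothety $z \mapsto e_t z$ centered at the origin.

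A homothety with ratio $e_t$ maps a regular $n$-gon to a regular $n$-gon and multiplies all lengths by $e_t$. Hence the edge length is $e_t$ and the circumradius is
\[
R^{(t)} = e_t R_n = \frac{e_t}{2\sin(\pi/n)} .
\]
The same conclusion follows directly from the chord relation $e_t = 2R^{(t)}\sin(\pi/n)$ used at the end of the proof of Theorem~\ref{thm:regular_ngon}. Substituting the rescaling rule $e_t = R^{(t-1)}$ gives the first-order recurrence $R^{(t)} = \lambda_n R^{(t-1)}$ with $\lambda_n = 1/(2\sin(\pi/n))$. A trivial induction from $R^{(0)} = R_n$ then yields $R^{(t)} = \lambda_n^t R_n$.

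For the trichotomy I will use strict monotonicity of $\sin$ on $(0,\pi/2]$. For $n \ge 3$ we have $\pi/n \in (0, \pi/3]$, and $\sin(\pi/n)$ compares to $\sin(\pi/6) = 1/2$ exactly as $\pi/n$ compares to $\pi/6$, i.e.\ as $6$ compares to $n$. This gives:
\begin{enumerate}
\item For $n \in \{3,4,5\}$: $\lambda_n < 1$, so $\lambda_n^t R_n \to 0$ geometrically.
\item For $n = 6$: $\lambda_6 = 1$ and $R_6 = 1/(2\cdot\tfrac12) = 1$, so $R^{(t)} \equiv 1$. Consequently $e_t = 1$ for all $t$, which confirms the claimed invariance of the edge length as well.
\item For $n \ge 7$: $\lambda_n > 1$ and $R_n > 0$, so $R^{(t)} \to +\infty$ exponentially.
\end{enumerate}

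The only step requiring care, and the one I expect to be the main obstacle, is the first: giving the informal phrase ``the defining step vectors are rescaled'' a precise meaning so that Theorem~\ref{thm:regular_ngon} applies verbatim. Treating the iteration as the family of scalar multiples $e_t \mathbf{w}(1/n,0)$ is the cleanest choice, and then homogeneity of $\mathcal{I}$ does all the work. I would also note explicitly that the recurrence depends only on the edge length and on $n$ (the exterior angle $2\pi/n$ is unchanged by positive scaling). This is why the bifurcation is governed solely by the comparison of the chord $2R\sin(\pi/n)$ with the radius $R$, which is an equality exactly for the hexagon.
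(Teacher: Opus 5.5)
Your proposal is correct and follows essentially the same route as the paper: the chord--circumradius relation $R = e/(2\sin(\pi/n))$, the substitution $e_t = R^{(t-1)}$, and a comparison of $2\sin(\pi/n)$ with $1$. Your additions make explicit what the paper takes for granted or checks case by case: you use linearity of $\mathcal{I}$ to show the rescaled chain is still a regular $n$-gon, and monotonicity of $\sin$ on $(0,\pi/2]$ in place of evaluating $n=3,4,5$ numerically.
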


\begin{proof}
In any regular $n$-gon of edge length $e$, the circumradius is $R = \frac{e}{2\sin(\pi/n)}$.
Setting $e_t = R^{(t-1)}$ immediately yields $R^{(t)} = \frac{R^{(t-1)}}{2\sin(\pi/n)}$.
The growth factor is $\lambda_n = (2\sin(\pi/n))^{-1}$.
Examining the function $f(n) = 2\sin(\pi/n)$ for $n \in [3, \infty)$:
\begin{itemize}
    \item $f(3) = 2\sin(\pi/3) = \sqrt{3} \approx 1.732 > 1 \implies \lambda_3 = 1/\sqrt{3} \approx 0.577 < 1$.
    \item $f(4) = 2\sin(\pi/4) = \sqrt{2} \approx 1.414 > 1 \implies \lambda_4 = 1/\sqrt{2} \approx 0.707 < 1$.
    \item $f(5) = 2\sin(\pi/5) = \sqrt{\frac{5-\sqrt{5}}{2}} \approx 1.176 > 1 \implies \lambda_5 \approx 0.851 < 1$.
    \item $f(6) = 2\sin(\pi/6) = 2(1/2) = 1 \implies \lambda_6 = 1$.
    \item For $n \ge 7$, $\pi/n < \pi/6 \implies \sin(\pi/n) < 1/2 \implies f(n) < 1 \implies \lambda_n > 1$.
\end{itemize}
This completes the classification.
\end{proof}

\subsection{The Zero-Sum Subspace and Homological Wave Cycles}

The closure of the polygon in Theorem~\ref{thm:regular_ngon} ($z_n = 0$) reflects a fundamental algebraic property: the cancellation of phase sums over complete periods.

\begin{definition}[Zero-Sum Wave Sequences]
\label{def:zero_sum}
A periodic sequence $u \in \mathbb{C}^\mathbb{Z}$ of period $n$ is called \textbf{zero-sum} (or a \textbf{closed wave cycle}) if the sum of its values over a fundamental period vanishes:
\begin{equation}
\sigma(u) := \sum_{j=1}^n u[j] = 0.
\end{equation}
The linear space of all periodic zero-sum sequences of period $n$ is denoted $\mathcal{Z}_n \subset \mathbb{C}^n$.
\end{definition}

\begin{proposition}[Algebraic Properties of the Zero-Sum Space]
\label{prop:zero_sum_algebra}
The zero-sum spaces $\mathcal{Z}_n$ satisfy:
\begin{enumerate}
    \item \textbf{Subspace and Ideal under Convolution:} $\mathcal{Z}_n$ is a complex subspace of codimension 1 in $\mathbb{C}^n$. Furthermore, under cyclic convolution $(u * v)[k] := \sum_{j=1}^n u[j] v[k-j]$, $\mathcal{Z}_n$ is a two-sided ideal:
    \begin{equation}
    \sigma(u * v) = \sigma(u) \, \sigma(v).
    \end{equation}
    Thus, if $u \in \mathcal{Z}_n$, then $(u * v) \in \mathcal{Z}_n$ for any arbitrary sequence $v \in \mathbb{C}^n$.
    
    \item \textbf{Kernel of the Boundary Operator:} Let $\partial: \mathbb{C}^n \to \mathbb{C}$ be the spatial trace map $\partial(u) := \sum_{j=1}^n u[j]$. Then $\mathcal{Z}_n = \ker(\partial)$.
    
    \item \textbf{Harmonic Inclusion:} Every primitive plane wave $\mathbf{w}(m/n, g)$ with $m \not\equiv 0 \pmod n$ belongs strictly to $\mathcal{Z}_n$.
    The only non-zero-sum rational waves are the stationary uniform calibration modes $\mathbf{w}(0, g) = e^{2\pi i g} \mathbf{1}$.
\end{enumerate}
\end{proposition}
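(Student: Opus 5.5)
The plan is to treat the three parts separately, working throughout on the principal period $\{1,\dots,n\}$ and identifying $n$-periodic sequences with $\mathbb{C}^n$. All indices in the convolution are read modulo $n$.

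\textbf{Part 1: subspace, codimension and ideal.} First, $\sigma$ is a linear functional on $\mathbb{C}^n$. It is nonzero, since $\sigma(\mathbf{1}) = n$. Its kernel $\mathcal{Z}_n$ is therefore a complex subspace, and by rank--nullity it has dimension $n-1$, i.e. codimension 1.

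For the multiplicativity $\sigma(u*v) = \sigma(u)\sigma(v)$, I would expand and swap the order of summation:
\begin{equation}
\sum_{k=1}^n \sum_{j=1}^n u[j]\, v[k-j] = \sum_{j=1}^n u[j] \sum_{k=1}^n v[k-j].
\end{equation}
For each fixed $j$, the map $k \mapsto k-j$ is a bijection of $\mathbb{Z}/n\mathbb{Z}$, so the inner sum equals $\sigma(v)$. This gives the identity.

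The ideal property follows immediately: if $\sigma(u)=0$, then $\sigma(u*v)=0$ for every $v$. Since cyclic convolution is commutative, the ideal is two-sided. The only point needing care is stating explicitly that indices are taken mod $n$, so that the reindexing is a genuine bijection.

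\textbf{Part 2: kernel of the boundary operator.} This is a restatement. The map $\partial$ coincides with $\sigma$ by definition, so $\mathcal{Z}_n = \ker(\partial)$ directly from Definition~\ref{def:zero_sum}.

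\textbf{Part 3: harmonic inclusion.} Write
\begin{equation}
\sigma(\mathbf{w}(m/n,g)) = e^{2\pi i g}\sum_{j=1}^n \zeta_n^{mj}.
\end{equation}
The sum over $j=1,\dots,n$ is a cyclic shift of the sum over $j=0,\dots,n-1$, because $\zeta_n^{mn}=1$. Lemma~\ref{lem:geometric_sum} then gives $0$ when $m\not\equiv 0 \pmod n$. Equivalently, this is the closure $(\mathcal{I}u)[n]=0$ from the corollary to Theorem~\ref{thm:integral_closed_form}, which I would cite instead if the indexing matches.

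Conversely, take a rational wave of period dividing $n$, written as $\mathbf{w}(m/n,g)$. If $m\equiv 0$, then the wave equals $e^{2\pi i g}\mathbf{1}$, whose period sum is $n e^{2\pi i g} \neq 0$, so it is not zero-sum. The dichotomy is therefore exact.

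\textbf{Main obstacle.} The only real obstacle is interpretive. The word ``strictly'' and the converse statement should be read relative to waves whose period divides $n$, parametrized by $m \in \mathbb{Z}/n\mathbb{Z}$. I would state this convention at the start of the argument so the converse is well posed.
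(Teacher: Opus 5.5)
Your proposal is correct and follows essentially the same route as the paper: interchanging the summations to get $\sigma(u*v)=\sigma(u)\sigma(v)$, treating Part 2 as a restatement of Definition~\ref{def:zero_sum}, and handling Part 3 by the vanishing full-period geometric sum together with $\sigma(\mathbf{w}(0,g)) = n e^{2\pi i g} \neq 0$ (the paper cites $(\mathcal{I}u)[n]=0$ from Theorem~\ref{thm:integral_closed_form}, which you already note is equivalent). Your additions fill in points the paper leaves implicit: the rank--nullity argument for codimension $1$, the explicit mod-$n$ reindexing, and the convention that the converse refers to waves whose period divides $n$.
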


\begin{proof}
Line 1: $\sigma(u * v) = \sum_{k=1}^n \sum_{j=1}^n u[j] v[k-j] = \sum_{j=1}^n u[j] \left( \sum_{k=1}^n v[k-j] \right) = \sigma(u)\sigma(v)$.
Since $\sigma(u) = 0$, $\sigma(u * v) = 0$.
Line 2 is the definition of the trace kernel.
Line 3 follows from Theorem~\ref{thm:integral_closed_form}, where $(\mathcal{I} u)[n] = 0$ for all $m \not\equiv 0 \pmod n$.
For $m = 0$, $\sigma(\mathbf{w}(0, g)) = n e^{2\pi i g} \neq 0$.
\end{proof}

\begin{remark}[Relation to Pointwise Products]
Under the pointwise product $\otimes$, the product of two zero-sum sequences is generally \textbf{not} zero-sum. For example, for $n=2$, the zero-sum wave $u = (1, -1)$ satisfies $u \otimes u = (1, 1)$, whose sum is $2 \neq 0$.
The preservation of the zero-sum property under product operations belongs specifically to the convolution algebra $(\mathbb{C}^n, +, *)$, whereas under pointwise product $\otimes$, $\mathcal{Z}_n$ acts as an orthogonal complement to the uniform ground state.
\end{remark}

\subsection{The Discrete Derivative Operator, the Fundamental Theorem of Wave Calculus, and Dispersion Relations}\label{subsec:derivative_operator}

Having established the cumulative integral operator $\mathcal{I}$ as a summation across the lattice, we introduce its natural conjugate: the \textbf{discrete derivative operator}. While continuous calculus relies on infinitesimal limits $\lim_{h \to 0} \frac{f(x+h) - f(x)}{h}$, the discrete lattice $\mathbb{Z}$ enforces a fundamental minimum spatial quantum (the lattice spacing $a = 1$). Differentiation on wave numbers is therefore governed by finite difference operators and their interaction with the shift group.

\begin{definition}[Finite Difference Derivative Operators]\label{def:discrete_derivative}
Let $u \in \mathcal{S} = \mathbb{C}^\mathbb{Z}$ be any wave sequence. We define the \textbf{forward difference operator} $\Delta_+$, the \textbf{backward difference operator} $\Delta_-$, and the \textbf{symmetric discrete Laplacian} $\Delta^2$ by:
\begin{align}
(\Delta_+ u)[m] &:= u[m+1] - u[m], \\
(\Delta_- u)[m] &:= u[m] - u[m-1], \\
(\Delta^2 u)[m] &:= (\Delta_+ \Delta_- u)[m] = u[m+1] - 2u[m] + u[m-1].
\end{align}
In terms of the spatial shift operator $(S u)[m] := u[m-1]$ and its adjoint $(S^{-1} u)[m] := u[m+1]$, these operators take the algebraic form $\Delta_+ = S^{-1} - I$, $\Delta_- = I - S$, and $\Delta^2 = S^{-1} - 2I + S$.
\end{definition}

\begin{theorem}[Harmonic Eigenfunction Property and Discrete Phase Advance]\label{thm:derivative_eigenfunction}
Every primitive rational plane wave $\mathbf{w}(f, g)[m] = \exp(2\pi i (f m + g)) \in \mathcal{G}$ is an exact eigenfunction of the forward difference, backward difference, and discrete Laplacian operators:
\begin{align}
\Delta_+ \mathbf{w}(f, g) &= \lambda_+(f) \, \mathbf{w}(f, g), \\
\Delta_- \mathbf{w}(f, g) &= \lambda_-(f) \, \mathbf{w}(f, g), \\
\Delta^2 \mathbf{w}(f, g) &= \lambda_{\Delta^2}(f) \, \mathbf{w}(f, g),
\end{align}
where the spectral eigenvalues are given by:
\begin{align}
\lambda_+(f) &= e^{2\pi i f} - 1 = 2i \, e^{\pi i f} \sin(\pi f) = 2\sin(\pi f) \, \exp\left( i \left[ \pi f + \frac{\pi}{2} \right] \right), \label{eq:eigenval_forward} \\
\lambda_-(f) &= 1 - e^{-2\pi i f} = 2i \, e^{-\pi i f} \sin(\pi f) = 2\sin(\pi f) \, \exp\left( i \left[ -\pi f + \frac{\pi}{2} \right] \right), \label{eq:eigenval_backward} \\
\lambda_{\Delta^2}(f) &= 2\cos(2\pi f) - 2 = -4\sin^2(\pi f). \label{eq:eigenval_laplacian}
\end{align}
Consequently, the discrete derivative of any rational wave number remains an exact wave number with amplitude envelope $2|\sin(\pi f)|$, invariant spatial frequency $f$, and a canonical phase advance:
\begin{equation}
(\Delta_+ \mathbf{w}(f, g))[m] = 2\sin(\pi f) \, \exp\left( 2\pi i \left[ f m + g + \frac{f}{2} + \frac{1}{4} \right] \right) \quad (\text{for } 0 < f < 1/2).
\end{equation}
\end{theorem}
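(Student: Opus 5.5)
The plan is to use the shift-operator form of the difference operators from Definition~\ref{def:discrete_derivative} and exploit the fact that $\mathbf{w}(f,g)$ is a character of the translation group $\mathbb{Z}$. First, evaluate the shifts directly:
\begin{equation}
(S^{\pm 1}\mathbf{w}(f,g))[m] = \exp\big(2\pi i(f(m\mp 1)+g)\big) = e^{\mp 2\pi i f}\,\mathbf{w}(f,g)[m].
\end{equation}
So $\mathbf{w}(f,g)$ is a simultaneous eigenvector of $S$ and $S^{-1}$, with eigenvalues $e^{-2\pi i f}$ and $e^{2\pi i f}$. Because $\Delta_+ = S^{-1}-I$, $\Delta_- = I-S$ and $\Delta^2 = S^{-1}-2I+S$ are polynomials in $S^{\pm1}$, each of them acts on $\mathbf{w}(f,g)$ by the corresponding scalar polynomial. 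This gives $\lambda_+ = e^{2\pi i f}-1$, $\lambda_- = 1-e^{-2\pi i f}$ and $\lambda_{\Delta^2} = e^{2\pi i f}+e^{-2\pi i f}-2 = 2\cos(2\pi f)-2$ at once. The value of $g$ and the rationality of $f$ play no role here.

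Second, I rewrite the eigenvalues in polar form using the same half-angle factorization as in the proof of Theorem~\ref{thm:integral_closed_form}. Factoring out $e^{\pi i f}$ gives
\begin{equation}
e^{2\pi i f}-1 = e^{\pi i f}\big(e^{\pi i f}-e^{-\pi i f}\big) = 2i\,e^{\pi i f}\sin(\pi f),
\end{equation}
and $i = e^{i\pi/2}$ produces the stated form $2\sin(\pi f)\exp(i[\pi f+\pi/2])$. The backward case follows in the same way by factoring out $e^{-\pi i f}$. For the Laplacian I would either use $1-\cos 2x = 2\sin^2 x$, or check that $\lambda_+\lambda_- = -4\sin^2(\pi f)$, consistent with $\Delta^2 = \Delta_+\Delta_-$.

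Third, I substitute into $(\Delta_+\mathbf{w})[m] = \lambda_+\mathbf{w}(f,g)[m]$ and combine exponents. Writing $\pi f + \pi/2 = 2\pi(f/2 + 1/4)$, the product is
\begin{equation}
2\sin(\pi f)\exp\big(2\pi i[fm+g+f/2+1/4]\big).
\end{equation}
The only delicate point is the claim that the envelope is $2|\sin(\pi f)|$ with this exact phase advance. That requires $\sin(\pi f)\ge 0$, which holds for the representative $f\in(0,1)$ and in particular on the stated range $0<f<1/2$. For other representatives of $f$ modulo $1$, a negative sine is absorbed as an extra phase $1/2$, and I would note this as a remark rather than a restriction.

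Closure in $\mathcal{G}$ then follows from Lemma~\ref{lem:reachability}, since $g+f/2+1/4$ is rational. The modulus $2\sin(\pi f)$ is not unimodular, however, so the result lies in $\mathcal{V}$ as a scalar multiple of an element of $\mathcal{G}$, not in $\mathcal{G}$ itself. I expect this careful reading of ``remains an exact wave number'' to be the only step that needs thought; everything else is routine computation.
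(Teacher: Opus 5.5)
Your proposal is correct and follows essentially the same route as the paper. The eigenvalues come from factoring $\mathbf{w}(f,g)[m]$ out of the shifted differences; your phrasing through the eigenvalues $e^{\mp 2\pi i f}$ of $S^{\pm 1}$ is the same computation. This is followed by the identical half-angle factorization and $\lambda_{\Delta^2}=\lambda_+\lambda_-$. Your two extra remarks are accurate refinements of the paper's looser wording: the phase-advance formula is an identity for every $f$, with $\sin(\pi f)\ge 0$ needed only to read $2\sin(\pi f)$ as the envelope, and the result is a real multiple of $\mathbf{w}(f,g+f/2+1/4)\in\mathcal{G}$, so it lies in $\mathcal{V}$ rather than in $\mathcal{G}$.
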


\begin{proof}
Direct evaluation from Definition~\ref{def:discrete_derivative}:
\begin{align*}
(\Delta_+ \mathbf{w}(f, g))[m] &= e^{2\pi i (f(m+1) + g)} - e^{2\pi i (fm + g)} \\
&= e^{2\pi i (fm + g)} \left( e^{2\pi i f} - 1 \right) = \lambda_+(f) \, \mathbf{w}(f, g)[m].
\end{align*}
Factoring the half-angle phase from $e^{2\pi i f} - 1$:
\begin{equation*}
e^{2\pi i f} - 1 = e^{\pi i f} \left( e^{\pi i f} - e^{-\pi i f} \right) = e^{\pi i f} \cdot 2i \sin(\pi f) = 2\sin(\pi f) e^{i(\pi f + \pi/2)},
\end{equation*}
which establishes~\eqref{eq:eigenval_forward}. Similarly for $\Delta_-$, $1 - e^{-2\pi i f} = e^{-\pi i f}(e^{\pi i f} - e^{-\pi i f}) = 2i e^{-\pi i f}\sin(\pi f)$, establishing~\eqref{eq:eigenval_backward}. For the Laplacian:
\begin{equation*}
\lambda_{\Delta^2}(f) = \lambda_+(f) \lambda_-(f) = (e^{2\pi i f} - 1)(1 - e^{-2\pi i f}) = e^{2\pi i f} + e^{-2\pi i f} - 2 = 2\cos(2\pi f) - 2 = -4\sin^2(\pi f),
\end{equation*}
completing the proof.
\end{proof}

\begin{remark}[Comparison with Continuous Derivatives and Half-Pitch Tilt]
In continuous calculus, the differential operator $\frac{d}{dx} e^{i k x} = i k e^{i k x} = k e^{i(kx + \pi/2)}$ advances the phase by exactly a quarter cycle ($\pi/2$) and scales the amplitude linearly by $k$. On the discrete lattice $\mathbb{Z}$:
\begin{enumerate}
    \item The linear amplitude scale $k$ is replaced by the compact trigonometric dispersion envelope $2\sin(\pi f)$. In the long-wavelength continuum limit $f = k a \to 0$, $2\sin(\pi k a) / a \to 2\pi k$, exactly recovering the continuous derivative.
    \item In addition to the continuous $\pi/2$ phase shift, the discrete derivative introduces a frequency-dependent phase tilt of $+\pi f$ (equivalent to a spatial shift of half a lattice step, $+1/2$). This tilt arises because the difference $u[m+1] - u[m]$ naturally resides on the dual bond lattice at coordinate $m + 1/2$.
\end{enumerate}
\end{remark}

\begin{theorem}[The Fundamental Theorem of Discrete Wave Calculus]\label{thm:discrete_ftc}
The discrete cumulative integral operator $\mathcal{I}$ and the finite difference operators $\Delta_+, \Delta_-$ satisfy the following fundamental duality relations:
\begin{enumerate}
    \item \textbf{Exact Inversion (Derivative of the Integral):}
    \begin{align}
    \Delta_- (\mathcal{I} u)[k] &= u[k], \quad \forall k \ge 1, \label{eq:ftc_inversion_minus} \\
    \Delta_+ (\mathcal{I} u)[k] &= u[k+1], \quad \forall k \ge 0. \label{eq:ftc_inversion_plus}
    \end{align}
    \item \textbf{Antiderivative Evaluation (Integral of the Derivative):}
    \begin{equation}\label{eq:ftc_eval}
    (\mathcal{I} \Delta_+ u)[k] = u[k+1] - u[1], \quad \forall k \ge 1.
    \end{equation}
    \item \textbf{Summation by Parts (Skew-Adjointness on Periodic Waves):}
    For any two periodic sequences $u, v$ of period $n$, there holds:
    \begin{equation}\label{eq:summation_by_parts}
    \sum_{m=1}^n u[m] (\Delta_+ v)[m] = - \sum_{m=1}^n (\Delta_- u)[m] v[m].
    \end{equation}
    Under the finite inner product of Section~\ref{sec:inner_product}, $\Delta_+^\dagger = -\Delta_-$, establishing that the discrete derivative is skew-adjoint.
\end{enumerate}
\end{theorem}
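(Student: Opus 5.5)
The plan is to prove all three parts by direct finite-sum manipulation from Definition~\ref{def:cumulative_integral} and Definition~\ref{def:discrete_derivative}, using telescoping for the first two parts and a periodic index shift for the third.

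For the exact inversion \eqref{eq:ftc_inversion_minus}, I will take $k \ge 2$ first. Then
\[
(\mathcal{I}u)[k] - (\mathcal{I}u)[k-1] = \sum_{j=1}^{k} u[j] - \sum_{j=1}^{k-1} u[j] = u[k].
\]
The case $k=1$ uses the convention $(\mathcal{I}u)[0]=0$, which gives $(\mathcal{I}u)[1]-0=u[1]$.

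For \eqref{eq:ftc_inversion_plus}, I will rewrite $\Delta_+(\mathcal{I}u)[k] = (\Delta_-\mathcal{I}u)[k+1]$, which is the identity $\Delta_+ = S^{-1}\Delta_-$. This reduces it to the previous identity at index $k+1 \ge 1$, so the range $k \ge 0$ follows immediately. The one point to check is that the two-sided extension at $k=0$ is consistent with the one-sided sum formula. It is, because the empty sum $\sum_{j=1}^{0}$ equals $0$. Negative indices are not needed for the stated ranges.

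For the antiderivative evaluation \eqref{eq:ftc_eval}, fix $k \ge 1$. I will expand
\[
(\mathcal{I}\Delta_+ u)[k] = \sum_{j=1}^{k} \bigl(u[j+1]-u[j]\bigr),
\]
which telescopes to $u[k+1]-u[1]$. A one-line induction on $k$ would make the telescoping fully rigorous if desired.

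For summation by parts \eqref{eq:summation_by_parts}, I will split the left side as
\[
\sum_{m=1}^n u[m]\,v[m+1] - \sum_{m=1}^n u[m]\,v[m].
\]
In the first sum I substitute $m \mapsto m-1$, obtaining $\sum_{m=2}^{n+1} u[m-1]\,v[m]$. Periodicity of $u$ and $v$ with period $n$ gives $u[n]\,v[n+1] = u[0]\,v[1]$, so this sum equals $\sum_{m=1}^{n} u[m-1]\,v[m]$. Recombining the two sums gives
\[
-\sum_{m=1}^n \bigl(u[m]-u[m-1]\bigr)\,v[m] = -\sum_{m=1}^n (\Delta_- u)[m]\,v[m].
\]
For the adjoint statement I will apply this identity with $\bar u$ in place of $u$, using that $\Delta_-$ commutes with complex conjugation. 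With the inner product taken over the period $n$, this yields $\langle u, \Delta_+ v\rangle = \langle -\Delta_- u, v\rangle$, that is, $\Delta_+^\dagger = -\Delta_-$.

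The main obstacle is not computational but definitional. The inner product of Section~\ref{sec:inner_product} appears only later in the paper, so I will state the assumption I rely on: for period-$n$ sequences it is a (normalized) sum $\sum_{m=1}^n \overline{u[m]}\,v[m]$ over one period. Under that assumption, the only delicate step is the boundary term in the index shift, and periodicity removes it exactly. I also expect the normalization of the inner product to be irrelevant, since it multiplies both sides equally. The remaining care is the endpoint bookkeeping in parts 1 and 2, specifically the $k=0$ and $k=1$ cases.
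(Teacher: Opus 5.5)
Your proposal is correct and follows the paper's proof. You use telescoping straight from the definition of $\mathcal{I}$ (with $(\mathcal{I}u)[0]=0$) for parts 1 and 2, and the shift $m\mapsto m-1$ plus $n$-periodicity for the summation by parts; your endpoint checks and explicit conjugation step for $\Delta_+^\dagger=-\Delta_-$ are slightly more careful than the paper's write-up. One small note: the paper's inner product is $\frac{1}{L}\sum_{m=1}^{L} u[m]\overline{v[m]}$, linear in the first slot rather than the second as you assumed, but applying your summation-by-parts identity to the pair $(\bar v, u)$ gives $\langle \Delta_+ u, v\rangle = \langle u, -\Delta_- v\rangle$, so the conclusion is unchanged.
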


\begin{proof}
For statement 1: By Definition~\ref{def:cumulative_integral}, $(\mathcal{I} u)[k] = \sum_{j=1}^k u[j]$. Then:
\begin{equation*}
\Delta_- (\mathcal{I} u)[k] = (\mathcal{I} u)[k] - (\mathcal{I} u)[k-1] = \sum_{j=1}^k u[j] - \sum_{j=1}^{k-1} u[j] = u[k].
\end{equation*}
Similarly, $\Delta_+ (\mathcal{I} u)[k] = (\mathcal{I} u)[k+1] - (\mathcal{I} u)[k] = u[k+1]$.

For statement 2: Telescoping the sum yields:
\begin{equation*}
(\mathcal{I} \Delta_+ u)[k] = \sum_{j=1}^k (u[j+1] - u[j]) = (u[2] - u[1]) + (u[3] - u[2]) + \dots + (u[k+1] - u[k]) = u[k+1] - u[1].
\end{equation*}

For statement 3: Expanding the sum:
\begin{equation*}
\sum_{m=1}^n u[m] (v[m+1] - v[m]) = \sum_{m=1}^n u[m] v[m+1] - \sum_{m=1}^n u[m] v[m].
\end{equation*}
Shifting the summation index in the first term $m \to m-1$ and using $n$-periodicity ($v[n+1] = v[1], u[0] = u[n]$):
\begin{equation*}
\sum_{m=1}^n u[m-1] v[m] - \sum_{m=1}^n u[m] v[m] = - \sum_{m=1}^n (u[m] - u[m-1]) v[m] = - \sum_{m=1}^n (\Delta_- u)[m] v[m],
\end{equation*}
which completes the proof.
\end{proof}

\begin{theorem}[Cohomological Classification: Image of Derivative is the Zero-Sum Space]\label{thm:derivative_cohomology}
Let $\mathcal{P}_n \cong \mathbb{C}^n$ denote the space of periodic sequences of period $n$.
\begin{enumerate}
    \item The image of the discrete derivative operator $\Delta_+$ on $\mathcal{P}_n$ is \textbf{identically the zero-sum subspace} $\mathcal{Z}_n$ (Definition~\ref{def:zero_sum}):
    \begin{equation}
    \operatorname{im}(\Delta_+) = \ker(\sigma) = \mathcal{Z}_n.
    \end{equation}
    \item The kernel of $\Delta_+$ on $\mathcal{P}_n$ is the 1-dimensional subspace of constant sequences:
    \begin{equation}
    \ker(\Delta_+) = \mathbb{C} \mathbf{1} = \{ c \, \mathbf{w}(0, 0) \mid c \in \mathbb{C} \}.
    \end{equation}
    \item The short sequence of vector spaces:
    \begin{equation}
    0 \longrightarrow \mathbb{C}\mathbf{1} \xrightarrow{\quad\iota\quad} \mathcal{P}_n \xrightarrow{\quad\Delta_+\quad} \mathcal{Z}_n \longrightarrow 0
    \end{equation}
    is exact. Consequently, a periodic wave sequence $v \in \mathcal{P}_n$ admits a periodic antiderivative $u \in \mathcal{P}_n$ satisfying $\Delta_+ u = v$ if and only if $v$ has zero sum over a fundamental period ($\sigma(v) = 0$).
\end{enumerate}
\end{theorem}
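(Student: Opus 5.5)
We prove the three parts in the order kernel, then image, then exactness. The only tools needed are the telescoping identity behind Theorem~\ref{thm:discrete_ftc}, the cumulative integral $\mathcal{I}$ of Definition~\ref{def:cumulative_integral}, and rank--nullity on the finite-dimensional space $\mathcal{P}_n \cong \mathbb{C}^n$.

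\textbf{Kernel.} Suppose $u \in \mathcal{P}_n$ satisfies $\Delta_+ u = 0$. Then $u[m+1] = u[m]$ for all $m$. By induction in both directions, $u[m] = u[0]$ for every $m \in \mathbb{Z}$, so $u \in \mathbb{C}\mathbf{1}$. Conversely, constants are annihilated by $\Delta_+$, which is also the case $f=0$ of Theorem~\ref{thm:derivative_eigenfunction}. This gives part 2, and it shows that $\iota$ is injective with image exactly $\ker(\Delta_+)$.

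\textbf{Image contained in $\mathcal{Z}_n$.} For $u \in \mathcal{P}_n$ we telescope:
\[
\sigma(\Delta_+ u) = \sum_{m=1}^n (u[m+1]-u[m]) = u[n+1]-u[1].
\]
This vanishes by $n$-periodicity, so $\operatorname{im}(\Delta_+) \subseteq \ker(\sigma) = \mathcal{Z}_n$.

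\textbf{Reverse inclusion.} We have two routes, and I would give both.
\begin{itemize}
\item \emph{Dimension count.} By rank--nullity, $\dim \operatorname{im}(\Delta_+) = n - 1$. By Proposition~\ref{prop:zero_sum_algebra}, $\dim \mathcal{Z}_n = n-1$, since $\sigma$ is a nonzero functional. A subspace of equal finite dimension must be the whole space, so the two coincide.
\item \emph{Constructive antiderivative.} Given $v \in \mathcal{Z}_n$, set $u[k] := (\mathcal{I} v)[k-1]$. By the inversion formula \eqref{eq:ftc_inversion_plus}, or directly from the two-sided definition of $\mathcal{I}$, this gives $\Delta_+ u = v$.
\end{itemize}

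\textbf{Main obstacle.} The only genuinely delicate point is checking that this $u$ is $n$-periodic. We compute
\[
u[k+n]-u[k] = \sum_{j=k}^{k+n-1} v[j] = \sigma(v) = 0.
\]
The middle equality holds because a sum of an $n$-periodic sequence over any window of $n$ consecutive indices equals $\sigma(v)$. The final equality holds precisely because $v$ has zero sum. This identity is valid across the two-sided extension of $\mathcal{I}$ at negative indices, so $u \in \mathcal{P}_n$.

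\textbf{Exactness.} Exactness at $\mathbb{C}\mathbf{1}$ is the injectivity of $\iota$. Exactness at $\mathcal{P}_n$ is part 2. Exactness at $\mathcal{Z}_n$ is part 1, i.e.\ surjectivity. The final ``if and only if'' is a restatement of $\operatorname{im}(\Delta_+) = \mathcal{Z}_n$: the forward direction is the telescoping computation, and the converse is the antiderivative construction. That antiderivative is unique up to adding an element of $\mathbb{C}\mathbf{1}$.
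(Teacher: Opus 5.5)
Your proposal is correct and follows essentially the same route as the paper. Telescoping gives $\operatorname{im}(\Delta_+)\subseteq\mathcal{Z}_n$, the antiderivative $u[k]=(\mathcal{I}v)[k-1]$ gives the reverse inclusion, $u[m+1]=u[m]$ forces constancy for the kernel, and rank--nullity confirms $\dim\operatorname{im}(\Delta_+)=n-1=\dim\mathcal{Z}_n$; your window-sum check $u[k+n]-u[k]=\sigma(v)$ for all $k\in\mathbb{Z}$ is just a more careful version of the paper's single check $u[n+1]=u[1]$.
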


\begin{proof}
If $v = \Delta_+ u$ for $u \in \mathcal{P}_n$, then by telescoping:
\begin{equation*}
\sigma(v) = \sum_{m=1}^n (\Delta_+ u)[m] = \sum_{m=1}^n (u[m+1] - u[m]) = u[n+1] - u[1] = 0,
\end{equation*}
since $u[n+1] = u[1]$ by periodicity. Thus $\operatorname{im}(\Delta_+) \subseteq \mathcal{Z}_n$.
Conversely, let $v \in \mathcal{Z}_n$ so that $\sum_{j=1}^n v[j] = 0$. Define $u[k] := (\mathcal{I} v)[k-1]$ for $k \ge 1$ with $u[1] := 0$.
By Theorem~\ref{thm:discrete_ftc}, $(\Delta_+ u)[m] = u[m+1] - u[m] = v[m]$.
We verify periodicity: $u[n+1] = \sum_{j=1}^n v[j] = \sigma(v) = 0 = u[1]$.
Thus $u \in \mathcal{P}_n$, showing $\mathcal{Z}_n \subseteq \operatorname{im}(\Delta_+)$, whence $\operatorname{im}(\Delta_+) = \mathcal{Z}_n$.
For the kernel, $\Delta_+ u = 0 \iff u[m+1] = u[m]$ for all $m$, which implies $u$ is constant.
By the rank-nullity theorem, $\dim(\operatorname{im}(\Delta_+)) = \dim(\mathcal{P}_n) - \dim(\ker(\Delta_+)) = n - 1 = \dim(\mathcal{Z}_n)$, confirming exactness.
\end{proof}

\begin{proposition}[Discrete Leibniz Product Rule]\label{prop:discrete_leibniz}
Under the pointwise product $\otimes$, the discrete derivative obeys the shifted Leibniz rule:
\begin{equation}
\Delta_+ (u \otimes v)[m] = (\Delta_+ u)[m] \, v[m+1] + u[m] \, (\Delta_+ v)[m].
\end{equation}
When applied to plane wave carriers $u = \mathbf{w}(f_1, g_1)$ and $v = \mathbf{w}(f_2, g_2)$, this yields:
\begin{equation}
\Delta_+ (u \otimes v) = \lambda_+(f_1 + f_2) \, (u \otimes v),
\end{equation}
reflecting the frequency addition group law of $\mathcal{G}$.
\end{proposition}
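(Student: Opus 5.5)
The plan is to verify the shifted Leibniz identity by direct expansion at a single lattice site, and then specialize it to plane wave carriers. The idea is to add and subtract a mixed term. By Definition~\ref{def:discrete_derivative} and the pointwise definition of $\otimes$,
\begin{equation*}
\Delta_+(u\otimes v)[m] = u[m+1]v[m+1] - u[m]v[m].
\end{equation*}
Inserting $-u[m]v[m+1] + u[m]v[m+1]$ regroups this as
\begin{equation*}
\bigl(u[m+1]-u[m]\bigr)v[m+1] + u[m]\bigl(v[m+1]-v[m]\bigr),
\end{equation*}
which is $(\Delta_+u)[m]\,v[m+1] + u[m](\Delta_+v)[m]$. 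This holds for arbitrary $u,v\in\mathcal{S}$, with no periodicity needed.

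For the second claim, the most economical route is to avoid the Leibniz formula altogether. By Theorem~\ref{thm:main_iso}, $\mathbf{w}(f_1,g_1)\otimes\mathbf{w}(f_2,g_2)=\mathbf{w}(f_1+f_2,g_1+g_2)$. Theorem~\ref{thm:derivative_eigenfunction} then gives directly $\Delta_+(u\otimes v)=\lambda_+(f_1+f_2)\,(u\otimes v)$.

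As a consistency check, I would also derive this from the Leibniz rule. Substituting $\Delta_+u=\lambda_+(f_1)u$, $\Delta_+v=\lambda_+(f_2)v$ and $v[m+1]=e^{2\pi i f_2}v[m]$ gives the coefficient
\begin{equation*}
(e^{2\pi i f_1}-1)e^{2\pi i f_2} + (e^{2\pi i f_2}-1) = e^{2\pi i(f_1+f_2)}-1 = \lambda_+(f_1+f_2).
\end{equation*}
This identity is the multiplicative cocycle $\lambda_+(f_1+f_2)=\lambda_+(f_1)(1+\lambda_+(f_2))+\lambda_+(f_2)$, which shows that the asymmetric shift $v[m+1]$ is exactly what makes the frequencies add.

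No step is a real obstacle. The only point needing care is keeping the shifted factor on the correct side (the alternative form $u[m+1]\Delta_+v+\Delta_+u\,v[m]$ is equally valid by symmetry) and making sure the exponential bookkeeping $e^{2\pi i f_1}e^{2\pi i f_2}=e^{2\pi i(f_1+f_2)}$ is stated explicitly.
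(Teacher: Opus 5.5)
Your proposal is correct and follows the paper's proof: the paper also proves the Leibniz rule by inserting the mixed term $u[m]v[m+1]$, and also obtains the plane-wave case from $u\otimes v=\mathbf{w}(f_1+f_2,g_1+g_2)$ together with Theorem~\ref{thm:derivative_eigenfunction}, not from the Leibniz formula. Your additional consistency check, which recovers $\lambda_+(f_1+f_2)$ from the Leibniz rule via $\lambda_+(f_1)(1+\lambda_+(f_2))+\lambda_+(f_2)=\lambda_+(f_1+f_2)$, is also correct; the paper does not include it.
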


\begin{proof}
Expanding the forward difference:
\begin{align*}
\Delta_+(u \otimes v)[m] &= u[m+1] v[m+1] - u[m] v[m] \\
&= (u[m+1] - u[m]) v[m+1] + u[m] (v[m+1] - v[m]) \\
&= (\Delta_+ u)[m] v[m+1] + u[m] (\Delta_+ v)[m].
\end{align*}
For plane waves, $u \otimes v = \mathbf{w}(f_1 + f_2, g_1 + g_2)$. By Theorem~\ref{thm:derivative_eigenfunction}, $\Delta_+$ acts on $\mathbf{w}(f_1+f_2, g_1+g_2)$ by multiplying by $\lambda_+(f_1+f_2)$, as required.
\end{proof}

\section{The Principal Period Inner Product, Mode Orthogonality, and Pre-Hilbert Structure}
\label{sec:inner_product}

\subsection{Definition of the Finite Principal Period Inner Product}

The linear space of periodic wave superpositions $\mathcal{V} = \operatorname{span}_\mathbb{C}\{\mathbf{w}(f, g) \mid f \in \mathbb{Q}/\mathbb{Z}, g \in \mathbb{Q}\}$ carries a natural linear algebraic structure under $(+)$. However, standard square-integrable sequence spaces $\ell^2(\mathbb{Z})$ are unsuited for unimodular wave carriers because delocalized plane waves have infinite total energy across the entire lattice:
\begin{equation}
\sum_{m \in \mathbb{Z}} |\mathbf{w}(f, g)[m]|^2 = \sum_{m \in \mathbb{Z}} 1 = +\infty.
\end{equation}
Because every element in $\mathcal{V}$ is periodic over a finite spatial block, we construct an exact, finite inner product defined directly over the \textbf{principal sequence} (the least common fundamental wavelength).

\begin{definition}[Principal Period Inner Product]
\label{def:principal_inner_product}
Let $u, v \in \mathcal{V}$ be two periodic wave numbers with fundamental periods $n_u, n_v \in \mathbb{N}$. 
Let $L = \operatorname{lcm}(n_u, n_v)$ denote their common fundamental period.
The \textbf{principal period inner product} $\langle u, v \rangle: \mathcal{V} \times \mathcal{V} \to \mathbb{C}$ is defined by the finite normalized spatial sum over the common principal block:
\begin{equation}
\langle u, v \rangle := \frac{1}{L} \sum_{m=1}^L u[m] \, \overline{v[m]},
\end{equation}
where $\overline{v[m]}$ denotes the standard complex conjugate.
The associated \textbf{principal sequence norm} $\|u\| \ge 0$ is defined by:
\begin{equation}
\|u\| := \sqrt{\langle u, u \rangle} = \left( \frac{1}{n_u} \sum_{m=1}^{n_u} |u[m]|^2 \right)^{1/2}.
\end{equation}
\end{definition}

\begin{lemma}[Period Invariance of the Principal Inner Product]
\label{lem:period_invariance}
The value of $\langle u, v \rangle$ is independent of the choice of common multiple period. For any positive integer $M \in \mathbb{N}$ such that $L \mid M$:
\begin{equation}
\frac{1}{M} \sum_{m=1}^M u[m] \, \overline{v[m]} = \frac{1}{L} \sum_{m=1}^L u[m] \, \overline{v[m]}.
\end{equation}
\end{lemma}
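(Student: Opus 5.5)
The plan is to reduce everything to the periodicity of the product sequence $h[m] := u[m]\,\overline{v[m]}$ and then split the longer sum into $L$-blocks. First I will check that $h$ is periodic with period $L = \operatorname{lcm}(n_u, n_v)$. Since $n_u \mid L$ and $n_v \mid L$, we have $u[m+L] = u[m]$ and $v[m+L] = v[m]$ for all $m \in \mathbb{Z}$. Complex conjugation commutes with evaluation, so $\overline{v[m+L]} = \overline{v[m]}$ and hence $h[m+L] = h[m]$. By induction, $h[m + rL] = h[m]$ for every integer $r \ge 0$.

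Next, write $M = kL$ with $k \in \mathbb{N}$, which is possible because $L \mid M$. Partition the index range $\{1, \dots, M\}$ into the $k$ consecutive blocks $\{rL+1, \dots, (r+1)L\}$ for $r = 0, \dots, k-1$. In block $r$, substitute $m = rL + m'$ with $m' \in \{1, \dots, L\}$ and use $h[rL + m'] = h[m']$. Each block sum then equals $\sum_{m'=1}^{L} h[m']$, so
\begin{equation*}
\sum_{m=1}^{M} h[m] = \sum_{r=0}^{k-1} \sum_{m'=1}^{L} h[rL+m'] = k \sum_{m'=1}^{L} h[m'].
\end{equation*}
Dividing by $M = kL$ gives the claimed identity.

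There is no real obstacle here; the only care needed is in the reindexing, namely that each block shift is by a multiple of $L$ and therefore leaves $h$ unchanged. The same argument also settles the consistency issue in Definition~\ref{def:principal_inner_product}. Setting $v = u$, so that $L = n_u$, and taking any common multiple shows that $\|u\|$ computed over $n_u$ agrees with $\sqrt{\langle u, u\rangle}$ computed over any longer common period. Likewise, $\langle u, v \rangle$ does not depend on whether fundamental periods or arbitrary periods of $u$ and $v$ are used, since any two common periods share a common multiple $M$ and both averages equal the average over $M$.
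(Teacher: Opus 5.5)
Your proof is correct and follows the paper's argument essentially verbatim: you show that $h = u\,\overline{v}$ is $L$-periodic, write $M = kL$, split the sum into $k$ blocks of length $L$ that each equal the base block, and divide by $kL$. Your closing remark, that the average does not depend on which common period is used because any two common periods share a common multiple, is a valid small extension of the paper's statement.
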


\begin{proof}
Let $M = k L$ for $k \in \mathbb{N}$. Since both $u$ and $v$ are periodic with period $L$, the product sequence $w[m] = u[m]\overline{v[m]}$ is also strictly periodic with period $L$. Decomposing the sum over $M$ into $k$ contiguous blocks of length $L$:
\begin{equation}
\frac{1}{M} \sum_{m=1}^M w[m] = \frac{1}{kL} \sum_{r=0}^{k-1} \left( \sum_{m=1}^L w[m + rL] \right) = \frac{1}{kL} \sum_{r=0}^{k-1} \left( \sum_{m=1}^L w[m] \right) = \frac{k}{kL} \sum_{m=1}^L w[m] = \frac{1}{L} \sum_{m=1}^L w[m].
\end{equation}
Thus, the inner product is intrinsically well-defined and invariant under common period refinement.
\end{proof}

\subsection{Exact Orthonormality of Rational Spatial Waves}

\begin{theorem}[Orthonormality of Distinct Rational Spatial Modes]
\label{thm:mode_orthonormality}
Let $u = \mathbf{w}(f_1, g_1)$ and $v = \mathbf{w}(f_2, g_2)$ be two primitive rational wave carriers with frequencies $f_1 = \frac{p_1}{q_1}, f_2 = \frac{p_2}{q_2} \in \mathbb{Q}/\mathbb{Z}$ and calibration phases $g_1, g_2 \in \mathbb{Q}$.
Then:
\begin{equation}
\langle \mathbf{w}(f_1, g_1), \mathbf{w}(f_2, g_2) \rangle = e^{2\pi i (g_1 - g_2)} \, \delta_{f_1, f_2},
\end{equation}
where $\delta_{f_1, f_2}$ is the Kronecker delta on the rational circle $\mathbb{Q}/\mathbb{Z}$.
In particular, for zero calibration phase ($g_1 = g_2 = 0$), the set of pure rational spatial carriers:
\begin{equation}
\mathcal{B}_{\mathrm{wave}} = \left\{ \mathbf{w}(f, 0) \mid f \in \mathbb{Q}/\mathbb{Z} \right\}
\end{equation}
forms an exact \textbf{orthonormal system} in $(\mathcal{V}, \langle \cdot, \cdot \rangle)$.
\end{theorem}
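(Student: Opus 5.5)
The plan is to compute the inner product directly from Definition~\ref{def:principal_inner_product}, reduce it to a finite geometric character sum, and then invoke Lemma~\ref{lem:geometric_sum}.

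\textbf{Step 1 (setup).} Write $f_1 = p_1/q_1$ and $f_2 = p_2/q_2$ in lowest terms. Then $u$ and $v$ have periods $q_1$ and $q_2$, and we set $L = \operatorname{lcm}(q_1,q_2)$. By Lemma~\ref{lem:period_invariance} we may instead sum over any common multiple. We therefore sum over $L$, or over $q_1q_2$ if that is more convenient, and the value of $\langle u,v\rangle$ does not depend on this choice.

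\textbf{Step 2 (factorization).} Pointwise we have
\[
u[m]\overline{v[m]} = e^{2\pi i (g_1-g_2)}\, e^{2\pi i (f_1-f_2)m}.
\]
Substituting this into the definition gives
\[
\langle u,v\rangle = e^{2\pi i(g_1-g_2)}\,\frac{1}{L}\sum_{m=1}^{L} e^{2\pi i (f_1-f_2)m}.
\]
Now write $f_1-f_2 = P/L$ with $P = p_1(L/q_1) - p_2(L/q_2)\in\mathbb{Z}$. The sum over $m=1,\dots,L$ is a reindexing of the sum over $m=0,\dots,L-1$, because the summand is $L$-periodic. Lemma~\ref{lem:geometric_sum} with $q=L$ then says the normalized sum equals $1$ if $P\equiv 0 \pmod L$ and $0$ otherwise.

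\textbf{Step 3 (identify the delta).} We have $P\equiv 0\pmod L$ exactly when $f_1-f_2\in\mathbb{Z}$, that is, when $f_1=f_2$ in $\mathbb{Q}/\mathbb{Z}$. This produces $\delta_{f_1,f_2}$. For the orthonormality claim, take $g_1=g_2=0$: then $\langle \mathbf{w}(f,0),\mathbf{w}(f,0)\rangle = 1$, and distinct classes in $\mathcal{B}_{\mathrm{wave}}$ are orthogonal. We also note that $\mathbf{w}(f,0)$ depends only on the class of $f$ modulo $1$ (Theorem~\ref{thm:main_iso}, well-definedness), so $\mathcal{B}_{\mathrm{wave}}$ is indexed without redundancy.

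The only point needing care, and the main obstacle, is well-definedness. We must check that $L$ (or the chosen common period) is genuinely a period of the product sequence $u\overline{v}$; this holds because $q_i\mid L$. We must also check that the answer is unaffected by the choice of representatives $p_i/q_i$ modulo $1$. Both are handled by the periodicity observation in Step 1 together with Lemma~\ref{lem:period_invariance}. One caveat: the statement's formula must be read with the phase factor $e^{2\pi i(g_1-g_2)}$ mattering only when $f_1=f_2$, since when $f_1\neq f_2$ the product is $0$ regardless of the phase.
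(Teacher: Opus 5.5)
Your proof is correct and follows the paper's route. Like the paper, you factor $u[m]\overline{v[m]}$ as $e^{2\pi i(g_1-g_2)}\,e^{2\pi i (f_1-f_2)m}$, write $f_1-f_2=P/L$ with $L=\operatorname{lcm}(q_1,q_2)$, and observe that the normalized sum is $1$ or $0$ according to whether $P\equiv 0 \pmod L$. The only cosmetic difference is that you cite Lemma~\ref{lem:geometric_sum} after reindexing $m=1,\dots,L$ to $m=0,\dots,L-1$, whereas the paper splits into two cases and redoes the geometric-progression computation inline.
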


\begin{proof}
Let $L = \operatorname{lcm}(q_1, q_2)$. The product sequence is:
\begin{equation}
u[m]\overline{v[m]} = e^{2\pi i \left( f_1 m + g_1 \right)} e^{-2\pi i \left( f_2 m + g_2 \right)} = e^{2\pi i (g_1 - g_2)} \exp\left( 2\pi i (f_1 - f_2) m \right).
\end{equation}
We examine the two cases:
\begin{enumerate}
    \item \textbf{Equal Frequencies ($f_1 \equiv f_2 \pmod 1$):}
    Here $(f_1 - f_2) \in \mathbb{Z}$, so $\exp(2\pi i (f_1 - f_2) m) = 1$ for all $m \in \mathbb{Z}$.
    The inner product reduces to:
    \begin{equation}
    \langle u, v \rangle = \frac{1}{L} \sum_{m=1}^L e^{2\pi i (g_1 - g_2)} \cdot 1 = e^{2\pi i (g_1 - g_2)} \left( \frac{1}{L} \cdot L \right) = e^{2\pi i (g_1 - g_2)}.
    \end{equation}
    
    \item \textbf{Distinct Frequencies ($f_1 \not\equiv f_2 \pmod 1$):}
    Let $\Delta f = f_1 - f_2 = \frac{P}{L} \not\in \mathbb{Z}$, where $P \not\equiv 0 \pmod L$.
    The sum is a finite geometric progression of the non-trivial root of unity $\omega = e^{2\pi i P / L} \neq 1$:
    \begin{equation}
    \sum_{m=1}^L \omega^m = \omega \frac{1 - \omega^L}{1 - \omega} = \omega \frac{1 - e^{2\pi i P}}{1 - \omega} = 0,
    \end{equation}
    since $e^{2\pi i P} = 1$ for all $P \in \mathbb{Z}$.
    Hence, $\langle u, v \rangle = 0$.
\end{enumerate}
Combining both cases establishes the theorem.
\end{proof}

\begin{corollary}[Parseval Identity and Power Conservation]
\label{cor:parseval}
Let $\Psi = \sum_{j=1}^N c_j \mathbf{w}(f_j, g_j)$ be any arbitrary finite superposition of distinct rational modes ($f_j \not\equiv f_k$ for $j \neq k$).
Then:
\begin{equation}
\|\Psi\|^2 = \langle \Psi, \Psi \rangle = \sum_{j=1}^N |c_j|^2.
\end{equation}
Thus, the principal norm measures the total physical wave power, and the superposition obeys the exact quantum mechanical Born rule.
\end{corollary}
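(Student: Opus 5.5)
The plan is to expand $\|\Psi\|^2 = \langle \Psi, \Psi\rangle$ by sesquilinearity and then invoke Theorem~\ref{thm:mode_orthonormality} term by term.

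\textbf{Setup.} Let $q_j$ denote the denominator of $f_j$ in lowest terms. Since the calibration phase $g_j$ does not affect the spatial period, I will take $L = \operatorname{lcm}(q_1, \dots, q_N)$ as a common period of all constituents, and hence of $\Psi$.

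\textbf{Choice of period.} Lemma~\ref{lem:period_invariance} lets me evaluate every pairwise inner product over this single block of length $L$. This removes the apparent dependence of Definition~\ref{def:principal_inner_product} on the pair-specific $\operatorname{lcm}$. It also reconciles the norm written with $n_u$ against the inner product written with $L$, provided the fundamental period of $\Psi$ divides $L$, which it does.

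\textbf{Expansion.} With this convention the form $\langle\cdot,\cdot\rangle$, written as $\frac{1}{L}\sum_{m=1}^L u[m]\overline{v[m]}$, is plainly linear in the first slot and conjugate-linear in the second. Therefore
\begin{equation*}
\langle \Psi, \Psi\rangle = \sum_{j=1}^N \sum_{k=1}^N c_j \overline{c_k}\, \langle \mathbf{w}(f_j, g_j), \mathbf{w}(f_k, g_k)\rangle .
\end{equation*}

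\textbf{Applying orthonormality.} By Theorem~\ref{thm:mode_orthonormality}, each pairing equals $e^{2\pi i (g_j - g_k)}\delta_{f_j, f_k}$.
\begin{itemize}
\item The hypothesis $f_j \not\equiv f_k$ for $j \ne k$ kills every off-diagonal term. These are exactly the interference cross-terms of Remark~\ref{rem:open_vs_closed_prob}, which average to zero over the block by Lemma~\ref{lem:geometric_sum}.
\item The diagonal terms contribute $|c_j|^2 e^{0} = |c_j|^2$.
\end{itemize}
Summing the diagonal gives $\sum_j |c_j|^2$.

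\textbf{Main obstacle.} The only delicate point is the well-definedness step above: the stated norm uses the fundamental period of $\Psi$, while the expansion uses $L$. I handle this with Lemma~\ref{lem:period_invariance} applied to $u = v = \Psi$, with $M = L$ and the fundamental period dividing $L$.

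\textbf{Interpretation.} The concluding physical reading, ``total wave power'' and the Born rule, is commentary rather than something requiring proof. I will note that each $|c_j|^2$ is the weight of mode $j$ and that these weights sum to the averaged density $\frac{1}{L}\sum_{m=1}^L|\Psi[m]|^2$.
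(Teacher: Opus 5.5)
Your proposal is correct and follows the same route as the paper. The paper gives no separate proof, treating the corollary as immediate from Theorem~\ref{thm:mode_orthonormality}: expand $\langle \Psi,\Psi\rangle$ sesquilinearly and keep only the diagonal terms $|c_j|^2$. Your use of Lemma~\ref{lem:period_invariance} to evaluate everything over the single block $L=\operatorname{lcm}(q_1,\dots,q_N)$ usefully makes explicit the well-definedness and sesquilinearity that the paper leaves tacit.
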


\subsection{Algebraic and Metric Properties of the Principal Norm}

\begin{proposition}[Algebraic and Metric Properties of the Principal Norm]
\label{prop:norm_properties}
The principal period norm $\|\cdot\|: \mathcal{V} \to [0, \infty)$ satisfies:
\begin{enumerate}
    \item \textbf{Positive Definiteness:} $\|u\| \ge 0$, with $\|u\| = 0$ if and only if $u = \mathbf{0}$.
    \item \textbf{Absolute Homogeneity:} $\|\alpha u\| = |\alpha| \, \|u\|$ for all scalars $\alpha \in \mathbb{C}$.
    \item \textbf{Subadditivity (Triangle Inequality):} $\|u + v\| \le \|u\| + \|v\|$ for all $u, v \in \mathcal{V}$.
    \item \textbf{Cauchy-Schwarz Inequality:} $|\langle u, v \rangle| \le \|u\| \, \|v\|$, with equality if and only if $u$ and $v$ are linearly dependent over $\mathbb{C}$.
    \item \textbf{Exact Multiplicativity on Wave Carriers:} For any two pure rational wave carriers $u, v \in \mathcal{G}$:
    \begin{equation}
    \|u \otimes v\| = \|u\| \, \|v\| = 1.
    \end{equation}
    \item \textbf{Isometric Modulation (Unitary Carrier Action):} For any pure wave carrier $w \in \mathcal{G}$ and any arbitrary superposition $\Psi \in \mathcal{V}$:
    \begin{equation}
    \|w \otimes \Psi\| = \|\Psi\|.
    \end{equation}
    Thus, pointwise modulation by any unimodular wave carrier is an exact \textbf{unitary isometry} on the pre-Hilbert space $(\mathcal{V}, \langle \cdot, \cdot \rangle)$.
\end{enumerate}
\end{proposition}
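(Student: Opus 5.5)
The plan is to obtain all six properties from one reduction. Every element of $\mathcal{V}$ is periodic, and by Lemma~\ref{lem:period_invariance} the inner product does not depend on which common period we use. So for any finite family of elements we fix a single common period $L$ and identify each sequence with its principal block in $\mathbb{C}^L$. Under this identification $\langle u,v\rangle$ becomes the standard Hermitian inner product on $\mathbb{C}^L$, scaled by the positive factor $1/L$. I will first check that it is Hermitian, sesquilinear and positive semidefinite; this is immediate from the finite-sum formula. It also shows that the norm formula in Definition~\ref{def:principal_inner_product} (using $n_u$) agrees with $\langle u,u\rangle$ (using $L$).

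\textbf{Positive definiteness.} Suppose $\|u\|=0$. Then $\sum_{m=1}^{L}|u[m]|^2=0$, so $u[m]=0$ for $m=1,\dots,L$. By $L$-periodicity $u[m]=0$ for all $m\in\mathbb{Z}$, i.e. $u=\mathbf{0}$ in $\mathbb{C}^\mathbb{Z}$. The point to state explicitly is that periodicity lets the vanishing on one block propagate to the whole lattice.

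\textbf{Homogeneity, Cauchy--Schwarz and the triangle inequality.} Homogeneity follows directly from sesquilinearity. For Cauchy--Schwarz I will use the standard argument: if $v\neq 0$, expand $0\le\|u-\lambda v\|^2$ with $\lambda=\langle u,v\rangle/\|v\|^2$. Equality forces $\|u-\lambda v\|=0$, hence $u=\lambda v$ by positive definiteness, and conversely linear dependence gives equality. The triangle inequality then comes from expanding $\|u+v\|^2=\|u\|^2+\|v\|^2+2\operatorname{Re}\langle u,v\rangle$ and bounding the cross term by Cauchy--Schwarz.

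\textbf{Properties 5 and 6.} Both rest on unimodularity. For $u,v\in\mathcal{G}$ we have $|u[m]|=|v[m]|=1$, so $|(u\otimes v)[m]|=1$ and all three norms equal $1$. Closure of $\mathcal{G}$ under $\otimes$ (Theorem~\ref{thm:main_iso}) guarantees that $u\otimes v$ is again periodic. For property 6, $w\otimes\Psi$ is periodic with period $\operatorname{lcm}$ of the two periods. On a common block,
\[
|w[m]\Psi[m]|^2=|\Psi[m]|^2,
\]
and Lemma~\ref{lem:period_invariance} gives $\|w\otimes\Psi\|=\|\Psi\|$. Multiplication by $w$ is linear, and it is invertible with inverse multiplication by $w^{-1}=\bar w\in\mathcal{G}$. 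Polarization then upgrades norm preservation to preservation of the inner product, which is the unitarity claim.

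\textbf{Main obstacle.} The only genuine difficulty is consistency of the definition rather than any inequality. The norm is defined with each element's own period $n_u$, while the inner product uses $\operatorname{lcm}$ of the two periods. Every step above therefore has to pass through Lemma~\ref{lem:period_invariance}, so that comparisons among $u$, $v$, $u+v$ and $w\otimes\Psi$ are made on one common block. One caveat: if elements of $\mathcal{G}_{\mathrm{poly}}$ with non-integer-valued polynomial phases are included, they are still periodic, since a rational polynomial phase is periodic modulo $1$. I would note this so that the reduction to $\mathbb{C}^L$ covers all of $\mathcal{V}$.
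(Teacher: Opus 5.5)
Your proposal is correct and follows the same route as the paper, which treats properties 1--4 as the standard consequences of an inner-product-induced norm and proves 5 and 6 by the same unimodularity computation $|w[m]\Psi[m]|^2=|\Psi[m]|^2$ on a common period block. Your additional steps fill in what the paper leaves implicit rather than changing the argument: you route every comparison through the period-invariance lemma so that $\langle\cdot,\cdot\rangle$ is genuinely sesquilinear on $\mathcal{V}$, you propagate vanishing on one block to all of $\mathbb{Z}$ by periodicity, and you use invertibility of multiplication by $w$ (inverse $\bar w\in\mathcal{G}$) together with polarization to justify the word ``unitary''.
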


\begin{proof}
Properties 1--4 are the standard axioms of an inner-product-induced norm.
For Property 5, since $|u[m]| = 1$ and $|v[m]| = 1$ for all $m \in \mathbb{Z}$, $|(u \otimes v)[m]| = |u[m] v[m]| = 1 \cdot 1 = 1$. Hence $\|u \otimes v\| = 1 = 1 \cdot 1 = \|u\| \, \|v\|$.
For Property 6, let $L$ be the common period of $w$ and $\Psi$. Then:
\begin{equation}
\|w \otimes \Psi\|^2 = \frac{1}{L} \sum_{m=1}^L |w[m]\Psi[m]|^2 = \frac{1}{L} \sum_{m=1}^L |w[m]|^2 |\Psi[m]|^2 = \frac{1}{L} \sum_{m=1}^L 1 \cdot |\Psi[m]|^2 = \|\Psi\|^2.
\end{equation}
Taking the square root yields $\|w \otimes \Psi\| = \|\Psi\|$.
\end{proof}

\subsection{Equivalence with the Bohr-Besicovitch Spatial Mean and Pre-Hilbert Completion}

\begin{theorem}[Asymptotic Identity with the Besicovitch Mean]
\label{thm:besicovitch_identity}
For any periodic wave numbers $u, v \in \mathcal{V}$, the finite principal period inner product coincides identically with the asymptotic infinite spatial mean of Bohr and Besicovitch:
\begin{equation}
\langle u, v \rangle = \lim_{N \to \infty} \frac{1}{2N+1} \sum_{m=-N}^N u[m] \, \overline{v[m]}.
\end{equation}
\end{theorem}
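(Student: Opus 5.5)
The plan is to reduce the statement to the averaging of a single periodic sequence over long symmetric windows. Fix $u, v \in \mathcal{V}$ with fundamental periods $n_u, n_v$. Set $L = \operatorname{lcm}(n_u, n_v)$ and $w[m] := u[m]\overline{v[m]}$. As in the proof of Lemma~\ref{lem:period_invariance}, $w$ is $L$-periodic. Write $\mu := \frac{1}{L}\sum_{m=1}^L w[m] = \langle u, v\rangle$. The claim is then that the symmetric Ces\`aro means $M_N := \frac{1}{2N+1}\sum_{m=-N}^N w[m]$ converge to $\mu$.

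The first step is a block decomposition of the window. By periodicity, the sum of $w$ over any $L$ consecutive integers equals $L\mu$, since any such block is a cyclic relabelling of $\{1,\dots,L\}$. Write $2N+1 = kL + r$ with $k = \lfloor (2N+1)/L \rfloor$ and $0 \le r < L$. The window $\{-N,\dots,N\}$ then splits into $k$ consecutive full blocks followed by a remainder of $r$ terms, which gives
\begin{equation*}
\sum_{m=-N}^N w[m] = kL\mu + R_N, \qquad |R_N| \le r\, \|w\|_\infty \le L \max_{1\le m\le L}|w[m]|.
\end{equation*}

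The second step is the limit. Dividing by $2N+1$ gives $M_N = \frac{kL}{2N+1}\mu + \frac{R_N}{2N+1}$. Here $\frac{kL}{2N+1} = 1 - \frac{r}{2N+1} \to 1$, and $R_N$ is uniformly bounded, so $R_N/(2N+1) \to 0$. Hence $M_N \to \mu = \langle u, v\rangle$. Lemma~\ref{lem:period_invariance} also guarantees that the value does not depend on using $L$ rather than any other common period. As a cross-check, one could instead expand $u$ and $v$ in rational modes, reduce by sesquilinearity to $w$ being a single exponential $e^{2\pi i \Delta f\, m}$, and then use the geometric sum of Lemma~\ref{lem:geometric_sum}, or a bounded Dirichlet kernel when $\Delta f \notin \mathbb{Z}$. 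That route gives the same result.

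The only genuine obstacle is making sure the periodicity hypothesis actually holds for all of $\mathcal{V}$. The theorem and Definition~\ref{def:principal_inner_product} restrict to periodic wave numbers, which is the span of the linear carriers $\mathbf{w}(f,g)$ with $f \in \mathbb{Q}/\mathbb{Z}$. A polynomial carrier $\exp(2\pi i P(m))$ with $P \in \mathbb{Q}[X]$ is also periodic: if $D$ is a common denominator of the binomial coefficients $c_k$ of $P$, then $P(m+D!\,D) - P(m) \in \mathbb{Z}$. So finite sums are periodic in every case, and the block argument goes through unchanged.
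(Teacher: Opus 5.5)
Your argument is correct and is essentially the paper's proof: you write $2N+1 = k_N L + r_N$, split the symmetric window into $k_N$ full period blocks of $w = u\bar v$ plus a remainder of at most $L$ terms, and then note that $k_N L/(2N+1) \to 1$ while the remainder, bounded by $L\max|w|$, is killed by the factor $1/(2N+1)$. One small slip in your side remark on polynomial carriers, which the proof does not need since periodicity is a hypothesis: if $D$ is a common denominator of the binomial-basis coefficients, then $D!\,D$ is not a period in general, and it should be $d!\,D$ with $d=\deg P$; for example, $\exp\bigl(2\pi i\cdot\tfrac12\binom{m}{5}\bigr)$ has fundamental period $8$, which does not divide $2!\cdot 2 = 4$.
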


\begin{proof}
Let $w[m] = u[m]\overline{v[m]}$ have period $L$. For any $N \in \mathbb{N}$, write $2N+1 = k_N L + r_N$, where $k_N = \lfloor (2N+1)/L \rfloor \to \infty$ and $0 \le r_N < L$.
The sum decomposes into $k_N$ complete period blocks and a residual remainder of length $r_N$:
\begin{equation}
\sum_{m=-N}^N w[m] = k_N \left( \sum_{m=1}^L w[m] \right) + \sum_{j=1}^{r_N} w[m_j].
\end{equation}
Dividing by $2N+1$:
\begin{equation}
\frac{1}{2N+1} \sum_{m=-N}^N w[m] = \frac{k_N L}{2N+1} \left( \frac{1}{L} \sum_{m=1}^L w[m] \right) + \frac{1}{2N+1} \sum_{j=1}^{r_N} w[m_j].
\end{equation}
As $N \to \infty$, $\frac{k_N L}{2N+1} \to 1$, while the remainder term satisfies:
\begin{equation}
\left| \frac{1}{2N+1} \sum_{j=1}^{r_N} w[m_j] \right| \le \frac{r_N \max |w|}{2N+1} \le \frac{L \max |w|}{2N+1} \longrightarrow 0.
\end{equation}
Therefore, the limit exists and equals the principal period sum $\frac{1}{L} \sum_{m=1}^L w[m] = \langle u, v \rangle$.
\end{proof}

\begin{remark}[Pre-Hilbert Space and Non-Separable Completion]
The pair $(\mathcal{V}, \langle \cdot, \cdot \rangle)$ constitutes a complex \textbf{pre-Hilbert space}.
Because the index set of frequencies is countably infinite ($\mathbb{Q}/\mathbb{Z}$), the algebraic dimension of $\mathcal{V}$ is $\aleph_0$.
Under Cauchy completion, the closed space embeds into the classical \textbf{Besicovitch Hilbert space} $B^2(\mathbb{Z})$, providing an exact, unitary quantum mechanical foundation for periodic wave numbers.
\end{remark}

\section{The Division Operator, Invertible Unit Groups, and the Total Quotient Ring of Wave Numbers}
\label{sec:division_operator}
\label{sec:division_field}

\subsection{Motivation: Zero-Divisors and the Obstruction to Field Structure}

The algebraic space of periodic wave superpositions $\mathcal{V} = \operatorname{span}_\mathbb{C}\{\mathbf{w}(f, g)\}$ forms a commutative ring under pointwise addition $(+)$ and pointwise multiplication $(\otimes)$, with additive zero $\mathbf{0}$ and multiplicative unit $\mathbf{1} = \mathbf{w}(0, 0)$.
It is tempting to seek a full field structure $\mathbb{W}_I$ where every non-zero wave admits a multiplicative inverse. 
However, in any non-trivial spatial sequence algebra, linear superpositions can generate spatial nodes (destructive interference zeros). For example, the two non-zero periodic waves:
\begin{equation}
\Psi_1[m] := 1 + e^{i \pi m} = \begin{cases} 2, & m \text{ even} \\ 0, & m \text{ odd} \end{cases}, \qquad
\Psi_2[m] := 1 - e^{i \pi m} = \begin{cases} 0, & m \text{ even} \\ 2, & m \text{ odd} \end{cases},
\end{equation}
satisfy $\Psi_1 \neq \mathbf{0}$ and $\Psi_2 \neq \mathbf{0}$, yet their pointwise product vanishes identically across the entire lattice:
\begin{equation}
(\Psi_1 \otimes \Psi_2)[m] = \Psi_1[m] \, \Psi_2[m] \equiv 0, \quad \forall m \in \mathbb{Z}.
\end{equation}
Thus, the wave algebra $\mathcal{V}$ possesses non-trivial \textbf{zero-divisors}. In commutative ring theory, the presence of zero-divisors strictly prevents $\mathcal{V}$ from forming an integral domain or a global field under $(\oplus, \otimes)$. 

To establish an exact theory of wave division, we isolate the \textbf{multiplicative group of units} (nodeless wave numbers), formulate the exact pointwise quotient operator, and construct the \textbf{total ring of fractions} (the discrete analogue of the field of meromorphic functions).

\subsection{The Inverse Operator and the Multiplicative Group of Invertible Waves}

\begin{definition}[The Pointwise Inverse Operator and Division]
\label{def:inverse_operator}
Let $\Psi \in \mathcal{V}$ be a periodic wave sequence. The \textbf{inverse operator} $\mathcal{I}_{\mathrm{inv}}$ is defined pointwise on the domain of nowhere-vanishing sequences:
\begin{equation}
(\mathcal{I}_{\mathrm{inv}} \Psi)[m] := \frac{1}{\Psi[m]}, \quad \text{defined for all } m \in \mathbb{Z} \text{ such that } \Psi[m] \neq 0.
\end{equation}
For any two wave sequences $\Psi_1, \Psi_2 \in \mathcal{V}$ with $\Psi_2$ nowhere-vanishing, the \textbf{pointwise quotient} $\odiv$ is defined by:
\begin{equation}
\Psi_1 \odiv \Psi_2 := \Psi_1 \otimes \mathcal{I}_{\mathrm{inv}}(\Psi_2), \quad (\Psi_1 \odiv \Psi_2)[m] = \frac{\Psi_1[m]}{\Psi_2[m]}.
\end{equation}
\end{definition}

\begin{theorem}[Characterization of the Invertible Unit Group $\mathcal{U}(\mathcal{V})$]
\label{thm:unit_group}
An element $\Psi \in \mathcal{V}$ has a strict multiplicative inverse $\Psi^{-1} \in \mathcal{V}$ under $\otimes$ if and only if $\Psi$ is \textbf{nodeless} (nowhere-vanishing on its principal sequence):
\begin{equation}
\Psi[m] \neq 0, \quad \forall m \in \{1, \dots, \operatorname{per}(\Psi)\}.
\end{equation}
The set of all such nodeless periodic wave numbers forms an Abelian multiplicative group under $\otimes$, denoted the \textbf{unit group} $\mathcal{U}(\mathcal{V}) = \mathcal{V}^\times$:
\begin{enumerate}
    \item \textbf{Subgroup of Pure Carriers:} The group of rational plane wave carriers $\mathcal{G} = \{ \mathbf{w}(f, g) \mid f \in \mathbb{Q}/\mathbb{Z}, g \in \mathbb{Q} \}$ forms a proper subgroup of $\mathcal{U}(\mathcal{V})$ consisting entirely of unitary elements:
    \begin{equation}
    \mathcal{I}_{\mathrm{inv}}(\mathbf{w}(f, g)) = \overline{\mathbf{w}(f, g)} = \mathbf{w}(-f, -g).
    \end{equation}
    \item \textbf{Embedded Scalar Field:} The set of spatially constant, non-zero waves forms a subfield isomorphic to $\mathbb{C}^\times$:
    \begin{equation}
    \mathbb{W}_\mathbb{C} := \{ c \, \mathbf{w}(0, 0) \mid c \in \mathbb{C}^\times \} \cong \mathbb{C}^\times, \qquad \mathbb{W}_\mathbb{Q} := \{ q \, \mathbf{w}(0, 0) \mid q \in \mathbb{Q}^\times \} \cong \mathbb{Q}^\times.
    \end{equation}
\end{enumerate}
\end{theorem}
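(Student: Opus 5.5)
The plan is to prove the biconditional first, by working on a single common period and using the fact that the plane waves of that period span all sequences of that period. The group-theoretic claims then follow by routine verification. One preliminary observation is needed throughout. Every $\Psi \in \mathcal{V}$ is a finite combination $\sum_j c_j \mathbf{w}(f_j, g_j)$ with $f_j = p_j/q_j$. Hence $\Psi$ is periodic with period $n := \operatorname{lcm}(q_j)$, so that $\operatorname{per}(\Psi) \mid n$. In particular, being nodeless on the principal block $\{1,\dots,\operatorname{per}(\Psi)\}$ is the same as being nodeless on all of $\mathbb{Z}$.

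\emph{Necessity.} Suppose $\Psi \otimes \Phi = \mathbf{1}$ for some $\Phi \in \mathcal{V}$. If $\Psi[m_0] = 0$ for some $m_0$, then $(\Psi\otimes\Phi)[m_0] = 0 \neq 1$, a contradiction. So an invertible $\Psi$ must be nodeless.

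\emph{Sufficiency.} This is the step I expect to be the main obstacle. Let $\Psi$ be nodeless of period $n$, and define $\Phi[m] := 1/\Psi[m]$. Then $\Phi$ is an $n$-periodic sequence and clearly satisfies $\Psi \otimes \Phi = \mathbf{1}$ pointwise. The issue is that $\Phi$ must be shown to lie in $\mathcal{V}$, and not merely in $\mathbb{C}^{\mathbb{Z}}$.

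\begin{enumerate}
\item Identify the space $\mathcal{P}_n \cong \mathbb{C}^n$ of $n$-periodic sequences with its restriction to $\{1,\dots,n\}$.
\item By Theorem~\ref{thm:mode_orthonormality} (equivalently Lemma~\ref{lem:geometric_sum}), the $n$ waves $\mathbf{w}(k/n, 0)$, $k = 0,\dots,n-1$, are orthonormal for the inner product of Definition~\ref{def:principal_inner_product} restricted to $\mathcal{P}_n$.
\item Being $n$ orthonormal vectors, they are linearly independent in the $n$-dimensional space $\mathcal{P}_n$, and therefore form a basis of it.
\item Explicitly, the inversion formula
\[
\Phi = \sum_{k=0}^{n-1} \langle \Phi, \mathbf{w}(k/n,0)\rangle \, \mathbf{w}(k/n,0)
\]
exhibits $\Phi$ as a finite combination of elements of $\mathcal{G} \subset \mathcal{G}_{\mathrm{poly}}$, so $\Phi \in \mathcal{V}$.
\end{enumerate}

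As an alternative to step 2, one can invoke the Kronecker basis of Theorem~\ref{thm:basis_construction} and write $\Phi = \sum_{\xi} \Phi[\xi] \cdot \tfrac{1}{n}\mathbf{u}(1/n,\xi)$.

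\emph{Group structure.} Three facts are needed.
\begin{itemize}
\item Closure: if $\Psi_1, \Psi_2$ are nodeless, then their product in $\mathcal{V}$ is nodeless, since pointwise products of nonzero complex numbers are nonzero.
\item Identity and commutativity: the identity is $\mathbf{1} = \mathbf{w}(0,0)$, and commutativity and associativity are inherited from $\mathbb{C}$.
\item Inverses: these exist and lie in $\mathcal{V}$ by the sufficiency argument, and the inverse of a nodeless sequence is again nodeless.
\end{itemize}

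\emph{Item 1.} Elements of $\mathcal{G}$ are unimodular, hence nodeless. Moreover, $\mathbf{w}(f,g)^{-1}[m] = e^{-2\pi i(fm+g)} = \overline{\mathbf{w}(f,g)[m]} = \mathbf{w}(-f,-g)[m]$. By Theorem~\ref{thm:main_iso}, $\mathcal{G}$ is a group under $\otimes$, so it is a subgroup of $\mathcal{U}(\mathcal{V})$. The inclusion is proper: the constant $2\cdot\mathbf{1}$ is nodeless but not unimodular, so it lies in $\mathcal{U}(\mathcal{V})$ but not in $\mathcal{G}$.

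\emph{Item 2.} The map $c \mapsto c\,\mathbf{w}(0,0)$ is an injective ring homomorphism $\mathbb{C} \to \mathcal{V}$, since addition and $\otimes$ act on constants exactly as in $\mathbb{C}$. Restricting to $c \neq 0$ gives the group isomorphism $\mathbb{W}_\mathbb{C} \cong \mathbb{C}^\times$, and restricting to $c \in \mathbb{Q}^\times$ gives $\mathbb{W}_\mathbb{Q} \cong \mathbb{Q}^\times$. The word ``subfield'' should be read as saying that the constants together with $\mathbf{0}$ form a copy of the field $\mathbb{C}$, whose multiplicative group $\mathbb{W}_\mathbb{C}$ sits inside $\mathcal{U}(\mathcal{V})$.
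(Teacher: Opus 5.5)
Your proposal is correct and follows essentially the same route as the paper. Necessity comes from evaluating $\Psi\otimes\Phi$ at a node, and sufficiency comes from taking the pointwise reciprocal and observing that the $n$ plane waves $\mathbf{w}(k/n,0)$ span every $n$-periodic sequence (equivalently, the Kronecker sieves of Theorem~\ref{thm:basis_construction} do, which is what the paper cites), so $1/\Psi$ already lies in $\mathcal{V}$. Your version is slightly more complete: you give the witness $2\cdot\mathbf{1}$ for properness of $\mathcal{G}\subset\mathcal{U}(\mathcal{V})$ and the correct reading of the ``subfield'' clause (the constants together with $\mathbf{0}$ form a copy of $\mathbb{C}$, whose nonzero part is $\mathbb{W}_\mathbb{C}\cong\mathbb{C}^\times$), and the paper's proof addresses neither.
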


\begin{proof}
If $\Psi[m_0] = 0$ for some $m_0$, then for any sequence $\Phi \in \mathcal{V}$, $(\Psi \otimes \Phi)[m_0] = 0 \cdot \Phi[m_0] = 0 \neq 1$, so no inverse can exist. 
Conversely, if $\Psi[m] \neq 0$ for all $1 \le m \le n$, define $\Phi[m] := 1/\Psi[m]$. Since $\Psi$ is $n$-periodic, $\Phi$ is strictly $n$-periodic and bounded:
\begin{equation}
\max_{m} |\Phi[m]| = \frac{1}{\min_m |\Psi[m]|} < \infty.
\end{equation}
By Theorem~\ref{thm:fourier_basis}, any periodic sequence of period $n$ in $\mathbb{C}^n$ lies in the linear span of the roots of unity basis $\{\mathbf{w}(k/n, 0)\}_{k=0}^{n-1} \subset \mathcal{V}$. Hence $\Phi \in \mathcal{V}$, satisfying $\Psi \otimes \Phi = \mathbf{1}$.
Associativity, commutativity, and existence of the unit $\mathbf{1}$ follow directly from the field properties of $\mathbb{C}$ at each lattice site $m$.
For pure carriers $\mathbf{w}(f, g)[m] = e^{2\pi i (fm + g)}$, its magnitude is $|e^{2\pi i(fm+g)}| = 1 \neq 0$, and $(e^{2\pi i (fm+g)})^{-1} = e^{-2\pi i(fm+g)} = \mathbf{w}(-f, -g)[m]$.
\end{proof}

\subsection{The Logarithmic Sum-to-Product Identity}

Because the elements of the unit group $\mathcal{U}(\mathcal{V})$ are nowhere-vanishing, their spatial phases may be tracked continuously or logarithmically. This leads to a remarkable identity demonstrating that \textbf{additive wave superposition can be expressed entirely via products, square roots, and logarithms} of invertible wave numbers.

\begin{theorem}[Logarithmic Superposition Formula]
\label{thm:log_superposition}
Let $\bm{\omega}_1, \bm{\omega}_2 \in \mathcal{U}(\mathcal{V})$ be two invertible wave numbers such that the quotient wave $\bm{\omega}_1 \odiv \bm{\omega}_2$ does not cross the branch cut $(-\infty, 0]$ of the principal complex logarithm $\operatorname{Log}$. 
Then their additive sum $\bm{\omega}_1 \oplus \bm{\omega}_2$ and difference $\bm{\omega}_1 \ominus \bm{\omega}_2$ admit the exact multiplicative representation:
\begin{align}
\bm{\omega}_1 \oplus \bm{\omega}_2 &= 2 \cos\left( \frac{1}{i} \operatorname{Log}\left( \frac{\bm{\omega}_1}{\bm{\omega}_2} \right)^{1/2} \right) \otimes \left( \bm{\omega}_1 \otimes \bm{\omega}_2 \right)^{1/2}, \label{eq:log_sum} \\
\bm{\omega}_1 \ominus \bm{\omega}_2 &= 2i \sin\left( \frac{1}{i} \operatorname{Log}\left( \frac{\bm{\omega}_1}{\bm{\omega}_2} \right)^{1/2} \right) \otimes \left( \bm{\omega}_1 \otimes \bm{\omega}_2 \right)^{1/2}. \label{eq:log_diff}
\end{align}
\end{theorem}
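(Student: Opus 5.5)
The identity is pointwise, so I will verify it site by site. Fix $m\in\mathbb{Z}$ and write $a=\bm{\omega}_1[m]$, $b=\bm{\omega}_2[m]$. Both are nonzero because $\bm{\omega}_1,\bm{\omega}_2\in\mathcal{U}(\mathcal{V})$ are nodeless (Theorem~\ref{thm:unit_group}). Set $z=a/b=(\bm{\omega}_1\odiv\bm{\omega}_2)[m]$. By hypothesis $z\notin(-\infty,0]$, so the principal logarithm $\operatorname{Log} z$ is defined, and the principal square root satisfies $z^{1/2}=\exp(\tfrac12\operatorname{Log} z)$ with $\operatorname{Log}(z^{1/2})=\tfrac12\operatorname{Log} z$. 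This holds because $\operatorname{Im}\tfrac12\operatorname{Log}z\in(-\pi/2,\pi/2)$, so no $2\pi i$ correction appears. I will read the argument of $\cos$ and $\sin$ in \eqref{eq:log_sum}--\eqref{eq:log_diff} as
\begin{equation*}
\theta:=\frac{1}{i}\operatorname{Log}\left(z^{1/2}\right)=\frac{1}{2i}\operatorname{Log} z\in\mathbb{C},
\end{equation*}
so that $e^{2i\theta}=z$ and $e^{i\theta}=z^{1/2}$. In general $\theta$ is complex, since $|z|\neq1$ unless both waves are unimodular. The trigonometric functions are therefore understood as the entire functions $\cos\theta=\tfrac12(e^{i\theta}+e^{-i\theta})$ and $\sin\theta=\tfrac1{2i}(e^{i\theta}-e^{-i\theta})$.

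The core computation is the half-angle factorization already used for $A_2$ in Theorem~\ref{thm:phasor_superposition} and for $\lambda_+$ in Theorem~\ref{thm:derivative_eigenfunction}. I factor out the ``geometric mean'' $b\,e^{i\theta}$:
\begin{equation*}
a+b=b\left(e^{2i\theta}+1\right)=b\,e^{i\theta}\left(e^{i\theta}+e^{-i\theta}\right)=2\cos\theta\cdot b\,e^{i\theta},
\end{equation*}
and in the same way $a-b=2i\sin\theta\cdot b\,e^{i\theta}$. The remaining task is to identify $b\,e^{i\theta}$ with $(\bm{\omega}_1\otimes\bm{\omega}_2)^{1/2}[m]$. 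Since $(b\,e^{i\theta})^2=b^2z=ab$, it is a square root of $ab$.

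I expect the main obstacle to be this branch bookkeeping. The sequence $(\bm{\omega}_1\otimes\bm{\omega}_2)^{1/2}$ is two-valued at each site, and the principal root of $ab$ need not equal $b\,z^{1/2}$. For example, taking $b=-1$ flips the sign. My plan is to make the branch explicit: I will define the square root in \eqref{eq:log_sum}--\eqref{eq:log_diff} to be the branch $\bm{\omega}_2\otimes(\bm{\omega}_1\odiv\bm{\omega}_2)^{1/2}$, with the principal root of the quotient. This is the natural analogue of the multi-branch root family of Definition~\ref{def:operators}, where the branch label is chosen sitewise. I will also remark that the other branch simply flips the sign of both the carrier and the $\cos$/$\sin$ envelope, so the product is unchanged. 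Hence the identity holds for either consistent joint choice, which is the form in which I would state the result.

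Finally, I will check that every factor lies in $\mathcal{V}$. The quotient $\bm{\omega}_1\odiv\bm{\omega}_2$ is periodic with period $L=\operatorname{lcm}$ of the two periods. Consequently $\theta[m]$, $\cos\theta[m]$, $\sin\theta[m]$ and the chosen root are all $L$-periodic sequences, and the root is nowhere-vanishing. By the argument of Theorem~\ref{thm:unit_group}, which appeals to Theorem~\ref{thm:fourier_basis}, every $L$-periodic sequence lies in $\mathcal{V}$. So both sides are elements of $\mathcal{V}$, they agree at every site, and they are therefore equal as wave numbers.
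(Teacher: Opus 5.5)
Your proof is correct and is the same pointwise computation as the paper's. The paper writes $z_1/z_2=re^{i\Delta\theta}$ and expands $2\cos(\cdot)$ as $r^{1/2}e^{i\Delta\theta/2}+r^{-1/2}e^{-i\Delta\theta/2}$, which is your factorization $a+b=2\cos\theta\cdot b\,e^{i\theta}$. Your explicit branch $(\bm{\omega}_1\otimes\bm{\omega}_2)^{1/2}:=\bm{\omega}_2\otimes(\bm{\omega}_1\odiv\bm{\omega}_2)^{1/2}$ is genuinely needed: the paper tacitly uses $(z_1/z_2)^{1/2}=z_1^{1/2}/z_2^{1/2}$ and $(z_1z_2)^{1/2}=z_1^{1/2}z_2^{1/2}$, which fail for principal roots (for $\bm{\omega}_1=\bm{\omega}_2=-\mathbf{1}$ the principal-root reading gives $2\cdot\mathbf{1}$ instead of $-2\cdot\mathbf{1}$).
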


\begin{proof}
Let $z_1, z_2 \in \mathbb{C}^\times$ be non-zero complex numbers representing the values of $\bm{\omega}_1[m]$ and $\bm{\omega}_2[m]$ at any lattice site $m$.
Write the quotient in polar form $\frac{z_1}{z_2} = r e^{i \Delta \theta}$ with $\Delta \theta \in (-\pi, \pi)$.
Taking the principal square root of the quotient:
\begin{equation}
\left( \frac{z_1}{z_2} \right)^{1/2} = r^{1/2} e^{i \Delta \theta / 2}.
\end{equation}
Applying the complex logarithm and scaling by $1/i = -i$:
\begin{equation}
\frac{1}{i} \operatorname{Log}\left( \left( \frac{z_1}{z_2} \right)^{1/2} \right) = -i \left( \frac{1}{2} \ln r + i \frac{\Delta \theta}{2} \right) = \frac{\Delta \theta}{2} - i \frac{\ln r}{2}.
\end{equation}
Applying the cosine function to this complex argument:
\begin{equation}
\cos\left( \frac{\Delta \theta}{2} - i \frac{\ln r}{2} \right) = \frac{1}{2} \left( \exp\left( i \frac{\Delta \theta}{2} + \frac{\ln r}{2} \right) + \exp\left( -i \frac{\Delta \theta}{2} - \frac{\ln r}{2} \right) \right) = \frac{1}{2} \left( r^{1/2} e^{i \Delta \theta / 2} + r^{-1/2} e^{-i \Delta \theta / 2} \right).
\end{equation}
Multiplying by $2 (z_1 z_2)^{1/2}$:
\begin{align}
2 \cos\left( \frac{1}{i} \operatorname{Log}\left( \frac{z_1}{z_2} \right)^{1/2} \right) (z_1 z_2)^{1/2} 
&= \left( \left( \frac{z_1}{z_2} \right)^{1/2} + \left( \frac{z_2}{z_1} \right)^{1/2} \right) (z_1 z_2)^{1/2} \nonumber \\
&= \frac{z_1^{1/2}}{z_2^{1/2}} z_1^{1/2} z_2^{1/2} + \frac{z_2^{1/2}}{z_1^{1/2}} z_1^{1/2} z_2^{1/2} \nonumber \\
&= z_1 + z_2.
\end{align}
Because this algebraic identity holds pointwise for each lattice site $m \in \mathbb{Z}$, Eq.~\eqref{eq:log_sum} holds across the entire wave sequence.
The proof for the difference $\bm{\omega}_1 \ominus \bm{\omega}_2$ in Eq.~\eqref{eq:log_diff} follows identically by replacing $\cos(w) = \frac{e^{iw}+e^{-iw}}{2}$ with $i \sin(w) = \frac{e^{iw}-e^{-iw}}{2}$.
\end{proof}

\begin{corollary}[Elimination of Addition via Multiplicative Logarithms]
Theorem~\ref{thm:log_superposition} establishes that the additive operation $\oplus$ on invertible wave numbers can be formally translated into the multiplicative group action of $\otimes$, coupled with the transcendent functions $\{\cos, \sin, \operatorname{Log}\}$ acting on the quotient wave $\bm{\omega}_1 \odiv \bm{\omega}_2$.
\end{corollary}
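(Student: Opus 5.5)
The corollary is a direct reformulation of Theorem~\ref{thm:log_superposition}, so the plan is to read off the right-hand sides of Eqs.~\eqref{eq:log_sum} and~\eqref{eq:log_diff} and check that every ingredient is either a multiplicative operation in $\mathcal{U}(\mathcal{V})$ or a transcendental function applied pointwise to the quotient wave $\bm{\omega}_1 \odiv \bm{\omega}_2$.

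\textbf{Step 1: the quotient wave.} Fix $\bm{\omega}_1, \bm{\omega}_2 \in \mathcal{U}(\mathcal{V})$ satisfying the branch-cut hypothesis. By Theorem~\ref{thm:unit_group}, $\mathcal{I}_{\mathrm{inv}}(\bm{\omega}_2)$ exists in $\mathcal{U}(\mathcal{V})$. Hence $\bm{\omega}_1 \odiv \bm{\omega}_2 = \bm{\omega}_1 \otimes \mathcal{I}_{\mathrm{inv}}(\bm{\omega}_2)$ is produced purely by $\otimes$ and inversion.

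\textbf{Step 2: the envelope.} The factor $2\cos\bigl(\tfrac{1}{i}\operatorname{Log}(\bm{\omega}_1 \odiv \bm{\omega}_2)^{1/2}\bigr)$ is obtained from the quotient wave by three pointwise operations: the principal square root, $\operatorname{Log}$, and $\cos$ (or $i\sin$ for the difference). The branch-cut hypothesis makes each of these single-valued at every site, exactly as in the proof of the theorem.

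\textbf{Step 3: the carrier.} The factor $(\bm{\omega}_1 \otimes \bm{\omega}_2)^{1/2}$ is a product followed by a root. I will take the root in the same pointwise sense as in the proof of Theorem~\ref{thm:log_superposition}, with $(z_1z_2)^{1/2}$ chosen compatibly with $z_1^{1/2} z_2^{1/2}$. For pure carriers this is the multi-branch root operator of Definition~\ref{def:operators}.

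\textbf{Step 4: assembly.} Combining the two factors with one further $\otimes$ reproduces $\bm{\omega}_1 \oplus \bm{\omega}_2$ (respectively $\bm{\omega}_1 \ominus \bm{\omega}_2$) by the theorem. No addition of waves is used anywhere; the only additions are the scalar ones hidden inside the definitions of $\cos$ and $\sin$.

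\textbf{Main obstacle: branch consistency.} The delicate step is making the choice of square roots consistent across lattice sites. The identity needs $(z_1/z_2)^{1/2}\,(z_1z_2)^{1/2} = z_1$ at each site, which fixes the branch of $(z_1z_2)^{1/2}$ once the principal branch of $(z_1/z_2)^{1/2}$ is chosen. I will make this explicit in one of two ways:
\begin{itemize}
\item choose $z_1^{1/2}$ and $z_2^{1/2}$ sitewise and define both roots from them; or
\item for unimodular carriers, lift to phases as in Section~\ref{sec:foundations}, where the root operator acts linearly on $\theta$ and the compatibility becomes the additive identity $\tfrac{\theta_1-\theta_2}{2} + \tfrac{\theta_1+\theta_2}{2} = \theta_1$.
\end{itemize}
Because the result is a formal translation rather than a closure claim, I will state it as holding sitewise under this branch convention. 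This avoids having to assert that $\cos$ of a wave number stays inside $\mathcal{V}$. By periodicity and Theorem~\ref{thm:unit_group}, any periodic nodeless sequence does lie in $\mathcal{V}$ anyway.
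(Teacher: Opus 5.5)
Your proposal is correct and matches the paper, which offers no separate argument for this corollary beyond the sitewise identity of Theorem~\ref{thm:log_superposition}. Your explicit convention that the root of $\bm{\omega}_1 \otimes \bm{\omega}_2$ be chosen so that $(\bm{\omega}_1 \odiv \bm{\omega}_2)^{1/2} \otimes (\bm{\omega}_1 \otimes \bm{\omega}_2)^{1/2} = \bm{\omega}_1$ usefully repairs a step the paper leaves implicit: with principal roots in both places the formula can fail by a sign (take $z_{1,2} = e^{\pm 0.9\pi i}$, where the right-hand side gives $2\cos(0.1\pi)$ instead of $-2\cos(0.1\pi)$), and that compatibility is in fact the only branch condition needed, since $2\cos\bigl(\tfrac{1}{i}L\bigr) = w + w^{-1}$ and $2i\sin\bigl(\tfrac{1}{i}L\bigr) = w - w^{-1}$ for any nonzero $w$ and any logarithm $L$ of $w$, so the branch-cut hypothesis plays no role in the identity itself.
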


\subsection{Discrete M\"{o}bius Transformations and Wave Equations}

Because the division operator $\odiv$ is well-defined on the unit group $\mathcal{U}(\mathcal{V})$, one may construct rational functions and solve projective algebraic equations directly in the wave algebra.

\begin{definition}[Discrete M\"{o}bius Transformation of Wave Numbers]
\label{def:mobius}
Let $\mathbf{A}, \mathbf{B}, \mathbf{C}, \mathbf{D} \in \mathcal{V}$ be periodic coefficient waves satisfying the non-degeneracy condition $\mathbf{A}\mathbf{D} - \mathbf{B}\mathbf{C} \in \mathcal{U}(\mathcal{V})$.
The \textbf{wave M\"{o}bius transformation} $\mathcal{M}: \mathcal{U}(\mathcal{V}) \dashrightarrow \mathcal{U}(\mathcal{V})$ is defined by:
\begin{equation}
\mathcal{M}(\bm{\omega}) := (\mathbf{A} \otimes \bm{\omega} \oplus \mathbf{B}) \odiv (\mathbf{C} \otimes \bm{\omega} \oplus \mathbf{D}),
\end{equation}
defined on all $\bm{\omega}$ such that $\mathbf{C} \otimes \bm{\omega} \oplus \mathbf{D}$ is nodeless.
\end{definition}

\begin{theorem}[Fixed-Point Wave Equation]
\label{thm:fixed_point_mobius}
The fixed points $\mathcal{M}(\bm{\omega}) = \bm{\omega}$ of the wave M\"{o}bius transformation satisfy the quadratic wave equation:
\begin{equation}
\mathbf{C} \otimes \bm{\omega}^{\otimes 2} \oplus (\mathbf{D} \ominus \mathbf{A}) \otimes \bm{\omega} \ominus \mathbf{B} = \mathbf{0}.
\end{equation}
When $\mathbf{C} \in \mathcal{U}(\mathcal{V})$ is nodeless, the exact fixed-point wave solutions are given by the algebraic wave quadratic formula:
\begin{equation}
\bm{\omega}_{\pm} = \left[ -(\mathbf{D} \ominus \mathbf{A}) \pm \sqrt{(\mathbf{D} \ominus \mathbf{A})^{\otimes 2} \oplus 4 \, (\mathbf{B} \otimes \mathbf{C})} \right] \odiv (2\mathbf{C}),
\end{equation}
where $\sqrt{\cdot}$ denotes the pointwise principal branch square root.
\end{theorem}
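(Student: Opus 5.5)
The plan is to argue entirely pointwise, as in the proof of Theorem~\ref{thm:log_superposition}. Fix a lattice site $m$ and write $a = \mathbf{A}[m]$, $b = \mathbf{B}[m]$, $c = \mathbf{C}[m]$, $d = \mathbf{D}[m]$, $w = \bm{\omega}[m]$. The fixed-point condition $\mathcal{M}(\bm{\omega}) = \bm{\omega}$ is only meaningful on the domain of Definition~\ref{def:mobius}, where $\mathbf{C}\otimes\bm{\omega}\oplus\mathbf{D}$ is nodeless. On that domain I will multiply both sides by this nodeless wave, using that it is a unit of $\mathcal{U}(\mathcal{V})$ by Theorem~\ref{thm:unit_group}. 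This turns the condition into $a w + b = w(c w + d)$ at every site, that is, $c w^2 + (d-a) w - b = 0$. Since the site $m$ was arbitrary, this is exactly the stated quadratic wave equation in $\mathcal{V}$. The steps are reversible on the same domain, so the fixed points of $\mathcal{M}$ are precisely the solutions of the quadratic that make $\mathbf{C}\otimes\bm{\omega}\oplus\mathbf{D}$ nodeless.

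For the second part, assume $\mathbf{C}$ is nodeless, so $c \neq 0$ at every site. The scalar quadratic formula then gives $w = \bigl(-(d-a) \pm \sqrt{(d-a)^2 + 4bc}\bigr)/(2c)$. This holds whichever square root of the discriminant is chosen, so in particular it holds for the principal branch, applied consistently with one sign at every site. To check that the resulting $\bm{\omega}_\pm$ lie in $\mathcal{V}$, I note that the whole construction takes place among periodic sequences:
\begin{itemize}
\item the coefficient waves share a common period $L$, namely the lcm of their periods;
\item the discriminant and its pointwise principal square root are therefore $L$-periodic;
\item every $L$-periodic sequence lies in $\operatorname{span}_{\mathbb{C}}\{\mathbf{w}(k/L,0)\}$, which is the argument already used in the proof of Theorem~\ref{thm:unit_group} via Theorem~\ref{thm:fourier_basis};
\item division by $2\mathbf{C}$ is then the operation $\odiv$ by a unit.
\end{itemize}

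The main obstacle is a domain issue rather than the algebra. A solution $\bm{\omega}_\pm$ of the quadratic might make $\mathbf{C}\otimes\bm{\omega}\oplus\mathbf{D}$ vanish at some site, and then it is not a fixed point in the sense of Definition~\ref{def:mobius}. My first attempt to rule this out is the identity $c w + d = \bigl((a+d) \pm \sqrt{\Delta}\bigr)/2$, where $\Delta = (d-a)^2 + 4bc$. Using $(a+d)^2 - \Delta = 4(ad - bc)$ and the non-degeneracy hypothesis $\mathbf{A}\mathbf{D} - \mathbf{B}\mathbf{C} \in \mathcal{U}(\mathcal{V})$, the right-hand side cannot vanish: if it did, then $\Delta = (a+d)^2$, which would force $ad - bc = 0$ at that site. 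This shows the denominator is nodeless, so both $\bm{\omega}_\pm$ are genuine fixed points. I will present this as the final step of the proof.
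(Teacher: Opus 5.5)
The paper states this theorem without proof, so there is no competing route. Your sitewise reduction is the natural argument, and its algebra is correct. The fixed-point condition becomes $cw^2+(d-a)w-b=0$ with discriminant $\Delta=(d-a)^2+4bc$. Your identity $cw+d=\tfrac{1}{2}\bigl((a+d)\pm\sqrt{\Delta}\bigr)$, together with $(a+d)^2-\Delta=4(ad-bc)$, does show that non-degeneracy keeps $\mathbf{C}\otimes\bm{\omega}_\pm\oplus\mathbf{D}$ nodeless.

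\textbf{The gap.} It lies in the domain check you single out as the main obstacle. Definition~\ref{def:mobius} makes $\mathcal{M}$ a partial map from $\mathcal{U}(\mathcal{V})$ to $\mathcal{U}(\mathcal{V})$, so a fixed point must itself be nodeless, and you never verify $\bm{\omega}_\pm\in\mathcal{U}(\mathcal{V})$. This can fail. At each site, $\bm{\omega}_+[m]$ and $\bm{\omega}_-[m]$ are the two roots of the sitewise quadratic, so by Vieta $\bm{\omega}_+[m]\,\bm{\omega}_-[m]=-b/c$. Hence both are nodeless if and only if $\mathbf{B}$ is.

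\textbf{Counterexample.} Take constant coefficients $\mathbf{A}=\mathbf{1}$, $\mathbf{B}=\mathbf{0}$, $\mathbf{C}=\mathbf{1}$, $\mathbf{D}=2\cdot\mathbf{1}$. Then $\mathbf{C}$ and $\mathbf{A}\mathbf{D}-\mathbf{B}\mathbf{C}=2\cdot\mathbf{1}$ are nodeless and the discriminant is $\mathbf{1}$. The solution $\bm{\omega}_+=\mathbf{0}$ passes your denominator test, since $\mathbf{C}\otimes\bm{\omega}_+\oplus\mathbf{D}=2\cdot\mathbf{1}$, but it is not in $\mathcal{U}(\mathcal{V})$. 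So your closing claim that both $\bm{\omega}_\pm$ are genuine fixed points is false as stated.

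\textbf{The fix.} You need one of two repairs:
\begin{itemize}
\item add the hypothesis $\mathbf{B}\in\mathcal{U}(\mathcal{V})$, after which the Vieta identity closes the gap in one line; or
\item declare explicitly that you take $\mathcal{M}$ to be defined on all $\bm{\omega}\in\mathcal{V}$ with nodeless denominator, which departs from the paper's definition.
\end{itemize}
The same correction applies to your first-paragraph claim that the fixed points are \emph{precisely} the solutions making $\mathbf{C}\otimes\bm{\omega}\oplus\mathbf{D}$ nodeless.

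\textbf{What the second part can mean.} Your argument shows that $\bm{\omega}_\pm$ are fixed points, not that they are the only ones, and the latter is false. Because $\mathcal{V}$ has zero divisors, the root can be chosen independently at each site. Any periodic pattern of choices gives an element of $\mathcal{V}$ solving the quadratic, so there are infinitely many solutions as soon as the discriminant is not identically zero. These mixed solutions also pass your non-degeneracy test, subject to the same caveat about $\mathbf{B}$. Indeed, substituting $w=-d/c$ into the quadratic gives $(ad-bc)/c\neq 0$, so no root at any site makes $cw+d$ vanish. Your phrase ``applied consistently with one sign at every site'' gestures at this. Turn it into an explicit statement that $\bm{\omega}_\pm$ are two distinguished members of a larger solution set, since that is the only provable reading of ``the exact fixed-point wave solutions.''
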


\subsection{The Total Ring of Fractions: Field Localization of Wave Numbers}

To rigorously resolve the tension between zero-divisors and division, we apply the standard algebraic machinery of ring localization.

\begin{definition}[The Regular Multiplicative Set $\mathcal{S}_{\mathrm{reg}}$]
Let $\mathcal{S}_{\mathrm{reg}} \subset \mathcal{V}$ be the set of all \textbf{non-zero-divisors} in the wave ring $\mathcal{V}$:
\begin{equation}
\mathcal{S}_{\mathrm{reg}} := \left\{ \Phi \in \mathcal{V} \mid \Phi \otimes \Psi = \mathbf{0} \implies \Psi = \mathbf{0} \right\}.
\end{equation}
\end{definition}

\begin{proposition}[Identity of Non-Zero-Divisors with Nodeless Sequences]
\label{prop:regular_elements}
An element $\Phi \in \mathcal{V}$ is a non-zero-divisor if and only if it is nodeless ($\Phi[m] \neq 0$ for all $m \in \mathbb{Z}$). Consequently:
\begin{equation}
\mathcal{S}_{\mathrm{reg}} \equiv \mathcal{U}(\mathcal{V}).
\end{equation}
\end{proposition}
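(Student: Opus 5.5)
We prove the two implications separately and then read off $\mathcal{S}_{\mathrm{reg}} = \mathcal{U}(\mathcal{V})$ from Theorem~\ref{thm:unit_group}.

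\textbf{Nodeless implies regular.} Suppose $\Phi[m]\neq 0$ for all $m\in\mathbb{Z}$ and $\Phi\otimes\Psi=\mathbf{0}$. At each site $\Phi[m]\Psi[m]=0$ in the field $\mathbb{C}$, so $\Psi[m]=0$ for every $m$. Hence $\Psi=\mathbf{0}$. Alternatively, Theorem~\ref{thm:unit_group} gives $\Phi^{-1}\in\mathcal{V}$, and $\Psi=\Phi^{-1}\otimes\Phi\otimes\Psi=\mathbf{0}$.

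\textbf{Regular implies nodeless.} We argue by contraposition. Suppose $\Phi[m_0]=0$ for some $m_0$.
\begin{itemize}
\item Let $n$ be a common period of $\Phi$. Elements of $\mathcal{V}$ that are not purely linear carriers may have larger periods, but every element of $\mathcal{G}_{\mathrm{poly}}$ is periodic because it has rational polynomial phase, so such an $n$ exists.
\item Take the Kronecker comb $\delta^{(n)}_{m_0}$, equal to $1$ at sites $m\equiv m_0 \pmod n$ and $0$ elsewhere.
\item By Theorem~\ref{thm:basis_construction}, or equivalently by Lemma~\ref{lem:geometric_sum} applied to $\frac1n\sum_{k=0}^{n-1}\mathbf{w}(k/n,-km_0/n)$, this comb lies in $\mathcal{V}$.
\end{itemize}
The comb is nonzero. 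Moreover $\Phi\otimes\delta^{(n)}_{m_0}$ vanishes off the residue class of $m_0$ trivially, and on that class it equals $\Phi[m_0]=0$ by $n$-periodicity. So $\Phi$ is a zero-divisor.

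The main point requiring care is that $n$-periodicity of $\Phi$ is what lets a single node at $m_0$ propagate to the whole residue class. Without it the annihilating comb would not work.

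Degree-$d$ chirps have periods dividing $d!$ times the denominators, which is still finite. So I would first note, from Theorem~\ref{thm:poly_iso}, that every $u\in\mathcal{G}_{\mathrm{poly}}$ is periodic: $\binom{m}{k}$ is periodic modulo any integer $N$ with period dividing a power of $N$ times $k!$. Then finite sums of such sequences are periodic with the lcm period.

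With both directions in hand, the regular elements are exactly the nodeless ones. Theorem~\ref{thm:unit_group} identifies the nodeless elements with $\mathcal{U}(\mathcal{V})$, which gives $\mathcal{S}_{\mathrm{reg}}\equiv\mathcal{U}(\mathcal{V})$.
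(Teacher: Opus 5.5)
Your proof is correct and follows the same route as the paper. The paper also annihilates a $\Phi$ with a node at $m_0$ using the Kronecker comb $\delta_{m_0}$ of period $\operatorname{per}(\Phi)$, and proves the converse by sitewise cancellation in $\mathbb{C}$. Your explicit checks that every element of $\mathcal{V}$ is periodic, and that the comb $\frac{1}{n}\sum_{k=0}^{n-1}\mathbf{w}(k/n,-km_0/n)$ actually lies in $\mathcal{V}$, fill in steps the paper leaves implicit.
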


\begin{proof}
If $\Phi[m_0] = 0$, then the non-zero Kronecker sequence $\delta_{m_0}[m] = \begin{cases} 1, & m \equiv m_0 \pmod{\operatorname{per}(\Phi)} \\ 0, & \text{otherwise} \end{cases} \neq \mathbf{0}$ satisfies $\Phi \otimes \delta_{m_0} = \mathbf{0}$, showing that $\Phi$ is a zero-divisor.
Conversely, if $\Phi[m] \neq 0$ for all $m$, then $\Phi \otimes \Psi = \mathbf{0} \implies \Phi[m]\Psi[m] = 0 \implies \Psi[m] = 0$ for all $m$, so $\Psi = \mathbf{0}$.
\end{proof}

\begin{theorem}[The Total Ring of Fractions $\mathcal{Q}(\mathcal{V})$]
\label{thm:total_fraction_ring}
Let $\mathcal{Q}(\mathcal{V}) := \mathcal{S}_{\mathrm{reg}}^{-1} \mathcal{V} = \mathcal{U}(\mathcal{V})^{-1} \mathcal{V}$ be the localization of the periodic wave ring $\mathcal{V}$ at the multiplicative set of nodeless sequences.
Then:
\begin{enumerate}
    \item Every element of $\mathcal{Q}(\mathcal{V})$ can be written as a formal quotient $\Psi \odiv \Phi$, where $\Psi \in \mathcal{V}$ and $\Phi \in \mathcal{U}(\mathcal{V})$.
    \item Because every nodeless sequence $\Phi \in \mathcal{U}(\mathcal{V})$ already possesses a two-sided inverse $\Phi^{-1} \in \mathcal{V}$ (by Theorem~\ref{thm:unit_group}), the localization map is a natural algebraic isomorphism:
    \begin{equation}
    \mathcal{Q}(\mathcal{V}) \cong \mathcal{V}.
    \end{equation}
    \item For any fixed period $n \in \mathbb{N}$, the subring of $n$-periodic sequences $\mathcal{V}_n$ decomposes as the direct sum of $n$ copies of the complex field:
    \begin{equation}
    \mathcal{V}_n \cong \bigoplus_{k=1}^n \mathbb{C},
    \end{equation}
    in which the field of scalars $\mathbb{W}_\mathbb{C} \cong \mathbb{C}$ embeds diagonally along the uniform carrier $\mathbf{w}(0, 0)$.
\end{enumerate}
\end{theorem}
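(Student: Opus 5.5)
The plan is to handle the three items in order, using the universal property of localization together with the characterization of units already proved in Theorem~\ref{thm:unit_group} and Proposition~\ref{prop:regular_elements}.

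\textbf{Item 1.} I would recall the standard construction of $\mathcal{S}_{\mathrm{reg}}^{-1}\mathcal{V}$ as equivalence classes of pairs $(\Psi,\Phi)\in\mathcal{V}\times\mathcal{S}_{\mathrm{reg}}$. Here $(\Psi,\Phi)\sim(\Psi',\Phi')$ iff $t\otimes(\Psi\otimes\Phi'-\Psi'\otimes\Phi)=\mathbf{0}$ for some $t\in\mathcal{S}_{\mathrm{reg}}$. Since $t$ is a non-zero-divisor, this reduces to the pointwise identity $\Psi\otimes\Phi'=\Psi'\otimes\Phi$. By Proposition~\ref{prop:regular_elements}, $\mathcal{S}_{\mathrm{reg}}=\mathcal{U}(\mathcal{V})$, so every element of $\mathcal{Q}(\mathcal{V})$ is a formal quotient $\Psi\odiv\Phi$ with $\Phi$ nodeless. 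This item is immediate.

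\textbf{Item 2.} I would take the canonical map $\lambda:\mathcal{V}\to\mathcal{Q}(\mathcal{V})$, $\Psi\mapsto\Psi/\mathbf{1}$.
\begin{itemize}
\item \emph{Injectivity.} If $\Psi/\mathbf{1}=0$, then $t\otimes\Psi=\mathbf{0}$ for some regular $t$, hence $\Psi=\mathbf{0}$.
\item \emph{Surjectivity.} Given $\Psi/\Phi$, Theorem~\ref{thm:unit_group} supplies $\Phi^{-1}\in\mathcal{V}$. Then $\Psi/\Phi=(\Psi\otimes\Phi^{-1})/\mathbf{1}$, because $\Psi\otimes\mathbf{1}=(\Psi\otimes\Phi^{-1})\otimes\Phi$.
\end{itemize}
Equivalently, one can invoke the general fact that localizing at a set of units is an isomorphism, via the universal property. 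Ring-homomorphism compatibility is automatic for $\lambda$.

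\textbf{Item 3.} I would define the evaluation map $\mathrm{ev}:\mathcal{V}_n\to\mathbb{C}^n$, $\Psi\mapsto(\Psi[1],\dots,\Psi[n])$, with componentwise operations. It is a ring homomorphism because $+$ and $\otimes$ are pointwise. It is injective because an $n$-periodic sequence is determined by one period. For surjectivity I would use Theorem~\ref{thm:basis_construction}: the normalized sequences $\tfrac1n\mathbf{u}(1/n,\xi)$ lie in $\mathcal{V}$ and restrict to the standard idempotents $\mathbf{e}_\xi$. Alternatively, the $n$ characters $\mathbf{w}(k/n,0)$ span $\mathbb{C}^n$ by the orthogonality of Theorem~\ref{thm:mode_orthonormality}.

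I would then note that the $\mathbf{e}_\xi$ are orthogonal idempotents summing to $\mathbf{1}$. This gives the decomposition $\bigoplus_{k=1}^n\mathbb{C}$. Under it, $c\,\mathbf{w}(0,0)\mapsto(c,\dots,c)$ is the diagonal embedding of $\mathbb{W}_\mathbb{C}$.

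\textbf{Main obstacle.} The main subtlety is the precise meaning of $\mathcal{V}$, namely which sequences are included. The statement needs $\mathcal{V}$ to contain every $n$-periodic sequence; otherwise $\Phi^{-1}$ might fall outside $\mathcal{V}$. That inclusion is exactly what the Kronecker basis of Theorem~\ref{thm:basis_construction} provides, since its constituents are products and differences of elements of $\mathcal{G}$, hence elements of the algebra $\mathcal{V}$.

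The other point is that the period of $\Phi^{-1}$ divides $\operatorname{per}(\Phi)$. Given that, the inverse stays in the same $\mathcal{V}_n$, and the identification $\mathcal{V}=\bigcup_n\mathcal{V}_n$ as a directed union is compatible with everything above.
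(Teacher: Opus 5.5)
Your proposal is correct and follows essentially the same route as the paper. Item 1 is the standard fraction construction. For Item 2, localizing at elements that are already units of $\mathcal{V}$ (by Theorem~\ref{thm:unit_group} and Proposition~\ref{prop:regular_elements}) adds nothing. For Item 3, the orthogonal DFT/Kronecker idempotents give $\mathcal{V}_n\cong\mathbb{C}^n$. You also make explicit what the paper leaves implicit: the injectivity and surjectivity of $\Psi\mapsto\Psi/\mathbf{1}$, and the fact that $\mathcal{V}$ contains every periodic sequence, so $\Phi^{-1}$ does lie in $\mathcal{V}$.
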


\begin{proof}
Part 1 is the canonical construction of fractions under localization.
Part 2 follows because $\mathcal{S}_{\mathrm{reg}} = \mathcal{U}(\mathcal{V})$, so every element inverted in the localization already has an inverse in $\mathcal{V}$; hence no new elements are added beyond $\mathcal{V}$.
Part 3 follows from Theorem~\ref{thm:basis_construction} (the discrete Fourier transform / Kronecker basis decomposition), where the $n$ orthogonal idempotent projections $P_k = \frac{1}{n}\sum_{j=1}^n \omega^{-jk} \mathbf{w}(j/n, 0)$ satisfy $P_j \otimes P_k = \delta_{j, k} P_k$ and $\sum_{k=1}^n P_k = \mathbf{1}$, establishing the ring isomorphism $\mathcal{V}_n \cong \mathbb{C}^n$.
\end{proof}

\begin{remark}[Physics Interpretation: Nodal Singularities and Quantum Localization]
In wave mechanics, dividing by a wave that vanishes at a node $\Psi[m_0] = 0$ corresponds to a physical vortex singularity or phase defect. 
The restriction of the division operator $\odiv$ to the unit group $\mathcal{U}(\mathcal{V})$ ensures that wave division is defined precisely on regular, defect-free wave states, while the idempotent decomposition $\mathcal{V}_n \cong \bigoplus_{k=1}^n \mathbb{C}$ reflects the simultaneous measurement outcomes across the complete spatial spectrum.
\end{remark}

\section{Wave Number Sieves, Idempotent Projectors, and the Algebraic Sieve of Eratosthenes}
\label{sec:sieves}

\subsection{Idempotent Basis Projectors and Particulate Wave Numbers}

In Section~\ref{sec:bases_quantization}, Theorem~\ref{thm:basis_construction} established that the finite discrete Fourier transform of the plane wave basis generates the localized Kronecker basis. We now formalize these localized basis sequences as algebraic \textbf{sieve operators} that select specific lattice coordinates under the pointwise product $\otimes$.

\begin{definition}[Idempotent Sieve Projector]
\label{def:sieve_projector}
For any integer period $n \ge 2$ and coordinate offset $\xi \in \{1, \dots, n\}$, the \textbf{elementary sieve projector} $\mathbf{e}(1/n, \xi) \in \mathcal{V}$ is defined by the finite normalized wave superposition:
\begin{equation}
\mathbf{e}(1/n, \xi)[m] := \frac{1}{n} \sum_{k=0}^{n-1} e^{-2\pi i k \xi / n} \, \mathbf{w}(k/n, 0)[m] = \frac{1}{n} \sum_{k=0}^{n-1} \exp\left( \frac{2\pi i k}{n} (m - \xi) \right).
\end{equation}
The sequence $\mathbf{e}(1/n, \xi)$ is strictly $n$-periodic, taking the value $1$ when $m \equiv \xi \pmod n$ and $0$ otherwise.
\end{definition}

\begin{proposition}[Algebraic Properties of Sieve Projectors]
\label{prop:sieve_projector_algebra}
The sieve projectors $\{\mathbf{e}(1/n, \xi)\}_{\xi=1}^n$ satisfy:
\begin{enumerate}
    \item \textbf{Idempotence:} $\mathbf{e}(1/n, \xi) \otimes \mathbf{e}(1/n, \xi) = \mathbf{e}(1/n, \xi)$.
    \item \textbf{Mutual Orthogonality:} $\mathbf{e}(1/n, \xi_1) \otimes \mathbf{e}(1/n, \xi_2) = \mathbf{0}$ for $\xi_1 \not\equiv \xi_2 \pmod n$.
    \item \textbf{Resolution of the Identity:} $\sum_{\xi=1}^n \mathbf{e}(1/n, \xi) = \mathbf{1}$.
    \item \textbf{Coordinate Sifting:} For any arbitrary periodic wave sequence $\Psi \in \mathcal{V}$, the product sequence $\Psi \otimes \mathbf{e}(1/n, \xi)$ isolates the value of $\Psi$ on the arithmetic progression $m \equiv \xi \pmod n$:
    \begin{equation}
    (\Psi \otimes \mathbf{e}(1/n, \xi))[m] = \begin{cases} \Psi[m], & m \equiv \xi \pmod n \\ 0, & m \not\equiv \xi \pmod n. \end{cases}
    \end{equation}
\end{enumerate}
\end{proposition}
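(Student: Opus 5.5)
The plan is to reduce everything to the pointwise indicator description of $\mathbf{e}(1/n,\xi)$ and then verify each identity site by site. This works because $\otimes$ and $+$ in $\mathcal{V}$ are pointwise operations.

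The first step is to justify the closing claim of Definition~\ref{def:sieve_projector}, which we will use as a lemma: $\mathbf{e}(1/n,\xi)[m]=\mathbf{1}[m\equiv\xi \pmod n]$. Set $p=m-\xi\in\mathbb{Z}$. The defining sum is $\frac1n\sum_{k=0}^{n-1}\zeta_n^{pk}$, and Lemma~\ref{lem:geometric_sum} (with $q=n$, and the roles of the summation index and $p$ exchanged, which is harmless since $\zeta_n^{pk}$ is symmetric in the two integers) evaluates it to $1$ if $p\equiv 0 \pmod n$ and to $0$ otherwise. This also exhibits $\mathbf{e}(1/n,\xi)$ as an $n$-periodic element of $\mathcal{V}$, so all products below remain inside $\mathcal{V}$.

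With the indicator form established, properties 1, 2 and 4 follow at once. For \textbf{idempotence}, note that at each site the value is $0$ or $1$, and both satisfy $x^2=x$. For \textbf{mutual orthogonality}, if $\xi_1\not\equiv\xi_2 \pmod n$ then no $m$ is congruent to both, so at every site at least one factor vanishes and the product is $\mathbf{0}$. For \textbf{coordinate sifting}, $(\Psi\otimes\mathbf{e}(1/n,\xi))[m]=\Psi[m]\cdot\mathbf{1}[m\equiv\xi]$, which is exactly the stated case split.

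For the \textbf{resolution of the identity}, fix $m$. Exactly one $\xi\in\{1,\dots,n\}$ satisfies $\xi\equiv m \pmod n$, because $\{1,\dots,n\}$ is a complete residue system. Hence the sum of the indicators equals $1$ at every site, and $\sum_\xi\mathbf{e}(1/n,\xi)=\mathbf{1}$. As an alternative cross-check at the level of wave coefficients, one can interchange the finite sums to get $\frac1n\sum_k\bigl(\sum_{\xi=1}^n\zeta_n^{-k\xi}\bigr)\mathbf{w}(k/n,0)$. Lemma~\ref{lem:geometric_sum} kills every term except $k=0$, leaving $\mathbf{w}(0,0)=\mathbf{1}$.

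The only genuine obstacle is the indicator evaluation in the first step, and Lemma~\ref{lem:geometric_sum} handles it directly. Beyond that, the one point requiring care is bookkeeping: the congruences must be read modulo $n$, so that the sites $m\in\mathbb{Z}$ outside $\{1,\dots,n\}$ are also covered. This is automatic from the $n$-periodicity established in the first step.
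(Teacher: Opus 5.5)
Your proposal is correct and follows essentially the same route as the paper. Both reduce to the $\{0,1\}$ indicator form of $\mathbf{e}(1/n,\xi)$ for idempotence, orthogonality and sifting, and both use the character sum $\frac1n\sum_{\xi=1}^n e^{-2\pi i k\xi/n}=\delta_{k,0}$ (your ``cross-check'') for the resolution of the identity; the only difference is that you derive the indicator form from Lemma~\ref{lem:geometric_sum}, whereas the paper simply asserts it in Definition~\ref{def:sieve_projector}, which makes your version slightly more complete.
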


\begin{proof}
Since each element $\mathbf{e}(1/n, \xi)[m] \in \{0, 1\}$, idempotence follows directly from $1^2 = 1$ and $0^2 = 0$. Mutual orthogonality follows because at each lattice site $m$, at most one residue offset $\xi \in \{1, \dots, n\}$ satisfies $m \equiv \xi \pmod n$. Resolution of the identity follows from the geometric character sum identity $\frac{1}{n}\sum_{\xi=1}^n e^{-2\pi i k \xi / n} = \delta_{k, 0}$, and coordinate sifting follows from pointwise scalar multiplication $\Psi[m] \cdot 1$ or $\Psi[m] \cdot 0$.
\end{proof}

\begin{definition}[Particulate Wave Numbers and Continuous Wave Localization]
\label{def:particulate_waves}
While finite sieve projectors $\mathbf{e}(1/n, \xi)$ produce infinite periodic Dirac combs with spacing $n$, a strictly localized \textbf{particulate wave number} $\mathbf{P}_{\xi_0}$ is an isolated single-site Kronecker impulse on $\mathbb{Z}$:
\begin{equation}
\mathbf{P}_{\xi_0}[m] := \delta_{m, \xi_0} = \begin{cases} 1, & m = \xi_0 \\ 0, & m \neq \xi_0. \end{cases}
\end{equation}
The particulate state $\mathbf{P}_{\xi_0}$ possesses no finite periodicity. It is generated from the plane wave manifold through the continuous inverse Fourier integral over the first Brillouin zone $\mathbb{T} = [0, 1)$:
\begin{equation}
\mathbf{P}_{\xi_0}[m] = \int_0^1 e^{-2\pi i f \xi_0} \, \mathbf{w}(f, 0)[m] \, df = \int_0^1 e^{2\pi i f (m - \xi_0)} \, df = \delta_{m, \xi_0}.
\end{equation}
For any integer $m \in \mathbb{Z}$ or rational $q = m/n \in \mathbb{Q}$, scaled particulate states are given by $\mathbf{P}_{\xi_0}(q) := q \, \mathbf{P}_{\xi_0}$.
Symmetric particulate wave numbers localized at $\pm \xi_0$ are given by the linear superposition:
\begin{equation}
\mathbf{P}_{\pm \xi_0} := \mathbf{P}_{\xi_0} + \mathbf{P}_{-\xi_0} = 2 \int_0^1 \cos(2\pi f \xi_0) \, \mathbf{w}(f, 0) \, df.
\end{equation}
\end{definition}

\begin{remark}[Wave-Particle Duality as an Algebraic Isomorphism]
The transition between pure plane waves $\mathbf{w}(f, 0)$ (completely delocalized in space, sharply localized in spectral momentum) and localized particulate states $\mathbf{P}_{\xi_0}$ (sharply localized at single lattice sites, completely delocalized across frequencies $f \in [0, 1)$) constitutes the exact algebraic realization of wave-particle duality on the discrete lattice $\mathbb{Z}$. 

Far from being an intractable metaphysical paradox, wave-particle duality in this sequence algebra reduces to an exact \textbf{Pontryagin basis isomorphism}:
\begin{enumerate}
    \item \textbf{Dual Canonical Bases}: Over any finite lattice $\mathbb{Z}/n\mathbb{Z}$, the plane waves $\{\mathbf{w}(k/n, 0)\}_{k=0}^{n-1}$ form the multiplicative basis of characters for the group $(\mathbb{Z}/n\mathbb{Z}, +)$, while the localized particulate sieves $\{\mathbf{e}(1/n, \xi)\}_{\xi=1}^n$ form the boolean idempotent basis under pointwise multiplication $\otimes$ ($\mathbf{e}_\xi \otimes \mathbf{e}_{\xi^\prime} = \delta_{\xi, \xi^\prime} \mathbf{e}_\xi$).
    \item \textbf{Unitary Transformation}: The two representations are connected without loss of information by the unitary discrete Fourier matrix $F_{k, \xi} = \frac{1}{\sqrt{n}} \exp(2\pi i k \xi / n)$.
    \item \textbf{Discrete Uncertainty Invariance}: The spatial position variance $(\Delta X)^2$ and crystal momentum variance $(\Delta K)^2$ satisfy the discrete Robertson-Schr\"{o}dinger uncertainty relation. A pure wave state has $(\Delta X)^2 = \infty$ and $(\Delta K)^2 = 0$, whereas an elementary particulate state has $(\Delta X)^2 = 0$ and maximum momentum variance $(\Delta K)^2 = \frac{1}{12}(1 - 1/n^2)$.
\end{enumerate}
Hence, ``particle'' and ``wave'' simply designate the spatial idempotent eigenbasis and the shift-operator momentum eigenbasis within the single underlying finite group algebra $\mathbb{C}[\mathbb{Z}/n\mathbb{Z}]$.
\end{remark}

\subsection{The Algebra of Not-Numbers: Complements, De Morgan Duality, and Boolean Idempotents}
\label{sec:not_numbers}

The concept of a \textbf{``Not-Number''} was introduced in \cite{smith2025rational} and defined in terms of the systematic algebraic exclusion of complementary attributes rather than as a positive specification.
Within the sequence algebra $(\mathcal{V}, +, \otimes)$, this concept is given a rigorous foundation as an ortho-complementation operator over idempotent elements and general wave numbers.

\begin{definition}[The Not-Operator and Not-Numbers]
\label{def:not_operator}
Let $\mathbf{1} := \mathbf{w}(0, 0) = (1, 1, 1, \dots)$ denote the multiplicative unit sequence in $\mathcal{V}$.
For any wave number $A \in \mathcal{V}$, its \textbf{affine Not-Number} $\mathrm{Not}(A) \in \mathcal{V}$ is defined by:
\begin{equation}
\mathrm{Not}(A) := \mathbf{1} - A.
\end{equation}
When $A$ is an idempotent sieve projector ($A \otimes A = A$, so that $A[m] \in \{0, 1\}$ for all $m \in \mathbb{Z}$), $\mathrm{Not}(A)$ is termed the \textbf{idempotent Not-Number} (or indicator complement) of $A$.
\end{definition}

\begin{theorem}[Algebraic Axioms of Not-Numbers]
\label{thm:not_axioms}
For any idempotent sieve wave numbers $A, B \in \mathcal{V}$ with values in $\{0, 1\}$, the Not-operator satisfies the following properties:
\begin{enumerate}
    \item \textbf{Exact Involution (Law of Double Negation):}
    \begin{equation}
    \mathrm{Not}\big(\mathrm{Not}(A)\big) = \mathbf{1} - (\mathbf{1} - A) = A.
    \end{equation}
    
    \item \textbf{Mutual Annihilation (Orthogonality under Tensor Multiplication):}
    \begin{equation}
    A \otimes \mathrm{Not}(A) = A \otimes (\mathbf{1} - A) = A - A^{\otimes 2} = A - A = \mathbf{0}.
    \end{equation}
    An entity and its Not-Number cannot simultaneously occupy any lattice site $m \in \mathbb{Z}$.
    
    \item \textbf{Exact Partition of Unity:}
    \begin{equation}
    A + \mathrm{Not}(A) = \mathbf{1}.
    \end{equation}
    Every lattice site is partitioned between $A$ and its complementary Not-Number.
    
    \item \textbf{Idempotency Preservation:}
    \begin{equation}
    \mathrm{Not}(A) \otimes \mathrm{Not}(A) = (\mathbf{1} - A) \otimes (\mathbf{1} - A) = \mathbf{1} - 2A + A^{\otimes 2} = \mathbf{1} - A = \mathrm{Not}(A).
    \end{equation}
    The complement of a projection sieve is itself a projection sieve.
\end{enumerate}
\end{theorem}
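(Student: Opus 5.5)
The four identities in Theorem~\ref{thm:not_axioms} are consequences of the commutative ring structure of $(\mathcal{V}, +, \otimes)$ together with the single hypothesis $A \otimes A = A$. The plan is therefore to work entirely inside this ring, using that $\mathbf{1} = \mathbf{w}(0,0)$ is the multiplicative unit (Theorem~\ref{thm:group_algebra}) and that $\otimes$ distributes over $+$. Both facts hold because all operations are defined sitewise on $\mathbb{C}^{\mathbb{Z}}$, so distributivity and commutativity are inherited from $\mathbb{C}$ at each $m$. First I would note that $\mathrm{Not}(A) = \mathbf{1} - A$ lies in $\mathcal{V}$, since $\mathcal{V}$ is a $\mathbb{C}$-linear span containing $\mathbf{1}$ and $A$. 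This makes every expression in the statement meaningful.

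The steps, in order, are as follows. Involution (item 1) is pure additive-group arithmetic: $\mathbf{1} - (\mathbf{1} - A) = A$. Partition of unity (item 3) is equally immediate: $A + (\mathbf{1} - A) = \mathbf{1}$. For mutual annihilation (item 2), distribute to get $A \otimes \mathbf{1} - A \otimes A$, use $A \otimes \mathbf{1} = A$, and then apply idempotence to obtain $\mathbf{0}$. For idempotency preservation (item 4), expand $(\mathbf{1} - A)\otimes(\mathbf{1} - A) = \mathbf{1} - 2A + A^{\otimes 2}$; this uses commutativity to merge the cross terms $A\otimes\mathbf{1}$ and $\mathbf{1}\otimes A$. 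Idempotence then reduces the result to $\mathbf{1} - A$.

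As a cross-check, and to justify the statement's gloss that the sites are ``partitioned,'' I would add a sitewise reading. Idempotence forces $A[m]^2 = A[m]$ in $\mathbb{C}$, hence $A[m] \in \{0,1\}$, and so $\mathrm{Not}(A)[m] = 1 - A[m] \in \{0,1\}$ is the complementary indicator. This recovers the interpretation that exactly one of $A$, $\mathrm{Not}(A)$ equals $1$ at each $m$, in line with the sifting behaviour of Proposition~\ref{prop:sieve_projector_algebra}. Note that the hypothesis $B$ in the statement is not used by any of the four items; presumably it is reserved for a subsequent De Morgan statement.

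There is no real obstacle here. The only point needing care is to confirm that the ring axioms used (distributivity, commutativity, and $\mathbf{1}$ as unit) genuinely hold in $\mathcal{V}$ and not merely in the formal group algebra. I would dispose of this by appealing to the sitewise definitions of $+$ and $\otimes$ on $\mathbb{C}^{\mathbb{Z}}$, of which $\mathcal{V}$ is a subring.
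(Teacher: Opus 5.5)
Your proposal is correct and takes the same route as the paper, which justifies each item by the inline ring computation in $(\mathcal{V},+,\otimes)$: $\mathbf{1}-(\mathbf{1}-A)=A$, distributing $A\otimes(\mathbf{1}-A)$ and $(\mathbf{1}-A)\otimes(\mathbf{1}-A)$, and then applying $A^{\otimes 2}=A$. Your sitewise check that $A[m]\in\{0,1\}$ and your remark that $B$ is never used are both accurate, but they go beyond what the proof needs.
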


\begin{theorem}[De Morgan Duality for Wave Numbers]
\label{thm:de_morgan_wave}
Define the algebraic conjunction ($\land$) and disjunction ($\lor$) of idempotent sieves by:
\begin{equation}
A \land B := A \otimes B, \qquad A \lor B := A + B - A \otimes B.
\end{equation}
Then the Not-operator satisfies exact De Morgan duality:
\begin{align}
\mathrm{Not}(A \land B) &= \mathrm{Not}(A) \lor \mathrm{Not}(B), \\
\mathrm{Not}(A \lor B) &= \mathrm{Not}(A) \land \mathrm{Not}(B) = \mathrm{Not}(A) \otimes \mathrm{Not}(B).
\end{align}
\end{theorem}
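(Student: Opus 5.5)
The plan is to verify both identities by direct ring algebra in the commutative ring $(\mathcal{V}, +, \otimes)$, using only the definitions of $\mathrm{Not}$, $\land$ and $\lor$. No idempotence is actually needed for the two identities themselves; idempotence only matters for interpreting the results as sieves. Throughout I will use that $\mathbf{1}$ is the multiplicative unit, together with commutativity and distributivity of $\otimes$ over $+$. Both facts hold pointwise at every lattice site $m$, since $\mathbb{C}$ is a field.

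For the first law, I expand the right-hand side: $\mathrm{Not}(A) \lor \mathrm{Not}(B) = (\mathbf{1}-A) + (\mathbf{1}-B) - (\mathbf{1}-A)\otimes(\mathbf{1}-B)$. The product expands to $\mathbf{1} - A - B + A\otimes B$. Collecting terms gives $\mathbf{1} - A\otimes B$, which is exactly $\mathrm{Not}(A\otimes B) = \mathrm{Not}(A\land B)$.

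For the second law, I expand $\mathrm{Not}(A)\otimes\mathrm{Not}(B) = \mathbf{1} - A - B + A\otimes B$, which is the same product as before. I then compare it with $\mathrm{Not}(A\lor B) = \mathbf{1} - (A + B - A\otimes B)$, and the two agree term by term. The middle equality $\mathrm{Not}(A)\land\mathrm{Not}(B) = \mathrm{Not}(A)\otimes\mathrm{Not}(B)$ is just the definition of $\land$.

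As a consistency remark, I would also observe that for idempotent $A, B$ the sequences $A\land B$ and $A\lor B$ are again idempotent with values in $\{0,1\}$. This can be checked pointwise or by expanding $(A+B-AB)^{\otimes 2}$ using $A^{\otimes 2}=A$ and $B^{\otimes 2}=B$. Combined with Theorem~\ref{thm:not_axioms}(4), it shows that both sides of each law stay within the class of sieves. There is no real obstacle here. The only point needing care is the sign bookkeeping in expanding $(\mathbf{1}-A)\otimes(\mathbf{1}-B)$, together with noting that the identities hold in any commutative unital ring and so a fortiori in $\mathcal{V}$.
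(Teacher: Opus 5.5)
Your proposal is correct and is essentially the paper's argument: a direct expansion of $(\mathbf{1}-A)\otimes(\mathbf{1}-B)=\mathbf{1}-A-B+A\otimes B$ in the ring $(\mathcal{V},+,\otimes)$. The only difference is that the paper proves the second law this way and gets the first by involution (substituting $\mathrm{Not}(A),\mathrm{Not}(B)$ and using $\mathrm{Not}(\mathrm{Not}(A))=A$), whereas you expand both laws directly; both routes are valid, and you are right that idempotence is not needed for either identity.
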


\begin{proof}
Direct algebraic computation in the ring $(\mathcal{V}, +, \otimes)$:
\begin{align*}
\mathrm{Not}(A) \otimes \mathrm{Not}(B) &= (\mathbf{1} - A) \otimes (\mathbf{1} - B) \\
&= \mathbf{1} - A - B + A \otimes B \\
&= \mathbf{1} - (A + B - A \otimes B) \\
&= \mathrm{Not}(A \lor B).
\end{align*}
Applying involution yields $\mathrm{Not}(A \land B) = \mathrm{Not}(A) \lor \mathrm{Not}(B)$.
\end{proof}

\begin{remark}[Not-Numbers for Continuous Carrier Waves]
For a pure plane wave $u = \mathbf{w}(f, g) = e^{2\pi i (f m + g)}$, its Not-Number is given by:
\begin{equation}
\mathrm{Not}(\mathbf{w}(f, g))[m] = 1 - e^{2\pi i (f m + g)} = -2i \sin\big(\pi(f m + g)\big) \, e^{i\pi(f m + g)}.
\end{equation}
The magnitude envelope $|\mathrm{Not}(\mathbf{w}(f, g))[m]| = 2|\sin(\pi(f m + g))|$ vanishes if and only if $f m + g \in \mathbb{Z}$. 
Thus, the Not-Wave vanishes precisely at the in-phase coherent nodes of the original wave, while achieving its maximum anti-phase amplitude of $2$ wherever the original wave is inverted.
\end{remark}

\subsection{Divisibility Partitions: Re-Numbers and Co-Numbers}

Using the sieve projectors, we can algebraically partition any periodic wave number into components that are either divisible or non-divisible by a given integer period $n$.

\begin{definition}[Re-Numbers and Co-Numbers]
\label{def:re_co_numbers}
For any integer period $n \ge 2$:
\begin{enumerate}
    \item The \textbf{re-sieve} (or divisibility indicator) $\overset{\circ}{\mathbf{e}}_n \in \mathcal{V}$ is the projector onto lattice sites that are integer multiples of $n$:
    \begin{equation}
    \overset{\circ}{\mathbf{e}}_n := \mathbf{e}(1/n, n) = \frac{1}{n} \sum_{k=0}^{n-1} \mathbf{w}(k/n, 0), \qquad \overset{\circ}{\mathbf{e}}_n[m] = \begin{cases} 1, & n \mid m \\ 0, & n \nmid m. \end{cases}
    \end{equation}
    \item The \textbf{co-sieve} (or divisibility Not-Number) $\overset{*}{\mathbf{e}}_n := \mathrm{Not}(\overset{\circ}{\mathbf{e}}_n) \in \mathcal{V}$ is the complementary orthoprojector onto lattice sites not divisible by $n$:
    \begin{equation}
    \overset{*}{\mathbf{e}}_n := \mathbf{1} - \overset{\circ}{\mathbf{e}}_n = \sum_{\xi=1}^{n-1} \mathbf{e}(1/n, \xi), \qquad \overset{*}{\mathbf{e}}_n[m] = \begin{cases} 0, & n \mid m \\ 1, & n \nmid m. \end{cases}
    \end{equation}
\end{enumerate}
For any wave sequence $\Psi \in \mathcal{V}$, its \textbf{re-wave} and \textbf{co-wave} with respect to $n$ are defined by:
\begin{equation}
\overset{\circ}{\Psi}_n := \Psi \otimes \overset{\circ}{\mathbf{e}}_n, \qquad \overset{*}{\Psi}_n := \Psi \otimes \overset{*}{\mathbf{e}}_n.
\end{equation}
\end{definition}

\begin{proposition}[Orthogonal Resolution of the Identity by Re- and Co-Waves]
\label{prop:re_co_properties}
For any period $n \ge 2$ and any wave number $\Psi \in \mathcal{V}$:
\begin{equation}
\overset{*}{\Psi}_n \oplus \overset{\circ}{\Psi}_n = \Psi, \qquad \overset{*}{\Psi}_n \otimes \overset{\circ}{\Psi}_n = \mathbf{0}.
\end{equation}
Thus, $\{\overset{\circ}{\mathbf{e}}_n, \overset{*}{\mathbf{e}}_n\}$ forms an exact two-channel orthogonal filter bank in the wave algebra $(\mathcal{V}, +, \otimes)$.
\end{proposition}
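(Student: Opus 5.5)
The plan is to reduce both identities to the two projector identities already established: the Not-Number partition of unity and the mutual annihilation law of Theorem~\ref{thm:not_axioms}, applied to the idempotent re-sieve $A = \overset{\circ}{\mathbf{e}}_n$. First I will check that $\overset{\circ}{\mathbf{e}}_n$ is indeed an idempotent sieve with values in $\{0,1\}$. This follows from Definition~\ref{def:sieve_projector} with $\xi = n$ together with Proposition~\ref{prop:sieve_projector_algebra}, item 1. Hence Theorem~\ref{thm:not_axioms} applies, giving
\[
\overset{\circ}{\mathbf{e}}_n + \overset{*}{\mathbf{e}}_n = \mathbf{1}, \qquad \overset{\circ}{\mathbf{e}}_n \otimes \overset{*}{\mathbf{e}}_n = \mathbf{0}.
\]

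For the additive identity, I use the fact that $(\mathcal{V}, +, \otimes)$ is a commutative ring, so $\otimes$ distributes over $+$:
\[
\overset{*}{\Psi}_n + \overset{\circ}{\Psi}_n = \Psi \otimes \bigl(\overset{*}{\mathbf{e}}_n + \overset{\circ}{\mathbf{e}}_n\bigr) = \Psi \otimes \mathbf{1} = \Psi.
\]
Distributivity holds because everything is defined pointwise on $\mathbb{Z}$ and the field axioms of $\mathbb{C}$ hold at each site. I will also note that each product stays in $\mathcal{V}$, since $\mathcal{V}$ is closed under $\otimes$ by Theorem~\ref{thm:group_algebra}.

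For the multiplicative identity, I use associativity and commutativity of $\otimes$ to regroup:
\[
\overset{*}{\Psi}_n \otimes \overset{\circ}{\Psi}_n = (\Psi \otimes \Psi) \otimes \bigl(\overset{*}{\mathbf{e}}_n \otimes \overset{\circ}{\mathbf{e}}_n\bigr) = \Psi^{\otimes 2} \otimes \mathbf{0} = \mathbf{0}.
\]
As a sanity check, the same conclusion follows site by site: at each $m$, exactly one of $\overset{\circ}{\mathbf{e}}_n[m]$ and $\overset{*}{\mathbf{e}}_n[m]$ equals $1$, depending on whether $n \mid m$, and the other equals $0$.

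I do not expect a genuine obstacle. The only point needing care is to confirm that $\overset{*}{\mathbf{e}}_n$ as defined, namely $\mathbf{1} - \overset{\circ}{\mathbf{e}}_n$, coincides with $\sum_{\xi=1}^{n-1} \mathbf{e}(1/n,\xi)$. This follows from the resolution of the identity in Proposition~\ref{prop:sieve_projector_algebra}, item 3. The filter-bank interpretation is then just a restatement of these two identities.
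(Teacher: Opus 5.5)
Your proof is correct and follows essentially the only argument available. The paper states this proposition without a separate proof, treating it as an immediate consequence of the partition-of-unity and mutual-annihilation laws of Theorem~\ref{thm:not_axioms} applied to the idempotent re-sieve $\overset{\circ}{\mathbf{e}}_n$, together with pointwise distributivity, associativity and commutativity of $\otimes$. One small simplification: for closure of $\mathcal{V}$ under $\otimes$ you only need that $\mathcal{G}_{\mathrm{poly}}$ is closed under $\otimes$ and that $\otimes$ is bilinear, so you need not cite the full isomorphism of Theorem~\ref{thm:group_algebra}.
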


\subsection{The Algebraic Sieve of Eratosthenes and Doubly Exponential Prime Bounds}

The prime co-sieve $\overset{*}{\mathbf{e}}_p = \mathrm{Not}(\overset{\circ}{\mathbf{e}}_p)$ vanishes precisely at multiples of $p$. 
By taking the algebraic product of prime Not-Numbers under $\otimes$, the classical Sieve of Eratosthenes is realized as a direct distributive product in the sequence ring $(\mathcal{V}, +, \otimes)$.

\begin{theorem}[The Closed Algebraic Sieve of Eratosthenes as a Product of Not-Numbers]
\label{thm:sieve_eratosthenes}
Let $\{p_1, p_2, \dots, p_k\} = \{2, 3, 5, \dots, p_k\}$ denote the first $k$ prime numbers.
Define the \textbf{cumulative prime co-sieve} $\mathbf{S}_k \in \mathcal{V}$ by the finite algebraic product under $\otimes$:
\begin{equation}
\mathbf{S}_k := \bigotimes_{j=1}^k \mathrm{Not}(\overset{\circ}{\mathbf{e}}_{p_j}) = \bigotimes_{j=1}^k \overset{*}{\mathbf{e}}_{p_j} = \bigotimes_{j=1}^k \left( \mathbf{1} - \frac{1}{p_j} \sum_{r=0}^{p_j-1} \mathbf{w}(r/p_j, 0) \right).
\end{equation}
Then:
\begin{enumerate}
    \item $\mathbf{S}_k$ is strictly periodic with fundamental period $P_k = \prod_{j=1}^k p_j$ (the $k$-th primorial).
    \item At any integer lattice site $m \in \mathbb{Z}$:
    \begin{equation}
    \mathbf{S}_k[m] = \begin{cases} 1, & \gcd(m, P_k) = 1 \\ 0, & \gcd(m, P_k) > 1. \end{cases}
    \end{equation}
    \item Over the quadratic interval:
    \begin{equation}
    p_k < m < p_{k+1}^2,
    \end{equation}
    the sequence elements satisfy $\mathbf{S}_k[m] = 1$ if and only if $m$ is a prime number.
\end{enumerate}
\end{theorem}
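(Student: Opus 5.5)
We prove the three items in the order (2), (1), (3), since the pointwise evaluation in (2) drives the other two. By Definition~\ref{def:re_co_numbers} and the evaluation of the re-sieve $\overset{\circ}{\mathbf{e}}_{p}$ (Lemma~\ref{lem:geometric_sum} applied to $\frac{1}{p}\sum_{r=0}^{p-1} e^{2\pi i r m/p}$), each co-sieve factor satisfies $\overset{*}{\mathbf{e}}_{p_j}[m] = 1$ if $p_j \nmid m$ and $0$ if $p_j \mid m$. The product $\otimes$ acts pointwise, so
\begin{equation*}
\mathbf{S}_k[m] = \prod_{j=1}^k \overset{*}{\mathbf{e}}_{p_j}[m] \in \{0,1\}.
\end{equation*}
This value equals $1$ exactly when no $p_j$ divides $m$. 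Since the prime divisors of $P_k$ are precisely $p_1,\dots,p_k$, that condition is equivalent to $\gcd(m,P_k)=1$, which proves item 2.

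For item 1, each factor is $p_j$-periodic, so $\mathbf{S}_k$ is $P_k$-periodic. To show $P_k$ is the \emph{fundamental} period, take any period $d$ of $\mathbf{S}_k$; the set of periods is closed under gcd with $P_k$, so we may assume $d \mid P_k$. Suppose $d<P_k$. Then some $p_j \nmid d$. Evaluate at $m = d$ using item 2: $\mathbf{S}_k[d]=\mathbf{S}_k[0]=0$, so $d$ shares a prime with $P_k$. This alone does not give a contradiction, so we need a sharper argument. The main step is to exhibit some $m$ with $\gcd(m,P_k)=1$ but $\gcd(m+d,P_k)>1$. Choose $m$ by the Chinese Remainder Theorem: $m \equiv 1 \pmod{p_i}$ for $i \neq j$, and $m \equiv -d \pmod{p_j}$. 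The last congruence is nonzero mod $p_j$ because $p_j \nmid d$. Then $p_i \nmid m$ for every $i$, while $p_j \mid m+d$, so $\mathbf{S}_k[m]=1 \neq 0=\mathbf{S}_k[m+d]$. Hence $d=P_k$. This CRT construction is the step requiring the most care.

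For item 3, take $p_k<m<p_{k+1}^2$. If $m$ is prime, then $m>p_k$ forces $m\notin\{p_1,\dots,p_k\}$, so $\gcd(m,P_k)=1$ and $\mathbf{S}_k[m]=1$. Conversely, suppose $\mathbf{S}_k[m]=1$, so every prime factor of $m$ is at least $p_{k+1}$. Since $m>p_k\ge 2>1$, $m$ has at least one prime factor. If $m$ were composite, it would have two prime factors (with multiplicity), each $\ge p_{k+1}$, giving $m\ge p_{k+1}^2$ and contradicting the upper bound. Hence $m$ is prime. This is the classical Eratosthenes bound. Only positive $m$ arise in this interval, so negative residues need no separate treatment.
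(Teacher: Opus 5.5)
Your proof is correct, and for items 2 and 3 it follows the paper's argument. The pointwise product of the $\{0,1\}$-valued co-sieves equals $1$ exactly when no $p_j$ divides $m$. Your contrapositive form of item 3 (a survivor in the interval has all prime factors $\ge p_{k+1}$, so compositeness would force $m \ge p_{k+1}^2$) is equivalent to the paper's remark that a composite $m < p_{k+1}^2$ has smallest prime factor $q \le \sqrt{m} < p_{k+1}$, hence $q \in \{p_1,\dots,p_k\}$.

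The genuine difference is item 1. The paper proves it first and independently of item 2. It notes only that each factor has period $p_j$ and that $\operatorname{lcm}(p_1,\dots,p_k)=P_k$. This shows $P_k$ is \emph{a} period, but the minimality implicit in ``fundamental'' is never argued. In general the least common multiple of the factors' periods is only an upper bound for the period of a pointwise product, e.g.\ $\mathbf{w}(1/2,0)\otimes\mathbf{w}(1/2,0)=\mathbf{1}$. For nonzero factors whose fundamental periods are pairwise coprime the bound is in fact sharp, but that needs a proof the paper does not give. Your Chinese Remainder witness (an $m$ coprime to $P_k$ with $p_j \mid m+d$) settles minimality directly for $\mathbf{S}_k$. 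The price is that you need item 2 first, which justifies your reordering.

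Two cosmetic points. The aside about evaluating at $m=d$ is a dead end and can be cut. The reduction to $d \mid P_k$ is unnecessary: any positive $d$ with $P_k \nmid d$ already misses some $p_j$ (since $P_k$ is squarefree), and the same construction applies.
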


\begin{proof}
Part 1: Each factor $\overset{*}{\mathbf{e}}_{p_j}$ has fundamental period $p_j$. Since distinct primes are mutually coprime, the joint period of the product sequence is $\operatorname{lcm}(p_1, \dots, p_k) = \prod_{j=1}^k p_j = P_k$.

Part 2: Under the pointwise ring product, $\mathbf{S}_k[m] = \prod_{j=1}^k \overset{*}{\mathbf{e}}_{p_j}[m]$. By Definition~\ref{def:re_co_numbers}, each binary factor $\overset{*}{\mathbf{e}}_{p_j}[m]$ vanishes if and only if $p_j \mid m$, and equals $1$ if and only if $p_j \nmid m$. The product is $1$ if and only if no prime $p_j \in \{p_1, \dots, p_k\}$ divides $m$, which is equivalent to $\gcd(m, P_k) = 1$.

Part 3: Let $m$ be any integer in the range $p_k < m < p_{k+1}^2$. If $m$ is composite, by the Fundamental Theorem of Arithmetic its smallest prime factor $q$ satisfies $q \le \sqrt{m} < \sqrt{p_{k+1}^2} = p_{k+1}$. Therefore, $q \in \{p_1, \dots, p_k\}$. Consequently, $q \mid m \implies \gcd(m, P_k) \ge q > 1$, and by Part 2, $\mathbf{S}_k[m] = 0$. Conversely, if $m$ is prime, then because $m > p_k$, it cannot be divisible by any of the primes $\{p_1, \dots, p_k\}$. Thus $\gcd(m, P_k) = 1$, and $\mathbf{S}_k[m] = 1$.
\end{proof}

\begin{corollary}[Recursive Prime Extension and Doubly Exponential Domain Growth]
\label{cor:doubly_exponential}
Applying Theorem~\ref{thm:sieve_eratosthenes} sequentially generates non-terminating, error-free sets of prime numbers with quadratic domain expansion at each step:
\begin{enumerate}
    \item At step $k=1$, $p_1 = 2$. Sifting by $\mathbf{S}_1$ identifies primes in $[3, 3^2) = [3, 9)$, yielding $\{3, 5, 7\}$. The largest identified prime is $7$.
    \item At step $k=4$, the known primes are $\{2, 3, 5, 7\}$, and $p_5 = 11$. The sieve $\mathbf{S}_4$ identifies all primes strictly below $11^2 = 121$. The largest prime below $121$ is $113$.
    \item At each recursive stage $N$, if $B_N = p_{\pi_N+1}$ denotes the next unsifted prime, the valid sieve range extends to $B_{N+1}^2 \approx B_N^2$. The reach of the guaranteed prime domain grows doubly exponentially:
    \begin{equation}
    B_N \sim 2^{2^{N-1}}.
    \end{equation}
\end{enumerate}
\end{corollary}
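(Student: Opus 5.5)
The first two items are direct instances of part 3 of Theorem~\ref{thm:sieve_eratosthenes}; I will check them by substitution. For $k=1$ we have $p_1=2$ and $p_2=3$. Part 3 then says that $\mathbf{S}_1[m]=1$ exactly for the primes $m$ with $2<m<9$. By part 2, $\mathbf{S}_1$ is the indicator of odd $m$. On $\{3,\dots,8\}$ this picks out $3,5,7$, and the largest of these is $7$. For $k=4$ we have $P_4=210$ and $p_5=11$. Part 3 certifies the primes in $(7,121)$. Together with the seed primes $2,3,5,7$, this gives every prime below $121$, the largest being $113$. These are only finite checks.

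For item 3 I will first make the recursion precise. Stage $N$ starts from the set of all primes below some bound $B_N$, where $B_N$ is itself the least prime not yet in that set. Let $k=\pi(B_N)$ be the number of known primes, so that $p_{k+1}=B_N$. Applying part 3 of Theorem~\ref{thm:sieve_eratosthenes} with $\mathbf{S}_k$ then certifies every prime in $(p_k,B_N^2)$. So after stage $N$ all primes below $B_N^2$ are known. The next unsifted prime $B_{N+1}$ is therefore the least prime $\ge B_N^2$. The start is $B_1=2$, meaning no primes are known and one begins by sieving by $2$. This is consistent with item 1, where $B_2=3$ (the least prime $\ge 4$ is $5$; the paper's indexing is offset slightly, which does not affect the asymptotics). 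The key claim is that the prime domain always continues with no gaps. This is exactly the contrapositive argument in part 3, that a composite $m<p_{k+1}^2$ has a prime factor $<p_{k+1}$, applied to the full set of known primes.

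The growth estimate then comes from Bertrand's postulate. For every $x\ge1$ there is a prime in $[x,2x)$, so $B_N^2\le B_{N+1}<2B_N^2$. Writing $b_N=\log_2 B_N$, this becomes $2b_N\le b_{N+1}<2b_N+1$ with $b_1=1$. Induction gives $2^{N-1}\le b_N\le 2^N-1$. Hence
\begin{equation*}
2^{2^{N-1}}\;\le\; B_N\;\le\;2^{2^{N}-1},
\end{equation*}
that is, $\log_2\log_2 B_N=N+O(1)$.

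The step I expect to be the main obstacle is interpreting the symbol $\sim$ in the statement. The ratio $B_N/2^{2^{N-1}}$ need not tend to $1$, because the additive error in $b_N$ is amplified through the doubling. So I will prove the corollary in the sense of doubly exponential order, $\log\log B_N\sim N\log 2$, which the two-sided bound above gives. The lower bound $B_N\ge 2^{2^{N-1}}$ already secures the claimed doubly exponential reach, with no need for anything beyond Bertrand's postulate.
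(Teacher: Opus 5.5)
The paper gives no proof of this corollary. Its only justification is the heuristic embedded in item~3, printed as ``$B_{N+1}^2\approx B_N^2$'', which you rightly read as $B_{N+1}\approx B_N^2$. Your route is sound and supplies the missing argument:
\begin{itemize}
\item items~1 and~2 as finite instances of part~3 of Theorem~\ref{thm:sieve_eratosthenes};
\item item~3 from the recurrence defining $B_{N+1}$ as the least prime exceeding $B_N^2$, together with Bertrand's postulate.
\end{itemize}
What you actually prove is correct: $2^{N-1}\le\log_2 B_N\le 2^N-1$, hence $\log\log B_N\sim N\log 2$.

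You should, however, be firmer about the symbol $\sim$. The literal claim is not merely out of reach; it is false. Write $b_N=\log_2 B_N$ and $b_{N+1}=2b_N+\delta_N$, where $0<\delta_N<1$ (positive because $B_N^2$ is composite). Then $b_N/2^{N-1}=b_1+\sum_{j<N}\delta_j2^{-j}$. This quantity is nondecreasing and is at least $b_2/2>1$ from $N=2$ on. Hence $B_N/2^{2^{N-1}}\ge 2^{(b_2/2-1)2^{N-1}}\to\infty$. The paper's own data show this: its sequence is $B_N=3,11,127,16139,\dots$, against $2^{2^{N-1}}=2,4,16,256$. The true sharp statement is $B_N\sim\theta^{2^{N-1}}$ for a seed-dependent constant $\theta=2^{\Lambda}>2$, where $\Lambda=\lim b_N/2^{N-1}$. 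Even that ratio form needs $\delta_N\to0$, i.e.\ primes in $[x,(1+\varepsilon)x]$ for large $x$. That is a prime-number-theorem input, which Bertrand does not provide. So your reading $\log\log B_N\sim N\log 2$ is the right correction and the strongest statement your tools give; present it as such.

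A few small repairs:
\begin{itemize}
\item Since $B_N$ is prime and not yet known, the number of known primes is $k=\pi(B_N)-1$, not $\pi(B_N)$.
\item Bertrand in the form ``a prime in $[x,2x)$ for every $x\ge1$'' fails at $x=1$. You only need it at $x=B_N^2\ge4$, and only for the upper bound. The lower bound $b_{N+1}\ge 2b_N$ is immediate from the definition of $B_{N+1}$.
\item Your seed is muddled. With $B_1=2$ no primes are known, so the first stage is the degenerate case $k=0$ (empty product $\mathbf{S}_0=\mathbf{1}$), not ``sieving by $2$''. It produces the orbit $2,5,29,853,\dots$, which is a different sequence from the paper's $3,11,127,\dots$, not an index shift of it.
\end{itemize}
The conclusion $\log_2\log_2 B_N=N+O(1)$ is insensitive to the seed, but your explicit bounds are tied to $b_1=1$. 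With the paper's seed $b_1=\log_2 3$, the same induction gives $2^{N-1}\log_2 3\le b_N\le 2^{N-1}(1+\log_2 3)-1$. Your bound $B_N\le 2^{2^N-1}$ already fails there at $N=1$.
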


\begin{corollary}[Primorial Frequency Spectrum and Mertens Density]
Expanding the product $\mathbf{S}_k$ algebraically into pure plane wave carriers:
\begin{equation}
\mathbf{S}_k = \sum_{d \mid P_k} \mu(d) \left( \frac{1}{d} \sum_{r=0}^{d-1} \mathbf{w}(r/d, 0) \right),
\end{equation}
where $\mu(d)$ is the classical M\"{o}bius function.
The spatial average of $\mathbf{S}_k$ over its period $P_k$ yields the exact Mertens sieve density:
\begin{equation}
\langle \mathbf{S}_k, \mathbf{1} \rangle = \frac{\phi(P_k)}{P_k} = \prod_{j=1}^k \left( 1 - \frac{1}{p_j} \right),
\end{equation}
where $\phi$ is Euler's totient function.
\end{corollary}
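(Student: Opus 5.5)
The plan is to expand the product $\mathbf{S}_k = \bigotimes_{j=1}^k (\mathbf{1} - \overset{\circ}{\mathbf{e}}_{p_j})$ distributively in the commutative ring $(\mathcal{V}, +, \otimes)$. Expanding over subsets $T \subseteq \{1,\dots,k\}$ gives
\[
\mathbf{S}_k = \sum_{T} (-1)^{|T|} \bigotimes_{j \in T} \overset{\circ}{\mathbf{e}}_{p_j}.
\]
Each subset $T$ corresponds bijectively to a squarefree divisor $d = \prod_{j\in T} p_j$ of $P_k$, and then $(-1)^{|T|} = \mu(d)$.

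The key step is to identify the product of re-sieves. I will show that $\bigotimes_{j\in T}\overset{\circ}{\mathbf{e}}_{p_j} = \overset{\circ}{\mathbf{e}}_d$. Pointwise, the left side equals $1$ exactly when every $p_j$ with $j \in T$ divides $m$. Since the $p_j$ are distinct primes, this happens exactly when $d \mid m$, which is the defining property of $\overset{\circ}{\mathbf{e}}_d$ in Definition~\ref{def:re_co_numbers}. Next, $\overset{\circ}{\mathbf{e}}_d = \frac{1}{d}\sum_{r=0}^{d-1}\mathbf{w}(r/d,0)$ by Lemma~\ref{lem:geometric_sum}, since that sum equals $1$ if $d\mid m$ and $0$ otherwise. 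The case $d=1$ gives $\mathbf{1}$, which is consistent. Substituting these identities yields the stated Möbius expansion. The only care needed here is to argue pointwise, because the wave sums are formal elements of $\mathcal{V}$ rather than identities obtained by multiplying frequency sets.

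For the density, I will take $\langle \cdot, \mathbf{1}\rangle$ of the expansion, using linearity in the first slot. Since $\mathbf{w}(0,0) = \mathbf{1}$, Theorem~\ref{thm:mode_orthonormality} gives $\langle \mathbf{w}(r/d,0), \mathbf{1}\rangle = \delta_{r,0}$. Hence
\[
\langle \overset{\circ}{\mathbf{e}}_d, \mathbf{1}\rangle = \tfrac{1}{d}, \qquad \langle \mathbf{S}_k, \mathbf{1}\rangle = \sum_{d\mid P_k}\mu(d)/d.
\]
Because $\mu(d)/d$ is multiplicative and $P_k$ is squarefree, this sum factors as $\prod_{j}(1 - 1/p_j)$.

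To match the totient form, I will cross-check using Part 2 of Theorem~\ref{thm:sieve_eratosthenes}. By Lemma~\ref{lem:period_invariance}, the average of $\mathbf{S}_k$ over one period $P_k$ equals the number of $m \in \{1,\dots,P_k\}$ coprime to $P_k$, divided by $P_k$. That ratio is $\phi(P_k)/P_k$ by definition, and Euler's product formula confirms it agrees with the product above.

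I expect no real obstacle. The one point requiring care is the bookkeeping that identifies $\bigotimes_{j\in T}\overset{\circ}{\mathbf{e}}_{p_j}$ with $\overset{\circ}{\mathbf{e}}_d$, which uses the coprimality of the $p_j$.
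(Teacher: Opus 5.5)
Your proposal is correct and follows essentially the paper's route. The paper obtains the expansion by the same distributive expansion of $\bigotimes_j(\mathbf{1}-\overset{\circ}{\mathbf{e}}_{p_j})$ into $\sum_{d\mid P_k}\mu(d)\,\overset{\circ}{\mathbf{e}}_d$ (stated as Theorem~\ref{thm:not_numbers_mobius}), combined with the plane-wave formula for $\overset{\circ}{\mathbf{e}}_d$, and it reads off the density from the coprimality indicator of Theorem~\ref{thm:sieve_eratosthenes} plus Euler's product formula. Your orthogonality computation of $\langle \mathbf{S}_k,\mathbf{1}\rangle$ is a valid extra check, though its linearity step tacitly uses Lemma~\ref{lem:period_invariance} to put all terms over a common period.
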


\begin{theorem}[M\"{o}bius Inversion as the Distributive Kernel of Not-Numbers]
\label{thm:not_numbers_mobius}
The appearance of the M\"{o}bius function $\mu(d) \in \{+1, -1, 0\}$ in the Fourier spectrum of the prime sieve is the direct consequence of the distributive expansion of the product of Not-Numbers:
\begin{equation}
\mathbf{S}_k = \bigotimes_{j=1}^k (\mathbf{1} - \overset{\circ}{\mathbf{e}}_{p_j}) = \mathbf{1} - \sum_{j} \overset{\circ}{\mathbf{e}}_{p_j} + \sum_{j_1 < j_2} \overset{\circ}{\mathbf{e}}_{p_{j_1} p_{j_2}} - \dots = \sum_{d \mid P_k} \mu(d) \, \overset{\circ}{\mathbf{e}}_d.
\end{equation}
The signs $\mu(d) = (-1)^{\omega(d)}$ for square-free $d$ reflect the alternating signs of distributive polynomial expansion over Not-Numbers, providing a purely algebraic origin for the inclusion-exclusion principle and M\"{o}bius inversion.
\end{theorem}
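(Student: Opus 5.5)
The identity is a finite computation inside the commutative ring $(\mathcal{V},+,\otimes)$. The plan is to expand the product $\bigotimes_{j=1}^k(\mathbf{1}-\overset{\circ}{\mathbf{e}}_{p_j})$ distributively and then identify each resulting monomial.

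\textbf{Step 1 (distributive expansion).} Because $\otimes$ is commutative and distributes over $+$ (Theorem~\ref{thm:group_algebra}), the standard expansion of $\prod_j(1-x_j)$ in any commutative unital ring gives
\begin{equation*}
\bigotimes_{j=1}^k(\mathbf{1}-\overset{\circ}{\mathbf{e}}_{p_j}) = \sum_{T\subseteq\{1,\dots,k\}} (-1)^{|T|}\bigotimes_{j\in T}\overset{\circ}{\mathbf{e}}_{p_j},
\end{equation*}
with the empty product equal to $\mathbf{1}$. I would verify this by induction on $k$: the case $k=1$ is immediate, and the inductive step multiplies the expansion for $k-1$ by $(\mathbf{1}-\overset{\circ}{\mathbf{e}}_{p_k})$ and splits the subsets according to whether they contain $k$.

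\textbf{Step 2 (products of re-sieves).} The key lemma is that for pairwise coprime $d_1,d_2$,
\begin{equation*}
\overset{\circ}{\mathbf{e}}_{d_1}\otimes\overset{\circ}{\mathbf{e}}_{d_2}=\overset{\circ}{\mathbf{e}}_{d_1d_2}.
\end{equation*}
I would prove this pointwise from the $\{0,1\}$-valued description in Definition~\ref{def:re_co_numbers}. The left side equals $1$ exactly when $d_1\mid m$ and $d_2\mid m$. For coprime $d_1,d_2$ this holds if and only if $d_1d_2\mid m$, since $\operatorname{lcm}(d_1,d_2)=d_1d_2$. One small technical issue is that Definition~\ref{def:re_co_numbers} requires $n\ge2$. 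I would extend it by setting $\overset{\circ}{\mathbf{e}}_1:=\mathbf{1}$, which agrees with the formula $\frac1n\sum_{k=0}^{n-1}\mathbf{w}(k/n,0)$ at $n=1$. By induction over $T$ and the distinctness of the primes, $\bigotimes_{j\in T}\overset{\circ}{\mathbf{e}}_{p_j}=\overset{\circ}{\mathbf{e}}_{d_T}$, where $d_T=\prod_{j\in T}p_j$.

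\textbf{Step 3 (reindexing by divisors).} The map $T\mapsto d_T$ is a bijection from subsets of $\{1,\dots,k\}$ onto divisors of the squarefree number $P_k$; this follows from unique factorization. Every such divisor is squarefree with $\omega(d_T)=|T|$, so $(-1)^{|T|}=\mu(d_T)$. Substituting into Step 1 gives $\sum_{d\mid P_k}\mu(d)\,\overset{\circ}{\mathbf{e}}_d$. Grouping the terms by $|T|=0,1,2,\dots$ reproduces the displayed alternating expansion.

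Finally, substituting $\overset{\circ}{\mathbf{e}}_d=\frac1d\sum_{r=0}^{d-1}\mathbf{w}(r/d,0)$ recovers the Fourier form stated in the preceding corollary. The interpretive claim about inclusion–exclusion then amounts to reading off the signs.

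I expect the main obstacle to be Step 2, specifically stating the coprimality hypothesis carefully and handling the $d=1$ convention. The identification $\overset{\circ}{\mathbf{e}}_{d_1}\otimes\overset{\circ}{\mathbf{e}}_{d_2}=\overset{\circ}{\mathbf{e}}_{\operatorname{lcm}(d_1,d_2)}$ is what makes the product collapse to a single re-sieve. It would fail without lcm for non-coprime moduli, although that case never arises here because the primes are distinct.
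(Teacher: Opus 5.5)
Your proposal is correct and takes the same route the paper indicates. The paper gives no separate proof: its displayed expansion $\mathbf{1} - \sum_j \overset{\circ}{\mathbf{e}}_{p_j} + \sum_{j_1<j_2}\overset{\circ}{\mathbf{e}}_{p_{j_1}p_{j_2}} - \cdots$ is your Step 1 combined with Steps 2--3, and your lemma $\overset{\circ}{\mathbf{e}}_{d_1}\otimes\overset{\circ}{\mathbf{e}}_{d_2}=\overset{\circ}{\mathbf{e}}_{d_1d_2}$ for coprime $d_1,d_2$, together with the convention $\overset{\circ}{\mathbf{e}}_1:=\mathbf{1}$, makes explicit what the paper uses tacitly.
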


\subsection{Dual Spectral Sieves: Frequency Filtering in Momentum Space}

In exact duality to spatial position sieves $\mathbf{e}(1/n, \xi)$, we construct \textbf{spectral frequency sieves} that filter out specific wave numbers from arbitrary linear superpositions.

\begin{definition}[Dual Spectral Sieve and Spectral Not-Sieve]
\label{def:spectral_sieve}
Let $f_0 \in \mathbb{Q}/\mathbb{Z}$ be a target rational frequency. 
\begin{enumerate}
    \item The \textbf{spectral frequency sieve} $\Pi_{f_0}: \mathcal{V} \to \mathcal{V}$ is defined using the principal period inner product:
    \begin{equation}
    \Pi_{f_0}(\Psi) := \langle \Psi, \mathbf{w}(f_0, 0) \rangle \, \mathbf{w}(f_0, 0).
    \end{equation}
    
    \item The \textbf{spectral Not-Sieve} (or frequency notch filter) $\mathrm{Not}(\Pi_{f_0}): \mathcal{V} \to \mathcal{V}$ is the complementary projector:
    \begin{equation}
    \mathrm{Not}(\Pi_{f_0}) := I - \Pi_{f_0}.
    \end{equation}
    It completely annihilates the component of $\Psi$ at frequency $f_0$ while preserving all orthogonal frequency components without phase distortion:
    \begin{equation}
    \mathrm{Not}(\Pi_{f_0})(\Psi) = \Psi - \langle \Psi, \mathbf{w}(f_0, 0) \rangle \, \mathbf{w}(f_0, 0).
    \end{equation}
\end{enumerate}
\end{definition}

\begin{theorem}[Orthogonal Frequency Filtering]
\label{thm:spectral_filtering}
Let $\Psi = \sum_{j=1}^N c_j \mathbf{w}(f_j, g_j) \in \mathcal{V}$ be any finite wave packet with distinct frequencies $f_j \in \mathbb{Q}/\mathbb{Z}$.
Then:
\begin{equation}
\Pi_{f_0}(\Psi) = \begin{cases} c_{j_0} e^{2\pi i g_{j_0}} \mathbf{w}(f_0, 0) = c_{j_0} \mathbf{w}(f_0, g_{j_0}), & \text{if } f_0 = f_{j_0} \text{ for some } j_0 \\ \mathbf{0}, & \text{if } f_0 \not\in \{f_1, \dots, f_N\}. \end{cases}
\end{equation}
The spectral projection operators satisfy $\Pi_{f_1} \circ \Pi_{f_2} = \delta_{f_1, f_2} \Pi_{f_1}$, forming a complete projection-valued measure (PVM) on the momentum spectrum.
\end{theorem}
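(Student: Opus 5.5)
The plan is to reduce everything to the orthonormality relation of Theorem~\ref{thm:mode_orthonormality}, combined with sesquilinearity of the principal period inner product. By Lemma~\ref{lem:period_invariance}, we may evaluate $\langle \Psi, \mathbf{w}(f_0,0)\rangle$ over any common period $L$ of $\Psi$ and $\mathbf{w}(f_0,0)$; take $L$ to be the lcm of the denominators of $f_0, f_1, \dots, f_N$. Then $\langle\cdot,\mathbf{w}(f_0,0)\rangle$ is linear in its first slot, being a finite sum, so
\begin{equation*}
\langle \Psi, \mathbf{w}(f_0,0)\rangle = \sum_{j=1}^N c_j \langle \mathbf{w}(f_j,g_j), \mathbf{w}(f_0,0)\rangle = \sum_{j=1}^N c_j e^{2\pi i g_j}\,\delta_{f_j,f_0}.
\end{equation*}
Because the $f_j$ are distinct in $\mathbb{Q}/\mathbb{Z}$, at most one index $j_0$ survives. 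If such a $j_0$ exists, the coefficient is $c_{j_0}e^{2\pi i g_{j_0}}$. Multiplying by $\mathbf{w}(f_0,0)$ and using the group law of Theorem~\ref{thm:main_iso}, namely $e^{2\pi i g}\mathbf{w}(f_0,0)=\mathbf{w}(f_0,g)$, gives the first case. If no such index exists, the sum is empty and gives $\mathbf{0}$.

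For the composition law, apply the first part twice. The output $\Pi_{f_2}(\Psi)=\langle\Psi,\mathbf{w}(f_2,0)\rangle\,\mathbf{w}(f_2,0)$ is a single-mode packet at frequency $f_2$. Computing $\Pi_{f_1}$ of it with Theorem~\ref{thm:mode_orthonormality} gives $\delta_{f_1,f_2}\langle\Psi,\mathbf{w}(f_2,0)\rangle\,\mathbf{w}(f_2,0)$, and this is exactly $\delta_{f_1,f_2}\Pi_{f_1}(\Psi)$. The case $f_1=f_2$ yields idempotence, and the case $f_1\neq f_2$ yields mutual orthogonality. Self-adjointness follows from the conjugate symmetry of the inner product: $\langle\Pi_{f_0}\Psi,\Phi\rangle = \langle\Psi,\mathbf{w}(f_0,0)\rangle\overline{\langle\Phi,\mathbf{w}(f_0,0)\rangle} = \langle\Psi,\Pi_{f_0}\Phi\rangle$. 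Together these show each $\Pi_{f_0}$ is an orthogonal projector on the pre-Hilbert space $(\mathcal{V},\langle\cdot,\cdot\rangle)$.

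\textbf{Completeness} is where care is needed, and I would prove it in the following sense: for every $\Psi\in\mathcal{V}$, the sum $\sum_{f\in\mathbb{Q}/\mathbb{Z}}\Pi_f(\Psi)$ has only finitely many nonzero terms and equals $\Psi$. To show this, first regroup any expansion of $\Psi$ so that its frequencies are distinct, combining terms with equal $f$ via $c\,\mathbf{w}(f,g)+c'\mathbf{w}(f,g') = (ce^{2\pi ig}+c'e^{2\pi ig'})\mathbf{w}(f,0)$. The first part then identifies each $\Pi_{f_j}(\Psi)$ with the $j$-th term, so summing over $j$ recovers $\Psi$.

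The main obstacle is purely definitional, not computational. Since $\mathcal{V}$ is only a pre-Hilbert space with countable algebraic dimension, ``projection-valued measure'' has to be read as the countably additive family $E(A)=\sum_{f\in A}\Pi_f$ for $A\subseteq\mathbb{Q}/\mathbb{Z}$. On each $\Psi$ this sum is finite, so no convergence issue arises. With that reading, $E(\mathbb{Q}/\mathbb{Z})=I$ and $E(A)E(B)=E(A\cap B)$ follow immediately from the composition law.
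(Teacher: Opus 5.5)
Your proof of the displayed formula matches the paper's: linearity of $\langle\cdot,\mathbf{w}(f_0,0)\rangle$ in the first slot, which you correctly justify via Lemma~\ref{lem:period_invariance}, followed by Theorem~\ref{thm:mode_orthonormality}, with distinctness of the $f_j$ leaving at most one surviving term. The paper's proof stops there and never checks $\Pi_{f_1}\circ\Pi_{f_2}=\delta_{f_1,f_2}\Pi_{f_1}$ or completeness, so your verification of the composition law, of self-adjointness, and of the finite-sum PVM $E(A)=\sum_{f\in A}\Pi_f$ correctly fills that gap using the same method.
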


\begin{proof}
By linearity and Theorem~\ref{thm:mode_orthonormality}:
\begin{equation}
\langle \Psi, \mathbf{w}(f_0, 0) \rangle = \sum_{j=1}^N c_j \langle \mathbf{w}(f_j, g_j), \mathbf{w}(f_0, 0) \rangle = \sum_{j=1}^N c_j e^{2\pi i g_j} \delta_{f_j, f_0}.
\end{equation}
If $f_0 = f_{j_0}$, the sum collapses to $c_{j_0} e^{2\pi i g_{j_0}}$, yielding $c_{j_0} e^{2\pi i g_{j_0}} \mathbf{w}(f_0, 0) = c_{j_0} \mathbf{w}(f_0, g_{j_0})$. If $f_0 \neq f_j$ for all $j$, the sum vanishes.
\end{proof}

\section{The Non-Abelian Quaternion Wave Extension: Spinor Fields, $\mathrm{SU}(2)$ Gauge Dynamics, and Non-Commutative Division Algebras}
\label{sec:quaternions}

\subsection{Motivation: Spinor Polarizations and Non-Abelian Gauge Degrees of Freedom}

In Sections~\ref{def:closure_space} through \ref{sec:sieves}, the wave closure space was constructed over the abelian scalar field $\mathbb{C}$ and the compact phase group $\mathrm{U}(1) \cong S^1$. While this scalar framework completely classifies scalar wave interference, integer-valued polynomial chirps, dispersion, and number-theoretic sieves, physical relativistic wave mechanics (e.g., the Dirac equation) and topological quantum matter (e.g., non-abelian Berry phases and spin-$1/2$ systems) demand non-commuting phase degrees of freedom.

To capture spin polarizations, spatial rotations in three dimensions, and non-abelian gauge holonomies directly within the wave sequence algebra, we extend the phase configuration space from $\mathbb{C}^\mathbb{Z}$ to the division ring of Hamilton's quaternions $\mathbb{H}^\mathbb{Z}$.

\begin{definition}[The Quaternion Algebra $\mathbb{H}$ and Spatial Basis]
The quaternion division algebra $\mathbb{H}$ over $\mathbb{R}$ is the four-dimensional associative, non-commutative normed division ring with basis $\{1, \mathbf{i}, \mathbf{j}, \mathbf{k}\}$ satisfying Hamilton's fundamental identities:
\begin{equation}
\mathbf{i}^2 = \mathbf{j}^2 = \mathbf{k}^2 = \mathbf{i}\mathbf{j}\mathbf{k} = -1,
\end{equation}
with the anti-commuting cyclic cross-relations:
\begin{equation}
\mathbf{i}\mathbf{j} = -\mathbf{j}\mathbf{i} = \mathbf{k}, \quad \mathbf{j}\mathbf{k} = -\mathbf{k}\mathbf{j} = \mathbf{i}, \quad \mathbf{k}\mathbf{i} = -\mathbf{i}\mathbf{k} = \mathbf{j}.
\end{equation}
Every quaternion $q \in \mathbb{H}$ is expressed as:
\begin{equation}
q = q_0 + q_1 \mathbf{i} + q_2 \mathbf{j} + q_3 \mathbf{k} = \operatorname{Sc}(q) + \operatorname{Vec}(q),
\end{equation}
where $\operatorname{Sc}(q) = q_0 \in \mathbb{R}$ is the scalar part and $\operatorname{Vec}(q) = \vec{q} = q_1 \mathbf{i} + q_2 \mathbf{j} + q_3 \mathbf{k} \in \mathbb{R}^3$ is the spatial vector part.
The quaternion conjugate is:
\begin{equation}
q^* := q_0 - q_1 \mathbf{i} - q_2 \mathbf{j} - q_3 \mathbf{k},
\end{equation}
and its Euclidean norm is $|q| = \sqrt{q q^*} = \sqrt{q_0^2 + q_1^2 + q_2^2 + q_3^2}$.
Every non-zero quaternion $q \neq 0$ has a unique multiplicative inverse:
\begin{equation}
q^{-1} = \frac{q^*}{|q|^2}.
\end{equation}
\end{definition}

\begin{definition}[The Quaternion Ambient Sequence Ring $\mathcal{S}_\mathbb{H}$]
The ambient quaternion sequence space is the two-sided $\mathbb{H}$-module:
\begin{equation}
\mathcal{S}_\mathbb{H} := \mathbb{H}^\mathbb{Z} = \{ \mathbf{Q} : \mathbb{Z} \to \mathbb{H} \}.
\end{equation}
Equipped with pointwise sequence addition $(+)$ and pointwise quaternion multiplication $(\star)$:
\begin{equation}
(\mathbf{Q}_1 + \mathbf{Q}_2)[m] := \mathbf{Q}_1[m] + \mathbf{Q}_2[m], \qquad (\mathbf{Q}_1 \star \mathbf{Q}_2)[m] := \mathbf{Q}_1[m] \cdot \mathbf{Q}_2[m], \quad \forall m \in \mathbb{Z},
\end{equation}
$\mathcal{S}_\mathbb{H}$ is an infinite-dimensional associative, non-commutative unital ring.
\end{definition}

\subsection{Unit Quaternions, the 3-Sphere $S^3 \cong \mathrm{SU}(2)$, and Non-Abelian Primitive Generators}

In analogy to the complex unimodular circle $S^1 = \mathrm{U}(1)$, the natural carrier manifold for quaternionic waves is the compact Lie group of unit quaternions:
\begin{equation}
\operatorname{Sp}(1) = \{ q \in \mathbb{H} \mid |q| = 1 \} \cong S^3 \cong \mathrm{SU}(2).
\end{equation}
Any unit quaternion $u \in \operatorname{Sp}(1)$ admits a polar Euler-Rodrigues decomposition:
\begin{equation}
u = \exp(\hat{\mathbf{n}} \, \theta) = \cos(\theta) + \hat{\mathbf{n}} \sin(\theta),
\end{equation}
where $\theta \in [0, \pi]$ is the rotation angle and $\hat{\mathbf{n}} = n_1 \mathbf{i} + n_2 \mathbf{j} + n_3 \mathbf{k} \in S^2$ (satisfying $\hat{\mathbf{n}}^2 = -1$) is a unit imaginary quaternion specifying the spatial rotation axis.

\begin{definition}[Non-Abelian Primitive Coordinate Generators]
\label{def:quaternion_primitives}
To build non-abelian wave dynamics on the lattice $\mathbb{Z}$, we define three directional spatial coordinate generators corresponding to the three orthogonal imaginary axes:
\begin{enumerate}
    \item $\mathbf{E}_Z^{(\mathbf{i})}[m] := \exp(2\pi \mathbf{i} \, m) \equiv 1$, with affine phase generator $\Theta_Z^{(\mathbf{i})}[m] = \mathbf{i} \, m$,
    \item $\mathbf{E}_Z^{(\mathbf{j})}[m] := \exp(2\pi \mathbf{j} \, m) \equiv 1$, with affine phase generator $\Theta_Z^{(\mathbf{j})}[m] = \mathbf{j} \, m$,
    \item $\mathbf{E}_Z^{(\mathbf{k})}[m] := \exp(2\pi \mathbf{k} \, m) \equiv 1$, with affine phase generator $\Theta_Z^{(\mathbf{k})}[m] = \mathbf{k} \, m$,
\end{enumerate}
supplemented by the global calibration primitives $\Theta_{\bar{1}}^{(\mathbf{i})} = \mathbf{i}$, $\Theta_{\bar{1}}^{(\mathbf{j})} = \mathbf{j}$, $\Theta_{\bar{1}}^{(\mathbf{k})} = \mathbf{k}$.
\end{definition}

\begin{definition}[Quaternion Plane Wave Carriers]
For any unit rotation axis $\hat{\mathbf{n}} \in S^2 \subset \operatorname{Vec}(\mathbb{H})$, frequency $f \in \mathbb{Q}$, and initial phase angle $\phi \in \mathbb{Q}$, the \textbf{quaternionic plane wave carrier} is defined by:
\begin{equation}
\mathbf{W}_{\hat{\mathbf{n}}}(f, \phi)[m] := \exp\left( 2\pi \hat{\mathbf{n}} (f m + \phi) \right) = \cos(2\pi (f m + \phi)) + \hat{\mathbf{n}} \sin(2\pi (f m + \phi)).
\end{equation}
\end{definition}

\begin{theorem}[Non-Commutative Product of Quaternionic Waves]
\label{thm:non_commutative_product}
Let $\mathbf{W}_{\hat{\mathbf{n}}_1}(f_1, \phi_1)$ and $\mathbf{W}_{\hat{\mathbf{n}}_2}(f_2, \phi_2)$ be two quaternionic wave carriers.
Their pointwise product $(\mathbf{W}_{\hat{\mathbf{n}}_1} \star \mathbf{W}_{\hat{\mathbf{n}}_2})[m]$ commutes for all $m \in \mathbb{Z}$ if and only if their rotation axes are collinear ($\hat{\mathbf{n}}_1 = \pm \hat{\mathbf{n}}_2$).
For non-collinear axes ($\hat{\mathbf{n}}_1 \times \hat{\mathbf{n}}_2 \neq \vec{0}$), their commutator is non-vanishing:
\begin{equation}
[\mathbf{W}_{\hat{\mathbf{n}}_1}, \mathbf{W}_{\hat{\mathbf{n}}_2}]_\star[m] = 2 \, (\hat{\mathbf{n}}_1 \times \hat{\mathbf{n}}_2) \, \sin(2\pi (f_1 m + \phi_1)) \sin(2\pi (f_2 m + \phi_2)).
\end{equation}
\end{theorem}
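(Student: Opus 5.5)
The plan is to reduce the statement to a single-site computation in $\mathbb{H}$. Since $\star$ is defined pointwise, the commutator $[\mathbf{W}_{\hat{\mathbf{n}}_1}, \mathbf{W}_{\hat{\mathbf{n}}_2}]_\star[m]$ is just the quaternion commutator of the two values at site $m$. Fix $m$ and abbreviate
\[
A := 2\pi(f_1 m + \phi_1), \qquad B := 2\pi(f_2 m + \phi_2).
\]
By the definition of the quaternionic plane wave carriers,
\[
a = \cos A + \hat{\mathbf{n}}_1 \sin A, \qquad b = \cos B + \hat{\mathbf{n}}_2 \sin B .
\]

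The key algebraic input is the standard product rule for pure imaginary quaternions, which follows directly from Hamilton's identities:
\[
\hat{\mathbf{n}}_1 \hat{\mathbf{n}}_2 = -\hat{\mathbf{n}}_1\cdot\hat{\mathbf{n}}_2 + \hat{\mathbf{n}}_1\times\hat{\mathbf{n}}_2 .
\]
Swapping the factors flips the sign of the cross product and leaves the dot product unchanged. Now expand $ab$ and $ba$ bilinearly. The real scalars $\cos A$, $\cos B$, $\sin A$, $\sin B$ are central in $\mathbb{H}$, so every term in which at least one factor is a scalar cancels in $ab-ba$. Only the vector–vector term survives:
\[
ab - ba = \sin A \sin B\,(\hat{\mathbf{n}}_1\hat{\mathbf{n}}_2 - \hat{\mathbf{n}}_2\hat{\mathbf{n}}_1) = 2\,(\hat{\mathbf{n}}_1\times\hat{\mathbf{n}}_2)\sin A\sin B .
\]
This is exactly the displayed formula. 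It also gives the "if" direction immediately: if $\hat{\mathbf{n}}_1 = \pm\hat{\mathbf{n}}_2$, the cross product vanishes and the carriers commute at every site.

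For the converse I would argue as follows. If $\hat{\mathbf{n}}_1\times\hat{\mathbf{n}}_2 \neq \vec 0$, the commutator is identically zero if and only if $\sin A\sin B = 0$ for every $m \in \mathbb{Z}$. So one must show that some site has $\sin(2\pi(f_1 m+\phi_1))\sin(2\pi(f_2 m+\phi_2)) \neq 0$.

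\textbf{Main obstacle.} This last step is where I expect trouble, because it is not true unconditionally. Write $2f_i = p_i/q_i$ in lowest terms. The zero set of $\sin(2\pi(f_i m + \phi_i))$ is the set of $m$ with $2(f_i m+\phi_i) \in \mathbb{Z}$. It is therefore either empty, all of $\mathbb{Z}$, or a single residue class modulo $q_i$. Two such classes can cover $\mathbb{Z}$ in degenerate situations:
\begin{itemize}
\item $f_i \in \tfrac12\mathbb{Z}$ with $2\phi_i \in \mathbb{Z}$, where one sine vanishes identically;
\item $q_1 = q_2 = 2$ with complementary residue classes, for example $f_1 = f_2 = 1/4$, $\phi_1 = 0$, $\phi_2 = 1/4$, where the product is a multiple of $\sin(\pi m)$ and vanishes at every site.
\end{itemize}
In these cases the two carriers are everywhere real, or pointwise alternately real, and commute despite having non-collinear axes. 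I would therefore prove the "only if" direction, and the "non-vanishing" claim, under the explicit non-degeneracy hypothesis that the scalar sequence $\sin A\sin B$ is not identically zero. I would then show that this hypothesis fails only in the listed residue-class coincidences, using the counting argument above, and read "non-vanishing" as "not the zero sequence," since the commutator does vanish at individual sites where either sine is zero.
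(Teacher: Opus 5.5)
Your commutator computation is the same as the paper's. The paper multiplies out $ab$ and $ba$ in full using $\hat{\mathbf{n}}_1\hat{\mathbf{n}}_2=-\hat{\mathbf{n}}_1\cdot\hat{\mathbf{n}}_2+\hat{\mathbf{n}}_1\times\hat{\mathbf{n}}_2$ and subtracts; you drop the terms with a central scalar factor first. Either way, the displayed formula and the ``if'' direction follow.

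Where you depart from the paper, you are right and the paper is wrong. The paper's proof ends by asserting that $2(\hat{\mathbf{n}}_1\times\hat{\mathbf{n}}_2)\sin\theta_1\sin\theta_2$ vanishes for all $m$ if and only if $\hat{\mathbf{n}}_1\times\hat{\mathbf{n}}_2=\vec 0$. That step tacitly assumes the scalar sequence $\sin\theta_1\sin\theta_2$ (your $\sin A\sin B$) is not identically zero. The carriers are defined for arbitrary $f,\phi\in\mathbb{Q}$, so this assumption fails:
\begin{itemize}
\item $\mathbf{W}_{\mathbf{i}}(0,0)\equiv 1$ commutes with every $\mathbf{W}_{\mathbf{j}}(f,\phi)$.
\item Your pair $\mathbf{W}_{\mathbf{i}}(1/4,0)$, $\mathbf{W}_{\mathbf{j}}(1/4,1/4)$ commutes at every site, because $\sin(\pi m/2)\cos(\pi m/2)=\tfrac12\sin(\pi m)=0$.
\end{itemize}
The ``non-vanishing'' claim can likewise only mean ``not the zero sequence,'' since the commutator is zero at every site where either sine vanishes.

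So the statement as written is false. Your repaired version, which adds the hypothesis $\sin A\sin B\not\equiv 0$ and classifies when it fails, is the correct theorem.

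Your classification of the degenerate cases is also right; just make the counting step explicit. Suppose both zero sets are single residue classes modulo $q_1,q_2\ge 2$. A block of $L=\operatorname{lcm}(q_1,q_2)$ consecutive integers meets their union in at most $L/q_1+L/q_2$ points. Covering $\mathbb{Z}$ therefore forces $1/q_1+1/q_2\ge 1$, i.e.\ $q_1=q_2=2$ with disjoint classes, which is exactly your second bullet.
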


\begin{proof}
Let $\theta_1 = 2\pi (f_1 m + \phi_1)$ and $\theta_2 = 2\pi (f_2 m + \phi_2)$.
By Hamilton quaternion multiplication:
\begin{align}
(\cos \theta_1 + \hat{\mathbf{n}}_1 \sin \theta_1)(\cos \theta_2 + \hat{\mathbf{n}}_2 \sin \theta_2) &= \cos \theta_1 \cos \theta_2 - (\hat{\mathbf{n}}_1 \cdot \hat{\mathbf{n}}_2) \sin \theta_1 \sin \theta_2 \nonumber \\
&\quad + \hat{\mathbf{n}}_1 \sin \theta_1 \cos \theta_2 + \hat{\mathbf{n}}_2 \cos \theta_1 \sin \theta_2 + (\hat{\mathbf{n}}_1 \times \hat{\mathbf{n}}_2) \sin \theta_1 \sin \theta_2.
\end{align}
Exchanging indices $1 \leftrightarrow 2$ reverses the cross product term while leaving the scalar and symmetric terms unchanged:
\begin{align}
(\cos \theta_2 + \hat{\mathbf{n}}_2 \sin \theta_2)(\cos \theta_1 + \hat{\mathbf{n}}_1 \sin \theta_1) &= \cos \theta_1 \cos \theta_2 - (\hat{\mathbf{n}}_1 \cdot \hat{\mathbf{n}}_2) \sin \theta_1 \sin \theta_2 \nonumber \\
&\quad + \hat{\mathbf{n}}_1 \sin \theta_1 \cos \theta_2 + \hat{\mathbf{n}}_2 \cos \theta_1 \sin \theta_2 - (\hat{\mathbf{n}}_1 \times \hat{\mathbf{n}}_2) \sin \theta_1 \sin \theta_2.
\end{align}
Subtracting the two expressions gives precisely the commutator $2 (\hat{\mathbf{n}}_1 \times \hat{\mathbf{n}}_2) \sin \theta_1 \sin \theta_2$.
This vanishes identically for all $m \in \mathbb{Z}$ if and only if $\hat{\mathbf{n}}_1 \times \hat{\mathbf{n}}_2 = \vec{0}$, which proves the theorem.
\end{proof}

\subsection{Spinor Representation: The Canonical Isomorphism $\mathbb{H}^\mathbb{Z} \cong \mathcal{M}_2(\mathbb{C})^\mathbb{Z}$}

Every quaternion can be mapped faithfully into a $2 \times 2$ complex matrix via the Pauli matrix representation. This establishes an exact isomorphism between quaternionic lattice waves and two-component relativistic spinor wave fields.

\begin{definition}[Pauli Spin Matrices and the Cayley-Dickson Map]
Let $\sigma_1, \sigma_2, \sigma_3$ denote the standard Pauli spin matrices:
\begin{equation}
\sigma_1 = \begin{pmatrix} 0 & 1 \\ 1 & 0 \end{pmatrix}, \quad \sigma_2 = \begin{pmatrix} 0 & -i \\ i & 0 \end{pmatrix}, \quad \sigma_3 = \begin{pmatrix} 1 & 0 \\ 0 & -1 \end{pmatrix}.
\end{equation}
The \textbf{spinor representation homomorphism} $\rho: \mathbb{H} \to \mathcal{M}_2(\mathbb{C})$ is defined by:
\begin{equation}
\rho(1) = I_2 = \begin{pmatrix} 1 & 0 \\ 0 & 1 \end{pmatrix}, \quad \rho(\mathbf{i}) = -i \sigma_1 = \begin{pmatrix} 0 & -i \\ -i & 0 \end{pmatrix}, \quad \rho(\mathbf{j}) = -i \sigma_2 = \begin{pmatrix} 0 & -1 \\ 1 & 0 \end{pmatrix}, \quad \rho(\mathbf{k}) = -i \sigma_3 = \begin{pmatrix} -i & 0 \\ 0 & i \end{pmatrix}.
\end{equation}
For any quaternion $q = q_0 + q_1 \mathbf{i} + q_2 \mathbf{j} + q_3 \mathbf{k}$:
\begin{equation}
\rho(q) = \begin{pmatrix} q_0 - i q_3 & -q_2 - i q_1 \\ q_2 - i q_1 & q_0 + i q_3 \end{pmatrix} = \begin{pmatrix} \alpha & -\bar{\beta} \\ \beta & \bar{\alpha} \end{pmatrix},
\end{equation}
where $\alpha = q_0 - i q_3 \in \mathbb{C}$ and $\beta = q_2 - i q_1 \in \mathbb{C}$ satisfy $|\alpha|^2 + |\beta|^2 = |q|^2$.
\end{definition}

\begin{theorem}[Isomorphism with Lattice Spinor Fields]
\label{thm:spinor_isomorphism}
Extending $\rho$ pointwise to sequence spaces:
\begin{equation}
\rho: (\mathcal{S}_\mathbb{H}, +, \star) \stackrel{\cong}{\longrightarrow} (\mathcal{M}_2(\mathbb{C})^\mathbb{Z}, +, \cdot), \quad (\rho(\mathbf{Q}))[m] := \rho(\mathbf{Q}[m]),
\end{equation}
is an injective $*$-algebra isomorphism of lattice sequences. Under this map:
\begin{enumerate}
    \item The unit quaternions $\operatorname{Sp}(1)^\mathbb{Z}$ map isomorphically to the local $\mathrm{SU}(2)$ gauge group:
    \begin{equation}
    \rho(\operatorname{Sp}(1)^\mathbb{Z}) \cong \mathrm{SU}(2)^\mathbb{Z}.
    \end{equation}
    \item The quaternion norm corresponds to the matrix determinant:
    \begin{equation}
    \det(\rho(\mathbf{Q}[m])) = |\mathbf{Q}[m]|^2.
    \end{equation}
    \item Quaternionic sequence conjugation corresponds to conjugate transpose (Hermitian adjoint):
    \begin{equation}
    \rho(\mathbf{Q}^*[m]) = (\rho(\mathbf{Q}[m]))^\dagger.
    \end{equation}
\end{enumerate}
\end{theorem}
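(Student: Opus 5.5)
The plan is to verify every claim at a single quaternion first and then extend pointwise. Since $+$ and $\star$ on $\mathcal{S}_\mathbb{H}$ and $+$, $\cdot$ on $\mathcal{M}_2(\mathbb{C})^\mathbb{Z}$ are all defined site by site, any identity of the form $\rho(q_1 q_2)=\rho(q_1)\rho(q_2)$ or $\rho(q^*)=\rho(q)^\dagger$ at one site $m$ holds for whole sequences.

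\textbf{Step 1: $\rho$ is an injective algebra homomorphism.} The map $\rho$ is defined on the basis $\{1,\mathbf{i},\mathbf{j},\mathbf{k}\}$ and extended $\mathbb{R}$-linearly, so it is additive.
\begin{itemize}
\item \emph{Multiplicativity.} By bilinearity it suffices to check Hamilton's relations on the images. Using $\sigma_a\sigma_b=\delta_{ab}I_2+i\epsilon_{abc}\sigma_c$, we get $(-i\sigma_a)(-i\sigma_b)=-\delta_{ab}I_2+\epsilon_{abc}(-i\sigma_c)$. This reads off as $\rho(\mathbf{i})^2=\rho(\mathbf{j})^2=\rho(\mathbf{k})^2=-I_2$, $\rho(\mathbf{i})\rho(\mathbf{j})=\rho(\mathbf{k})$, and the cyclic companions.
\item \emph{Injectivity.} The explicit matrix $\begin{pmatrix}\alpha&-\bar\beta\\ \beta&\bar\alpha\end{pmatrix}$ determines $q_0,q_3$ from $\alpha=q_0-iq_3$ and $q_1,q_2$ from $\beta=q_2-iq_1$. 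Hence $\ker\rho=0$.
\end{itemize}

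\textbf{Caveat on ``isomorphism''.} The image of $\rho$ is the real $4$-dimensional subalgebra of matrices of the above form. It is not all of the $8$-real-dimensional $\mathcal{M}_2(\mathbb{C})$. So I will prove the statement as an isomorphism of real $*$-algebras onto the image $\rho(\mathbb{H})^\mathbb{Z}$, which is what the word ``injective'' in the statement signals. Full $\mathcal{M}_2(\mathbb{C})$ is reached only after complexification, in the later biquaternion section.

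\textbf{Step 2: norm and determinant (item 2).} Compute $\det\rho(q)=\alpha\bar\alpha+\beta\bar\beta=|\alpha|^2+|\beta|^2$. Substituting $\alpha,\beta$ gives $q_0^2+q_3^2+q_2^2+q_1^2=|q|^2$, applied at each site $m$.

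\textbf{Step 3: conjugation and adjoint (item 3).} Each $\rho(\mathbf{i}),\rho(\mathbf{j}),\rho(\mathbf{k})$ equals $-i\sigma_a$ with $\sigma_a$ Hermitian, so it is anti-Hermitian, while $\rho(1)$ is Hermitian. Since the coefficients $q_\mu$ are real, $\rho(q)^\dagger=q_0I_2-\sum_a q_a\rho(e_a)=\rho(q^*)$. The $*$-compatibility of the pointwise extension follows.

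\textbf{Step 4: unit quaternions and $\mathrm{SU}(2)$ (item 1).} This is the only step needing a converse argument, so I expect it to be the main (if modest) obstacle.
\begin{itemize}
\item \emph{Forward direction.} If $|q|=1$, then $\rho(q)\rho(q)^\dagger=\rho(qq^*)=\rho(|q|^2)=I_2$ by Steps 1 and 3, and $\det\rho(q)=1$ by Step 2. So $\rho(q)\in\mathrm{SU}(2)$.
\item \emph{Converse.} Take $U=\begin{pmatrix}a&b\\c&d\end{pmatrix}\in\mathrm{SU}(2)$. Then $U^{-1}=U^\dagger$, together with the adjugate formula for $U^{-1}$ and $\det U=1$, forces $d=\bar a$ and $b=-\bar c$. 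Thus $U$ has the form $\begin{pmatrix}\alpha&-\bar\beta\\ \beta&\bar\alpha\end{pmatrix}$ with $|\alpha|^2+|\beta|^2=1$, which is $\rho(q)$ for a unit quaternion $q$.
\end{itemize}
Restricting the injective homomorphism of Step 1 therefore gives a group isomorphism $\operatorname{Sp}(1)\cong\mathrm{SU}(2)$. Taking the product over all lattice sites yields $\rho(\operatorname{Sp}(1)^\mathbb{Z})=\mathrm{SU}(2)^\mathbb{Z}$.
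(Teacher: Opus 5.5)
Your proposal is correct and follows essentially the same route as the paper's proof. Both check Hamilton's relations on $\rho(\mathbf{i}),\rho(\mathbf{j}),\rho(\mathbf{k})$, compute $\det\rho(q)=|\alpha|^2+|\beta|^2=|q|^2$, and use $\rho(q)^\dagger\rho(q)=|q|^2 I_2$ for unit quaternions.

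You are in fact more careful than the paper in two respects. First, you supply the converse inclusion $\mathrm{SU}(2)\subseteq\rho(\operatorname{Sp}(1))$ needed for item~1. Second, you rightly observe that $\rho(\mathbb{H})$ is only a real $4$-dimensional subalgebra of $\mathcal{M}_2(\mathbb{C})$. So $\rho$ is an isomorphism onto its image $\rho(\mathbb{H})^\mathbb{Z}$, not onto all of $\mathcal{M}_2(\mathbb{C})^\mathbb{Z}$, a point the paper's proof passes over.
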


\begin{proof}
Direct verification on the basis elements $\mathbf{i}, \mathbf{j}, \mathbf{k}$ confirms that $\rho(q_a q_b) = \rho(q_a) \rho(q_b)$. The determinant $\det \begin{pmatrix} \alpha & -\bar{\beta} \\ \beta & \bar{\alpha} \end{pmatrix} = |\alpha|^2 + |\beta|^2 = q_0^2 + q_1^2 + q_2^2 + q_3^2 = |q|^2$. Unitarity $\rho(q)^\dagger \rho(q) = |q|^2 I_2$ confirms $\mathrm{SU}(2)$ preservation when $|q|=1$.
\end{proof}

\subsection{Quaternionic Non-Abelian Berry Phases and $\mathrm{SU}(2)$ Holonomy}

In adiabatic quantum mechanics and topological insulators, a spinor wave traveling across a closed loop in parameter space acquires a non-abelian geometric phase given by an element of $\mathrm{SU}(2)$. Within the quaternion wave algebra, this holonomy is expressed directly as a cyclic ordered product of quaternion wave numbers.

\begin{theorem}[Quaternionic Discrete Berry Holonomy]
\label{thm:berry_phase}
Let $\mathbf{W}_{\mathbf{i}}(1/4, 0)$, $\mathbf{W}_{\mathbf{j}}(1/4, 0)$, and $\mathbf{W}_{\mathbf{k}}(1/4, 0)$ denote quarter-period quaternionic waves aligned along the $\mathbf{i}, \mathbf{j}, \mathbf{k}$ axes.
The cyclic loop product:
\begin{equation}
\mathbf{U}_{\mathrm{loop}} := \mathbf{W}_{\mathbf{i}}(1/4, 0) \star \mathbf{W}_{\mathbf{j}}(1/4, 0) \star \mathbf{W}_{\mathbf{i}}(1/4, 0)^{-1} \star \mathbf{W}_{\mathbf{j}}(1/4, 0)^{-1}
\end{equation}
evaluates at $m=1$ to the non-trivial $\mathrm{SU}(2)$ Berry phase:
\begin{equation}
\mathbf{U}_{\mathrm{loop}}[1] = \mathbf{i} \cdot \mathbf{j} \cdot (-\mathbf{i}) \cdot (-\mathbf{j}) = \mathbf{k} \cdot \mathbf{k} = -1 = \exp(\pi \hat{\mathbf{n}}).
\end{equation}
This phase represents the characteristic $2\pi$ spinor rotation sign reversal: rotating a spin-$1/2$ wave carrier by $2\pi$ reverses its algebraic sign, requiring a $4\pi$ rotation ($m=2$) to restore the identity $+1$.
\end{theorem}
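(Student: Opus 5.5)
The plan is to evaluate the loop product directly at the lattice site $m=1$. I will use the Euler--Rodrigues form of the quaternionic plane wave carrier,
\[
\mathbf{W}_{\hat{\mathbf{n}}}(f,\phi)[m]=\cos(2\pi(fm+\phi))+\hat{\mathbf{n}}\sin(2\pi(fm+\phi)),
\]
together with Hamilton's relations. Nothing beyond the definitions of this section is needed. Non-commutativity matters only through the ordering of the four factors, which Theorem~\ref{thm:non_commutative_product} already guarantees is non-trivial for the orthogonal axes $\mathbf{i},\mathbf{j}$.

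\textbf{Step 1 (single factors).} With $f=1/4$ and $\phi=0$, the phase at $m=1$ is $2\pi/4=\pi/2$. Hence $\mathbf{W}_{\mathbf{i}}(1/4,0)[1]=\cos(\pi/2)+\mathbf{i}\sin(\pi/2)=\mathbf{i}$, and likewise $\mathbf{W}_{\mathbf{j}}(1/4,0)[1]=\mathbf{j}$.

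\textbf{Step 2 (inverses).} Each carrier takes values in $\operatorname{Sp}(1)$. By $q^{-1}=q^*/|q|^2$ the pointwise inverse is the conjugate, which gives $\mathbf{i}^{-1}=-\mathbf{i}$ and $\mathbf{j}^{-1}=-\mathbf{j}$.

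\textbf{Step 3 (the product).} Since $\star$ is pointwise, $\mathbf{U}_{\mathrm{loop}}[1]=\mathbf{i}\,\mathbf{j}\,(-\mathbf{i})(-\mathbf{j})$. The two minus signs cancel, leaving $\mathbf{i}\mathbf{j}\mathbf{i}\mathbf{j}$. Grouping as $(\mathbf{i}\mathbf{j})(\mathbf{i}\mathbf{j})$ and using $\mathbf{i}\mathbf{j}=\mathbf{k}$ and $\mathbf{k}^2=-1$ yields $-1$. The identification $-1=\exp(\pi\hat{\mathbf{n}})=\cos\pi+\hat{\mathbf{n}}\sin\pi$ holds for every unit imaginary axis $\hat{\mathbf{n}}$. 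This shows that $-1$ is the central element of $\mathrm{SU}(2)$, independent of axis; under $\rho$ (Theorem~\ref{thm:spinor_isomorphism}) it maps to $-I_2$.

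\textbf{Step 4 (the site $m=2$).} Each factor now has phase $\pi$, so all four factors equal $-1$. These values are real and hence central in $\mathbb{H}$, so the product is $(-1)^4=+1$. More generally the loop sequence is $4$-periodic, and its values are $1,-1,1,-1$ at $m=0,1,2,3$.

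The main obstacle is interpretive rather than computational: making precise the statement that $-1$ is the ``$2\pi$ spinor sign reversal'' and that $m=2$ corresponds to a $4\pi$ rotation. I would handle this by passing through $\rho$. A carrier value $\cos\theta+\hat{\mathbf{n}}\sin\theta$ acts on vectors $\vec{x}\in\operatorname{Vec}(\mathbb{H})$ by conjugation $\vec{x}\mapsto q\vec{x}q^*$, which is a rotation by angle $2\theta$. The spinor $-1$ acts trivially on vectors, so it is exactly the kernel element of $\mathrm{SU}(2)\to\mathrm{SO}(3)$. The doubling $\theta\mapsto 2\theta$ is what justifies reading $m=1$ and $m=2$ as rotations by $2\pi$ and $4\pi$ in the statement's language. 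I would state this correspondence as interpretation only, since the algebraic content is fully contained in Steps 1--4.
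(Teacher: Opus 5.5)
Your proposal is correct and follows essentially the paper's argument: evaluate each carrier at $m=1$ via the Euler--Rodrigues form to get $\mathbf{i}$ and $\mathbf{j}$, invert by conjugation, multiply using Hamilton's relations, and identify $-1$ with $-I_2$ under $\rho$. Your grouping $(\mathbf{i}\mathbf{j})(\mathbf{i}\mathbf{j})=\mathbf{k}\mathbf{k}=-1$ is cleaner than the paper's printed intermediate $\mathbf{k}\cdot(-\mathbf{k})$, which would actually evaluate to $+1$; your explicit $m=2$ check and double-cover reading also supply what the paper leaves implicit.
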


\begin{proof}
At $m=1$, $\mathbf{W}_{\mathbf{i}}(1/4, 0)[1] = \exp(\frac{\pi}{2} \mathbf{i}) = \cos(\pi/2) + \mathbf{i} \sin(\pi/2) = \mathbf{i}$.
Similarly, $\mathbf{W}_{\mathbf{j}}(1/4, 0)[1] = \mathbf{j}$.
Their inverses are $\mathbf{i}^{-1} = -\mathbf{i}$ and $\mathbf{j}^{-1} = -\mathbf{j}$.
Thus:
\begin{equation}
\mathbf{U}_{\mathrm{loop}}[1] = \mathbf{i} \cdot \mathbf{j} \cdot (-\mathbf{i}) \cdot (-\mathbf{j}) = \mathbf{k} \cdot (-\mathbf{k}) = -(-1) \cdot (-1) = -1.
\end{equation}
In the spinor representation $\rho(-1) = -I_2 = \begin{pmatrix} -1 & 0 \\ 0 & -1 \end{pmatrix}$, confirming the $2\pi$ spin-$1/2$ geometric phase.
\end{proof}

\subsection{Quaternionic Wave Sifting, Particulate Spinors, and Division Algebras}

In Section~\ref{sec:division_operator}, division was defined on nodeless complex waves by inverting each non-zero scalar value. In the quaternionic wave algebra, the division operator generalizes seamlessly to a \textbf{non-commutative two-sided division}:

\begin{definition}[Two-Sided Quaternionic Division Operator]
Let $\Phi, \Psi \in \mathcal{S}_\mathbb{H}$ with $\Phi$ nodeless ($|\Phi[m]| > 0$ for all $m \in \mathbb{Z}$).
Because quaternion multiplication is non-commutative, division splits into \textbf{left-division} and \textbf{right-division}:
\begin{equation}
(\Psi \odiv_L \Phi)[m] := (\Phi[m])^{-1} \cdot \Psi[m] = \frac{\Phi^*[m] \cdot \Psi[m]}{|\Phi[m]|^2},
\end{equation}
\begin{equation}
(\Psi \odiv_R \Phi)[m] := \Psi[m] \cdot (\Phi[m])^{-1} = \frac{\Psi[m] \cdot \Phi^*[m]}{|\Phi[m]|^2}.
\end{equation}
Both left and right division satisfy exact inversion:
\begin{equation}
\Phi \star (\Psi \odiv_L \Phi) = \Psi, \qquad (\Psi \odiv_R \Phi) \star \Phi = \Psi.
\end{equation}
\end{definition}

\begin{theorem}[Quaternion Total Ring of Fractions and Local Hurwitz Integers]
\label{thm:hurwitz_fractions}
Let $\mathcal{V}_\mathbb{H} = \operatorname{span}_\mathbb{H}(\mathcal{G}_\mathbb{H})$ denote the quaternionic wave algebra over the rational quaternions $\mathbb{H}_\mathbb{Q} = \{ q_0 + q_1 \mathbf{i} + q_2 \mathbf{j} + q_3 \mathbf{k} \mid q_a \in \mathbb{Q} \}$.
Then:
\begin{enumerate}
    \item The group of regular non-zero-divisors is precisely the set of nodeless quaternion sequences:
    \begin{equation}
    \mathcal{U}(\mathcal{V}_\mathbb{H}) = \{ \Phi \in \mathcal{V}_\mathbb{H} \mid |\Phi[m]| > 0, \, \forall m \in \mathbb{Z} \}.
    \end{equation}
    \item The total ring of left (right) fractions is isomorphic to $\mathcal{V}_\mathbb{H}$:
    \begin{equation}
    \mathcal{Q}_L(\mathcal{V}_\mathbb{H}) \cong \mathcal{V}_\mathbb{H} \cong \mathcal{Q}_R(\mathcal{V}_\mathbb{H}).
    \end{equation}
    \item Over the discrete lattice of Hurwitz quaternions $\mathcal{H} = \mathbb{Z}[\mathbf{i}, \mathbf{j}, \mathbf{k}, \frac{1+\mathbf{i}+\mathbf{j}+\mathbf{k}}{2}]$, the sieve projectors of Section~\ref{sec:sieves} decompose quaternionic waves into prime quaternionic ideals (Lagrange four-square prime factorization).
\end{enumerate}
\end{theorem}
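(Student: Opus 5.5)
The plan is to transport the complex-valued arguments of Section~\ref{sec:division_field} to the quaternionic setting. The one new ingredient is that the Kronecker sieve projectors $\mathbf{e}(1/n,\xi)$ of Definition~\ref{def:sieve_projector} are real-valued, with values in $\{0,1\}$. They therefore lie in the centre of $\mathcal{S}_\mathbb{H}$ and commute with every quaternionic sequence.

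\textbf{Step 0: the sieves lie in $\mathcal{V}_\mathbb{H}$.} I would first show that $\mathbf{e}(1/n,\xi)\in\mathcal{V}_\mathbb{H}$. Since $\mathbf{W}_{\mathbf{i}}(f,\phi)^*=\mathbf{W}_{\mathbf{i}}(-f,-\phi)$, the scalar carrier
\begin{equation*}
\cos(2\pi(fm+\phi))=\tfrac12\big(\mathbf{W}_{\mathbf{i}}(f,\phi)+\mathbf{W}_{\mathbf{i}}(-f,-\phi)\big)[m]
\end{equation*}
lies in the span. The imaginary parts of the terms in Definition~\ref{def:sieve_projector} cancel, so
\begin{equation*}
\mathbf{e}(1/n,\xi)=\tfrac1n\sum_{k=0}^{n-1}\cos\!\big(2\pi k(m-\xi)/n\big),
\end{equation*}
a combination with rational coefficients $1/n$. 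Consequently every $n$-periodic $\Psi\in\mathcal{V}_\mathbb{H}$ has the sifting decomposition $\Psi=\sum_{\xi=1}^n \Psi[\xi]\,\mathbf{e}(1/n,\xi)$, exactly as in Proposition~\ref{prop:sieve_projector_algebra}.

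\textbf{Part 1.} This repeats Proposition~\ref{prop:regular_elements} and Theorem~\ref{thm:unit_group}.
\begin{itemize}
\item If $\Phi[m_0]=0$, then the nonzero sequence $\mathbf{e}(1/n,m_0)$ is annihilated on both sides by $\Phi$, so $\Phi$ is a zero-divisor.
\item If $\Phi$ is nodeless, then $\Phi\star\Psi=\mathbf{0}$ forces $\Psi[m]=\Phi[m]^{-1}\cdot 0=0$ at every site, since $\mathbb{H}$ has no zero-divisors. The same holds for right multiplication.
\item For a nodeless $\Phi$, the candidate two-sided inverse is
\begin{equation*}
\Phi^{-1}=\sum_\xi \Phi[\xi]^{-1}\mathbf{e}(1/n,\xi),
\end{equation*}
which is periodic, with $\Phi[\xi]^{-1}=\Phi[\xi]^*/|\Phi[\xi]|^2$. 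Hence the regular elements coincide with the units.
\end{itemize}

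\textbf{Main obstacle: the coefficient field.} The hard step is showing that $\Phi^{-1}\in\mathcal{V}_\mathbb{H}$. The values $\Phi[\xi]$ are not rational quaternions: they have components in the real cyclotomic field $\mathbb{Q}(\mu_\infty)^+$, because of terms such as $\cos(2\pi/n)$. The sifting formula therefore only places $\Phi^{-1}$ in the span over $\mathbb{H}_{K}$, where $K=\mathbb{Q}(\mu_\infty)\cap\mathbb{R}$. I would first try to show that $\mathcal{V}_\mathbb{H}$ is already closed under multiplication by $K$-valued constants, by realising $\cos(2\pi r)$ as a constant carrier $\operatorname{Sc}\,\mathbf{W}_{\mathbf{i}}(0,r)$ and using the closure under $\star$ of the span of the carriers. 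If that fails, I would state the theorem with coefficients in $\mathbb{H}_K$ (or $\mathbb{H}$), and I would flag this as the point where the statement needs that reading.

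\textbf{Part 2.} In a noncommutative ring the localisation needs an Ore condition. Here that condition is trivial, because by Part 1 every regular element is already a unit. Concretely, $s^{-1}a=(as'^{-1}\ldots)$ reduces to $s^{-1}a\in\mathcal{V}_\mathbb{H}$. So the left and right Ore conditions hold, and the canonical maps $\mathcal{V}_\mathbb{H}\to\mathcal{Q}_L(\mathcal{V}_\mathbb{H})$ and $\mathcal{V}_\mathbb{H}\to\mathcal{Q}_R(\mathcal{V}_\mathbb{H})$ are isomorphisms. This is the same argument as Theorem~\ref{thm:total_fraction_ring}, part 2.

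\textbf{Part 3.} This statement is imprecise, and I would make it precise as follows. Restrict to periodic sequences whose principal values lie in the Hurwitz order $\mathcal{H}$. Use the central sieve decomposition $\Psi=\sum_\xi\Psi[\xi]\,\mathbf{e}(1/n,\xi)$ to reduce to a single site. At that site, apply the factorisation theory of $\mathcal{H}$, which is a Euclidean order with unique factorisation up to unit migration (Conway--Smith). The steps are:
\begin{enumerate}
\item Factor the norm $N(\Psi[\xi])=\prod p$.
\item Lift each rational prime $p$ to a Hurwitz prime of norm $p$; such a prime exists by Lagrange's four-square theorem.
\item Assemble the site-wise prime factorisations through the orthogonal idempotents $\mathbf{e}(1/n,\xi)$.
\end{enumerate}
Centrality of the sieves guarantees that the non-commutative factor orderings at different sites do not interfere.
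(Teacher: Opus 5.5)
Your argument has the same skeleton as the paper's:
\begin{itemize}
\item regular $=$ nodeless, because $\mathbb{H}$ has no zero-divisors;
\item the inverse is taken sitewise as $\Phi[m]^*/|\Phi[m]|^2$;
\item localizing at elements that are already units adds nothing.
\end{itemize}
Where you depart from the paper you are more careful, and the differences matter.

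\textbf{Comparison with the paper.} The paper's annihilator is the particulate sequence $\mathbf{P}_{m_0}=\delta_{m,m_0}$. It is not periodic, hence not in $\mathcal{V}_\mathbb{H}$, so the paper only shows that $\Phi$ is a zero-divisor in $\mathcal{S}_\mathbb{H}$. Your sieve $\mathbf{e}(1/n,m_0)$ is periodic, central, and (by your Step~0) in $\mathcal{V}_\mathbb{H}$, which repairs this and handles both sides at once. The paper also stops at ``$\Phi$ is invertible in $\mathcal{S}_\mathbb{H}$'' and never checks $\Phi^{-1}\in\mathcal{V}_\mathbb{H}$, which is exactly what the unit-group claim and Part~2 require. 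It never addresses the Ore condition; your sentence there is garbled, and the witness you need is simply $1\cdot a=(as^{-1})\,s$. Finally, the paper gives no argument at all for Part~3.

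\textbf{Your main obstacle.} Your first attempt succeeds, even without appealing to closure under $\star$, so there is no need to weaken the statement to $\mathbb{H}_K$ coefficients. Take your reading that carrier values have coordinates in $K=\mathbb{Q}(\mu_\infty)\cap\mathbb{R}$ (note $\sin 2\pi r=\cos 2\pi(r-\tfrac14)$).
\begin{itemize}
\item $K$ is exactly the $\mathbb{Q}$-span of the numbers $\cos 2\pi r$, $r\in\mathbb{Q}$. This holds because $\mathbb{Q}(\zeta_n)\cap\mathbb{R}=\mathbb{Q}[\cos(2\pi/n)]$ for $\zeta_n=e^{2\pi i/n}$, and powers of a cosine reduce to cosines of multiples.
\item For $c\in K$, product-to-sum writes $c\,\mathbf{e}(1/n,\xi)$ as a $\mathbb{Q}$-combination of the sequences $\cos(2\pi(fm+\phi))=\tfrac12\big(\mathbf{W}_{\mathbf{i}}(f,\phi)+\mathbf{W}_{\mathbf{i}}(-f,-\phi)\big)[m]$ with $f,\phi\in\mathbb{Q}$.
\item Since $|\Phi[\xi]|^{-2}\in K$, the coordinates of $\Phi[\xi]^{-1}$ lie in $K$. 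Hence $\Phi^{-1}=\sum_\xi\Phi[\xi]^{-1}\mathbf{e}(1/n,\xi)$ is an $\mathbb{H}_\mathbb{Q}$-combination of carriers.
\end{itemize}
If $\operatorname{span}_\mathbb{H}$ is read with real quaternion coefficients, the issue does not arise at all.

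\textbf{Part~3.} Tighten one step. Lagrange's theorem only produces \emph{some} prime of norm $p$. The factor you actually need is a generator of the principal right ideal $\Psi[\xi]\mathcal{H}+p\mathcal{H}$, after first splitting off $p=PP^*$ whenever $p$ divides $\Psi[\xi]$ in $\mathcal{H}$. Uniqueness up to unit migration holds only for primitive values, with the order of the rational primes fixed. Zero sites must be excluded. In your favour, the assembled factor sequences $\sum_\xi P_{\xi,j}\,\mathbf{e}(1/n,\xi)$ lie in $\mathcal{V}_\mathbb{H}$ automatically, since Hurwitz quaternions have rational coordinates.
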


\begin{proof}
If $\Phi[m_0] = 0$, then $|\Phi[m_0]| = 0$, and the localized particulate sequence $\mathbf{P}_{m_0}[m] = \delta_{m, m_0}$ satisfies $\Phi \star \mathbf{P}_{m_0} = \mathbf{0}$, showing that $\Phi$ is a zero-divisor.
Conversely, if $|\Phi[m]| > 0$ for all $m$, each local quaternion $\Phi[m]$ belongs to the division ring $\mathbb{H}$ and has an exact two-sided inverse $\Phi^{-1}[m] = \Phi^*[m] / |\Phi[m]|^2 \in \mathbb{H}$.
Thus $\Phi$ is an invertible unit in the sequence ring $\mathcal{S}_\mathbb{H}$.
Localization at the set of units adds no new elements, establishing $\mathcal{Q}(\mathcal{V}_\mathbb{H}) \cong \mathcal{V}_\mathbb{H}$.
\end{proof}

\begin{proposition}[Quaternionic Polar Decomposition, Lossless Invariance, and Spinor Rotors]
\label{prop:quaternion_polar}
Every non-zero quaternionic wave sequence $q \in \mathcal{V}_\mathbb{H}$ with $q[m] \neq 0$ admits a unique pointwise \textbf{polar decomposition}:
\begin{equation}
q[m] = R[m] \cdot \mathbf{U}[m],
\end{equation}
where:
\begin{enumerate}
    \item $R[m] = |q[m]| \in \mathbb{R}_{>0}$ is the \textbf{scalar Euclidean magnitude}, governing the local energy density and invertibility. Specifically, $q$ is an invertible unit in $(\mathcal{V}_\mathbb{H}, \otimes)$ if and only if $R[m] > 0$ for all $m \in \mathbb{Z}$.
    \item $\mathbf{U}[m] \in \mathrm{Sp}(1) \cong \mathrm{SU}(2) \cong S^3$ is a \textbf{unit quaternion (spatial rotor)}:
    \begin{equation}
    \mathbf{U}[m] = \exp\big(\hat{\mathbf{n}}[m] \, \theta[m]\big) = \cos\theta[m] + \hat{\mathbf{n}}[m] \sin\theta[m],
    \end{equation}
    where $\hat{\mathbf{n}}[m] = n_1 \mathbf{i} + n_2 \mathbf{j} + n_3 \mathbf{k}$ is a unit imaginary vector ($\hat{\mathbf{n}}^2 = -1$) and $\theta[m] \in [0, \pi]$.
\end{enumerate}
\textbf{Conservation of Geometric Information:}
If a quaternionic wave is reduced solely to its scalar magnitude $R[m]$, the three-dimensional rotor degrees of freedom ($\hat{\mathbf{n}}[m]$ and $\theta[m]$) are lost, stripping away the physical spin orientation and non-abelian gauge frame.
Conversely, the polar pair $\big(R[m], \mathbf{U}[m]\big)$ is completely lossless, establishing an exact isomorphism between invertible quaternionic waves and pairs of positive scalar envelopes and $\mathrm{SU}(2)$ gauge field configurations on the lattice.
\end{proposition}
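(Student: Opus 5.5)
The argument is pointwise and reduces to the polar form of a single nonzero quaternion, together with the structural facts already established in Theorem~\ref{thm:hurwitz_fractions} and Theorem~\ref{thm:spinor_isomorphism}. Fix a lattice site $m$ with $q[m]\neq 0$.

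\textbf{Existence.} Set $R[m]:=|q[m]|=\sqrt{q[m]q[m]^*}>0$ and $\mathbf{U}[m]:=q[m]/R[m]$. Because $R[m]$ is real, it is central in $\mathbb{H}$, so left and right scalings agree. Multiplicativity of the Euclidean norm gives $|\mathbf{U}[m]|=1$, hence $\mathbf{U}[m]\in\mathrm{Sp}(1)$, and by Theorem~\ref{thm:spinor_isomorphism} its image under $\rho$ lies in $\mathrm{SU}(2)$.

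\textbf{Exponential form of the rotor.} Write $\mathbf{U}[m]=u_0+\vec u$ with $u_0^2+|\vec u|^2=1$. Choose $\theta[m]=\arccos u_0\in[0,\pi]$. Then $\sin\theta[m]=|\vec u|\ge 0$. If $\vec u\neq 0$, set $\hat{\mathbf n}[m]=\vec u/|\vec u|$; the identity $\hat{\mathbf n}^2=-|\hat{\mathbf n}|^2=-1$ follows from the Hamilton relations, since cross terms cancel by anticommutation. The power-series argument, using $\hat{\mathbf n}^2=-1$, then gives $\exp(\hat{\mathbf n}\theta)=\cos\theta+\hat{\mathbf n}\sin\theta$.

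\textbf{Uniqueness.} Suppose $q=R\mathbf{U}=R'\mathbf{U}'$ with $R,R'>0$ and $|\mathbf{U}|=|\mathbf{U}'|=1$. Taking norms forces $R=R'$, and then $\mathbf{U}=\mathbf{U}'$. The parameter $\theta$ is unique in $[0,\pi]$, and $\hat{\mathbf n}$ is unique whenever $\sin\theta\neq 0$. When $\mathbf{U}[m]=\pm1$, that is $\theta\in\{0,\pi\}$, the axis is arbitrary. I would state uniqueness for the pair $(R,\mathbf{U})$ and flag this degeneracy of the axis explicitly. I expect this degenerate case to be the only subtle point, since the statement's claim of uniqueness must be read as uniqueness of $\mathbf{U}$ rather than of $(\hat{\mathbf n},\theta)$.

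\textbf{Invertibility criterion.} This is exactly part~1 of Theorem~\ref{thm:hurwitz_fractions}. A node $R[m_0]=0$ makes $q$ a zero-divisor via $\mathbf{P}_{m_0}$. If instead $R>0$ everywhere, then $q^{-1}[m]=\mathbf{U}[m]^*/R[m]$. To ensure this inverse lies in $\mathcal{V}_\mathbb{H}$ and not merely in $\mathcal{S}_\mathbb{H}$, I would argue as in Theorem~\ref{thm:unit_group}: the inverse is periodic with the same period, and periodic sequences are spanned by the sieve projectors of Section~\ref{sec:sieves} with quaternionic coefficients.

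\textbf{Losslessness.} The map $(R,\mathbf{U})\mapsto R\mathbf{U}$ from $(\mathbb{R}_{>0})^\mathbb{Z}\times\mathrm{Sp}(1)^\mathbb{Z}$ to nodeless sequences is a bijection, with inverse $q\mapsto(|q|,q/|q|)$ by the two preceding steps. Conversely, $q\mapsto R$ alone is not injective: $q$ and $q\mathbf{V}$ share the same $R$ for any $\mathbf{V}\in\mathrm{Sp}(1)^\mathbb{Z}$. This is precisely the stated loss of the three rotor degrees of freedom.

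Finally, I would note that the correspondence is compatible with the product whenever the envelope is central, since $R_1\mathbf{U}_1R_2\mathbf{U}_2=(R_1R_2)(\mathbf{U}_1\mathbf{U}_2)$. This identifies the invertible waves with $(\mathbb{R}_{>0})^\mathbb{Z}\times\mathrm{SU}(2)^\mathbb{Z}$ as a direct product of groups.
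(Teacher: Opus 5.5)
The paper gives no proof of this proposition. It is stated right after Theorem~\ref{thm:hurwitz_fractions}, and the only supporting argument is that theorem's proof. That proof covers just the invertibility clause, and even there it only exhibits $\Phi^{-1}=\Phi^*/|\Phi|^2$ in the ambient ring $\mathcal{S}_\mathbb{H}$. Your pointwise argument is the standard one and is correct: normalize by $R[m]=|q[m]|$, take $\theta=\arccos u_0\in[0,\pi]$ and $\hat{\mathbf{n}}=\vec u/|\vec u|$, and get uniqueness of $(R,\mathbf{U})$ by taking norms. It is also more careful than the statement in two ways.
\begin{itemize}
\item You correctly note that uniqueness holds for $(R,\mathbf{U})$ but fails for $(\hat{\mathbf{n}},\theta)$ when $\mathbf{U}[m]=\pm1$.
\item You correctly note that the inverse must be shown to lie in $\mathcal{V}_\mathbb{H}$, not merely in $\mathcal{S}_\mathbb{H}$.
\end{itemize}
Using the non-periodic $\mathbf{P}_{m_0}\notin\mathcal{V}_\mathbb{H}$ for non-invertibility is harmless, since a zero-divisor of $\mathcal{S}_\mathbb{H}$ cannot be a unit. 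Still, $(q\star\Phi)[m_0]=0\neq 1$ is more direct.

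Three steps need tightening.

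First, in the membership step, realize the sieve projectors inside $\mathcal{V}_\mathbb{H}$ explicitly, for example $\mathbf{e}(1/n,\xi)=\frac{1}{n}\sum_{k=0}^{n-1}\mathbf{W}_{\mathbf{i}}(k/n,-k\xi/n)$. This is $\{0,1\}$-valued and hence central, so $q^{-1}=\sum_\xi q[\xi]^{-1}\,\mathbf{e}(1/n,\xi)$ has no left/right ambiguity.

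Second, and more substantively, Theorem~\ref{thm:hurwitz_fractions} takes $\mathcal{V}_\mathbb{H}$ over $\mathbb{H}_\mathbb{Q}$. So ``spanned with quaternionic coefficients'' does not carry over verbatim from Theorem~\ref{thm:unit_group}, where the coefficients were arbitrary complex numbers. Here the coefficients $q[\xi]^{-1}$ are typically irrational. You need the values of $\mathcal{V}_\mathbb{H}$ to form a ring that is closed under $h\mapsto h^*/|h|^2$ and whose constant sequences lie in $\mathcal{V}_\mathbb{H}$. For instance, suppose $\mathcal{G}_\mathbb{H}$ is generated by rational $\mathbf{W}_{\mathbf{i}},\mathbf{W}_{\mathbf{j}},\mathbf{W}_{\mathbf{k}}$ under $\star$ and inversion.
\begin{itemize}
\item Then all values lie in $F+F\mathbf{i}+F\mathbf{j}+F\mathbf{k}$ with $F=\mathbb{Q}^{\mathrm{ab}}\cap\mathbb{R}$.
\item This set is closed under inversion because $F$ is a field.
\item $F$ is the $\mathbb{Q}$-span of the constants $\cos(2\pi r)=\tfrac12\bigl(\mathbf{W}_{\mathbf{i}}(0,r)+\mathbf{W}_{\mathbf{i}}(0,-r)\bigr)\in\mathcal{V}_\mathbb{H}$, so the needed constant sequences are available.
\end{itemize}

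Third, your closing identification with $(\mathbb{R}_{>0})^\mathbb{Z}\times\mathrm{SU}(2)^\mathbb{Z}$ is exact for the units of $\mathcal{S}_\mathbb{H}$. The units of $\mathcal{V}_\mathbb{H}$ correspond only to a proper subgroup, since periodicity alone is already a constraint. The ``exact isomorphism'' in the statement should be read with that restriction.
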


\section{Dynamical Spacetime Geometry: Biquaternions, Minkowski Signature, Vacuum Jitter, and the Topological Selection of Wave Numbers}
\label{sec:spacetime_geometry}

\subsection{Temporal Interpretation of Coordinate Advance and 4D State Decomposition}
\label{subsec:temporal_coordinate}

In the preceding algebraic constructions, the canonical primitive generator $E_Z: m \mapsto m+1$ governs the step-wise advance across the sequence domain $\mathbb{Z}$. 
When we interpret this discrete discrete parameter $m \in \mathbb{Z}$ not merely as an abstract index, but as physical \textbf{proper or coordinate time} in increments of $\Delta t$ ($t_m = m \Delta t$), the quaternionic wave sequence space $\mathcal{V}_\mathbb{H}$ is promoted from a static sequence algebra to a \textbf{four-dimensional dynamical spacetime field theory}.

\begin{definition}[Spacetime 4-Vector State Decomposition]
\label{def:spacetime_4vector}
Let $q \in \mathcal{V}_\mathbb{H}$ be a quaternionic wave sequence. Pointwise at each discrete temporal epoch $m \in \mathbb{Z}$, $q[m]$ decomposes canonically into a scalar (temporal) component and an imaginary (3-dimensional spatial) vector:
\begin{equation}
q[m] = q_0[m] + q_1[m] \mathbf{i} + q_2[m] \mathbf{j} + q_3[m] \mathbf{k} = \big( q_0[m], \, \mathbf{q}[m] \big) \in \mathbb{R} \oplus \operatorname{Im}(\mathbb{H}) \cong \mathbb{R} \times \mathbb{R}^3.
\end{equation}
Under the physical identification $q_0[m] = c t_m = c m \Delta t$ and $\mathbf{q}[m] = (x[m], y[m], z[m])$, the sequence $q$ traces the worldline and internal polarization state of a localized relativistic excitation.
\end{definition}

In this framework, the discrete translation generator $E_Z$ represents the \textbf{generator of time evolution} (Hamiltonian displacement), while the unit spatial rotor $\mathbf{U}[m] \in \mathrm{Sp}(1) \cong \mathrm{SU}(2)$ in the polar decomposition $q[m] = R[m] \mathbf{U}[m]$ (Proposition~\ref{prop:quaternion_polar}) governs the spatial spin orientation, orbital precession, and non-abelian gauge frame at time $t_m$.

\subsection{The Metric Signature Obstruction and the Biquaternion Algebra $\mathbb{H}_\mathbb{C}$}
\label{subsec:biquaternion_minkowski}

A foundational question arises when attempting to model spacetime via quaternions: the standard algebraic quaternion norm is strictly \textbf{positive-definite (Euclidean)}:
\begin{equation}
\|q\|^2 = q q^* = q_0^2 + q_1^2 + q_2^2 + q_3^2 = c^2 t^2 + x^2 + y^2 + z^2.
\end{equation}
While this Euclidean metric is natural in statistical mechanics and imaginary-time path integrals (Wick rotation $\tau = i t$), real physical spacetime is governed by the \textbf{indefinite Minkowski metric signature} $(+,-,-,-)$:
\begin{equation}
s^2 = c^2 t^2 - (x^2 + y^2 + z^2) = c^2 t^2 - \|\mathbf{x}\|^2.
\end{equation}
To achieve exact compliance with special relativity without invoking an ad hoc external metric tensor, we complexify the quaternion division ring $\mathbb{H}$ to the algebra of \textbf{biquaternions}.

\begin{definition}[Complexified Quaternions / Biquaternions]
\label{def:biquaternions}
The \textbf{biquaternion algebra} $\mathbb{H}_\mathbb{C}$ is the tensor product of $\mathbb{H}$ with the complex field $\mathbb{C}$:
\begin{equation}
\mathbb{H}_\mathbb{C} = \mathbb{H} \otimes_\mathbb{R} \mathbb{C} = \left\{ Q = q + I p : q, p \in \mathbb{H}, \; I^2 = -1, \; I \mathbf{i} = \mathbf{i} I, \; I \mathbf{j} = \mathbf{j} I, \; I \mathbf{k} = \mathbf{k} I \right\},
\end{equation}
where $I$ denotes the commuting complex imaginary unit, distinct from the anti-commuting spatial quaternionic units $\{\mathbf{i}, \mathbf{j}, \mathbf{k}\}$.
\end{definition}

\begin{theorem}[Minkowski Spacetime Metric via Biquaternionic Determinant]
\label{thm:biquaternion_minkowski}
Let $X \in \mathbb{H}_\mathbb{C}$ be the spacetime event biquaternion defined by:
\begin{equation}
X = c t \, \mathbf{1} + I \big( x \mathbf{i} + y \mathbf{j} + z \mathbf{k} \big) = c t \, \mathbf{1} + I \mathbf{x}.
\end{equation}
Under the canonical algebra isomorphism $\Psi: \mathbb{H}_\mathbb{C} \xrightarrow{\sim} \mathcal{M}_2(\mathbb{C})$ mapped via the Pauli spin matrices:
\begin{equation}
\mathbf{1} \mapsto \begin{pmatrix} 1 & 0 \\ 0 & 1 \end{pmatrix}, \quad 
\mathbf{i} \mapsto -i \sigma_1 = \begin{pmatrix} 0 & -i \\ -i & 0 \end{pmatrix}, \quad 
\mathbf{j} \mapsto -i \sigma_2 = \begin{pmatrix} 0 & -1 \\ 1 & 0 \end{pmatrix}, \quad 
\mathbf{k} \mapsto -i \sigma_3 = \begin{pmatrix} -i & 0 \\ 0 & i \end{pmatrix},
\end{equation}
the matrix representative of $X$ is the Hermitian matrix:
\begin{equation}
\Psi(X) = \begin{pmatrix} c t + z & x - i y \\ x + i y & c t - z \end{pmatrix}.
\end{equation}
Then the determinant of $\Psi(X)$ is identically the invariant relativistic Minkowski spacetime interval:
\begin{equation}
\det\big(\Psi(X)\big) = (c t + z)(c t - z) - (x - i y)(x + i y) = c^2 t^2 - (x^2 + y^2 + z^2) = s^2.
\end{equation}
\end{theorem}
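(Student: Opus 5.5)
The plan is to build the map $\Psi$ as the $\mathbb{C}$-linear extension of the real spinor representation $\rho$ from Theorem~\ref{thm:spinor_isomorphism}, sending the commuting unit $I$ to scalar multiplication by $i$. Then I compute $\Psi(X)$ by linearity and take its determinant.

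\textbf{Step 1 (well-definedness and isomorphism).} For $Q = q + Ip$ with $q, p \in \mathbb{H}$, set $\Psi(Q) := \rho(q) + i\,\rho(p)$. I will check multiplicativity using two facts: $I$ is central with $I^2=-1$, and $\rho$ is a real algebra homomorphism. Concretely, $(q+Ip)(q'+Ip') = (qq' - pp') + I(qp' + pq')$. Its image under $\Psi$ expands to the same thing as $(\rho(q)+i\rho(p))(\rho(q')+i\rho(p'))$, because the scalar $i$ commutes with all matrices. For bijectivity, note that $\Psi(1)=I_2$, and $\Psi(I\mathbf{i}) = i(-i\sigma_1) = \sigma_1$, and similarly $\Psi(I\mathbf{j})=\sigma_2$ and $\Psi(I\mathbf{k})=\sigma_3$. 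The set $\{I_2,\sigma_1,\sigma_2,\sigma_3\}$ is a $\mathbb{C}$-basis of $\mathcal{M}_2(\mathbb{C})$. Both spaces have complex dimension $4$, so $\Psi$ is a $\mathbb{C}$-algebra isomorphism.

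\textbf{Step 2 (matrix of $X$).} By the computation in Step 1,
\begin{equation*}
\Psi(X) = ct\,I_2 + x\,\sigma_1 + y\,\sigma_2 + z\,\sigma_3 .
\end{equation*}
Writing out the Pauli matrices gives $\begin{pmatrix} ct+z & x-iy \\ x+iy & ct-z\end{pmatrix}$. Each $\sigma_a$ is Hermitian and $ct, x, y, z$ are real, so this matrix is Hermitian.

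\textbf{Step 3 (determinant).} The determinant of this matrix is $(ct+z)(ct-z) - (x-iy)(x+iy) = c^2t^2 - z^2 - (x^2+y^2)$. This equals $s^2$ as stated.

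The only real obstacle is bookkeeping in Step 1. One must keep the central imaginary unit $I$ distinct from the quaternionic $\mathbf{i}$, and confirm that the factor $-i$ in $\rho(\mathbf{i}) = -i\sigma_1$ is exactly cancelled by $I \mapsto i$. Without this cancellation the spatial part would come out anti-Hermitian and the sign of the spatial terms in the determinant would flip to the Euclidean form. Verifying $\Psi(I\mathbf{i})=\sigma_1$ and its analogues explicitly settles this point.
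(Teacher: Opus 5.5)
Your proposal is correct, and your Step 3 is exactly the paper's proof, which consists solely of expanding the $2\times 2$ determinant of the displayed matrix. Your Steps 1--2 go further than the paper: they justify two claims the paper only asserts in the statement. First, the $\mathbb{C}$-linear extension of $\rho$ with $I \mapsto i$ is a well-defined algebra isomorphism. Second, it sends $X$ to $ct\,I_2 + x\sigma_1 + y\sigma_2 + z\sigma_3$, the stated Hermitian matrix. Choosing $I \mapsto i$ rather than $I \mapsto -i$ is what reproduces the displayed matrix, although the determinant would come out the same either way.
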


\begin{proof}
Direct expansion of the $2 \times 2$ determinant yields:
\begin{equation}
\det \begin{pmatrix} ct + z & x - iy \\ x + iy & ct - z \end{pmatrix} = (ct)^2 - z^2 - (x^2 + y^2) = c^2 t^2 - \|\mathbf{x}\|^2 = s^2.
\end{equation}
This confirms that the indefinite Minkowski signature emerges intrinsically from the biquaternion product structure under complex conjugation of the spatial components, without any postulation of an external geometric metric.
\end{proof}

\begin{theorem}[Algebraic Realization of the Restricted Lorentz Group $\mathrm{SO}^+(1,3)$]
\label{thm:lorentz_group_biquaternion}
The group of determinant-preserving linear transformations on $\mathbb{H}_\mathbb{C}$:
\begin{equation}
X \mapsto \Lambda X \Lambda^\dagger, \quad \text{with } \det(\Lambda) = 1, \; \Lambda \in \mathrm{SL}(2, \mathbb{C}),
\end{equation}
forms a surjective two-to-one homomorphism $\mathrm{SL}(2, \mathbb{C}) \to \mathrm{SO}^+(1,3)$ onto the \textbf{restricted Lorentz group}. In particular:
\begin{enumerate}
\item \textbf{Spatial Rotations $\mathrm{SO}(3)$:} Elements $\Lambda = \exp(\hat{\mathbf{n}} \theta / 2) \in \mathrm{Sp}(1) \cong \mathrm{SU}(2)$ with real quaternion coefficients leave $ct$ invariant and rotate the spatial vector $\mathbf{x} \mapsto \Lambda \mathbf{x} \Lambda^{-1}$.
\item \textbf{Lorentz Boosts:} Elements $\Lambda = \exp(I \hat{\mathbf{v}} \xi / 2)$ with imaginary rapidity $\xi = \tanh^{-1}(v/c)$ mix the temporal scalar $ct$ and the spatial velocity direction $\hat{\mathbf{v}}$ in exact accordance with the relativistic velocity addition law.
\end{enumerate}
\end{theorem}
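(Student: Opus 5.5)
We would work entirely on the matrix side, via the isomorphism $\Psi$ of Theorem~\ref{thm:biquaternion_minkowski}. The first step is to identify the real four-dimensional space of spacetime events $X = ct\,\mathbf{1} + I\mathbf{x}$ with the space $\mathrm{Herm}_2$ of $2\times 2$ Hermitian matrices, using $\Psi(X) = ct\, I_2 + x\sigma_1 + y\sigma_2 + z\sigma_3$. This representation holds because $\Psi(I\mathbf{i}) = i(-i\sigma_1) = \sigma_1$, and similarly for $\mathbf{j}$ and $\mathbf{k}$. The map $(ct,x,y,z)\mapsto \Psi(X)$ is a real-linear bijection $\mathbb{R}^4 \to \mathrm{Herm}_2$. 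We would also record what $\Lambda^\dagger$ means in biquaternion language: it is quaternionic conjugation composed with complex conjugation $I\mapsto -I$. This is consistent with Theorem~\ref{thm:spinor_isomorphism}(3).

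Next, for $\Lambda\in\mathrm{SL}(2,\mathbb{C})$ we define $L_\Lambda(H) = \Lambda H\Lambda^\dagger$ and check three facts:
\begin{itemize}
\item $L_\Lambda(H)$ is again Hermitian.
\item $L_\Lambda$ is real-linear in $H$.
\item $\det L_\Lambda(H) = |\det\Lambda|^2\det H = \det H$.
\end{itemize}
By Theorem~\ref{thm:biquaternion_minkowski}, $\det$ is exactly the quadratic form $s^2$, so $L_\Lambda\in \mathrm{O}(1,3)$. We also have $L_{\Lambda_1\Lambda_2} = L_{\Lambda_1}L_{\Lambda_2}$, so $\Lambda\mapsto L_\Lambda$ is a homomorphism. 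It is continuous and $\mathrm{SL}(2,\mathbb{C})$ is connected, since every element is a product of exponentials. The image therefore lies in the identity component $\mathrm{SO}^+(1,3)$.

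For the kernel, suppose $\Lambda H\Lambda^\dagger = H$ for all Hermitian $H$. Taking $H = I_2$ gives $\Lambda\Lambda^\dagger = I$, so $\Lambda$ is unitary and commutes with $\sigma_1,\sigma_2,\sigma_3$. By Schur's lemma (or direct computation) $\Lambda$ is then scalar, and $\det\Lambda = 1$ forces $\Lambda = \pm I_2$. This gives the two-to-one statement.

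Next we compute the two special families.

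\textbf{Rotations.} For real-coefficient $\Lambda = \exp(\hat{\mathbf{n}}\theta/2)$, $\rho(\Lambda)$ lies in $\mathrm{SU}(2)$ by Theorem~\ref{thm:spinor_isomorphism}, so $\Lambda^\dagger = \Lambda^{-1}$. Conjugation fixes the scalar $ct\,I_2$ and acts on the traceless part as the standard rotation by angle $\theta$ about $\hat{\mathbf{n}}$. This is the Euler--Rodrigues formula, where the half-angle doubles.

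\textbf{Boosts.} For $\Lambda = \exp(I\hat{\mathbf{v}}\xi/2)$ we have $\Psi(I\hat{\mathbf{v}}) = \hat{\mathbf{v}}\cdot\boldsymbol{\sigma}$, which is Hermitian and squares to $I_2$. Hence $\Lambda = \cosh(\xi/2) + \sinh(\xi/2)\,\hat{\mathbf{v}}\cdot\boldsymbol\sigma$ and $\Lambda^\dagger = \Lambda$. Expanding $\Lambda H\Lambda$ with double-angle identities gives the standard boost with real rapidity $\xi$ along $\hat{\mathbf{v}}$:
\begin{equation*}
ct' = ct\cosh\xi + x_\parallel\sinh\xi,\qquad x_\parallel' = ct\sinh\xi + x_\parallel\cosh\xi,
\end{equation*}
with transverse components unchanged. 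So $\xi$ should be read as a real rapidity entering through the imaginary unit $I$. The velocity-addition law follows because collinear boost generators commute, so rapidities add, $\xi_{12} = \xi_1+\xi_2$. Taking $\tanh$ gives $v_{12} = (v_1+v_2)/(1+v_1v_2/c^2)$.

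Surjectivity is the main obstacle. We would invoke the standard decomposition of $\mathrm{SO}^+(1,3)$: any $L$ in it sends $e_0$ to a future-directed unit timelike vector $(\cosh\xi, \sinh\xi\,\hat{\mathbf{v}})$. Composing with the inverse of the boost $\exp(I\hat{\mathbf{v}}\xi/2)$ yields an element fixing $e_0$. That element preserves the orthogonal spatial complement with determinant $+1$, so it lies in $\mathrm{SO}(3)$ and is hit by the rotation family. Hence $L = B\,R$ is in the image. On the $\mathrm{SL}(2,\mathbb{C})$ side this matches the polar decomposition $\Lambda = (\text{positive Hermitian})\cdot(\text{unitary})$. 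The delicate points are orthochronicity, i.e.\ that $L$ keeps $e_0$ in the future cone, and orientation. Both are guaranteed because the image of $e_0$ varies continuously within the identity component.
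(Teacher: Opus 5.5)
The paper gives no proof of this theorem. It is asserted directly after the determinant computation of Theorem~\ref{thm:biquaternion_minkowski}, so there is no argument of the paper's to compare yours with. Your proposal is a correct and essentially complete version of the standard proof of the double cover $\mathrm{SL}(2,\mathbb{C})\to\mathrm{SO}^+(1,3)$. It covers invariance of $\mathrm{Herm}_2$ and of $\det$, the homomorphism property, image in the identity component by connectedness, the kernel, and surjectivity via ``boost times rotation''. For the kernel, Schur is not even needed: $I_2,\sigma_1,\sigma_2,\sigma_3$ span $\mathcal{M}_2(\mathbb{C})$, so $\Lambda$ is central and hence $\pm I_2$.

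What your write-up buys over the paper is that it makes explicit three readings without which the statement is not literally true:
\begin{itemize}
\item The maps $X\mapsto\Lambda X\Lambda^\dagger$ act as Lorentz transformations on the real four-dimensional slice $\{ct\,\mathbf{1}+I\mathbf{x}\}\cong\mathrm{Herm}_2$, not on all of $\mathbb{H}_\mathbb{C}$.
\item The rapidity $\xi$ must be real. For genuinely imaginary $\xi$, the element $\exp(I\hat{\mathbf{v}}\xi/2)$ is a real unit quaternion, i.e.\ a rotation.
\item The ``velocity addition law'' is justified only in its collinear form, through additivity of rapidities. Non-collinear boosts compose to a boost times a Wigner rotation, which on the $\mathrm{SL}(2,\mathbb{C})$ side is the polar decomposition of $\Lambda_1\Lambda_2$.
\end{itemize}

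Two small points to tighten. First, the claim that every element of $\mathrm{SL}(2,\mathbb{C})$ is a product of exponentials is true, but not because $\exp$ is onto; it is not, since $-\begin{pmatrix}1&1\\0&1\end{pmatrix}$ is not an exponential. Justify it with the polar decomposition $\Lambda=e^{H}e^{A}$, where $H$ is traceless Hermitian and $A$ is traceless anti-Hermitian; you already invoke this at the end. Second, in the surjectivity step, orthochronicity and $\det L=1$ are hypotheses on $L$, or follow by continuity if $\mathrm{SO}^+(1,3)$ is defined as the identity component. They are not something that needs to be ``guaranteed''. On the image side you can see orthochronicity directly: the time component of $\Lambda I_2\Lambda^\dagger$ is $\tfrac12\operatorname{tr}(\Lambda\Lambda^\dagger)>0$.
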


\subsection{Dynamical Spinning Polygons and Vacuum Lattice Jitter}
\label{subsec:vacuum_jitter}

In Section~\ref{sec:integral_operator}, the discrete cumulative integral operator $\mathcal{I}: \mathcal{V} \to \mathcal{V}$ acting on the wave sequence $\mathbf{w}(f, g)$ was characterized as a geometric polygon in the complex plane $\mathbb{C}$. 
When time $m$ advances continually through the discrete ticks of $E_Z$, this geometric polygon is \textbf{not static}: at each time step $m \mapsto m+1$, the position vector $\mathbf{x}[m] = \mathcal{I}q[m]$ steps along an edge directed by the instantaneous root of unity $\zeta_n^{p m}$. 
The spatial lattice is therefore in a state of perpetual, coherent \textbf{rotational circulation} (``jiggling'').

\begin{theorem}[Zero-Drift Background with Non-Vanishing Vacuum Variance]
\label{thm:vacuum_variance}
Let $\mathbf{w}(f, g)[m] = \exp(2\pi i (f m + g))$ be a rational wave sequence with $f = p/q \in \mathbb{Q}/\mathbb{Z}$ in lowest terms, and let $\Delta \mathbf{x}[m] = \mathbf{w}(f, g)[m]$ represent the instantaneous velocity / spatial displacement increment per time step.
Over any full cycle period of $q$ temporal steps, the statistical properties of the discrete spacetime trajectory satisfy:
\begin{enumerate}
\item \textbf{Macroscopic Spatial Flatness (Zero Net Drift):}
\begin{equation}
\langle \Delta \mathbf{x} \rangle_q = \frac{1}{q} \sum_{m=0}^{q-1} \Delta \mathbf{x}[m] = \frac{e^{2\pi i g}}{q} \sum_{m=0}^{q-1} e^{2\pi i \frac{p}{q} m} = 0.
\end{equation}
The center of mass of the background spacetime remains strictly stationary; on macroscopic observation scales $T \gg q \Delta t$, no directional drift or net spatial translation occurs.
\item \textbf{Non-Vanishing Zero-Point Kinetic Energy (Vacuum Jitter):}
\begin{equation}
\sigma^2_{\mathrm{vac}} = \frac{1}{q} \sum_{m=0}^{q-1} \|\Delta \mathbf{x}[m] - \langle \Delta \mathbf{x} \rangle_q\|^2 = \frac{1}{q} \sum_{m=0}^{q-1} |\mathbf{w}(f, g)[m]|^2 = \frac{1}{q} \sum_{m=0}^{q-1} 1 = 1 > 0.
\end{equation}
\end{enumerate}
\end{theorem}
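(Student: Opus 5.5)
Both assertions of Theorem~\ref{thm:vacuum_variance} reduce to finite computations over one fundamental period, and I would prove them in order.

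\textbf{Zero net drift.} First factor out the calibration phase: $\Delta \mathbf{x}[m] = e^{2\pi i g} e^{2\pi i p m/q}$. The mean then becomes
\begin{equation*}
\langle \Delta \mathbf{x}\rangle_q = \frac{e^{2\pi i g}}{q}\sum_{m=0}^{q-1}\zeta_q^{pm}.
\end{equation*}
This is exactly the character sum of Lemma~\ref{lem:geometric_sum}, which vanishes whenever $p \not\equiv 0 \pmod q$.

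This is the one place needing care. Since $p/q$ is in lowest terms, $p \equiv 0 \pmod q$ happens only when $q=1$, that is, $f \equiv 0$. In that case the sum equals $e^{2\pi i g} \neq 0$ and the claimed drift-freeness fails. So I would state explicitly that the theorem concerns nontrivial frequencies $f \not\equiv 0$, i.e.\ $q \ge 2$, as is implicit in ``full cycle period''.

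For such $f$ the drift vanishes. Equivalently, by the Corollary to Theorem~\ref{thm:integral_closed_form}, $(\mathcal{I}u)[q]=0$, so the polygon of Theorem~\ref{thm:regular_ngon} closes. I would also note that the result does not depend on which $q$ consecutive steps are used, by the periodicity argument of Lemma~\ref{lem:period_invariance}.

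\textbf{Non-vanishing variance.} Substitute the zero mean obtained above into the variance, so the centering term drops out. This leaves $\frac1q\sum_{m=0}^{q-1}|\mathbf{w}(f,g)[m]|^2$. Unimodularity, $|e^{2\pi i(fm+g)}|=1$ as in Definition~\ref{def:operators}, makes each term equal to $1$, giving $\sigma^2_{\mathrm{vac}} = 1 > 0$.

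It is worth recording the standard decomposition $\sigma^2 = \langle|\Delta\mathbf{x}|^2\rangle - |\langle\Delta\mathbf{x}\rangle|^2$. Its first term is always $1$, and the nonzero-frequency hypothesis is exactly what makes the second term vanish. Stated this way, the degenerate case $q=1$, where the variance is $1-1=0$, is handled transparently.

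Beyond isolating the $q=1$ exception, I expect no real obstacle: every ingredient is already available in Lemma~\ref{lem:geometric_sum} and the unimodularity of $\mathcal{G}$.
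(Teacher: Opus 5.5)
Your proof is correct and matches the paper's argument: the mean vanishes by the root-of-unity sum of Lemma~\ref{lem:geometric_sum}, and with zero mean the variance reduces to the average of $|\mathbf{w}(f,g)[m]|^2 = 1$. Your explicit exclusion of $q=1$ is a real improvement, since the paper's proof simply asserts $\zeta_q^p \neq 1$, which fails exactly when $f \equiv 0$; in that case the drift is $e^{2\pi i g}$ and the variance is $0$, so the theorem needs $q \ge 2$.
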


\begin{proof}
The vanishing of the first moment $\langle \Delta \mathbf{x} \rangle_q = 0$ follows immediately from Lemma~\ref{lem:geometric_sum}, since $\zeta_q^p = \exp(2\pi i p/q) \neq 1$ is a primitive $q$-th root of unity and $\sum_{m=0}^{q-1} (\zeta_q^p)^m = \frac{1 - (\zeta_q^p)^q}{1 - \zeta_q^p} = \frac{1-1}{1-\zeta_q^p} = 0$.
The non-vanishing of the second moment follows because each unimodular wave element has modulus $|\mathbf{w}[m]| \equiv 1$ for all $m$, yielding an average quadratic fluctuation of $\frac{1}{q} \cdot q \cdot 1 = 1$.
\end{proof}

\begin{remark}[Correspondence with Quantum Field Theoretic Zero-Point Energy]
Theorem~\ref{thm:vacuum_variance} provides an exact, deterministic discrete-algebraic analogue of the \textbf{vacuum expectation values in Quantum Field Theory (QFT)}. In QFT, the ground state $|0\rangle$ of a quantum field $\phi$ exhibits a vanishing mean field:
\begin{equation}
\langle 0 | \phi(x) | 0 \rangle = 0,
\end{equation}
while simultaneously maintaining a non-zero vacuum fluctuation (zero-point energy):
\begin{equation}
\langle 0 | \phi(x)^2 | 0 \rangle = \int \frac{d^3 k}{(2\pi)^3} \frac{\hbar \omega_k}{2} > 0.
\end{equation}
In the wave closure space, the vacuum is neither inert nor empty: it is an actively spinning, closed-cycle polygon whose microscopic jitter generates persistent zero-point variance while preserving macroscopic space-time homogeneity.
\end{remark}

\subsection{Topological Selection of Rational Wave Numbers: Stability vs. Ergodic Dispersion}
\label{subsec:topological_selection}

A long-standing question in the foundations of physics is why observable quantum numbers, particle charges, and frequencies are quantized into rational ratios rather than forming an arbitrary real continuum. 
The dynamic cumulative integral operator provides a purely \textbf{topological selection principle}:

\begin{theorem}[Topological Orbit Closure and Solitary State Stability]
\label{thm:topological_selection}
Let $u \in \mathcal{V}$ be a wave sequence with frequency $f \in \mathbb{R}/\mathbb{Z}$. The discrete worldline in configuration space is the trajectory of the cumulative integral $\mathbf{X}[M] = (\mathcal{I}u)[M] = \sum_{m=0}^M u[m]$.
\begin{enumerate}
\item \textbf{Irrational Frequencies ($f \notin \mathbb{Q}$): Phase Decoherence and Ergodic Diffusion.}
If $f$ is irrational, by Weyl's Equidistribution Theorem, the fractional parts $\{f m\}_{m \in \mathbb{Z}}$ are uniformly and densely distributed modulo 1. 
The partial sums $\mathbf{X}[M]$ never return to their initial point; the trajectory is topologically open, non-periodic, and explores the configuration space ergodically. 
The envelope cannot form a closed loop, resulting in perpetual phase dispersion:
\begin{equation}
\mathbf{X}[M + K] \neq \mathbf{X}[M] \quad \text{for all } K \in \mathbb{Z} \setminus \{0\}.
\end{equation}
Consequently, no stationary, localized, solitary wave excitation (particle state) can be sustained.
\item \textbf{Rational Frequencies ($f = p/q \in \mathbb{Q}$): Topological Orbit Closure.}
If and only if $f$ is rational with irreducible denominator $q \in \mathbb{N}$, the trajectory closes exactly upon itself after $q$ time steps:
\begin{equation}
\mathbf{X}[M + q] = \mathbf{X}[M] + \sum_{m=M+1}^{M+q} e^{2\pi i (f m + g)} = \mathbf{X}[M] + 0 = \mathbf{X}[M].
\end{equation}
The trajectory forms a compact, topologically stable, closed regular $q$-gon orbit with topological winding number $p$.
\end{enumerate}
\end{theorem}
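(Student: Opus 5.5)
The plan is to reduce both parts to a single closed-form identity for the increment of the worldline over a window of $K$ consecutive steps, and then read off the dichotomy from when a root of unity equals $1$. Write $u[m]=e^{2\pi i(fm+g)}$ and $\omega=e^{2\pi i f}$. Exactly as in the proof of Theorem~\ref{thm:integral_closed_form}, for $\omega\neq 1$ and every $M$ and every $K\ge 1$ I will establish
\begin{equation*}
\mathbf{X}[M+K]-\mathbf{X}[M]=\sum_{m=M+1}^{M+K}u[m]=e^{2\pi i(f(M+1)+g)}\,\frac{1-\omega^{K}}{1-\omega}.
\end{equation*}
Two remarks on this identity. First, it is insensitive to whether $\mathbf{X}$ starts its sum at $m=0$ or $m=1$: the indexing in the statement differs slightly from Definition~\ref{def:cumulative_integral}, but only differences of $\mathbf{X}$ enter. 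Second, the case $K<0$ follows by exchanging the roles of $M$ and $M+K$. Since the prefactor is unimodular, the increment vanishes if and only if $\omega^{K}=1$, that is, if and only if $fK\in\mathbb{Z}$.

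\textbf{Irrational case.} For $f\notin\mathbb{Q}$ we have $\omega\neq 1$, and $fK\notin\mathbb{Z}$ for every nonzero integer $K$. Hence $\mathbf{X}[M+K]\neq\mathbf{X}[M]$ for all $M$ and all $K\neq 0$, so the trajectory never returns to any earlier point, and in particular never to its initial point. This argument is purely algebraic and does not need Weyl's theorem. Weyl equidistribution of $\{fm\}$ enters only for the supplementary description that the step directions fill $S^1$ densely. From the closed form one also gets $|\mathbf{X}[M]|\le 1/|\sin\pi f|$, so the orbit is bounded but not periodic. I will state the conclusion that no stationary localized excitation is sustained as the physical reading of non-periodicity, not as part of the formal content.

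\textbf{Rational case.} Let $f=p/q$ in lowest terms with $q\ge 2$. Then $\omega$ is a primitive $q$-th root of unity, $\omega\neq 1$, and the sum of any $q$ consecutive terms vanishes by Lemma~\ref{lem:geometric_sum} (equivalently, by the identity above with $K=q$). This gives $\mathbf{X}[M+q]=\mathbf{X}[M]$. Moreover, the identity shows that $q$ is the minimal closing period, because $\omega^K=1$ exactly when $q\mid K$. Together with the irrational case, this yields the ``if and only if'': the orbit closes if and only if $f\in\mathbb{Q}$, with exact period $q$. For the geometry, the argument of Theorem~\ref{thm:regular_ngon} carries over. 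Consecutive steps are unit vectors whose directions advance by the constant angle $2\pi p/q$, so the vertices $\mathbf{X}[M],\dots,\mathbf{X}[M+q-1]$ lie on a circle of radius $1/(2|\sin(\pi p/q)|)$. The closed path is the regular (star) polygon $\{q/p\}$, and its total tangent turning is $2\pi p$, so it winds $p$ times around its centre.

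\textbf{Expected obstacles.} The main difficulty is not computational but a matter of making the statement precise. Three points need care.
\begin{enumerate}
\item The degenerate frequency $f\equiv 0$ (that is, $q=1$) must be excluded, because then $\mathbf{X}$ drifts linearly and never closes. I will add the hypothesis $f\not\equiv 0$.
\item The ``winding number $p$'' must be pinned down, as the turning number of the closed edge path, or equivalently the winding of the vertices about the circumcentre. With $p$ taken in $\{1,\dots,q-1\}$, this is only defined modulo orientation, since $p$ and $p-q$ give the same $\omega$.
\item The statement's ``$u\in\mathcal{V}$ with frequency $f$'' must be read as the single carrier $\mathbf{w}(f,g)$. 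For irrational $f$ this carrier is not in $\mathcal{V}$ at all, so I will state part~1 for $\mathbf{w}(f,g)\in\mathbb{T}^{\mathbb{Z}}$.
\end{enumerate}
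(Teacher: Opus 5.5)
The paper gives no separate proof. Its justification is built into the statement: Weyl equidistribution for irrational $f$, and the vanishing of a full-period geometric sum (Lemma~\ref{lem:geometric_sum}) for $f=p/q$.

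\textbf{Comparison of routes.} Your rational case is that same computation. For the irrational case you take a genuinely different and better route. The window identity $\mathbf{X}[M+K]-\mathbf{X}[M]=e^{2\pi i(f(M+1)+g)}(1-\omega^K)/(1-\omega)$ reduces non-return to $\omega^K\neq 1$. That is the actual mechanism: equidistribution of $\{fm\}$ is an asymptotic statement and says nothing by itself about whether a finite window sum can vanish.

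The identity also gives several things the paper's sketch lacks:
\begin{itemize}
\item minimality of the period $q$;
\item the failure of the stated ``if and only if'' at $q=1$, where the orbit drifts linearly;
\item boundedness of the orbit.
\end{itemize}
It yields more than you use. Every point $\mathbf{X}[M]$ lies on one fixed circle of radius $1/(2|\sin\pi f|)$. So for irrational $f$ the orbit is dense on that circle; this is the only place Weyl (or Kronecker density) is needed. The clause ``explores the configuration space ergodically'' can only be established in that restricted sense. Your caveats about $f\equiv 0$ and about irrational carriers not lying in $\mathcal{V}$ are both correct.

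\textbf{An error in your geometric paragraph.} With $p\in\{1,\dots,q-1\}$, the total tangent turning of the closed edge path is not $2\pi p$. The exterior angle between consecutive straight edges is the principal value of $2\pi p/q$ in $(-\pi,\pi]$. Hence the turning number, equivalently the winding number about the circumcentre, is $p$ when $p<q/2$ and $p-q$ when $p>q/2$.
\begin{itemize}
\item For $q=3$, $p=2$, $g=0$, the step directions are $0^\circ,240^\circ,120^\circ,\dots$. Each step is a clockwise turn of $120^\circ$, so the triangle has winding number $-1$, not $2$.
\item For $q=2$ the path is a diameter traversed back and forth through the centre, so no winding number is defined.
\end{itemize}
This is not an orientation ambiguity: $p$ and $p-q$ (for example $4$ and $-1$ when $q=5$) are genuinely different winding numbers. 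The correct statement, for $q\ge 3$, is that the winding number is the representative of $p$ modulo $q$ lying in $(-q/2,q/2)$.
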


\begin{corollary}[Rationality as a Quantum Selection Rule]
Physical persistence of localized matter fields mandates that the underlying wave numbers belong strictly to the rational torsion group:
\begin{equation}
f \in \mathbb{Q}/\mathbb{Z} = \varinjlim \mathbb{Z}/n\mathbb{Z}.
\end{equation}
The denominator $q$ is a topological quantum number defining the finite dimension of the internal cyclic state space (qudit Hilbert space dimension $\dim \mathcal{H} = q$).
\end{corollary}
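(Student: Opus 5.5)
The plan is to first turn the informal hypothesis ``physical persistence of localized matter fields'' into a precise condition on the worldline $\mathbf{X}[M]=(\mathcal{I}u)[M]$, and then read the conclusion off Theorem~\ref{thm:topological_selection}. I will take persistence to mean that the trajectory is \emph{closed}: there is some $K\in\mathbb{N}$ with $\mathbf{X}[M+K]=\mathbf{X}[M]$ for all $M$.

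Boundedness alone is the wrong criterion. For irrational $f$ the closed form in Theorem~\ref{thm:integral_closed_form} extends verbatim to real $f\notin\mathbb{Z}$ and gives $|\mathbf{X}[M]|\le 1/|\sin(\pi f)|$, so the orbit is bounded but never closes. This is why the corollary must rest on periodicity. With periodicity as the definition, the corollary reduces to one equivalence: $\mathbf{X}$ is $K$-periodic if and only if $f\in\mathbb{Q}/\mathbb{Z}$, $f\neq 0$, and $q\mid K$, where $q$ is the reduced denominator.

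The key steps, in order, are as follows.

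\textbf{Step 1 (reduce closure to the increments).} Apply $\Delta_-$ via Theorem~\ref{thm:discrete_ftc}. If $\mathbf{X}$ is $K$-periodic, then so is $u=\Delta_-\mathbf{X}$. Moreover $\sum_{m=M+1}^{M+K}u[m]=\mathbf{X}[M+K]-\mathbf{X}[M]=0$.

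\textbf{Step 2 (periodicity forces rationality).} $K$-periodicity of $u[m]=e^{2\pi i(fm+g)}$ means $e^{2\pi i fK}=1$. Hence $fK\in\mathbb{Z}$, so $f$ is rational and its reduced denominator $q$ divides $K$. This already excludes irrational $f$, consistent with part 1 of Theorem~\ref{thm:topological_selection}.

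\textbf{Step 3 (exclude $f=0$).} For $f=0$ the zero-sum condition fails, since $\sum u = Ke^{2\pi i g}\neq 0$. The orbit then drifts linearly, which matches the last line of Proposition~\ref{prop:zero_sum_algebra}.

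\textbf{Step 4 (converse).} Conversely, for $f=p/q$ with $q\ge 2$, Lemma~\ref{lem:geometric_sum} gives the zero sum. Part 2 of Theorem~\ref{thm:topological_selection} then gives $\mathbf{X}[M+q]=\mathbf{X}[M]$.

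Together these steps yield $f\in\mathbb{Q}/\mathbb{Z}$. By Theorem~\ref{thm:pontryagin}, this is precisely the torsion subgroup of $\widehat{\mathbb{Z}}$.

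For the identification $\mathbb{Q}/\mathbb{Z}=\varinjlim\mathbb{Z}/n\mathbb{Z}$, I will use the directed system indexed by divisibility. The transition map $\mathbb{Z}/n\mathbb{Z}\to\mathbb{Z}/nk\mathbb{Z}$ is $a\mapsto ka$, and $\mathbb{Z}/n\mathbb{Z}$ embeds in $\mathbb{Q}/\mathbb{Z}$ by $a\mapsto a/n$. These embeddings are compatible with the transition maps, and their union is all of $\mathbb{Q}/\mathbb{Z}$, so the direct limit is $\mathbb{Q}/\mathbb{Z}$. Equivalently, I can invoke the Pr\"ufer decomposition of Theorem~\ref{thm:prufer}.

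For the statement that the denominator is a topological quantum number, I will argue as follows.
\begin{itemize}
\item \textbf{Discrete invariant.} The minimal period of the orbit is exactly $q$, by the divisibility in Step~2. So $q$ is an integer invariant, unchanged by the calibration shift $g$ and by $p\mapsto p+q$.
\item \textbf{Internal state space.} The states visited form the $q$-periodic sequences $\mathcal{V}_q\cong\mathbb{C}^q$. Theorem~\ref{thm:basis_construction}, together with Theorem~\ref{thm:mode_orthonormality}, gives an orthonormal basis of this space. Hence $\dim\mathcal{H}=q$.
\end{itemize}

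The main obstacle is not any of the computations but Step~0: the corollary's hypothesis is physical, not mathematical. The argument is only as strong as the chosen formalization. I will state explicitly that ``persistence'' is taken to mean exact orbit closure (periodicity of $\mathbf{X}$) rather than boundedness, because the bounded-but-open irrational case shows the weaker notion does not select rationals. I will also flag that the inequality $\mathbf{X}[M+K]\neq\mathbf{X}[M]$ for irrational $f$ is exactly what Step~2 proves, namely $e^{2\pi i fK}\neq 1$ for every nonzero integer $K$.
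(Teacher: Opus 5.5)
Your proposal is correct and follows the paper's own (implicit) route: the corollary is read off the orbit-closure dichotomy of Theorem~\ref{thm:topological_selection}, combined with the identification of period-$q$ sequences with $\mathbb{C}^q$ in Subsection~\ref{sec:qudit_kinematics}, which makes $\dim\mathcal{H}=q$ an identification rather than a derived fact, exactly as you use it. Your elementary derivation of the dichotomy from $u=\Delta_-\mathbf{X}$ and $e^{2\pi i fK}=1$ needs no equidistribution, and your exclusion of $f=0$ is correct, since there the paper's ``if and only if'' genuinely fails because $\mathbf{X}$ drifts linearly; both are sound refinements rather than a different approach.
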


\subsection{Discrete Zitterbewegung, Internal Spinor Precession, and Inertial Mass}
\label{subsec:zitterbewegung_mass}

In relativistic quantum mechanics, Dirac demonstrated that the velocity operator $\mathbf{v} = c \boldsymbol{\alpha}$ of a free electron does not commute with the free Hamiltonian, producing an ultra-fast trembling motion known as \textbf{Zitterbewegung} (Schr\"{o}dinger, 1930) with angular frequency:
\begin{equation}
\omega_{\mathrm{zbw}} = \frac{2 m_0 c^2}{\hbar}.
\end{equation}
In our non-abelian quaternionic wave closure, this phenomenon arises organically without postulating wave equations:

\begin{proposition}[Inertial Mass from Internal Rotor Precession]
\label{prop:mass_zitterbewegung}
Let $q[m] = R \exp(\hat{\mathbf{n}}[m] \, 2\pi f m)$ be a quaternionic wave sequence whose internal spin rotor precesses at rational frequency $f = p/q$.
The instantaneous displacement velocity $\mathbf{v}[m] = (\Delta \mathbf{x})[m] / \Delta t$ executes a discrete polygonal orbit with angular frequency:
\begin{equation}
\omega_0 = \frac{2\pi f}{\Delta t} = \frac{2\pi p}{q \Delta t}.
\end{equation}
Identifying this internal rotational frequency with the de Broglie-Einstein quantum clock $\hbar \omega_0 = m_0 c^2$ yields an exact algebraic expression for the \textbf{rest mass} of the localized excitation:
\begin{equation}
m_0 = \frac{\hbar \omega_0}{c^2} = \frac{2\pi \hbar p}{q c^2 \Delta t} = \frac{h p}{q c^2 \Delta t}.
\end{equation}
Thus, the mass spectrum of elementary solitary states on the discrete spacetime lattice is inversely proportional to the polygon period $q$, quantizing mass directly in terms of the wave number denominator.
\end{proposition}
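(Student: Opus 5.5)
The proposition is essentially a kinematic identification rather than a deep structural claim, so the plan is to reduce it to the scalar complex results already established and then perform the de Broglie--Einstein substitution. First I would fix the rotor axis: assume $\hat{\mathbf{n}}[m] \equiv \hat{\mathbf{n}}$ is constant, or more generally $q$-periodic in $m$. For constant $\hat{\mathbf{n}}$, the real subalgebra $\mathbb{R} \oplus \mathbb{R}\hat{\mathbf{n}} \subset \mathbb{H}$ is isomorphic to $\mathbb{C}$ via $\hat{\mathbf{n}} \mapsto i$, because $\hat{\mathbf{n}}^2 = -1$. Under this isomorphism $q[m] = R\exp(\hat{\mathbf{n}}\,2\pi f m)$ becomes $R\,\mathbf{w}(f,0)[m]$. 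All products involved commute, so the non-abelian structure of Theorem~\ref{thm:non_commutative_product} plays no role here.

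Second, following Theorem~\ref{thm:vacuum_variance}, I would take the displacement increment to be $\Delta\mathbf{x}[m] = q[m]$, so that $\mathbf{X}[M] = (\mathcal{I}q)[M]$. Theorems~\ref{thm:integral_closed_form} and~\ref{thm:regular_ngon}, together with part 2 of Theorem~\ref{thm:topological_selection}, then give the following: the increments are vectors of length $R$ whose direction advances by the constant angle $2\pi f = 2\pi p/q$ per step, and the path closes after $q$ steps as a regular $q$-gon (a star polygon with winding number $p$ when $p>1$). Hence $\mathbf{v}[m] = \Delta\mathbf{x}[m]/\Delta t$ is a unimodular rotating phasor, scaled by $R/\Delta t$. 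Its phase advance per tick is $2\pi f$, and dividing by the tick duration gives the angular frequency $\omega_0 = 2\pi f/\Delta t = 2\pi p/(q\Delta t)$.

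Third, I would impose the identification $\hbar\omega_0 = m_0c^2$ as a definition, following de Broglie. Solving gives
\begin{equation*}
m_0 = \frac{\hbar\omega_0}{c^2} = \frac{2\pi\hbar p}{q c^2 \Delta t} = \frac{hp}{qc^2\Delta t},
\end{equation*}
using $h = 2\pi\hbar$. This shows directly the inverse proportionality to $q$ at fixed $p$.

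The main obstacle is the first step. If $\hat{\mathbf{n}}[m]$ genuinely varies, the increments no longer lie in a single commutative plane, and the orbit need not be a closed polygon. I would therefore state explicitly that the axis is constant, or at least that it is $q$-periodic and the angular speed is read off from $\theta[m] = 2\pi f m$ alone. In the latter case I would only claim closure of the trajectory after $\operatorname{lcm}$ of the periods. A second point to flag is the meaning of ``angular frequency'' when $\gcd(p,q)=1$ with $p>1$. The per-step turning angle is $2\pi p/q$, and I would keep it as stated rather than reducing it to the fundamental $2\pi/q$. The resulting mass formula therefore depends on this convention, since $f$ is only determined modulo $1$.
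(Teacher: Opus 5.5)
Your proposal is correct and follows the only argument the paper offers. The paper gives no separate proof: the statement itself reads off the per-tick phase advance $2\pi f$ of the velocity phasor from the polygon picture of Section~\ref{sec:integral_operator} and Theorem~\ref{thm:vacuum_variance}, divides by $\Delta t$, and substitutes into $\hbar\omega_0 = m_0 c^2$, leaving implicit exactly the caveats you raise (a constant axis, and the dependence of $m_0$ on the chosen lift of $f$ modulo $1$, so that $m_0 \propto 1/q$ only at fixed $p$).

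One correction to your fallback: with a merely $q$-periodic axis, $\mathbf{X}[M+q]-\mathbf{X}[M]$ is the sum of the sequence $q[m]$ over one period, and this need not vanish. For example, take $f=1/4$ with $\hat{\mathbf{n}}[m]=\mathbf{j}$ when $m\equiv 3 \pmod 4$ and $\hat{\mathbf{n}}[m]=\mathbf{i}$ otherwise; the period sum is then $R(\mathbf{i}-\mathbf{j})$ for every $M$, so the trajectory drifts instead of closing. The polygonal-orbit part of the claim therefore needs the constant-axis hypothesis, or an explicit zero-period-sum assumption, and not just periodicity of the axis.
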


\subsection{The Geometric and Physical Primacy of the $n=6$ Hexagonal Lattice}
\label{subsec:primacy_n6}

Throughout the analysis of discrete cumulative integrals (Section~\ref{sec:integral_operator}), the period $n=6$ emerged as a profound structural bifurcation point. 
We can now elucidate why the $n=6$ spinning polygon is physically and geometrically privileged:

\begin{enumerate}
\item \textbf{The Crystallographic Restriction Theorem:}
By the crystallographic restriction theorem, the only rotational symmetries compatible with periodic discrete translational symmetry in 2D and 3D Euclidean space are of order:
\begin{equation}
n \in \{1, \, 2, \, 3, \, 4, \, 6\}.
\end{equation}
Rotations of order $n=5$ or $n \ge 7$ cannot tile flat space periodically, inevitably producing geometric strain, defects, or aperiodic Penrose-type quasicrystals. 
Among all crystallographic symmetries, $n=6$ forms the \textbf{Eisenstein integer ring} $\mathbb{Z}[\omega]$ ($\omega = e^{2\pi i/3}$), whose Voronoi cells form the regular hexagonal honeycomb tiling of $\mathbb{C}$.

\item \textbf{Maximal Packing Density and Minimal Surface Shear:}
By the planar Honeycomb Conjecture (Hales, 2001), the regular hexagonal partition of the plane possesses the absolute minimum boundary perimeter for a given unit cell area. 
Furthermore, the hexagonal lattice achieves the maximal possible 2D sphere-packing density:
\begin{equation}
\eta_{\mathrm{hex}} = \frac{\pi}{\sqrt{12}} \approx 0.9069.
\end{equation}
Consequently, a vacuum substrate composed of spinning $n=6$ hexagons minimizes the elastic interfacial shear energy and surface tension of the fluctuating spacetime fabric.

\item \textbf{Emergence of Relativistic Massless Dirac Fermions:}
When waves propagate on a dynamically fluctuating $n=6$ hexagonal lattice, the tight-binding dispersion relation takes the form:
\begin{equation}
E(\mathbf{k}) = \pm t_0 \sqrt{3 + 2 \cos(\mathbf{k} \cdot \mathbf{a}_1) + 2 \cos(\mathbf{k} \cdot \mathbf{a}_2) + 2 \cos(\mathbf{k} \cdot (\mathbf{a}_2 - \mathbf{a}_1))}.
\end{equation}
At the corners of the hexagonal Brillouin zone (the Dirac points $\mathbf{K}$ and $\mathbf{K}'$), the energy dispersion becomes strictly \textbf{linear}:
\begin{equation}
E(\mathbf{q}) \approx \pm \hbar v_F \|\mathbf{q}\|, \quad \text{where } \mathbf{q} = \mathbf{k} - \mathbf{K}, \; v_F = \frac{3 t_0 a}{2\hbar}.
\end{equation}
Low-energy excitations on an $n=6$ spinning hexagonal lattice satisfy the two-dimensional massless Dirac equation $i \gamma^\mu \partial_\mu \psi = 0$ \textbf{automatically}, without requiring relativity to be postulated a priori (Castro Neto et al., 2009).

\item \textbf{4D Quaternionic Extension: The 24-Cell Polytope:}
When the $n=6$ hexagonal dynamics are lifted to 4-dimensional Euclidean space via the Hurwitz quaternions $\mathcal{H} = \mathbb{Z}[\mathbf{i}, \mathbf{j}, \mathbf{k}, \frac{1+\mathbf{i}+\mathbf{j}+\mathbf{k}}{2}]$, the 6 roots of unity generalize to the 24 unit Hurwitz quaternions. 
These form the vertices of the **24-cell** (icositetrachoron)—the unique regular 4-polytope that has no 3-dimensional analogue and is self-dual. 
The 24-cell tiles 4D spacetime completely, providing the 4D relativistic counterpart to the 2D hexagonal honeycomb.
\end{enumerate}

\section{Sequential Roadmap: Operators Beyond Superposition, Integration, Norms, Division, and Sieves}
\label{sec:roadmap_beyond}

\begin{remark}[Algebraic Operators Completing Quantum Dynamics]
While $(\mathbb{C}[\mathcal{G}_{\mathrm{poly}}], +, \otimes)$ models arbitrary static wave superpositions, physical dynamics on the lattice require the following structural operators:
\begin{enumerate}
    \item \textbf{Spatial Translation / Shift Operator ($S$)}:
    The shift operator $(S \psi)[m] := \psi[m-1]$ generates discrete lattice momentum. Because $S X - X S = S$ (where $(X\psi)[m] = m \psi[m]$), adjoining $S$ introduces the non-commutative Weyl-Heisenberg algebra on the lattice and crossed-product $C^*$-algebras $C^*(\mathbb{Z}) \rtimes \mathbb{Z}$.
    
    \item \textbf{Discrete Derivative and Laplacians ($\Delta$)}:
    The central difference $\Delta^2 = S - 2I + S^{-1}$ generates the discrete Schr\"{o}dinger equation:
    \begin{equation}
    i \hbar \frac{\partial \psi}{\partial t} = -\frac{\hbar^2}{2M a^2} (S - 2I + S^{-1})\psi + V(m)\psi,
    \end{equation}
    yielding tight-binding dispersion relations $E(k) = 2t(1 - \cos(k a))$.
    
    \item \textbf{Hilbert Inner Product and Born Measurement Space}:
    Adjoining the Besicovitch/Hilbert inner product for almost-periodic sequences:
    \begin{equation}
    \langle u, v \rangle := \lim_{N \to \infty} \frac{1}{2N + 1} \sum_{m=-N}^N u[m] \, \overline{v[m]}
    \end{equation}
    formalizes orthogonality $\langle u_{k}, u_{k^\prime} \rangle = \delta_{k, k^\prime}$, projection-valued measurements, and unitary time evolution.
\end{enumerate}
\end{remark}

\subsection{The Limit Operator, Cauchy Completions, and Continuous Field Transitions}

A natural frontier beyond finite algebraic closures is the formal adjunction of a \textbf{limit operator} $\lim_{k \to \infty}$. 
Adjoining limits operates along three distinct structural axes:
\begin{enumerate}
    \item \textbf{Parametric Completion of Kinematic Frequencies ($f, g \in \mathbb{Q} \to \mathbb{R}$):}
    Admitting Cauchy sequences of rational frequencies $\{f_k\}_{k=1}^\infty \subset \mathbb{Q}$ completes the torsion group $\mathbb{Q}/\mathbb{Z}$ into the continuous compact circle $\mathbb{R}/\mathbb{Z} \cong \mathbb{T}$. 
    While rational wave numbers generate closed regular $n$-gon cycles under cumulative summation $\mathcal{I}$, irrational wave numbers generate dense, non-closing ergodic trajectories on the circle, providing the rigorous algebraic substrate for quasicrystals, incommensurate Aubry-Andr\'{e}-Harper lattices, and fractal energy spectra.
    
    \item \textbf{Continuous Lattice Scaling Limit ($a \to 0$):}
    Parameterizing the spatial lattice by a physical grid spacing $x = m a$ and taking $a \to 0$ transforms the finite differences $\Delta_+ / a$ and cumulative sums $a \mathcal{I}$ into continuous directional derivatives $\frac{\partial}{\partial x}$ and Riemann integrals $\int dx$. 
    This establishes that continuous hydrodynamic and geomorphic field equations---such as Exner sediment mass conservation and Navier-Stokes overland flow on fluvial surfaces---are the macroscopic continuum limits of discrete wave number conservation algebra.
    
    \item \textbf{Norm Completion and Besicovitch Space:}
    Completing the pre-Hilbert space $(\mathcal{V}, \langle \cdot, \cdot \rangle)$ of Section~\ref{sec:inner_product} under Cauchy sequences in the principal period norm yields the non-separable Bohr-Besicovitch Hilbert space $B^2(\mathbb{Z})$ of almost-periodic functions, establishing full Parseval completeness for infinite harmonic expansions.
\end{enumerate}

\subsection{Finitary Foundation: The Principal Period Representation}
\label{sec:finitary_periods}

Crucially, prior to adjoining analytical limits, the entire algebraic architecture of wave numbers possesses a completely \textbf{finite, exact, and discrete representation}. 
Because every plane wave carrier and finite superposition $u \in \mathcal{V}$ is strictly periodic with fundamental period $n \in \mathbb{N}$, the bi-infinite sequence $u \in \mathbb{C}^\mathbb{Z}$ is uniquely and losslessly characterized by its finite principal tuple:
\begin{equation}
({_1}u) := \big( u[1], u[2], \dots, u[n] \big) \in \mathbb{C}^n.
\end{equation}
Any algebraic operation between two wave numbers $u, v$ with periods $n_u, n_v$ evaluates strictly within the finite-dimensional vector space $\mathbb{C}^N$ over the least common wavelength $N = \operatorname{lcm}(n_u, n_v)$. 
Consequently, the infinite sequence ring $\mathcal{V}$ admits an exact categorical identification as the inductive colimit (direct limit) of finite-dimensional cyclic vector spaces:
\begin{equation}
\mathcal{V} \cong \varinjlim_{n \in \mathbb{N}} \mathbb{C}^n,
\end{equation}
ordered by the divisibility poset of $\mathbb{N}$. 
This finitary reduction guarantees that wave number algebra operates with zero numerical truncation error, exact integer-modular arithmetic, and complete algorithmic decidability, completely avoiding the analytical pathologies of infinite sequence spaces until the limit operator is explicitly invoked.

\subsection{Transformation Groups: Translations, Modulations, and the Discrete Weyl-Heisenberg Algebra}
\label{sec:transformation_groups}

Wave numbers admit a natural representation of geometric and spectral symmetries:
\begin{enumerate}
    \item \textbf{Spatial Lattice Translations ($S_a$):}
    The spatial shift operator by $a \in \mathbb{Z}$ acts via:
    \begin{equation}
    (S_a u)[m] := u[m - a].
    \end{equation}
    On an elementary plane wave carrier $\mathbf{w}(f, g)$, spatial translation acts as a linear phase shift:
    \begin{equation}
    (S_a \mathbf{w}(f, g))[m] = e^{2\pi i (f(m-a) + g)} = e^{-2\pi i f a} \mathbf{w}(f, g)[m],
    \end{equation}
    revealing that spatial lattice translations are dual to global gauge rotations in the fiber.
    
    \item \textbf{Momentum Boosts (Spectral Modulation $M_f$):}
    Modulation by frequency $f_0 \in \mathbb{Q}$ acts via the pointwise tensor product:
    \begin{equation}
    M_{f_0}(u) := \mathbf{w}(f_0, 0) \otimes u.
    \end{equation}
    This boosts the spatial crystal momentum by $f \mapsto f + f_0 \pmod 1$.
    
    \item \textbf{The Non-Commutative Discrete Weyl-Heisenberg Algebra:}
    The shift operator $S_1$ and modulation operator $M_f$ do not commute. Their product obeys the fundamental commutation relation:
    \begin{equation}
    S_1 M_f = e^{-2\pi i f} M_f S_1.
    \end{equation}
    The pair $\{S_1, M_f\}$ generates the discrete Weyl-Heisenberg group, governing quantum kinematics and magnetic translations on discrete 1D and 2D lattices.
    
    \item \textbf{Quaternionic Spatial Rotations (Adjoint $\mathrm{SO}(3)$ Action):}
    For quaternionic wave numbers $q \in \mathcal{V}_\mathbb{H}$, a rotation of the spatial polarization frame by a unit rotor $p \in \mathrm{Sp}(1)$ is given by the inner automorphism:
    \begin{equation}
    \mathcal{R}_p(q)[m] := p \, q[m] \, p^{-1}.
    \end{equation}
    Under the canonical homomorphism $\mathrm{SU}(2) \to \mathrm{SO}(3)$, this maps the imaginary spatial coordinates $(\mathbf{i}, \mathbf{j}, \mathbf{k})$ under rigid 3D rotations while leaving the scalar energy density $|q[m]|$ strictly invariant.
\end{enumerate}

\subsection{Finite-State Quantum Kinematics: Periodicity as State Space Dimension}
\label{sec:qudit_kinematics}

The algebraic equivalence $\mathcal{V} \cong \varinjlim_{n \in \mathbb{N}} \mathbb{C}^n$ established in Subsection~\ref{sec:finitary_periods} provides a formal foundation for finite-dimensional quantum mechanics:
\begin{enumerate}
    \item \textbf{Period $n$ as the Hilbert Space Dimension ($n$-Level Qudits):}
    A periodic wave number of fundamental period $n$ is parameterized by its principal period tuple $({_1}u) = \big(u[1], \dots, u[n]\big) \in \mathbb{C}^n$.
    Under the normalization $\sum_{m=1}^n |u[m]|^2 = 1$, the vector $({_1}u)$ represents a pure quantum state in an $n$-dimensional Hilbert space $\mathcal{H}_n \cong \mathbb{C}^n$ (an \textbf{$n$-level quantum system} or \textbf{qudit}):
    \begin{itemize}
        \item A period-$2$ wave number represents a \textbf{qubit} (spin-$1/2$ two-level system).
        \item A period-$3$ wave number represents a \textbf{qutrit} (three-level system).
        \item A general period-$n$ wave represents a general \textbf{qudit} on the cyclic group $\mathbb{Z}/n\mathbb{Z}$.
    \end{itemize}
    
    \item \textbf{Dual Canonical Bases (Position vs. Momentum):}
    The two fundamental bases of the wave algebra correspond to the two canonical observables of discrete quantum mechanics:
    \begin{itemize}
        \item The \textbf{position basis} $\{|m\rangle\}_{m=1}^n$ is spanned by the localized Kronecker sieves $\{\mathbf{e}(1/n, \xi)\}_{\xi=1}^n$.
        \item The \textbf{momentum basis} $\{|k\rangle\}_{k=0}^{n-1}$ is spanned by the delocalized plane waves $\{\mathbf{w}(k/n, 0)\}_{k=0}^{n-1}$.
    \end{itemize}
    The discrete Fourier matrix $F_n$ serves as the exact unitary change-of-basis operator between position and momentum eigenstates.
    
    \item \textbf{Schwinger's Generalized Pauli Algebra:}
    On the finite state space $\mathbb{C}^n$, the cyclic shift operator $S$ and the clock operator $Z = \operatorname{diag}\big(1, \omega, \dots, \omega^{n-1}\big)$ (where $\omega = e^{2\pi i / n}$) satisfy the Sylvester-Schwinger algebra:
    \begin{equation}
    S Z = \omega^{-1} Z S, \qquad S^n = Z^n = I_n.
    \end{equation}
    The set of operators $\{S^j Z^k\}_{j,k=0}^{n-1}$ forms an orthogonal basis for the full operator algebra of observables $\mathrm{End}(\mathbb{C}^n) \cong M_n(\mathbb{C})$.
\end{enumerate}
Thus, wave number periodicity is not merely an arithmetic property, but the exact physical dimension of the underlying finite quantum state space.

\section*{Acknowledgements}
The author gratefully acknowledges the collaborative assistance of Google AI Studio and the Gemini AI Coding and Mathematics Agent in the theoretical derivation, formal symbolic verification, structural classification, and \LaTeX{} typesetting of the algebraic wave closure theory presented in this manuscript.

\end{document}